\def\myarabic#1{\normalfont(\roman{#1})}
\newlist{theoremlist}{enumerate}{1}
\setlist[theoremlist]{label=\myarabic{theoremlisti},ref={\myarabic{theoremlisti}},itemindent=0pt,labelindent=0pt,
leftmargin=*,noitemsep}
\renewcommand{\p@theoremlisti}{\perh@ps{\thetheorem}}
\protected\def\perh@ps#1#2{\textup{#1#2}}
\newcommand{\itemrefperh@ps}[2]{\textup{#2}}
\newcommand{\itemref}[1]{\begingroup\let\perh@ps\itemrefperh@ps\ref{#1}\endgroup}
\newtheorem{theorem}{Theorem}[section]
\newtheorem*{statement}{Theorem}
\newtheorem{lemma}[theorem]{Lemma}
\newtheorem{proposition}[theorem]{Proposition}
\newtheorem{corollary}[theorem]{Corollary}
\newtheorem{conjecture}[theorem]{Conjecture}
\theoremstyle{definition}
\newtheorem{remark}[theorem]{Remark}
\theoremstyle{definition}
\newtheorem{definition}[theorem]{Definition}
\newtheorem{question}[theorem]{Question}
\theoremstyle{definition}
\theoremstyle{definition}
\newtheorem{example}[theorem]{Example}
\theoremstyle{definition}
\newtheorem{notation}[theorem]{Notation}
\crefname{figure}{Figure}{Figures}
\def\Acal{\mathcal{A}}\def\Fcal{\mathcal{F}}
\def\gbf{\mathbf{g}}\def\kbf{\mathbf{k}}
\def\C{\mathbb{C}}
\def\R{\mathbb{R}}
\def\Z{\mathbb{Z}}
\def\Q{\mathbb{Q}}
\def\P{\mathbb{P}}
\newcommand\parr[1]{{({#1})}}
\def\<{{\langle}}
\def\>{{\rangle}}
\def\la{{\lambda}}
\def\GL{\operatorname{GL}}
\def\SL{\operatorname{SL}}
\def\Gr{\operatorname{Gr}}
\def\Z{{\mathbb Z}}
\def\R{{\mathbb R}}
\def\Gr{{\rm Gr}}
\def\GL{{\rm GL}}
\def\id{{\operatorname{id}}}
\newcommand{\smat}[1]{\left[\begin{smallmatrix}
      #1
    \end{smallmatrix}\right]}
\def\pj{^{\parr J}}
\def\bs{\backslash}
\def\t{{\mathbf{t}}}
\def\d#1{\dot{#1}}
\def\ds{\d{s}}
\def\dw{\d{w}}
\def\du{\d{u}}
\def\dv{\d{v}}
\def\alphacheck{\alpha^\vee}
\DeclareRobustCommand{\cev}[1]{%
  \mathpalette\do@cev{#1}%
}
\newcommand{\do@cev}[2]{%
  \fix@cev{#1}{+}%
  \reflectbox{$\m@th#1\vec{\reflectbox{$\fix@cev{#1}{-}\m@th#1#2\fix@cev{#1}{+}$}}$}%
  \fix@cev{#1}{-}%
}
\newcommand{\fix@cev}[2]{%
  \ifx#1\displaystyle
    \mkern#23mu
  \else
    \ifx#1\textstyle
      \mkern#23mu
    \else
      \ifx#1\scriptstyle
        \mkern#22mu
      \else
        \mkern#22mu
      \fi
    \fi
  \fi
}
\def\Rich_#1^#2{\vec R^\circ_{#1,#2}}
\def\LRich_#1^#2{\cev R^\circ_{#1,#2}}
\def\RichL_#1^#2{\LRich_{#1}^{#2}}
\def\Rtp_#1^#2{\vec R_{#1,#2}^{>0}}
\def\LRtp_#1^#2{\cev R_{#1,#2}^{>0}}
\def\Rtnn_#1^#2{\vec R_{#1,#2}^{\geq0}}
\def\LRtnn_#1^#2{\cev R_{#1,#2}^{\geq0}}
\def\Gtnn{{G_{\geq0}}}
\def\GBtnn{{(G/B_-)_{\geq0}}}
\def\GBptnn{{(G/B)_{\geq0}}}
\def\PR_#1^#2{\accentset{\circ}{\Pi}_{#1,#2}}%
\def\PRtp_#1^#2{\Pi_{#1,#2}^{>0}}%
\def\PRtnn_#1^#2{\Pi_{#1,#2}^{\geq0}}%
\def\PRcl_#1^#2{\Pi_{#1,#2}}%
\def\PRR_#1^#2{\accentset{\circ}{\Pi}_{#1,#2}^\R}%
\def\PRRcl_#1^#2{\Pi_{#1,#2}^\R}%
\def\bt{{\mathbf{t}}}
\def\bw{{\mathbf{w}}}
\def\bv{{\mathbf{v}}}
\def\Fcal{\mathcal{F}}
\def\pu#1{^{(#1)}}
\def\bx{{\mathbf{x}}}
\def\bi{{\mathbf{i}}}
\def\Gomp{G_0^\mp}
\def\Gopm{G_0^\pm}
\def\k{\kbf}
\def\Red{\operatorname{Red}}
\def\Hom{\operatorname{Hom}}
\def\hjmap{\kappa}
\def\hjmp_#1{\hjmap_{#1}}
\def\Uom_#1{U^{\diamond,-}_{#1}}
\def\xrasim{\xrightarrow{\sim}}
\def\Lxrasim{\xleftarrow{\sim}}
\def\Richaff_#1^#2{\accentset{\circ}{\mathcal{R}}_{#1}^{#2}}
\def\bs{\backslash}
\def\Cast{\C^\ast}
\def\Povar_#1{\accentset{\circ}{\Pi}_{#1}}
\def\Povarcl_#1{\Pi_{#1}}
\def\RPovar_#1{\accentset{\circ}{\Pi}^\R_{#1}}
\def\RPovarcl_#1{\Pi^\R_{#1}}
\def\Povtp_#1^#2{\Pi_{#1,#2}^{>0}}
\def\Povtnn_#1{\Pi_{#1}^{\geq0}}
\def\Star_#1{\operatorname{Star}_{#1}}
\def\Startnn_#1{\operatorname{Star}^{\geq0}_{#1}}
\def\Link{\operatorname{Lk}}
\def\Lkx_#1{\Link_{#1}}
\def\Lkxx_#1^#2{\accentset{\circ}{\Link}_{#1}^{#2}}
\def\Lktxx_#1^#2{\Link^{>0}_{#1,#2}}
\def\Starxx_#1^#2{\operatorname{Star}_{#1,#2}}
\def\Startxx_#1^#2{\operatorname{Star}^{\geq0}_{#1,#2}}
\def\sctnn_#1{\sc^{\geq0}_{#1}}
\def\sctp_#1^#2{\sc^{>0}_{#1,#2}}
\def\Seps_#1{S_{#1}}
\def\Lktpe_#1^#2{\Link^{>0}_{#1,#2}}
\def\Lktnne_#1{\Link^{\geq0}_{#1}}
\def\Lktp_#1^#2{\Link^{>0}_{#1,#2}}
\def\Lktnn_#1{\Link^{\geq0}_{#1}}
\def\sc{Z}
\def\sco_#1^#2{\accentset{\circ}{\sc}_{#1,#2}}
\def\sccl_#1^#2{\sc_{#1}^{#2}}
\def\Y{\mathcal{Y}}
\def\Yo_#1{\accentset{\circ}{\Y}_{#1}}
\def\Ycl_#1{\Y_{#1}}
\def\Ytp_#1{\Y_{#1}^{>0}}
\def\strg(#1){\normg{#1}}
\def\normg#1{\|#1\|}
\def\Jo{J_\bv^\circ}
\def\int{{\operatorname{init}}}
\def\Vi#1{v^{\parr{#1}}}
\def\Wi#1{w^{\parr{#1}}}
\def\vi#1{v_{\parr{#1}}}
\def\wi#1{w_{\parr{#1}}}
\def\PJ{P^J_-}
\def\pj{^{\parr{j}}}
\def\dom{\Delta^{\omega_i}_{\omega_i}}
\def\DOM^#1_#2{\Delta^{#1 \omega_i}_{#2 \omega_i}}
\def\DOMr^#1_#2{\Delta^{#1 \omega_r}_{#2 \omega_r}}
\def\DOMir^#1_#2{\Delta^{#1 \omega_{i_r}}_{#2 \omega_{i_r}}}
\def\om{\omega}
\def\line#1{\overline{#1}}
\def\lline#1{\overline{\overline{#1}}}
\def\sv{s^{\mathbf{v}}}
\newcommand*{\smallcap}{{\mathbin{\scalebox{0.5}{\ensuremath{\cap}}}}}%
\def\gc{g_\smallcap}
\def\BG{B_-\backslash G}
\def\BGtnn{(B_-\backslash G)_{\geq0}}
\def\RLtp_#1^#2{\cev R_{#1,#2}^{>0}}
\def\kbf{\mathbf{k}}
\def\Rsf_#1^#2{\Rich_{#1}^{#2}(K)}
\def\LRsf_#1^#2{\LRich_{#1}^{#2}(K)}
\def\GBsf{(G/B_-)(K)}
\def\BGsf{(\BG)(K)}
\def\pre{{\,\operatorname{pre}}}
\def\tpre_#1^#2{\vec\twistop^\pre_{#1,#2}}
\def\Ltpre_#1^#2{\cev\twistop^\pre_{#1,#2}}
\def\tpreL_#1^#2{\Ltpre_{#1}^{#2}}
\def\twistop{\tau}
\def\twist_#1^#2{\vec\twistop_{#1,#2}}
\def\twistL_#1^#2{\cev\twistop_{#1,#2}}
\def\bs{\backslash}
\def\Sn{S_n}
\def\chiv{\vec\chi_{v}}
\def\chivi{\cev\chi_{v}}
\def\chiw{\vec\chi^{\,w}}
\def\chiwi{\cev\chi^{\,w}}
\def\om{\omega}
\def\sv_#1{s^{\bv}_{#1}}
\def\gt{\gbf_{\bv,\bw}(\bt)}
\def\gtast{\gbf^\ast_{\bv,\bw}(\bt)}
\def\Lec{{\operatorname{Lec}}}
\def\fR{\vec f}
\def\fL{\cev f}
\def\capBWB(#1){(#1)^{\smallcap w}}
\def\capBWoB(#1){(#1)^{\smallcap w_0}}
\def\capBWBnopar(#1){#1^{\smallcap w}}
\def\th{\theta}
\def\yR{\vec y}
\def\yL{\cev y}
\def\Alec{\vec\Acal}
\def\PJm{P^J_-}
\def\Pio_#1^#2{\Pi^\circ_{#1,#2}}
\def\arr#1{%
{\scalebox{0.6}{%
\begin{tikzpicture}[scale=0.4,rotate=#1]%
\node[draw,circle,fill=black,scale=0.4](A) at(0,0){};%
\fill[black!25] (-0.5,-0.5)--(0,0)--(-0.5,0.5) to[bend right=20] (-0.5,-0.5);
\draw[line width=1.2pt](-0.5,-0.5)--(0,0)--(-0.5,0.5);%
\end{tikzpicture}%
}}%
}
\def\arrR{{\arr{0}}}
\def\arrL{{\arr{180}}}
\def\arrU{{\arr{90}}}
\def\arrD{{\arr{-90}}}
\def\RegL{C^\arrR}
\def\RegR{C^\arrL}
\def\RegD{C^\arrU}
\def\RegU{C^\arrD}
\def\UW{\wedge^\arrR}
\def\DW{\wedge^\arrL}
\def\DRv{I_{\shortrightarrow}^{\bv}}
\def\DRw{I_{\shortrightarrow}^{\bw}}
\def\ULv{I^{\shortleftarrow}_{\bv}}
\def\ULw{I^{\shortleftarrow}_{\bw}}
\def\chmnrR_#1{\vec\Delta_{#1}}
\def\chmnrL_#1{\cev\Delta_{#1}}
\def\BMX{B_-\backslash}
\def\Ypp{y'_+}
\def\Yp{y_+}
\def\Yo{y_0}
\def\Ym{y_-}
\def\Xggc{x}
\def\Xp{x_+}
\def\Xo{x_0}
\def\Xm{x_-}
\def\Htp{\H_{>0}}
\def\vv{v'}
\def\ww{w'}
\def\dvv{\dot v'}
\def\dww{\dot w'}
\def\H{H}
\def\Dpm_#1{\Delta^{\pm}_{#1}}
\def\Dmp_#1{\Delta^{\mp}_{#1}}
\def\ach{\alphacheck}
\def\pj{^{\parr{j}}}
\def\yRpr{\yR^{\,(r)}}
\def\yRpj{\yR^{\,(j)}}
\def\al{\alpha}
\def\QJ{Q^J}
\def\MS{\operatorname{MS}}
\def\TMSR_#1^#2{\vec \tau^{\,\MS}_{#1,#2}}
\def\TMSL_#1^#2{\cev \tau^{\,\MS}_{#1,#2}}
\def\IR{\vec I}
\def\IL{\cev I}
\def\Meas{\operatorname{Meas}}
\def\Vlax_#1{V(\la)_{#1}}
\def\QLec{Q_{\Lec}}
\def\Ing{\operatorname{Ing}}
\def\zing{z_{\Ing}}
\def\fLp{\fL'}
\def\QIng{Q_{\Ing}}
\newcommand*\bigcdot{\mathpalette\bigcdot@{.4}}
\newcommand*\bigcdot@[2]{\mathbin{\vcenter{\hbox{\scalebox{#2}{$\m@th#1\bullet$}}}}}
\def\rev{\operatorname{rev}}
\def\revR{\vec\rev}
\def\revL{\cev\rev}
\def\laVd{{}^\la{\dot V}}
\def\Vd{\dot V}
\def\Bd{\dot {\bf B}(\la)}
\def\cevdotP{\text{\reflectbox{$\vec{\text{$\dot{\reflectbox{$P$\!}}$}}$}}\!}%
\def\PRds{\vec{\dot P}_{v,w}(\k)^\sigma}
\def\LPRds{\cevdotP_{v,w}(\k)^\sigma}
\def\LPRdsf{\cevdotP_{v,w}(K)^\sigma}
\def\Ztnn{\Z_{\geq0}}
\def\proofsection#1{%
\hypersetup{bookmarksdepth=-1}%
\subsubsection*{#1}%
\hypersetup{bookmarksdepth}%
}
\def\smat#1{
\begin{psmallmatrix}#1\end{psmallmatrix}
}
\newcommand{\xMapsto}[2][]{\ext@arrow 0599{\Mapstofill@}{#1}{#2}}
\def\Mapstofill@{\arrowfill@{\Mapstochar\Relbar}\Relbar\Rightarrow}
\def\colspan{\operatorname{ColumnSpan}}
\def\yLT{\yL^{\,T}}
\begin{document}

\title{The twist for Richardson varieties}

\author{Pavel Galashin}
\address{Department of Mathematics, University of California, Los Angeles, 520 Portola Plaza,
Los Angeles, CA 90025, USA}
\email{\href{mailto:galashin@math.ucla.edu}{galashin@math.ucla.edu}}

\author{Thomas Lam}
\address{Department of Mathematics, University of Michigan, 2074 East Hall, 530 Church Street, Ann Arbor, MI 48109-1043, USA}
\email{\href{mailto:tfylam@umich.edu}{tfylam@umich.edu}}
\thanks{P.G.\ was supported by an Alfred P. Sloan Research Fellowship and by the National Science Foundation under Grants No.~DMS-1954121 and No.~DMS-2046915. T.L.\ was supported by Grants No.~DMS-1464693 and No.~DMS-1953852 from the National Science Foundation.}

\subjclass[2020]{
  Primary:
  14M15. %
  Secondary:
  13F60. %
}

\keywords{Twist map, Richardson variety, total positivity, cluster algebra, Grassmannian, positroid variety.}

\date{\today}

\begin{abstract}
We construct the twist automorphism of open Richardson varieties inside the flag variety of a complex semisimple algebraic group. We show that the twist map preserves totally positive parts, and prove a Chamber Ansatz formula for it.  Our twist map generalizes the twist maps previously constructed by Berenstein--Fomin--Zelevinsky, Marsh--Scott, and Muller--Speyer. We use it to explain the relationship between the two conjectural cluster structures for Richardson varieties studied by Leclerc and by Ingermanson.

\end{abstract}

\numberwithin{equation}{section}

\maketitle

\section{Introduction}

 Let $G$ be a complex, simply-connected, semisimple algebraic group, split over $\R$, and let $B,B_-\subset G$ be a pair of opposite Borel subgroups. Let $\H:=B\cap B_-$ be the maximal torus and $W:=N_G(\H)/\H$ be the associated Weyl group. We denote by $\ell(\cdot)$ the length function on $W$.   
 Let $N\subset B$ and $N_-\subset B_-$ be the unipotent radicals.
 We have Bruhat decompositions 
\begin{equation*}%
  G=\bigsqcup_{w\in W} B_-wB_-=\bigsqcup_{v\in W} BvB_-=\bigsqcup_{v\in W} B_-vB.
\end{equation*} 
We consider the \emph{(generalized) flag variety} $G/B_-$.
 Our main objects of study are the \emph{open Richardson varieties}
\begin{equation}\label{eq:Rich_dfn_R}
  \Rich_v^w:=(B_-wB_-\cap BvB_-)/B_- \subset G/B_-,%
\end{equation}
where the intersection is nonempty whenever $v\leq w$ in the Bruhat order on $W$. 

Our goal is to define the \emph{twist map} $\twist_v^w:\Rich_v^w\xrasim \Rich_v^w$ and study its properties. Previously, twist maps have been defined for the following varieties:
\begin{enumerate}[label=(\arabic*)]
\item\label{intro:case_unipotent} unipotent cells~\cite{Lus2,Lus3,BFZ,BZ};
\item\label{intro:case_double_Bruh} double Bruhat cells~\cite{FZ_double};
\item\label{intro:case_positroid} open positroid varieties in the Grassmannian~\cite{MaSc,MuSp}.
\end{enumerate}
Both unipotent cells and positroid varieties, as well as type $A$ double Bruhat cells, are special cases of open Richardson varieties. We show that $\twist_v^w$ specializes to the corresponding twist map in each of those cases. See also~\cite{GLS_generic,Williams_Q_syst,Weng_DT,Kimura} for related results on the twist map and its relation to the Donaldson--Thomas transformation.

In cases~\ref{intro:case_unipotent}--\ref{intro:case_positroid}, the twist map has a close relationship to \emph{total positivity}~\cite{gantmakher_krein_translation,Schoenberg,Lus2,LusIntro} and \emph{cluster algebras}~\cite{FZ}. The totally positive part $X_{>0}$ of each of the above varieties $X$ admits a \emph{positive parametrization}, that is, a homeomorphism $\gbf:(\R_{>0})^{\dim X}\xrasim X_{>0}$. This parametrization depends on some extra combinatorial data: a reduced word in case~\ref{intro:case_unipotent}, a double reduced word in case~\ref{intro:case_double_Bruh}, and a plabic graph~\cite{Pos} in case~\ref{intro:case_positroid}. Next, each variety $X$ in cases~\ref{intro:case_unipotent}--\ref{intro:case_positroid} admits a \emph{cluster algebra structure}~\cite{FZ3,Scott,GLS_quantum,SSBW,GL_cluster,GoYa}, which in particular includes a family of regular functions on $X$ called \emph{cluster variables}. The twist map $\twistop:X\xrasim X$ was shown to be an isomorphism that \emph{preserves total positivity}, that is, restricts to a homeomorphism $X_{>0}\xrasim X_{>0}$. Moreover, it allows one to invert the positive parametrization $\gbf:(\R_{>0})^{\dim X}\xrasim X_{>0}$ via a \emph{Chamber Ansatz} formula: for $\bt\in (\R_{>0})^{\dim X}$, the $\bt$-parameters are given by monomials in the cluster variables evaluated at $\twistop(\gbf(\bt))$.

We show that our twist map $\twist_v^w$ has analogous properties. (See \cref{sec:main} for the full description of our main results.) The totally positive part $\Rtp_v^w$ of $\Rich_v^w$ was defined by Lusztig~\cite{Lus2}; see also~\cite{Rietsch2}. Positive parametrizations  of $\Rtp_v^w$ were constructed in~\cite{MR}. They depend on the following combinatorial data: given a choice of a reduced expression $\bw$ for $w$, we find a \emph{positive distinguished subexpression} $\bv$ for $v$ inside $\bw$; see \cref{sec:MR_param}. This gives rise to a set $\Jo$ of size $|\Jo|=\ell(w)-\ell(v)=\dim\Rich_v^w$, and~\cite{MR} give a positive parametrization $\gbf_{\bv,\bw}:(\R_{>0})^{\Jo}\xrasim \Rtp_v^w$.
 Two conjectural cluster structures on $\Rich_v^w$ were studied in~\cite{Lec,Ing}.
\begin{statement}
  The map $\twist_v^w:\Rich_v^w\xrasim \Rich_v^w$ restricts to a homeomorphism $\Rtp_v^w\xrasim \Rtp_v^w$ on the totally positive part of $\Rich_v^w$. For $\bt\in (\R_{>0})^{\Jo}$, the $\bt$-parameters can be recovered as monomials in the regular functions considered in~\cite{MR,Lec,Ing}, evaluated at the twist of $\gt$.
\end{statement}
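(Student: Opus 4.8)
The plan is to define $\twist_v^w$ by an explicit group-theoretic formula, prove it is a biregular automorphism of $\Rich_v^w$, and then compute its effect on the generalized minors appearing in the Marsh--Rietsch parametrization; both assertions of the statement will follow from that computation. Mirroring the twists in cases~\ref{intro:case_unipotent}--\ref{intro:case_positroid}, I would define $\twist_v^w$ on $\Rich_v^w$ as a composite of three operations: left and right translation by Weyl-group representatives $\dot v$ and $\dot w$, straightening the two Bruhat strata so that one lands in a product cell; the anti-automorphism $g\mapsto g^\theta$ of $G$ that interchanges $B$ and $B_-$, fixes $\H$, and preserves $\Gtnn$ (the ``positive transpose''); and the projection onto the open cell $N_-\H N$. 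One then checks that this composite carries $B_-wB_-\cap BvB_-$ into itself, is a regular map, admits a regular inverse assembled from the same pieces, and is independent of the choice of representatives $\dot v,\dot w$ by a torus-equivariance computation.

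\emph{The Chamber Ansatz.}
Fix a reduced word $\bw$ for $w$ and the positive distinguished subexpression $\bv$ of~\cref{sec:MR_param}, giving the index set $\Jo$ and the parametrization $\gbf_{\bv,\bw}\colon (\R_{>0})^{\Jo}\xrasim\Rtp_v^w$ of~\cite{MR}. The core computation is to evaluate, for each $k\in\Jo$, the chamber generalized minor $\Delta^{\gamma_k}$ --- with $\gamma_k$ the weight attached to the $k$-th partial product of $\bw$ --- at $\twist_v^w(\gt)$. I would proceed by induction on $\ell(w)$, stripping off the last letter $s_{i_m}$ of $\bw$: commuting $s_{i_m}$ through the transpose and the cell projection turns $\Delta^{\gamma_m}(\twist_v^w(\gt))$ into $t_m$ times a product of lower chamber minors, exactly as in the Berenstein--Fomin--Zelevinsky Chamber Ansatz. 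Solving the resulting triangular system of monomial identities expresses each $t_k$ as a Laurent monomial in $\{\Delta^{\gamma_j}(\twist_v^w(\gt))\}_j$. Since the cluster-type functions of~\cite{Lec} and~\cite{Ing} are themselves Laurent monomials in the $\Delta^{\gamma_j}$ on $\Rich_v^w$ (via the comparison results established elsewhere in the paper), the same identities recover the $\bt$-parameters as monomials in the functions of~\cite{Lec,Ing} as well.

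\emph{Positivity and conclusion.}
The inclusion $\twist_v^w(\Rtp_v^w)\subseteq\Rtp_v^w$ is immediate from the construction, since each constituent of the twist preserves the relevant totally positive part (Lusztig for the transpose and the cell projections, Marsh--Rietsch for the translations on Richardson strata). Injectivity on $\Rtp_v^w$ and continuity of the inverse follow from the monomial inversion formula; applying the same reasoning to $(\twist_v^w)^{-1}$ (the opposite twist) yields surjectivity and continuity of $\twist_v^w$ itself, so that $\twist_v^w$ restricts to a self-homeomorphism of $\Rtp_v^w$, and the Chamber Ansatz identities give the claimed monomial expressions for the $\bt$-parameters.

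\emph{Main obstacle.}
The delicate point is the induction in the Chamber Ansatz step: one must pin down the correct chamber weights $\gamma_k$ for a positive distinguished subexpression and show that the transpose and Bruhat-cell projections interact with the reduced word in the triangular fashion the induction requires. This is precisely where the combinatorics of distinguished subexpressions from~\cite{MR} and the group-theoretic definition of the twist have to be matched, and it is also the computation that must specialize correctly to the Chamber Ansatz formulas of~\cite{BFZ,MaSc,MuSp}.
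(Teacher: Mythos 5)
Your proposal has two genuine gaps. The first is the positivity step: you claim the inclusion $\twist_v^w(\Rtp_v^w)\subseteq\Rtp_v^w$ is ``immediate from the construction, since each constituent of the twist preserves the relevant totally positive part.'' It is not. Left or right translation by $\dv$ or $\dw$ does \emph{not} preserve $\GBtnn$ (these elements are not totally nonnegative and permute Bruhat strata), and the Gauss-cell projection $g\mapsto[g^T\dw]_L^+$ has no a priori positivity property on Richardson strata; there is no result of Lusztig or of \cite{MR} that can be quoted here. This is exactly where the bulk of the paper's work lies: one must first prove that the chiral map $\chiv$ preserves positivity (\cref{thm:chiral_tnn}, via an induction on $\ell(v)$ using the collision moves, \cref{lemma:dudv}, plus a closure argument to pass from $w=w_0$ to general $w$), and then that the pre-twist does (\cref{thm:twist_tnn}), which in turn requires the opposite chiral map (\cref{thm:chiral_tnn_op}), two reversal maps --- one of which is a recent result of Lusztig (\cref{prop:rev1}) --- and the explicit computation that the torus factor $\Yo$ of $g^T\dw$ lies in $\Htp$ (\cref{lemma:Yo_tnn}). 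None of this is supplied by your ``each constituent preserves positivity'' argument. (Relatedly, your sketched definition omits the passage between $\BG$ and $G/B_-$, i.e.\ the chiral map, which is also where the well-definedness of the twist on $\Rich_v^w$ has to be checked, cf.\ \cref{lemma:pre_twist_well_def}.)

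The second gap is in the Chamber Ansatz. You run a single BFZ-style induction and then assert that the functions of \cite{Lec} and \cite{Ing} are Laurent monomials in the same chamber minors ``via the comparison results established elsewhere in the paper,'' which is circular for a blind proof and, more importantly, misses the central subtlety: there are \emph{two} distinct Chamber Ansatz formulas, attached to the right and left twists respectively. The minors $\fR_r$ of \cite{Lec} are recovered from $\yR=[g^T\dw]_L^+$ (right twist, \cref{thm:R_chamber_ans}), whereas the chamber minors $\fL_r$ of \cite{MR} are recovered from $\yL$ (left twist, \cref{thm:L_chamber_ans}), and these formulas involve genuinely different data ($\Vi r,\Wi r$ versus $\vi r,\wi r$); the paper states explicitly that neither can be formally deduced from the other (cf.\ the remark after \cref{cor:R_vs_L_ans}). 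The paper therefore proves the right-twist formula separately, following the strategy of \cite{BZ}: the $N'(w)$-invariance of the $\fR_r$ (\cref{lemma:Lec_NN'}, \cref{lemma:yR_yRpr_N'(u)}, \cref{cor:BFZ_step1}) together with the key identity $\DOM^{v^{-1}}_{w^{-1}}(\yR)\cdot\DOM^{}_{w^{-1}}(g)=1$ (\cref{lemma:DOM_DOM=1}); it deduces the left-twist formula from \cite{MR} by verifying that $z=\yLT$ satisfies the hypotheses of their theorem; and it needs \cref{thm:compare} to identify Ingermanson's functions $\fLp_r$ with the $\fR_r$. Your single triangular induction, even if carried out, would yield only one of these formulas and would not by itself justify the claim about the functions of \cite{MR} and \cite{Ing}.
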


We note that one Chamber Ansatz formula, allowing one to recover the positive parameters of~\cite{MR}, has already been shown in~\cite{MR}. However, we found that this was only part of the complete picture. First, recall that unipotent cells considered in~\cite{BFZ,BZ} are special cases of $\Rich_v^w$ when $v=\id$. It turns out that in this case, the Chamber Ansatz formula of~\cite{BZ} is \emph{not} a specialization of the Chamber Ansatz formula of~\cite{MR}.
The explanation for this seemingly strange phenomenon comes from the work of Muller--Speyer~\cite{MuSp}: in the case of open positroid varieties, there are \emph{two} Chamber Ansatz formulas: one uses the \emph{right twist} map $\vec\twistop$~\cite{MuSp}, and the other one uses the \emph{left twist} $\cev\twistop$, which is the inverse map of $\vec\twistop$.
We show that the same situation takes place in the case of open Richardson varieties. In particular, we have two Chamber Ansatz formulas, fitting into the following commutative diagram, analogous to~\cite[Theorem~7.1]{MuSp}:
\begin{equation}\label{eq:twist_cd}
\begin{tikzcd}[column sep=85pt,row sep=35pt]
(\R_{>0})^{\Jo} \arrow[r,leftrightarrow,"\text{Left twist}","\text{Chamber Ansatz}"'] &(\R_{>0})^{\Jo}\arrow[r,leftrightarrow,"\text{Right twist}","\text{Chamber Ansatz}"'] \arrow[d,"\gbf_{\bv,\bw}"] &(\R_{>0})^{\Jo}\\
\Rtp_v^w \arrow[r,bend left=10,"\twist_v^w"] \arrow[u,"(\fL_j)_{j\in\Jo}"] &
\Rtp_v^w \arrow[l,bend left=10,"\twistL_v^w"] \arrow[r,bend left=10,"\twist_v^w"]& 
\Rtp_v^w. \arrow[l,bend left=10,"\twistL_v^w"]\arrow[u,"(\fR_j)_{j\in\Jo}"']
\end{tikzcd}  
\end{equation}
Here, all maps are homeomorphisms. The regular functions $\fR_j$ are products of cluster variables considered in~\cite{Lec}, while the regular functions $\fL_j$ are the \emph{chamber minors} considered in~\cite{MR}. (They are also closely related to the products of cluster variables considered in~\cite{Ing}.) The Chamber Ansatz of~\cite{BZ} corresponds to the right twist, while the Chamber Ansatz of~\cite{MR} corresponds to the left twist. 
 We have a complex-algebraic version of the above diagram:
\begin{equation}\label{eq:twist_cd_C}
\begin{tikzcd}[column sep=85pt,row sep=35pt]
\C^{\Jo} \arrow[r,dashed,leftrightarrow,"\text{Left twist}","\text{Chamber Ansatz}"'] &(\Cast)^{\Jo}\arrow[r,dashed,leftrightarrow,"\text{Right twist}","\text{Chamber Ansatz}"'] \arrow[d,"\gbf_{\bv,\bw}"] &\C^{\Jo}\\
\Rich_v^w \arrow[r,bend left=10,"\twist_v^w"] \arrow[u,"(\fL_j)_{j\in\Jo}"] &
\Rich_v^w \arrow[l,bend left=10,"\twistL_v^w"] \arrow[r,bend left=10,"\twist_v^w"]& 
\Rich_v^w. \arrow[l,bend left=10,"\twistL_v^w"]\arrow[u,"(\fR_j)_{j\in\Jo}"']
\end{tikzcd}  
\end{equation}
Here, the solid arrows denote regular maps and the horizontal dashed arrows denote rational monomial maps.

The original motivation for our work was to understand the relationship between the conjectural cluster structures on $\Rich_v^w$ considered in~\cite{Lec,Ing}. In \cref{sec:cluster}, we use the map $\twist_v^w$ to give a precise comparison.

We also consider the left-sided quotient $\BG$. Let
\begin{equation}\label{eq:Rich_dfn_L}
\LRich_v^w:=B_-\bs(B_-wB_-\cap B_-vB) \subset \BG.
\end{equation}
The intersection is again nonempty whenever $v\leq w$. One of the important ingredients of our construction is the \emph{chiral map}
\begin{equation}\label{eq:intro_chiral}
  \chiv: \LRich_v^w\xrasim \Rich_v^w,
\end{equation}
obtained by identifying both sides with $N\dv\cap \dv N\cap B_-wB_-$, where $N$ is the unipotent radical of $B$ and $\dv\in N_G(\H)$ is a lift of $v\in W$ to $G$. 
 We show that $\chiv$ preserves total positivity, i.e., restricts to a homeomorphism $\LRtp_v^w\xrasim \Rtp_v^w$. 
 The significance of the chiral map first occurred to us during the development of the results of~\cite{GKL3}. Later, this map played a key role in~\cite{GL_cluster}. In this paper, we use it in the construction of the twist map: by definition, $\twist_v^w$ is the composition of $\chiv$ with the \emph{right pre-twist map} $\tpre_v^w:\Rtp_v^w\xrasim \LRtp_v^w$.

\subsection*{Outline}
In \cref{sec:main}, we discuss our main results in detail. We review background material in \cref{sec:background}. 
 In \cref{sec:basic}, we show that the pre-twist and twist maps are well defined; cf. \cref{prop:pre_pre_twist}. 
 In \cref{sec:MR_chamber_ansatz}, we review the Chamber Ansatz of~\cite{MR} and use it to deduce the left twist Chamber Ansatz (\cref{thm:L_chamber_ans}). 
 We show that the chiral map\,---\,as well as some other related ``reversal'' maps\,---\,preserves total positivity (\cref{thm:chiral_tnn}) in \cref{sec:chiral}. 
 In \cref{sec:twist-TNN}, we prove the analogous statement for the pre-twist and the twist maps (\cref{thm:twist_tnn}).
 We prove the right twist Chamber Ansatz (\cref{thm:R_chamber_ans}) in \cref{sec:right-twist-chamber}.
 In \cref{sec:muller-speyer-twist}, we show that our twist map specializes to the twist of~\cite{MuSp} in the case of open positroid varieties (\cref{thm:MuSp_twist}). 
 We show that the chiral and the (pre-)twist maps are given by subtraction-free rational functions in \cref{sec:SF}.
 In \cref{sec:cluster}, we explain how our results connect the conjectural cluster structures on $\Rich_v^w$ studied in~\cite{Lec,Ing}. 
 Finally, in \cref{sec:fixed}, we discuss fixed points of the twist map.

\subsection*{Acknowledgments}
We thank Melissa Sherman-Bennett and David Speyer for interesting conversations related to this project, and George Lusztig for comments on an earlier version of this work.  We thank Steven Karp for collaboration on earlier related projects.

\section{Main results}\label{sec:main}
We explain our main results in detail. The reader is encouraged to refer to \cref{tab:formulas} for the notation and statements used throughout the paper, and to \cref{sec:A_example} for an example.

\subsection{Preliminaries} 
We fix a \emph{pinning} of $G$, which in particular includes a choice $\{\alpha_i\}_{i\in I}$ of simple roots for the root system $\Phi$ of $G$, and a homomorphism $\phi_i:\SL_2(\C)\to G$ for each $i\in I$. We set
\begin{equation*}%
  x_i(t):=\phi_i \begin{pmatrix}
1 & t \\ 0 & 1
  \end{pmatrix},\quad 
  y_i(t):=\phi_i \begin{pmatrix}
1 & 0 \\ t & 1
\end{pmatrix}, \quad\text{and}\quad
\ds_i:=\phi_i\begin{pmatrix}
0 & 1 \\ -1 & 0
\end{pmatrix}=x_i(1)y_i(-1)x_i(1).
\end{equation*}
We have an involutive anti-automorphism $g\mapsto g^T$ of $G$ defined by $x_i(t)^T=y_i(t)^T$, $y_i(t)^T=x_i(t)^T$, and $a^T=a$ for all $a\in \H$.

Lusztig~\cite{Lus2} defined the \emph{totally nonnegative part} $\Gtnn$ of $G$ as the submonoid generated by  $x_i(t),y_i(t)$ for $t>0$, as well as by the totally positive elements of $\H$; see \cref{sec:pinnings} for a precise definition. He defined the \emph{totally nonnegative flag variety} $\GBtnn$ to be the closure of the image of $\Gtnn$ under the projection $G\to G/B_-$. Similarly, define $\BGtnn\subset \BG$ to be the closure of the image of $\Gtnn$ under the projection $G\to\BG$. Letting $\Rtp_v^w:=\Rich_v^w\cap \GBtnn$ and $\LRtp_v^w:=\LRich_v^w\cap \BGtnn$, we have stratifications
\begin{equation*}%
  G/B_-=\bigsqcup_{v\leq w} \Rich_v^w,\quad \BG=\bigsqcup_{v\leq w} \LRich_v^w,\quad   \GBtnn=\bigsqcup_{v\leq w} \Rtp_v^w,\quad \BGtnn=\bigsqcup_{v\leq w} \LRtp_v^w.
\end{equation*}

For each element $w\in W$, a \emph{reduced word} for $w$ is an expression of the form $w=s_{i_1}s_{i_2}\cdots s_{i_m}$, where $m$ is minimal possible. We refer to such $m$ as the \emph{length} of $w$ and denote $\ell(w):=m$. Given such a reduced word, we set $\dw:=\ds_{i_1}\ds_{i_2}\cdots \ds_{i_m}$. The resulting element $\dw\in G$ does not depend on the choice of the reduced word. We omit the dot when the choice of the representative of $w$ inside $N_G(\H)$ does not matter; for example, we write $B_-wB_-$ instead of $B_-\dw B_-$.

\subsection{Type $A_{n-1}$}
Let $G=\SL_n(\C)$, in which case $B$, $B_-$, $N$, $N_-$, and $\H$ are the subgroups consisting of upper triangular, lower triangular, upper unitriangular, lower unitriangular, and diagonal matrices, respectively. The Weyl group $W$ is the symmetric group $\Sn$. For $j\in[n]:=\{1,2,\dots,n\}$ and a permutation $w=s_{i_1}\cdots s_{i_m}$, we set $w(j):=s_{i_1}(\cdots(s_{i_m}(j))\cdots )$.

The submonoid $\Gtnn$ consists of matrices all of whose minors (of all sizes) are nonnegative. For two sets $I,J\subset[n]$ of the same size $k$, let $\Delta^I_J$ denote the $k\times k$ minor with row set $I$ and column set $J$. We say that $\Delta^I_J$ is a \emph{right-aligned flag minor} if $J=\{n-k+1,\dots,n-1,n\}$ consists of the rightmost $k$ columns. Similarly, $\Delta^I_J$ is a \emph{top-aligned flag minor} if $I=[k]$ consists of the top $k$ rows.

For a matrix $g\in G$, the flag $gB_-$ belongs to $\GBtnn$ if and only if the right-aligned flag minors of $g$ are all nonnegative. (More precisely, if and only if for each $k\in [n]$, the ratio of any two $k\times k$ nonzero right-aligned flag minors of $g$ is nonnegative.) Similarly, $B_-g$ belongs to $\BGtnn$ if and only if the top-aligned flag minors of $g$ are all nonnegative. For a permutation $v\in W$, the matrix $\dv\in G$ is a signed permutation matrix for $v$, where the signs are chosen so that the right-aligned (equivalently, the top-aligned) flag minors of $\dv$ are nonnegative. For example, if $v=s_1s_2=[2,3,1,4]$ in one-line notation then 
\begin{equation}\label{eq:example_v=3124}
  \dv=\ds_1\ds_2=\smat{
0 & 0 & 1 & 0\\
-1 & 0 & 0 & 0\\
0 & -1 & 0 & 0\\
0 & 0 & 0 & 1
}.
\end{equation}

\subsection{The chiral map}
Let $v\leq w$ in $W$. Our first main result is a construction of a natural map relating $\LRich_v^w\subset\BG$ and $\Rich_v^w\subset G/B_-$.
\begin{notation}
For a subset $F\subset G$, we set $\capBWBnopar(F):=F\cap B_-wB_-$.
\end{notation}

It is known~\cite{BGY,Lec} that the maps $B_-g\mapsfrom g\mapsto gB_-$ restrict to isomorphisms
\begin{equation}\label{eq:NvN_to_Rich}
 \LRich_v^w\Lxrasim \capBWB(N\dv \cap \dv N) \xrasim \Rich_v^w.
\end{equation}
\begin{definition}
The \emph{chiral map} $\chiv: \LRich_v^w\xrasim \Rich_v^w$ is obtained by identifying both spaces with $\capBWB(N\dv \cap \dv N)$ via~\eqref{eq:NvN_to_Rich}. %
 The inverse of $\chiv$ is denoted $\chivi$.
\end{definition}
\noindent Taking the union over all $w\in W$ satisfying $w\geq v$, we get an isomorphism 
\begin{equation*}%
  \chiv:B_-\bs(B_-vB) \xrasim (BvB_-)/B_-
\end{equation*}
 on the \emph{opposite Schubert cells} obtained by identifying both spaces with $N\dv \cap \dv N$. 
  We state our first result: the chiral map preserves total positivity.
\begin{theorem}\label{thm:chiral_tnn}
Let $v\in W$. Then for any $g\in N\dv \cap \dv N$, we have
\begin{equation}\label{eq:chiral_tnn}
  gB_-\in \GBtnn \quad \Longleftrightarrow \quad B_-g\in\BGtnn.
\end{equation}
In other words, for all $v\leq w \in W$, the map $\chiv: \LRich_v^w\xrasim \Rich_v^w$ restricts to a homeomorphism
\begin{equation*}%
  \chiv: \LRtp_v^w\xrasim \Rtp_v^w.
\end{equation*}
\end{theorem}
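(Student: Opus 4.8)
The plan is to prove the equivalence \eqref{eq:chiral_tnn} for a single element $g\in N\dv\cap \dv N$, since the statement about $\chiv$ restricting to a homeomorphism $\LRtp_v^w\xrasim \Rtp_v^w$ then follows immediately: $\chiv$ is already an isomorphism of varieties by~\eqref{eq:NvN_to_Rich}, and~\eqref{eq:chiral_tnn} says precisely that it identifies the totally nonnegative parts; continuity of $\chiv$ and $\chivi$ is inherited from the ambient algebraic isomorphisms, and total nonnegativity is a closed condition, so a homeomorphism of the closures follows. Thus the entire content is the pointwise equivalence, and by symmetry (applying the anti-automorphism $g\mapsto g^T$, which swaps $N\leftrightarrow N_-$ appropriately, or by the left–right symmetry of the setup) it suffices to prove one direction, say $B_-g\in\BGtnn \Rightarrow gB_-\in\GBtnn$.

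First I would reduce to an explicit parametrization. By~\cite{MR}, given a reduced word $\bw$ for $w$ and the positive distinguished subexpression $\bv$ for $v$, there is a positive parametrization $\gbf_{\bv,\bw}:(\R_{>0})^{\Jo}\xrasim \Rtp_v^w$, and one has an analogous parametrization of $\LRtp_v^w$. Each of these is built from a product of Chevalley generators $x_i(t)$, $y_i(t)$ and torus elements dictated by the subexpression. Concretely, an element of $N\dv\cap \dv N$ lying over $\Rtp_v^w$ can be written as a totally nonnegative product $g = g_1\dv = \dv g_2$ where $g_1, g_2$ are in suitable unipotent subgroups, and the parametrizations of the two Richardson varieties correspond to reading such a product "from the left" versus "from the right". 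So the first key step is: express the isomorphisms of~\eqref{eq:NvN_to_Rich} at the level of the~\cite{MR} parametrizations, and check that a point with all~\cite{MR} parameters positive on one side maps to a point with all parameters positive on the other side.

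The cleanest route to this is to work with a single element $g\in N\dv\cap\dv N$ and test membership by the flag-minor criterion recalled in the excerpt (right-aligned flag minors nonnegative for $gB_-\in\GBtnn$, top-aligned flag minors nonnegative for $B_-g\in\BGtnn$), or rather its general-$G$ analogue via the Plücker-type coordinates $\Delta^{u\omega_i}_{\omega_i}$ and $\Delta^{\omega_i}_{u\omega_i}$. The second key step, then, is to relate these two families of generalized minors on the locus $N\dv\cap\dv N$. Writing $g = n\dv = \dv n'$ with $n,n'\in N$, one computes that $\Delta^{\omega_i}_{u\omega_i}(g)$ and $\Delta^{u\omega_i}_{\omega_i}(g)$ are related through the $\dv$-twist and through $n, n'$ in a way that is manifestly subtraction-free — each is a positive Laurent monomial times a positive combination of the other family, because $n,n'$ are unipotent with nonnegative entries when $g$ is totally nonnegative on either side. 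This uses standard identities for generalized minors under left/right multiplication by $x_i(t), y_i(t)$ and by $\dv$. I would lean on the characterization (again from~\cite{MR} or~\cite{Rietsch2}) that membership in $\GBtnn$ (resp. $\BGtnn$) is detected by nonnegativity and non-vanishing patterns of finitely many such generalized minors, and then verify the transfer minor-by-minor.

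The main obstacle I anticipate is precisely this minor-comparison step: making the dictionary between the right-aligned generalized minors of $g$ and the top-aligned ones completely explicit and subtraction-free, uniformly over all $v$ (not just in type $A$, and not just for the big cell). The subtlety is that $g$ ranges over the non-closed subset $N\dv\cap\dv N$, so some minors vanish and one must track which ratios are well-defined; controlling the vanishing loci and showing the surviving ratios are positive is where the care is needed. One way to organize this is to first prove the statement on the open opposite Schubert cell $N\dv\cap\dv N$ (union over all $w\ge v$), where one can use a global coordinate system, and then intersect with $B_-wB_-$; membership in a fixed Bruhat cell is cut out by vanishing of an initial segment of minors and non-vanishing of the "leading" one, and these conditions are compatible with the transfer. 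An alternative, perhaps slicker, approach is to use the amalgamation/factorization description of $\Gtnn$ directly: express $g\in (N\dv\cap\dv N)\cap\Gtnn$-lift as $\dv$ conjugated appropriately by a totally nonnegative unipotent element, and observe that the two projections $g\mapsto gB_-$ and $g\mapsto B_-g$ are both "positive" with respect to this factorization — but this still ultimately requires the same identities. In any case, once the pointwise equivalence is in hand, the homeomorphism statement is formal, as noted above.
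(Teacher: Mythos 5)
Your reduction to the pointwise statement is fine, and a symmetry can indeed be used to get one implication from the other --- though not via $x\mapsto x^T$, which sends $G/B_-$ to $B\backslash G$; the correct involution is $x\mapsto x^\iota$ of~\eqref{eq:inv_iota}, which induces~\eqref{eq:iota_TNN} and carries $N\dv\cap\dv N$ to $N\lline{v^{-1}}\cap\lline{v^{-1}}N$, interchanging the two sides of~\eqref{eq:chiral_tnn} with $v$ replaced by $v^{-1}$. The genuine gap is your ``second key step,'' which is not an argument but a restatement of the theorem: the assertion that the minors detecting $B_-g\in\BGtnn$ are subtraction-free (positive) expressions in those detecting $gB_-\in\GBtnn$ on the locus $N\dv\cap\dv N$ is exactly the nontrivial content, and the reason you offer --- ``because $n,n'$ are unipotent with nonnegative entries when $g$ is totally nonnegative on either side'' --- is circular, since nonnegativity of the factors is a consequence of what is being proved, not an input (and ``entries'' has no meaning outside type $A$). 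Moreover, the criterion you propose to lean on is not available in the stated generality: the equivalence ``$gB_-\in\GBtnn$ iff all flag minors are nonnegative'' is a type~$A$ fact recalled in the paper, whereas for general $G$ the only usable characterization of $\Rtp_v^w$ is positivity of the MR-parameters or, equivalently, positivity of \emph{all} canonical-basis coordinates (as in the identification via $\PRds$ used in Section~9), of which the generalized minors $\Delta^{u\om_i}_{\om_i}$ are only the extremal ones; and the Chamber Ansatz expressing the MR-parameters through minors is evaluated at a twisted representative $z$, not at $g$ itself, so invoking it presupposes the very machinery being built. The paper's remark after \cref{cor:R_vs_L_ans} makes the point explicitly: the authors do not know how to relate the two families of chamber minors directly.

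For contrast, the paper's proof avoids any minor dictionary. \cref{lemma:y_i(t)gB_-_in_Rtp} together with an induction on $\ell(v)$ (\cref{lemma:dudv}), carried out by explicit collision and conjugation moves~\eqref{eq:collision_x}--\eqref{eq:collision_y} inside the group, shows that for $w=w_0$ and $g=\gt$ MR-positive the canonical representative $\gc=\dv[\dv^{-1}g]_L^+\in N\dv\cap\dv N$ satisfies $B_-\gc\in\BGtnn$; the case of general $w\leq w_0$ then follows from continuity of $\chivi$ on the opposite Schubert cell and the fact that $\Rtp_v^w$ lies in the closure of $\Rtp_v^{w_0}$. If you wish to salvage your route, you would have to actually prove the subtraction-free transfer between the two minor families (and a general-type positivity criterion in terms of them), which is precisely the difficulty the paper's inductive group-theoretic argument is designed to circumvent.
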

Consider the case $G=\SL_n(\C)$. Then the set $N\dv\cap \dv N$ consists of matrices obtained from the signed permutation matrix $\dv$ by allowing nonzero entries in the cells which are both to the right and above a $\pm1$ in $\dv$. E.g., for $v=[2,3,1,4]$ and $\dv$ as in~\eqref{eq:example_v=3124}, the set $N\dv\cap \dv N$ consists of matrices
\begin{equation}\label{eq:example_v=3124_matrix}
\smat{
0 & 0 & 1 & a\\
-1 & -d & 0 & b\\
0 & -1 & 0 & c\\
0 & 0 & 0 & 1
} \quad\text{for $a,b,c,d\in\C$.}
\end{equation}
 Then \cref{thm:chiral_tnn} claims that a matrix $g\in N\dv\cap \dv N$ has all right-aligned flag minors nonnegative if and only if it has all top-aligned flag minors nonnegative. We encourage the reader to check that either one of these two conditions is satisfied for the matrix in~\eqref{eq:example_v=3124_matrix} whenever $a,b,c,d\in\R$ satisfy
\begin{equation}\label{eq:abcd>=0}
  a,b,c,d,cd-b\geq 0.
\end{equation}

\subsection{The twist map} Let $\Gopm:=BB_-$ and $\Gomp:=B_-B$. It is well known that the multiplication map gives isomorphisms
\begin{equation}\label{eq:Gopm_Gomp}
  N\times \H\times N_-\xrasim \Gopm \quad\text{and}\quad N_-\times \H\times N\xrasim \Gomp.
\end{equation}
We denote the inverses of these maps by $g\mapsto ([g]_L^+,[g]_0^\pm,[g]_R^-)$ and $g\mapsto ([g]_L^-,[g]_0^\mp,[g]_R^+)$, respectively. Here $[g]_L^+,[g]_R^+\in N$, $[g]_R^-,[g]_L^-\in N_-$, and $[g]_0^\pm,[g]_0^\mp\in \H$. In type $A$, the factorizations
\begin{equation}\label{eq:UDL_LDU}
  g=[g]_L^+\cdot [g]_0^\pm\cdot [g]_R^- \quad\text{and}\quad g=[g]_L^-\cdot [g]_0^\mp\cdot [g]_R^+
\end{equation}
are known as the \emph{UDL and LDU decompositions}, respectively. We will be mostly interested in the elements $[g]_L^+$ and $[g]_R^+$.

The twist map essentially boils down to the following isomorphism.
\begin{proposition}\label{prop:pre_pre_twist} 
Let $v\leq w$ in $W$. We have an isomorphism 
\begin{equation*}%
  \capBWB(N\dv)\xrasim \capBWB(\dv N),\quad g\mapsto \dv[g^T\dw]_L^+.
\end{equation*}
The inverse is given by $h\mapsto [\dw h^T]_R^+\dv$.
\end{proposition}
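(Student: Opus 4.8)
The plan is to verify that the two maps $g \mapsto \dv[g^T\dw]_L^+$ and $h \mapsto [\dw h^T]_R^+\dv$ are well-defined, land in the claimed sets, and are mutually inverse, with the bulk of the work being the well-definedness. First I would check that the map makes sense: for $g \in \capBWB(N\dv)$, write $g = n\dv$ with $n \in N$ and $g \in B_-wB_-$. Then $g^T\dw = \dv^T n^T \dw$, and since $n^T \in N_-$ while $g \in B_-wB_-$, one has $g^T \in B_-\dw^{-1}B_-$ (using $(B_-wB_-)^T = B_-w^{-1}B_-$), so $g^T\dw \in B_-\dw^{-1}\dw B_- \cap \dots$; more usefully, I want $g^T\dw \in \Gopm = BB_-$ so that $[g^T\dw]_L^+$ is defined. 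This is where I would invoke the identification $\capBWB(N\dv) \cong \capBWB(N\dv\cap\dv N)$ from~\eqref{eq:NvN_to_Rich}: the real content is that an element of $N\dv$ lying in $B_-wB_-$ automatically also lies in $\dv N$, i.e.\ equals $\dv n'$ for some $n'\in N$, and then $g^T\dw = (n')^T \dv^T \dw$. Since $v \le w$, the element $\dv^T\dw = \dv^{-1}\dw$ up to torus, and the reducedness/subword structure guarantees this lies in the big cell $\Gopm$; combined with $(n')^T \in N_-$ one gets $g^T\dw \in N_- \cdot \Gopm \subset \Gopm$ after checking $N_-\Gopm = \Gopm$, which is immediate. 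So $[g^T\dw]_L^+ \in N$ is well-defined, and $\dv[g^T\dw]_L^+ \in \dv N$.

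Next I would show $\dv[g^T\dw]_L^+ \in B_-wB_-$. Writing $g^T\dw = [g^T\dw]_L^+ \cdot a \cdot m$ with $a \in \H$, $m \in N_-$, we get $[g^T\dw]_L^+ = g^T\dw m^{-1}a^{-1}$, so $\dv[g^T\dw]_L^+ = \dv g^T \dw m^{-1}a^{-1}$. Now $\dv g^T = \dv \dv^T (n')^T{}^{\!T}\!\cdots$ — more cleanly, $g = \dv n'$ gives $g^T = (n')^T\dv^T$, so $\dv g^T = \dv (n')^T \dv^T$; since $(n')^T \in N_-$ and conjugation by $\dv$... this is getting delicate, so instead I would argue membership in $B_-wB_-$ by using that $\dw m^{-1}a^{-1} \in \dw B_- \subset B_-wB_-$ and that left-multiplication by $\dv g^T \in B_-(\text{something})$ preserves the stratum — concretely, $\dv g^T \in B_-$? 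We have $g^T \in (B_-wB_-)^T = B_-w^{-1}B_-$ and $\dv g^T$: since $v\le w$, one can check $\dv^{-1}\cdot B_-w^{-1}B_- \cap B_- \ne \emptyset$ is not quite enough. The honest approach is: $\dv[g^T\dw]_L^+$ lies in $\dv N$ by construction, and it is $B_-$-equivalent (on the left) to $g^T\dw$ which lies in $B_-w B_-$ since $g^T\dw \in (B_- w^{-1} B_-)\dw$ and $\dw$ normalizes the stratum appropriately — I would nail this down via the standard fact that for $v\le w$, $\capBWB(\dv N) = \capBWB(\dv N \cap N\dv)$ is exactly the set identified with $\Rich_v^w$ in~\eqref{eq:NvN_to_Rich}, so it suffices to show the image lies in $\dv N$ and has the right projection to $G/B_-$, i.e.\ lies in $B_-wB_-$.

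For the inverse, the computation is symmetric under $g \leftrightarrow h$, $T$ fixed, $L \leftrightarrow R$, $+ \leftrightarrow +$, and $\dv \leftrightarrow \dv$, $\dw \leftrightarrow \dw$ with left/right swapped; I would check that if $h = \dv[g^T\dw]_L^+$ then $[\dw h^T]_R^+\dv = g$ by direct substitution: $h^T = [g^T\dw]_L^{+T}\dv^T$, so $\dw h^T = \dw [g^T\dw]_L^{+T}\dv^T$, and using $[g^T\dw]_L^{+T} \in N_-$ together with the fact that $g^T\dw = [g^T\dw]_L^+ \cdot ([g^T\dw]_0^\pm) \cdot [g^T\dw]_R^-$ transposes to $\dw^T g = [g^T\dw]_R^{-T}\cdot([g^T\dw]_0^\pm)\cdot[g^T\dw]_L^{+T}$, one extracts that $\dw[g^T\dw]_L^{+T} = (\text{upper})\cdot g \cdot \dv^{T-1}\cdots$ and peels off the $[\cdot]_R^+$ factor to recover $g\dv^{-1}\dv = g$. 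The main obstacle, as flagged, is not this formal inversion but the well-definedness: proving that $g^T\dw$ always lies in the big cell $\Gopm = BB_-$, which is precisely where the hypothesis $v\le w$ and the identification~\eqref{eq:NvN_to_Rich} enter, and showing the image of the map actually lands back in $B_-wB_-$ rather than a different Bruhat stratum. I expect both of these to follow cleanly once one writes $g \in N\dv\cap\dv N$ and uses that $\dv^{-1}\dw$ (suitably interpreted with torus corrections) lies in $BB_-$ because $v \le w$.
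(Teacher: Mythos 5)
Your proposal has the right overall shape (well-definedness, landing in the correct stratum, mutual inversion by direct computation), but the execution rests on several claims that are false, and the places where they enter are precisely the nontrivial points of the proposition. The central problem is your assertion that an element of $N\dv$ lying in $B_-wB_-$ "automatically also lies in $\dv N$," invoked via~\eqref{eq:NvN_to_Rich}. That is not what~\eqref{eq:NvN_to_Rich} says: it provides a unique representative in $\capBWB(N\dv\cap\dv N)$ for each point of $\Rich_v^w$, but the domain $\capBWB(N\dv)$ of the proposition is strictly larger in general (this is exactly the content of \cref{lemma:Xggc}; in \cref{sec:A_example} the MR-parametrized $g$ lies in $N\dv$ but not in $\dv N$, and $\gc\neq g$). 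The same objection applies to your later "standard fact" $\capBWB(\dv N)=\capBWB(\dv N\cap N\dv)$. The two auxiliary claims you then lean on are also false: $\dv^{-1}\dw$ need not lie in $\Gopm$ when $v\leq w$ (already $v=\id$, $w=s_i$ fails, since $\ds_i\notin BB_-$), and $N_-\Gopm\neq\Gopm$ (left multiplication by $N_-$ does not preserve $BB_-$; only left multiplication by $B$ and right multiplication by $B_-$ do). The correct mechanism for well-definedness is~\eqref{eq:Gauss_dwi_g}: since $g\in B_-wB_-$ (the same $w$ that is being multiplied), $\dw^{-1}g\in\Gopm$, and transposing (transpose preserves $\Gopm$ and $\dw^T=\dw^{-1}$) gives $g^T\dw\in\Gopm$. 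Likewise, the membership of the image in $B_-wB_-$, which you acknowledge is "delicate" and then leave circular, becomes a one-liner once you use the actual hypothesis $g\in N\dv$ instead of the false $g\in\dv N$: then $\dv g^T\in N_-$, so $\dv[g^T\dw]_L^+\in\dv g^T\dw\cdot B_-\subset N_-\dw B_-\subset B_-wB_-$.

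The inversion argument is likewise incomplete at its crux. Writing $\dw^{-1}g=g_+g_0g_-$ with $(g_+,g_0,g_-)\in N\times\H\times N_-$, one gets $[g^T\dw]_L^+=g_-^T$, hence $h=\dv g_-^T$ and $\dw h^T=(\dw g_0^{-1}\dw^{-1})\cdot(\dw g_+^{-1}\dw^{-1})\cdot(g\dv^{-1})$. To "peel off the $[\cdot]_R^+$ factor" and conclude $[\dw h^T]_R^+=g\dv^{-1}$ you must know that $\dw g_+^{-1}\dw^{-1}\in N_-$, i.e.\ that $g_+\in\dw^{-1}N_-\dw\cap N$; this refined location of the $N$-factor is exactly what~\eqref{eq:Gauss_dwi_g} and~\eqref{eq:dec_w_R} supply, again using $g\in B_-wB_-$ rather than mere membership in $\Gopm$. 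Your sketch never identifies this ingredient, so the claimed recovery of $g$ does not follow from what you wrote.
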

It turns out that the map of \cref{prop:pre_pre_twist} descends to a map $\Rich_v^w\xrasim \LRich_v^w$. We refer to the resulting map as the \emph{right pre-twist}.
\begin{definition}\label{dfn:pre_twist}
Define the \emph{right pre-twist} $\tpre_v^w$ and the \emph{left pre-twist} $\Ltpre_v^w$ by
\begin{align*}%
  \tpre_v^w:\Rich_v^w&\xrasim \LRich_v^w,\quad gB_- \mapsto B_-v[g^T\dw]_L^+ \quad\text{for $g\in \capBWB(N\dv)$};\\
  \Ltpre_v^w:\LRich_v^w&\xrasim \Rich_v^w,\quad B_-h\mapsto [\dw h^T]_R^+v B_- \quad\text{for $h\in \capBWB(\dv N)$}.
\end{align*}
The \emph{right twist} $\twist_v^w$ and the \emph{left twist} $\twistL_v^w$ are defined by composing the pre-twists with the chiral map:
\begin{equation*}%
  \twist_v^w:=\chiv\circ \tpre_v^w:\Rich_v^w\xrasim \Rich_v^w,\qquad \twistL_v^w:=\Ltpre_v^w \circ \chivi:\Rich_v^w\xrasim \Rich_v^w.
\end{equation*}
\end{definition}
It follows from \cref{prop:pre_pre_twist} that the maps $\twist_v^w$ and $\twistL_v^w$ are mutually inverse automorphisms of $\Rich_v^w$. It turns out that these maps also preserve total positivity.

\begin{theorem}\label{thm:twist_tnn}
  For all $v\leq w$ in $W$, the twist map restricts to a homeomorphism
\begin{equation*}%
  \twist_v^w:\Rtp_v^w\xrasim \Rtp_v^w.
\end{equation*}
\end{theorem}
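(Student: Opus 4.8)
The plan is to strip off the chiral map using \cref{thm:chiral_tnn} and reduce to a ``minors versus minors'' statement about the pre-twist, in the spirit of \eqref{eq:chiral_tnn}. By \cref{dfn:pre_twist} we have $\twist_v^w=\chiv\circ\tpre_v^w$, and by \cref{thm:chiral_tnn} the chiral map and its inverse restrict to homeomorphisms $\chiv\colon\LRtp_v^w\xrasim\Rtp_v^w$ and $\chivi\colon\Rtp_v^w\xrasim\LRtp_v^w$. Since $\tpre_v^w\colon\Rich_v^w\xrasim\LRich_v^w$ is an isomorphism of complex varieties (\cref{prop:pre_pre_twist}), hence a homeomorphism on complex points, and $\Ltpre_v^w$ is its inverse, it suffices to establish the two inclusions
\[
\tpre_v^w(\Rtp_v^w)\subseteq\LRtp_v^w
\qquad\text{and}\qquad
\Ltpre_v^w(\LRtp_v^w)\subseteq\Rtp_v^w .
\]
Indeed, applying $\tpre_v^w$ to the second inclusion and combining with the first gives $\tpre_v^w(\Rtp_v^w)=\LRtp_v^w$, so that $\twist_v^w=\chiv\circ\tpre_v^w$ and $\twistL_v^w=\Ltpre_v^w\circ\chivi$ restrict to mutually inverse continuous bijections, hence homeomorphisms, of $\Rtp_v^w$.

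By \cref{dfn:pre_twist} the first inclusion unwinds to the following assertion: if $g\in\capBWB(N\dv)$ and $gB_-\in\GBtnn$, then $B_-\dv[g^T\dw]_L^+\in\BGtnn$; the second is the symmetric statement for $h\mapsto[\dw h^T]_R^+\dv$ with the roles of $\GBtnn$ and $\BGtnn$ exchanged. I would prove these first in type $A$, using the minor criteria recalled in \cref{sec:main}: $gB_-\in\GBtnn$ is equivalent to nonnegativity of all right-aligned flag minors of $g$, and $B_-h\in\BGtnn$ to nonnegativity of all top-aligned flag minors of $h$. The computation then expands a top-aligned flag minor $\Delta^{[k]}_J\bigl(\dv[g^T\dw]_L^+\bigr)$: left multiplication by the signed permutation matrix $\dv$ turns it, up to sign, into a flag minor of $[g^T\dw]_L^+$ on the row set $v^{-1}([k])$; the factorization $g^T\dw=[g^T\dw]_L^+\cdot[g^T\dw]_0^\pm\cdot[g^T\dw]_R^-$, which exists because $g\in B_-wB_-$ forces the relevant leading minors to be nonzero, rewrites this in terms of minors of $g^T\dw$, since the lower-unitriangular factor leaves minors on initial column sets unchanged and the torus factor only rescales; finally the transpose identity $\Delta^I_J(g^T)=\Delta^J_I(g)$ and right multiplication by $\dw$ convert everything, up to sign and reindexing by $v$ and $w$, into right-aligned flag minors of $g$. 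The upshot I expect is that each top-aligned flag minor of $\tpre_v^w(gB_-)$ is a subtraction-free expression in the right-aligned flag minors of $g$, and hence nonnegative; the symmetric calculation handles $\Ltpre_v^w$. The general case follows by the same manipulations with matrix minors replaced by the generalized minors $\Delta^{u\omega_i}_{w\omega_i}$, all identities being checked on the rank-one subgroups $\phi_i(\SL_2(\C))$ of the pinning.

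An alternative, perhaps cleaner, route is to pull everything back through the Marsh--Rietsch parametrization: one shows that $\chivi\circ\tpre_v^w\circ\gbf_{\bv,\bw}\colon(\R_{>0})^{\Jo}\to\LRich_v^w$ is again a positive parametrization, i.e.\ has image $\LRtp_v^w$. Combining the Chamber Ansatz of \cite{MR} (recalled in \cref{sec:MR_chamber_ansatz}) with the positivity of the chamber minors on $\Rtp_v^w$ shows that the transition between $\gbf_{\bv,\bw}$ and this composition is subtraction-free, which yields both inclusions simultaneously; this is one reason to present \cref{thm:twist_tnn} after \cref{sec:MR_chamber_ansatz}.

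The main obstacle is the bookkeeping in the minor computation: keeping precise track, through the transpose anti-automorphism, the right multiplications by $\dw$ and $\dv$, and the Bruhat constraint $g\in B_-wB_-$, of exactly which flag minors of the factors $[g^T\dw]_L^+$ and $[\dw h^T]_R^+$ are controlled by the flag-minor positivity on the source, and verifying that the resulting rational expressions are manifestly subtraction-free. On the parametrization route the difficulty instead becomes establishing subtraction-freeness of the transition map without circularly invoking \cref{thm:twist_tnn} itself.
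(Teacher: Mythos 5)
Your reduction is the right first move and matches the paper: by \cref{thm:chiral_tnn} everything comes down to showing that the pre-twist $\tpre_v^w$ carries $\Rtp_v^w$ into $\LRtp_v^w$ (and its inverse back). But the way you propose to prove that inclusion does not go through. The ``minors versus minors'' computation breaks at exactly the point you gloss over: a top-aligned flag minor of $\dv\,[g^T\dw]_L^+$ has an \emph{arbitrary} column set, and right multiplication by the lower-unitriangular factor $[g^T\dw]_R^-$ only preserves minors whose column set is a final (right-aligned) segment. For general column sets, expressing $\Delta^{v^{-1}[k]}_J\bigl([g^T\dw]_L^+\bigr)$ in terms of $g^T\dw$ forces you through Cauchy--Binet sums involving $([g^T\dw]_R^-)^{-1}$, whose entries have uncontrolled signs; nothing in your sketch makes the result subtraction-free in the flag minors of $g$, and no such identity is established (or used) in the paper. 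Your fallback route through the Marsh--Rietsch parametrization has the circularity you yourself flag: the Chamber Ansatz only tells you that the finitely many chamber minors $\fR_j(\yR)$ are monomials in the $t_r$, which is far from membership of $B_-v\yR$ in $\BGtnn$.

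The paper closes this gap with three ingredients absent from your proposal. Writing $g^T\dw=\Yp\Yo\Ym$ as in~\eqref{eq:YpYoYm_dfn}, it (i) passes to the representative $h:=\dw\Ym^T\in N_-\dw\cap\dw N$ of $gB_-$ and applies the \emph{opposite} chiral map theorem (\cref{thm:chiral_tnn_op}) to get $B_-\dw\Ym^{-1}\in\BGtnn$; that theorem in turn rests on the reversal maps of \cref{sec:reversal_maps}, including Lusztig's result that $gB_-\mapsto g^\theta\dw_0B_-$ preserves positivity (\cref{prop:rev1}) and its left-sided analogue (\cref{prop:rev2}); (ii) it computes the torus factor explicitly (\cref{lemma:Yo_tnn}, an induction using the collision moves) to show $\Yo\in\Htp$, so one may cancel it; and (iii) it rewrites $\Ym^{-1}\Yo^{-1}=\dw^{-1}g^{-T}\Yp$ and uses $g\in N\dv$ to identify $B_-g^{-T}\Yp=B_-v\Yp=\tpre_v^w(gB_-)$. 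Each of these is a genuine piece of content that your proposed bookkeeping would have to replace; as written, the proposal asserts the key positivity (``the upshot I expect\dots'') rather than proving it.
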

\noindent See \cref{sec:A_example} for an example.

In fact, the twist map and the chiral map have the property of being given by \emph{subtraction-free rational expressions}; see \cref{sec:SF}.

\subsection{Muller--Speyer twist}\label{sec:MuSp_main_res}
Let $G=\SL_n(\C)$, and thus $I=[n-1]$. Choose $k\in I$ and set $J:=I\setminus\{k\}$. Let $\PJm\subset G$ be the parabolic subgroup consisting of block lower triangular matrices of the form $\begin{pmatrix}
A & 0\\
C & D
\end{pmatrix}$, where $A\in\GL_k(\C)$ and $D\in\GL_{n-k}(\C)$. The quotient $G/\PJm$ is isomorphic to the Grassmannian $\Gr(n-k,n)$ of $(n-k)$-planes inside $\C^n$ via the map sending $g\PJm\in G/\PJm$ to the column span of the rightmost $n-k$ columns of $g$. We say that $w\in \Sn$ is \emph{$k$-Grassmannian} if $w(1)<\cdots <w(k)$ and $w(k+1)<\cdots<w(n)$, and we let $W^J$ denote the set of $k$-Grassmannian permutations. Let $Q^J:=\{(v,w)\in W\times W^J\mid v\leq w\}$. Then the elements of $Q^J$ label \emph{open positroid varieties}~\cite{KLS} inside $\Gr(n-k,n)$. Specifically, for each $(v,w)\in Q^J$, the natural projection map $\pi_J:G/B_-\to G/\PJm$ restricts to an isomorphism $\Rich_v^w\xrasim \Pio_v^w$, where we set $\Pio_v^w:=\pi_J(\Rich_v^w)$. We have a stratification
\begin{equation*}%
  \Gr(n-k,n)=\bigsqcup_{(v,w)\in Q^J} \Pio_v^w.
\end{equation*}

For each $(v,w)\in Q^J$, Muller and Speyer~\cite{MuSp} defined the \emph{right} and \emph{left twist} automorphisms of $\Pio_v^w$. Even though their definition is quite different from ours, we show that their twist maps are special cases of our twist maps. 
\begin{theorem}\label{thm:MuSp_twist}
  For all $(v,w)\in Q^J$, under the identification $\pi_J:\Rich_v^w\xrasim \Pio_v^w$, the maps $\twist_v^w$ and $\twistL_v^w$ coincide respectively with the right and left twist maps defined in~\cite{MuSp}.
\end{theorem}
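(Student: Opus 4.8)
The plan is to reduce the comparison of two twist maps on $\Pio_v^w$ to a computation with unitriangular matrices, exploiting the fact that both twists are birational automorphisms of the same irreducible variety. Since $\twist_v^w$ and the Muller--Speyer right twist $\vec\twistop^{\,\MuSp}$ are both isomorphisms $\Pio_v^w\xrasim\Pio_v^w$, and since $\Pio_v^w$ is irreducible, it suffices to check that they agree on a dense subset; by \cref{thm:twist_tnn} and the corresponding total-positivity statement in~\cite{MuSp}, it is in fact enough to check agreement on the totally positive part $\Pio_v^w\cap\Grtp(n-k,n)$, or even on the image of a single positive parametrization $\gbf_{\bv,\bw}$. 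So the first step is to set up explicit coordinates: fix a reduced word $\bw$ for the $k$-Grassmannian permutation $w$, the associated positive distinguished subexpression $\bv$ for $v$, and the parametrization $\gbf_{\bv,\bw}:(\R_{>0})^{\Jo}\xrasim\Rtp_v^w$ from~\cite{MR}. Under $\pi_J$ this becomes a plabic-graph parametrization of $\Pio_v^w\cap\Grtp$, matching (after the standard dictionary between reduced words for Grassmannian permutations and \Le-diagrams / plabic graphs, see~\cite{Pos}) one of the boundary-measurement parametrizations used by Muller--Speyer.

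Next I would unwind both sides of the claimed identity into formulas on the matrix $g\in\capBWB(N\dv)$ representing a point of $\Rich_v^w$. Our right twist sends $gB_-$ to $\chiv(B_-v[g^T\dw]_L^+)$; composing with $\pi_J$ and using that the chiral map is the identity on $N\dv\cap\dv N$ under the two quotient identifications~\eqref{eq:NvN_to_Rich}, this becomes an explicit column-span of a matrix built from the $L$-factor of $g^T\dw$ in the UDL decomposition. On the Muller--Speyer side, the right twist of a point $X\in\Pio_v^w$ is defined columnwise: the $i$-th column of $\vec\twistop^{\,\MuSp}(X)$ is obtained from the columns of $X$ indexed by a certain window, normalized so that a prescribed Pl\"ucker coordinate equals $1$ (or, in the formulation via the boundary measurement map, by the ``source-set to target-set'' flip of a perfect orientation). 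The key step is a dictionary lemma: the UDL factor $[g^T\dw]_L^+$, read through $\pi_J$, computes exactly the consecutive-columns determinants (Pl\"ucker coordinates) that appear in the Muller--Speyer columnwise formula. This is essentially the statement that, for $g\in N\dv$, the flag minors of $g^T\dw$ coincide up to the combinatorics of $v,w$ with the twisted Pl\"ucker coordinates of $\pi_J(gB_-)$; it can be proved by a direct Gaussian-elimination / Cauchy--Binet computation, or by citing the Chamber Ansatz of~\cite{MR} together with the matching Chamber Ansatz for the Muller--Speyer twist proved in~\cite{MuSp} (cf.~the diagram~\eqref{eq:twist_cd}).

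The cleanest route, which I would actually carry out, is the last one: both $\twist_v^w$ and $\vec\twistop^{\,\MuSp}$ are characterized by a Chamber Ansatz formula of the form ``$t_j = (\text{monomial in chamber/cluster coordinates evaluated at the twist})$'' with respect to the \emph{same} positive parametrization $\gbf_{\bv,\bw}$ --- for our twist this is \cref{thm:R_chamber_ans}, and for the Muller--Speyer twist it is~\cite[Theorem~7.1]{MuSp} --- and the two families of coordinates (our $\fR_j$, products of Leclerc cluster variables, versus the Muller--Speyer twisted Pl\"ucker coordinates) are matched by the known identification of Leclerc's cluster structure on $\Pio_v^w$ with the Grassmannian cluster structure of Scott. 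Since $\gbf_{\bv,\bw}$ is a bijection onto $\Rtp_v^w$ and the Chamber Ansatz inverts it, two automorphisms of $\Rtp_v^w$ satisfying the same Chamber Ansatz with respect to the same parametrization must coincide; then density (and the regularity of both maps) upgrades this to equality on all of $\Pio_v^w$. The same argument with left pre-twists and $\chivi$ handles $\twistL_v^w$ versus the Muller--Speyer left twist, or one simply notes that both left twists are the inverses of the already-matched right twists.

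The main obstacle is the dictionary lemma identifying the relevant UDL flag minors of $g^T\dw$ with the Muller--Speyer twisted Pl\"ucker coordinates in a way compatible with the reduced-word$\leftrightarrow$plabic-graph correspondence: one must track carefully which consecutive windows of columns appear, how the positive distinguished subexpression $\bv$ determines the set $\Jo$ and hence the face labels, and how Muller--Speyer's normalization conventions (which Pl\"ucker coordinate is set to $1$, and the left-versus-right asymmetry) line up with the $[\,\cdot\,]_L^+$ versus $[\,\cdot\,]_R^+$ factorizations here. Signs are a genuine nuisance: $\dv$ is a \emph{signed} permutation matrix, and keeping the signs consistent between the two frameworks (so that totally positive points really do match) requires the careful bookkeeping built into the definition of $\dv$ via $\ds_i$. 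Once this combinatorial translation is pinned down, the rest is formal.
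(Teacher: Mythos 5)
Your ``cleanest route'' is essentially the paper's proof: the paper packages exactly this argument as a commutative diagram on the totally positive parts, combining the right twist Chamber Ansatz (\cref{thm:R_chamber_ans}) with the Muller--Speyer Chamber Ansatz, the matching of the MR parametrization with the plabic-graph boundary measurement map (via \cite{TW,Karpman2}), and the identification of the functions $\fR_r$ with the face Pl\"ucker coordinates $\Delta_{F_r}$ (via \cite[Lemma~3.6]{GL_cluster}), then upgrades from $\Rtp_v^w$ to $\Rich_v^w$ by Zariski density and regularity, and handles the left twists by taking inverses. The two ``dictionary'' steps you flag as the main obstacle are precisely the ingredients the paper outsources to those references, so your plan is correct and follows the same approach.
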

\noindent See \cref{ex:MuSp}. In particular, up to the action of $\H$, our twist maps generalize the twist maps of Marsh--Scott~\cite{MaSc}. In view of~\cite[Section~A.4]{MuSp}, we also obtain the twist map of~\cite{FZ_double} for type $A$ double Bruhat cells as a special case.

\subsection{MR-parametrizations}\label{sec:MR_param}

Fix $v\leq w$ in $W$. Let $w=s_{i_1}s_{i_2}\cdots s_{i_m}$ be a reduced word for $w\in W$.  In this case, $\bw:=(i_1,\dots,i_m)$ is called a \emph{reduced expression} for $w$.  
 By~\cite[Lemma~3.5]{MR}, every reduced expression $\bw$ for $w$ contains a unique ``rightmost'' reduced subexpression $\bv$ for $v$, called the \emph{positive distinguished subexpression}. We denote the set of such pairs $(\bv,\bw)$ by $\Red(v,w)$, and for $(\bv,\bw)\in\Red(v,w)$, we let $\Jo\subset [m]$ denote the set of indices \emph{not} used in $\bv$.

The space $\Rich_v^w$ contains an open dense subset, called the \emph{open Deodhar stratum}. It admits the following \emph{Marsh--Rietsch (MR) parametrization}, introduced in \cite{MR}. For $\bt=(t_r)_{r\in\Jo} \in (\C^\ast)^{|\Jo|}$, define an element 
\begin{equation}\label{eq:MR}
\gt  =  g_1\cdots g_m \in \capBWB(N\dv),
\qquad \text{where} \qquad
g_r = \begin{cases} \ds_{i_r} & \mbox{if $r \notin \Jo$,} \\
x_{i_r}(t_r) &\mbox{if $r \in \Jo$.} \end{cases}
\end{equation}
The map $(\C^\ast)^{\Jo} \to \Rich_v^w$ given by $\t \mapsto \gt B_-$ is an isomorphism onto its image, the open Deodhar stratum in $\Rich_v^w$. The restriction of this map to $(\R_{>0})^{\Jo}$ gives a homeomorphism $(\R_{>0})^{\Jo}\xrasim \Rtp_v^w$.

\subsection{Chamber Ansatz}

For $r\in[m]$, we set 
\[\wi r:=s_{i_1}\cdots s_{i_r},\qquad\vi r:=\sv_{i_1}\cdots\sv_{i_r},\quad\text{where}\quad\sv_{i_r}:=
\begin{cases}
  s_{i_r}, &\text{if $r\notin\Jo$,}\\
  1,&\text{if $r\in \Jo$;}
\end{cases}\]
\[
\Wi r:=w^{-1}\cdot \wi r=s_{i_m}\cdots s_{i_{r+1}},\qquad \text{and}\qquad \Vi r:=v^{-1}\cdot \vi r=\sv_{i_m}\cdots\sv_{i_{r+1}}.
\]

Let $\{\om_i\}_{i\in I}$ be the fundamental weights corresponding to the simple roots $\{\alpha_i\}_{i\in I}$. For each $i\in I$ and $v,w\in W$, we have a regular function $\DOM^v_w: G\to \C$, called a \emph{generalized minor}~\cite{FZ_double}. The function $\DOM^v_w$ depends only on the weights $v\om_i,w\om_i$. In the case $G=\SL_n(\C)$, $\DOM^v_w$ is the usual matrix minor with row set $\{v(1),\dots,v(i)\}$ and column set $\{w(1),\dots,w(i)\}$.

For $r\in [m]$, set
\begin{equation}\label{eq:fR_dfn}
  \fR_r:=\DOMir^{v\pu{r-1}}_{w\pu{r-1}}: N\to \C.
\end{equation}
Leclerc~\cite{Lec} shows that the irreducible factors of the generalized minors $\fR_1,\fR_2,\dots,\fR_m$ form a seed of a cluster subalgebra $\Alec\subset\C[\Rich_v^w]$ of the coordinate ring of $\Rich_v^w$.

Let $(a_{i,j})_{i,j\in I}$ denote the Cartan matrix of the root system $\Phi$ of $G$, given by $a_{i,j}:=\<\alpha_i,\alpha_j^\vee\>$; see \cref{sec:pinnings} for further details. We prove the following result.

\begin{theorem}[Right twist Chamber Ansatz]\label{thm:R_chamber_ans}
Let $v\leq w$ in $W$ and $(\bv,\bw)\in\Red(v,w)$. Let $g:=\gt$ be MR-parametrized, and let $\yR:=[g^T\dw]_L^+$ (cf. \cref{prop:pre_pre_twist}). Then for all $r\in\Jo$, we have
\begin{equation}\label{eq:R_chamber_ans}
  t_r=\frac{ \prod_{j\neq i_r} \Delta^{\Vi r\om_j}_{\Wi r\om_j}(\yR)^{-a_{j,i_r}}  }
{\Delta^{\Vi r\om_{i_r}}_{\Wi r\om_{i_r}}(\yR) \Delta^{\Vi{r-1}\om_{i_r}}_{\Wi{r-1}\om_{i_r}}(\yR)   }.
\end{equation}
Conversely, for each $j\in[m]$, we have 
\begin{equation}\label{eq:R_chamber_ans_inv}
  \fR_j(\yR)=\prod_{r\in\Jo:\, r\geq j } t_r^{-\<\om_{i_j},s_{i_j}s_{i_{j+1}}\cdots s_{i_{r-1}} \ach_{i_r}\>}.
\end{equation}
\end{theorem}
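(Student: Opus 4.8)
The plan is to deduce both formulas from the already-established left twist Chamber Ansatz of Marsh–Rietsch (\cref{thm:L_chamber_ans}) together with \cref{prop:pre_pre_twist}, essentially by transposing and reversing. Recall that $g=\gt=g_1\cdots g_m\in\capBWB(N\dv)$ with $g_r=\ds_{i_r}$ or $x_{i_r}(t_r)$, and $\yR=[g^T\dw]_L^+$. The key observation is that $g^T\dw=g_m^T\cdots g_1^T\dw$, and since $g_r^T=\ds_{i_r}$ when $r\notin\Jo$ and $g_r^T=y_{i_r}(t_r)$ when $r\in\Jo$, the product $g^T\dw$ is built from the \emph{reversed} word $\bw^{\mathrm{rev}}$ using $y$-factors; multiplying by $\dw$ on the right turns this into an MR-type parametrization for the pair obtained by reversing $\bw$ (with the same positive distinguished subexpression data, now read backwards). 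Concretely, $\dw^{-1}$-conjugation and the relation $\ds_i^T=\ds_i$ let one match $g^T\dw$ against the MR parametrization of the ``opposite'' pair, so that $\yR$ becomes the input to which the left-twist Chamber Ansatz of~\cite{MR} applies. First I would make this dictionary precise: identify the reversed positive distinguished subexpression, check that $\Jo$ is preserved (as a reindexing $r\mapsto m+1-r$ is a bookkeeping matter), and verify that the chamber minors appearing in~\cite{MR}'s formula, when pulled back through this transpose-reverse identification, become exactly the minors $\Delta^{\Vi r\om_j}_{\Wi r\om_j}(\yR)$ in~\eqref{eq:R_chamber_ans}.

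For~\eqref{eq:R_chamber_ans} itself, once the identification is set up, the formula is the image of the Marsh–Rietsch Chamber Ansatz under this correspondence. The one subtlety is the exponent pattern: the numerator $\prod_{j\neq i_r}\Delta^{\Vi r\om_j}_{\Wi r\om_j}(\yR)^{-a_{j,i_r}}$ and the denominator $\Delta^{\Vi r\om_{i_r}}_{\Wi r\om_{i_r}}(\yR)\,\Delta^{\Vi{r-1}\om_{i_r}}_{\Wi{r-1}\om_{i_r}}(\yR)$ should be traced through from the $y$-move/generalized-minor identities: one uses that $\Delta^{u\om_i}_{v'\om_i}(g\cdot x_i(t)\cdot h)$ and $\Delta^{u\om_i}_{v'\om_i}(g\cdot y_i(t)\cdot h)$ transform by an affine-linear rule in $t$ governed by the adjacent minors (the standard ``$T$-system'' relation among generalized minors, e.g.\ \cite{FZ_double}), combined with the multiplicativity $\Delta^{u\om_i}_{s_iv'\om_i}=\Delta^{u\om_i}_{v'\om_i}\cdot(\text{stuff})$ along the reduced word. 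I would organize this as an induction on $r$ decreasing from $m$, peeling off one factor $g_r$ of $g$ (equivalently $g_r^T$ from the left of $g^T\dw$) at a time and tracking how each relevant minor of $\yR$ changes.

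For the inverse direction~\eqref{eq:R_chamber_ans_inv}, namely $\fR_j(\yR)=\prod_{r\in\Jo:\,r\geq j} t_r^{-\<\om_{i_j},\,s_{i_j}s_{i_{j+1}}\cdots s_{i_{r-1}}\ach_{i_r}\>}$, I would recall that $\fR_j=\DOMir^{v\pu{j-1}}_{w\pu{j-1}}$ restricted to $N$, and that $[g^T\dw]_L^+$ lies in $N$, so $\fR_j(\yR)=\Delta^{\Vi{j-1}\om_{i_j}}_{\Wi{j-1}\om_{i_j}}([g^T\dw]_L^+)=\Delta^{\Vi{j-1}\om_{i_j}}_{\Wi{j-1}\om_{i_j}}(g^T\dw)$, the last equality because for a generalized minor of the form $\Delta^{u\om_i}_{u'\om_i}$ with $u'\om_i$ extremal in the appropriate direction, the $N_-$ and torus parts of the LDU-type decomposition do not contribute. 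Then one expands $g^T\dw=g_m^T\cdots g_1^T\dw$ and evaluates the minor by the standard ``crossing'' combinatorics: each $y_{i_r}(t_r)$ with $r\geq j$ contributes a monomial factor $t_r^{c_r}$ where the exponent $c_r$ is the pairing of the weight that the minor ``sees'' at position $r$ (namely $s_{i_j}s_{i_{j+1}}\cdots s_{i_{r-1}}\om_{i_j}$, by walking the word from $j$ to $r$) against the coroot $\ach_{i_r}$, with the sign coming from transposition; factors $\ds_{i_r}$ only move the weight, and factors with $r<j$ act trivially on the relevant minor. Collecting exponents gives the stated product.

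The main obstacle I anticipate is bookkeeping the transpose-and-reverse dictionary cleanly — in particular, making sure the positive distinguished subexpression $\bv$ inside $\bw$ corresponds correctly (with the right Bruhat/length inequalities) under reversal so that the chamber minors match up with the correct $\Vi r,\Wi r$ rather than some shifted or dualized versions, and keeping the signs from $g_r^T=y_{i_r}(t_r)$ and from $\ds_i^T=\ds_i$ consistent throughout. Once that dictionary is pinned down, both~\eqref{eq:R_chamber_ans} and~\eqref{eq:R_chamber_ans_inv} follow from the Marsh–Rietsch results in \cref{sec:MR_chamber_ansatz} together with routine generalized-minor identities, and the two formulas are of course mutually consistent (one inverts the other), which provides a useful internal check.
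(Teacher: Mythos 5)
Your overall strategy---deducing the right twist Chamber Ansatz from the Marsh--Rietsch/left twist Chamber Ansatz by a transpose-and-reverse dictionary---is precisely the reduction the paper says it could not make work: the two formulas are genuinely different (the right-hand version involves the suffix elements $\Vi r,\Wi r$ and the element $\yR=\Yp$, the $N$-factor of $g^T\dw=\Yp\Yo\Ym$, whereas the Marsh--Rietsch formula involves the prefixes $\vi r,\wi r$ and an element $z\in N_-$ built from $\Ym$), and the identity relating minors of $\Yp\Yo$ to minors of $z^T$ (\cref{cor:R_vs_L_ans}) is obtained only as a \emph{consequence} of proving both Ansatz formulas independently. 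Concretely, your dictionary breaks at several points. First, $\ds_i^T=\ds_i^{-1}$, not $\ds_i$. Second, reversal does not preserve the MR structure: $\bv$ is the \emph{rightmost} subexpression of $\bw$, and reversing $\bw$ turns it into the leftmost subexpression of $\bw^{\mathrm{rev}}$, which is in general not positive distinguished; so $\Jo$ is not simply reindexed by $r\mapsto m+1-r$, and the trailing factor $\dw$ must be absorbed by collision moves that change the parameters nontrivially. Third, and most decisively, your claim in the inverse direction that $\fR_j(\yR)=\Delta^{\Vi{j-1}\om_{i_j}}_{\Wi{j-1}\om_{i_j}}(g^T\dw)$ because the $\H$- and $N_-$-parts ``do not contribute'' is false: in the paper's running example ($n=4$, $\bw=(2,1,2,3,2,1)$, $v=s_1s_2$) one has, for $j=1$,
\begin{equation*}
\Delta^{\{1,3\}}_{\{3,4\}}(\yR)=\tfrac{1}{t_1t_4}\qquad\text{while}\qquad \Delta^{\{1,3\}}_{\{3,4\}}(g^T\dw)=1,
\end{equation*}
so the torus factor $\Yo$ (and in general $\Ym$) does contribute, and in fact its contribution is essentially the monomial you are trying to compute.

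The paper's actual proof does not pass through \cref{thm:L_chamber_ans} at all; it follows the Berenstein--Zelevinsky strategy for the case $v=\id$. The key steps are: (i) a truncation lemma showing $\yR\in\yRpr\cdot N'(s_{i_r}\cdots s_{i_m})$, which combined with Leclerc's two-sided invariance of $\fR_r$ (\cref{lemma:Lec_NN'}) gives $\fR_r(\yR)=\fR_r(\yRpr)$; (ii) the identity $\DOM^{v^{-1}}_{w^{-1}}(\yR)\cdot\DOM^{}_{w^{-1}}(g)=1$ together with an explicit monomial evaluation of $\DOM^{}_{w^{-1}}(g)$, applied to the truncated products $g_j\cdots g_m$, which yields~\eqref{eq:R_chamber_ans_inv}; and (iii) a purely weight-theoretic exponent match, using $\al_{i_r}=\sum_j a_{j,i_r}\om_j$, to deduce~\eqref{eq:R_chamber_ans} from~\eqref{eq:R_chamber_ans_inv}. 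Your step (iii) is the only part of your outline that aligns with the paper; to repair the rest you would need to replace the transpose-reverse reduction with an argument controlling the Gauss factorization of $g^T\dw$ directly, as in the paper's Lemmas in \cref{sec:right-twist-chamber}.
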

\begin{remark}\label{rmk:intro:all_minors_f_r}
Each non-trivial (i.e., not identically equal to $1$) generalized minor of $\yR$ appearing in the right-hand side of~\eqref{eq:R_chamber_ans} is of the form $\fR_j(\yR)$ for some $j\in [m]$.  As we explain in Section~\ref{ssec:Lec}, $\fR_j(\yR)$ is a function of the right twist $\twist_v^w(gB_-)$.  In particular,~\eqref{eq:R_chamber_ans} gives a way to recover the MR-parameters $(t_r)_{r\in \Jo}$ from $\twist_v^w(gB_-)$.  See also~\cite[Equation~(4-1)]{BaoHe}.
\end{remark}
\begin{remark}
Let $v=\id$. Then $\Rich_v^w\cong \capBWBnopar(N)$ is a unipotent cell, considered in~\cite{BFZ,BZ}. It is clear that the twist map $\eta_w: \capBWBnopar(N)\xrasim \capBWBnopar(N)$ defined in~\cite[Theorem~1.2]{BZ} coincides with the map $h\mapsto [\dw h^T]_R^+$ of \cref{prop:pre_pre_twist}. The Chamber Ansatz in~\cite[Theorem~1.4]{BZ} describes the $\bt$-parameters in terms of the functions $\fR_j$ evaluated at the element $\eta_w^{-1}(g)=\yR$. Thus~\cite[Theorem~1.4]{BZ} is a special case of \cref{thm:R_chamber_ans}.
\end{remark}
\begin{remark}
Even though~\eqref{eq:R_chamber_ans} looks almost identical to~\cite[Theorem~7.1(1)]{MR}, the two Chamber Ansatz formulas are genuinely different, and~\eqref{eq:R_chamber_ans} cannot be deduced from the results of~\cite{MR} in a simple way. The crucial difference between~\eqref{eq:R_chamber_ans} and~\cite[Theorem~7.1(1)]{MR} is that our formula involves $\Vi r,\Wi r$ while~\cite[Theorem~7.1(1)]{MR} involves $\vi r,\wi r$ instead. 
 See \cref{fig:ch_ans} for a comparison.
\end{remark}

In order to complete the diagrams in~\eqref{eq:twist_cd}--\eqref{eq:twist_cd_C}, we need to state the \emph{left twist Chamber Ansatz}, which is much closer to the results of~\cite{MR}. Note that we work inside $G/B_-$ while~\cite{MR} work inside $G/B$.  To relate our results with those in~\cite{MR}, we apply the involution $x\mapsto x^\th$ of~\cite[Eq.~(1.11)]{FZ}; see \cref{rmk:inv_TNN}. 

Let $w_0\in W$ be the longest element of $W$. For each $i\in I$, let $i^\ast\in I$ be defined by $\om_{i^\ast}=-w_0\om_i$. 
Set 
\begin{equation}\label{eq:fL_dfn}
  \fL_r:=\Delta^{\wi r w_0\om_{i_r^\ast}}_{\vi r w_0\om_{i_r^\ast}}: N\to \C.
\end{equation}

\begin{theorem}[Left twist Chamber Ansatz~\cite{MR}]\label{thm:L_chamber_ans}
Let $v\leq w$ in $W$ and $(\bv,\bw)\in\Red(v,w)$. Let $g:=\gt\in\capBWB(N\dv)$ be MR-parametrized, and let $\gc\in \capBWB(N\dv\cap\dv N)$ be the unique element satisfying $gB_-=\gc B_-$. Let $\yL:=[\dw \gc^T]_R^+$ (cf. \cref{prop:pre_pre_twist}). Then for all $r\in\Jo$, we have
\begin{equation}\label{eq:L_chamber_ans}
  t_r=\frac{ \prod_{j\neq i_r^\ast} \Delta^{\wi r w_0\om_j}_{\vi rw_0\om_j}(\yL)^{-a_{j,i_r^\ast}}  }
{\Delta^{\wi r w_0\om_{i_r^\ast}}_{\vi r w_0\om_{i_r^\ast}}(\yL) \Delta^{\wi{r-1}w_0\om_{i_r^\ast}}_{\vi{r-1}w_0\om_{i_r^\ast}}(\yL)   }.
\end{equation}
Conversely, for each $j\in[m]$, we have 
\begin{equation}\label{eq:L_chamber_ans_inv}
  \fL_j(\yL)=\prod_{r\in\Jo:\, r\leq j } t_r^{-\<\om_{i_j},s_{i_j}s_{i_{j-1}}\cdots s_{i_{r+1}} \ach_{i_r}\>}.
\end{equation}
\end{theorem}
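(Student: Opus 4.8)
The plan is to deduce this theorem from the Chamber Ansatz of Marsh--Rietsch~\cite[Theorem~7.1]{MR} by a change of coordinates, since as noted in the introduction the left twist is ``much closer to the results of~\cite{MR}''. First I would recall that \cite{MR} work inside $G/B$ rather than $G/B_-$, and that the two pictures are related by the involutive automorphism $x\mapsto x^\th$ of $G$ from~\cite[Eq.~(1.11)]{FZ}, which interchanges $B$ and $B_-$, sends $s_i\mapsto s_{i^\ast}$ at the level of the Weyl group, and acts on weights by $\om_i\mapsto -w_0\om_i=\om_{i^\ast}$ (this is the content of \cref{rmk:inv_TNN}, to be assumed). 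Applying $x\mapsto x^\th$ to the MR-parametrized element $\gt$ and tracking how the data $(\bv,\bw)$, the index set $\Jo$, and the reduced subword structure transform, one obtains an MR-parametrized element in the \cite{MR} conventions with the same parameters $(t_r)_{r\in\Jo}$ (the parameters are unaffected because $x_i(t)^\th$ is again of the form $x_{i^\ast}(t)$ up to the torus, and the $\ds_i$ transform to $\ds_{i^\ast}$).

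Second, I would identify the map $h\mapsto[\dw h^T]_R^+$ appearing in \cref{prop:pre_pre_twist} — equivalently the left pre-twist composed with the chiral identification, which produces $\yL=[\dw\gc^T]_R^+$ — with the twist map used by \cite{MR} under the $\th$-involution. Concretely: by the second isomorphism of \cref{prop:pre_pre_twist}, $\yL$ is characterized by $\dv\,\yL\in\capBWB(\dv N)$ and $\yL=[\dw\gc^T]_R^+$; I would check that under $x\mapsto x^\th$ this matches the ``twist'' input used in~\cite[Theorem~7.1]{MR}, using that $\gc$ is the unique element of $\capBWB(N\dv\cap\dv N)$ with $\gc B_-=gB_-$ and that transposition interacts with $\th$ in the expected way ($(g^T)^\th=((g^\th)^T)$ up to reindexing by $\ast$). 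Once this dictionary is set up, formula~\eqref{eq:L_chamber_ans} is obtained by applying $\th$ termwise to~\cite[Theorem~7.1(1)]{MR}: the generalized minors $\Delta^{vy\om_j}_{w\om_j}$ there become $\Delta^{\wi r w_0\om_j}_{\vi r w_0\om_j}(\yL)$ here because $\th$ converts $\vi r\om_j\mapsto\vi r w_0\om_{j^\ast}$ and similarly for $\wi r$, and the Cartan integers $a_{j,i_r}$ become $a_{j^\ast,i_r^\ast}=a_{j,i_r}$ after reindexing (so the exponent on the $j$-th factor is $-a_{j,i_r^\ast}$ once we relabel $j\leftrightarrow j^\ast$). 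The inverse formula~\eqref{eq:L_chamber_ans_inv} follows by the same translation from the inverse direction of~\cite[Theorem~7.1]{MR}, or alternatively by a direct induction on $r$ using~\eqref{eq:L_chamber_ans} and the defining recursion $\vi r=\vi{r-1}\sv_{i_r}$, $\wi r=\wi{r-1}s_{i_r}$ together with the identity $\sum_{j}\<\om_i,\ach_j\>\,a_{j,k}$-type pairings that make the product telescope.

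The main obstacle I expect is \emph{bookkeeping the involution carefully}: the map $\th$ reverses the roles of left and right cosets, conjugates the pinning, and introduces the $\ast$-twist on indices, so one must verify that (i) the positive distinguished subexpression $\bv\subset\bw$ transforms to a positive distinguished subexpression in the transformed reduced word with the same $\Jo$, and (ii) the particular unipotent factor $\yL=[\dw\gc^T]_R^+$ really is the $\th$-image of the object that \cite{MR} feed into their Chamber Ansatz, as opposed to some other twist differing by an element of $\H$ or by inversion. Getting (ii) right is where the chiral map $\chiv$ enters essentially — it is precisely $\gc$, the common representative in $N\dv\cap\dv N$, that makes the two sides match, and one should check compatibility of $\chiv$ with $\th$, which should follow from its description via~\eqref{eq:NvN_to_Rich} together with the fact that $\th$ preserves $N$, $\dv\mapsto\dot{v}^\th$ a lift of the $\ast$-twisted permutation, and total positivity (by \cref{thm:chiral_tnn} and \cref{rmk:inv_TNN}). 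Once the dictionary is nailed down, no new computation beyond that in~\cite{MR} is needed.
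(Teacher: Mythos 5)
Your overall route is the same as the paper's: quote the Marsh--Rietsch Chamber Ansatz (in the paper this is already recast in $G/B_-$ conventions as \cref{thm:MR_chamber_ans}, so the $\theta$-bookkeeping you spend most of your plan on is treated as routine translation) and then feed the element $\yL$ into it. But there is a genuine gap at exactly the point you defer: your item (ii), ``check that $\yL$ really is the object MR feed into their Chamber Ansatz,'' is the entire content of the proof, and your conclusion that ``no new computation beyond that in \cite{MR} is needed'' is not correct. Two things are off. First, you do not need (and should not try) to match $\yL$ with MR's specific element: \cref{thm:MR_chamber_ans} holds for \emph{any} $z\in N_-$ with $zwB_-=gB_-$, and the right-hand side is independent of that choice; this invariance is what makes the deduction possible, and your worry about ``differing by an element of $\H$ or by inversion'' dissolves once you use it (within $N_-$ there is no torus ambiguity). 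Second, what does remain to be proved is the coset identity $gB_-=\yLT wB_-$, and this is a real computation, not involution bookkeeping. In the paper it goes as follows: since $\gc\in B_-wB_-$, \eqref{eq:Gauss_dwi_g} gives a factorization $\gc^T\dw=b\Ym$ with $b\in B$ and $\Ym\in\dw^{-1}N\dw\cap N_-$; conjugating by $\dw$ yields $\yL=[\dw\gc^T]_R^+=[\dw b\dw^{-1}]_R^+\cdot\dw\Ym\dw^{-1}$, and by \eqref{eq:Gauss_wBwi} the factor $b':=[\dw b\dw^{-1}]_R^+$ lies in $\dw N\dw^{-1}\cap N$, so $Bw^{-1}b'=Bw^{-1}$; combining with $Bg^T=B\gc^T=B\Ym\dw^{-1}$ one gets $Bg^T=Bw^{-1}\yL$, i.e.\ $gB_-=\yLT wB_-$. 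Without some argument of this kind your proposal does not establish \eqref{eq:L_chamber_ans}.

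A smaller point: your claim that $x_i(t)^\th$ is ``again of the form $x_{i^\ast}(t)$ up to the torus'' is not right as stated; by \eqref{eq:inv_theta} one has $x_i(t)^\th=y_i(t)$, and the index involution $i\mapsto i^\ast$ enters only through conjugation by $\dw_0$ (cf.\ \cref{conj5}), not through $\th$ itself. This does not affect the viability of the translation step, but it is the kind of slip that the careful dictionary you propose would need to avoid.
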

\noindent Similarly to \cref{rmk:intro:all_minors_f_r}, all non-trivial generalized minors in the right-hand side of~\eqref{eq:L_chamber_ans} are of the form $\fL_j(\yL)$ for some $j\in[m]$. 
\begin{remark}
After switching from $G/B_-$ to $G/B$, the left twist Chamber Ansatz~\eqref{eq:L_chamber_ans} coincides with~\cite[Theorem~7.1(1)]{MR}, except that the element $\yL$ is replaced with the transpose $z^T$ of an arbitrary element $z\in N_-$ such that $zwB_-=gB_-$; see \cref{thm:MR_chamber_ans}. We show in \cref{sec:MR_chamber_ansatz} that the element $z:=\yLT$ satisfies these assumptions, and thus deduce~\eqref{eq:L_chamber_ans} from the results of~\cite{MR}.
\end{remark}

\subsection{Type $A$ Chamber Ansatz}
When $G=\SL_n(\C)$, our results admit a pictorial interpretation in terms of wiring diagrams. Fix $v\leq w$ in $W$ and $(\bv,\bw)\in\Red(v,w)$. We construct the \emph{wiring diagram} of $\bw=(i_1,i_2,\dots,i_m)$ by drawing $n$ horizontal strands and placing, for each $r\in[m]$, a crossing with horizontal coordinate $r$ between the strands with vertical coordinates $i_r$ and $i_r+1$. We refer to $i_r$ as the \emph{height} of this crossing. For each $r\in\Jo$, we place a dot labeled $t_r$ on the corresponding crossing. The connected components of the complement of the wiring diagram are called \emph{chambers}. The \emph{height} of a chamber is the number of strands passing below it. For $r\in [m]$, we denote by $\RegR_r,\RegU_r,\RegL_r,\RegD_r$ the chambers located immediately to the right, above, to the left, and below the $r$-th crossing, respectively:%
\begin{center}
  \includegraphics[width=0.18\textwidth]{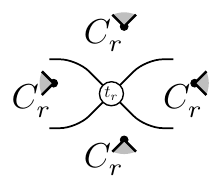}
\end{center}

We refer to the strands in the wiring diagram of $\bw$ as \emph{$\bw$-strands}. A \emph{$\bv$-strand} is a path from left to right that follows the crossings not in $\Jo$ and ignores the crossings in $\Jo$. The right and left endpoints of the strands are labeled $1,2,\dots,n$ from the bottom to the top. For a chamber $C$, we define subsets $\DRv(C),\DRw(C),\ULv(C),\ULw(C)\subset[n]$ by
\begin{align*}
\DRv(C)&:=\{\text{right endpoints of $\bv$-strands passing below $C$}\}; \\
\DRw(C)&:=\{\text{right endpoints of $\bw$-strands passing below $C$}\};\\
\ULv(C)&:=\{\text{left endpoints of $\bv$-strands passing above $C$}\};\\
\ULw(C)&:=\{\text{left endpoints of $\bw$-strands passing above $C$}\}.
\end{align*}
Finally, we introduce \emph{chamber minors}
\begin{equation*}%
  \chmnrR_C:=\Delta^{\DRv(C)}_{\DRw(C)} \quad\text{and}\quad   \chmnrL_C:=\Delta^{\ULw(C)}_{\ULv(C)}.
\end{equation*}
Comparing to~\eqref{eq:fR_dfn} and~\eqref{eq:fL_dfn}, we see that for all $r\in [m]$, we have
\begin{equation}\label{eq:fR_fL_chmnr}
  \fR_r=\chmnrR_{\RegL_r} \quad\text{and}\quad \fL_r=\chmnrL_{\RegR_r}.
\end{equation}

\begin{remark}
We emphasize that there is no symmetry between $\chmnrR_C$ and $\chmnrL_C$ since both of them are defined in terms of the \emph{rightmost} subexpression for $v$ inside $\bw$.
\end{remark}

Let $r\in[m]$ and consider the two $\bw$-strands $S_1,S_2$ participating in the $r$-th crossing. Let $\UW_r$ (resp., $\DW_r$) denote the union of chambers located vertically between $S_1$ and $S_2$ to the left (resp., to the right) of the $r$-th crossing. Following~\cite{MuSp}, we refer to $\UW_r$ and $\DW_r$ as the \emph{upstream wedge} and the \emph{downstream wedge}, respectively. The results of the previous subsection can be specialized as follows.

\begin{theorem}[Type $A$ Chamber Ansatz]\label{thm:A_ch_ans}
Let $G=\SL_n(\C)$, $v\leq w$ in $W$, and $(\bv,\bw)\in\Red(v,w)$. Let $g:=\gt$ and $\yR:=[g^T\dw]_L^+$ be as in \cref{thm:R_chamber_ans}. Let $\gc\in N\dv\cap\dv N$ with $gB_-=\gc B_-$ and $\yL:=[\dw \gc^T]_R^+$ be as in \cref{thm:L_chamber_ans}. Then for all $r\in\Jo$,
\begin{equation}\label{eq:A_ch_ans}
  t_r=\frac{\chmnrR_{\RegU_r}(\yR)\chmnrR_{\RegD_r}(\yR)}{\chmnrR_{\RegL_r}(\yR)\chmnrR_{\RegR_r}(\yR)} = \frac{\chmnrL_{\RegU_r}(\yL)\chmnrL_{\RegD_r}(\yL)}{\chmnrL_{\RegL_r}(\yL)\chmnrL_{\RegR_r}(\yL)}.
\end{equation}
Conversely, for each chamber $C$ of the wiring diagram of $\bw$, we have 
\begin{equation*}%
  \chmnrR_C(\yR)=\prod_{r\in\Jo:\,C\in\UW_r} t_r^{-1} \quad\text{and}\quad \chmnrL_C(\yL)=\prod_{r\in\Jo:\,C\in\DW_r} t_r^{-1}.
\end{equation*}
\end{theorem}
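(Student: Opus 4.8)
The plan is to derive \cref{thm:A_ch_ans} as the type $A$ specialization of \cref{thm:R_chamber_ans} and \cref{thm:L_chamber_ans} by translating all the generalized-minor data into wiring-diagram data. First I would set up the dictionary between the two pictures. In type $A$, the generalized minor $\Delta^{u\om_j}_{u'\om_j}$ equals the ordinary flag minor with row set $\{u(1),\dots,u(j)\}$ and column set $\{u'(1),\dots,u'(j)\}$, as recalled right after~\eqref{eq:example_v=3124}. I would check that for each chamber $C$ in the wiring diagram of $\bw$, the sets $\DRv(C)$ and $\DRw(C)$ are exactly $\{\Vi r(1),\dots,\Vi r(h)\}$ and $\{\Wi r(1),\dots,\Wi r(h)\}$, where $h$ is the height of $C$ and $r$ is any crossing on the lower boundary of $C$; the point is that a $\bv$-strand (resp.\ $\bw$-strand) passing below $C$ corresponds, after reading the crossings to the left of $C$ from right to left, to one of the first $h$ values of $\Vi r$ (resp.\ $\Wi r$), since $\Vi r = \sv_{i_m}\cdots\sv_{i_{r+1}}$ and $\Wi r = s_{i_m}\cdots s_{i_{r+1}}$ act by the compositions of transpositions recorded by those crossings. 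An analogous bookkeeping identifies $\ULw(C)$ and $\ULv(C)$ with $\{\wi r w_0(1),\dots\}$ and $\{\vi r w_0(1),\dots\}$ after accounting for the $w_0$-twist and the $i\mapsto i^\ast$ relabeling built into~\eqref{eq:fL_dfn}; here one uses that $w_0\om_i = -\om_{i^\ast}$ so that $\Delta^{\wi r w_0\om_{i^\ast}}_{\vi r w_0\om_{i^\ast}}$ is the flag minor whose row/column sets are the \emph{complements} of $\{\wi r(n),\wi r(n-1),\dots\}$ etc., which matches "left endpoints passing above''. This is essentially already recorded in~\eqref{eq:fR_fL_chmnr}, so I would cite that and only spell out the chamber-to-Weyl-group translation for the four neighboring chambers $\RegR_r,\RegU_r,\RegL_r,\RegD_r$.

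Next I would rewrite the right-hand side of~\eqref{eq:R_chamber_ans}. The Cartan matrix of type $A_{n-1}$ has $a_{j,i_r}=-1$ when $|j-i_r|=1$ and $a_{j,i_r}=0$ otherwise, so the product $\prod_{j\ne i_r}\Delta^{\Vi r\om_j}_{\Wi r\om_j}(\yR)^{-a_{j,i_r}}$ collapses to $\Delta^{\Vi r\om_{i_r-1}}_{\Wi r\om_{i_r-1}}(\yR)\cdot\Delta^{\Vi r\om_{i_r+1}}_{\Wi r\om_{i_r+1}}(\yR)$ (with the convention that $\om_0$- and $\om_n$-minors are $1$, matching the fact that the chambers $\RegU_r$ or $\RegD_r$ at the top/bottom of the diagram contribute trivial minors). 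Using the dictionary, $\Delta^{\Vi r\om_{i_r+1}}_{\Wi r\om_{i_r+1}}(\yR)=\chmnrR_{\RegU_r}(\yR)$ and $\Delta^{\Vi r\om_{i_r-1}}_{\Wi r\om_{i_r-1}}(\yR)=\chmnrR_{\RegD_r}(\yR)$, while the denominator $\Delta^{\Vi r\om_{i_r}}_{\Wi r\om_{i_r}}(\yR)\Delta^{\Vi{r-1}\om_{i_r}}_{\Wi{r-1}\om_{i_r}}(\yR)$ becomes $\chmnrR_{\RegR_r}(\yR)\chmnrR_{\RegL_r}(\yR)$, since passing from the $(r-1)$-st to the $r$-th crossing at height $i_r$ toggles between the chamber left of and right of crossing $r$. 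This gives the first equality in~\eqref{eq:A_ch_ans}. The second equality is obtained the same way from~\eqref{eq:L_chamber_ans}, now using $a_{j,i_r^\ast}$, the $w_0$-twisted dictionary, and $\fL_r=\chmnrL_{\RegR_r}$; the relabeling $i_r\mapsto i_r^\ast$ and the appearance of $w_0$ are precisely what is needed so that the left chamber minors are again read off the \emph{same} four neighboring chambers, as the wiring-diagram picture does not see the $\ast$.

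For the converse formulas, I would start from~\eqref{eq:R_chamber_ans_inv}: $\fR_j(\yR)=\prod_{r\in\Jo,\,r\ge j} t_r^{-\<\om_{i_j},\,s_{i_j}s_{i_{j+1}}\cdots s_{i_{r-1}}\ach_{i_r}\>}$. I would show that the exponent $\<\om_{i_j},\,s_{i_j}\cdots s_{i_{r-1}}\ach_{i_r}\>$ is $0$ or $1$, and equals $1$ exactly when the chamber $C=\RegL_j$ lies in the upstream wedge $\UW_r$ of crossing $r$. The mechanism: $s_{i_j}\cdots s_{i_{r-1}}\ach_{i_r}$ is the coroot $\ach_\beta$ for the root $\beta$ that is the label of the $r$-th crossing "transported leftward'' to just right of crossing $j$; pairing with $\om_{i_j}$ detects whether $\beta$ straddles height $i_j$, i.e.\ whether the two strands of crossing $r$ separate, at horizontal position $j$, the chamber $\RegL_j$ — which is exactly the condition $\RegL_j\in\UW_r$. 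Combined with $\fR_j=\chmnrR_{\RegL_j}$ from~\eqref{eq:fR_fL_chmnr}, this is the claimed product formula for a chamber $C$ that happens to be of the form $\RegL_j$; every chamber is $\RegL_j$ for some $j$ (or is an unbounded chamber whose minor is identically $1$, and which lies in no wedge), so this suffices. The left version follows identically from~\eqref{eq:L_chamber_ans_inv} with upstream replaced by downstream wedges, using $s_{i_j}s_{i_{j-1}}\cdots s_{i_{r+1}}$ transporting crossing $r$ \emph{rightward} toward crossing $j$, and $\fL_j=\chmnrL_{\RegR_j}$.

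I expect the main obstacle to be the bookkeeping in the converse direction: carefully matching the exponent $\<\om_{i_j},\,s_{i_j}\cdots s_{i_{r-1}}\ach_{i_r}\>$ with the combinatorial condition "$\RegL_j\in\UW_r$'', and in particular getting the orientation conventions (upstream vs.\ downstream, the direction in which crossings are transported, the role of $w_0$ and $\ast$ on the left side) exactly right so that both halves of~\eqref{eq:A_ch_ans} involve literally the same four chambers $\RegU_r,\RegD_r,\RegL_r,\RegR_r$. The forward direction, by contrast, is a fairly mechanical consequence of the type $A_{n-1}$ Cartan matrix being tridiagonal with $-1$'s off the diagonal, together with the minor dictionary; I would present it quickly and spend most of the written proof on the wedge-membership lemma underlying the converse.
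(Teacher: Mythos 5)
Your proposal is correct and takes essentially the same route as the paper: there, \cref{thm:A_ch_ans} is presented as the immediate type $A$ specialization of \cref{thm:R_chamber_ans,thm:L_chamber_ans} via the dictionary~\eqref{eq:fR_fL_chmnr} between generalized minors and chamber minors, and your spelled-out translation (tridiagonal Cartan matrix, chamber/Weyl-group bookkeeping, wedge interpretation of the exponents in~\eqref{eq:R_chamber_ans_inv} and~\eqref{eq:L_chamber_ans_inv}) supplies exactly those routine details. One small wording fix: since the product $s_{i_j}s_{i_{j+1}}\cdots s_{i_{r-1}}$ includes $s_{i_j}$, the transported coroot records the heights of the two wires of crossing $r$ just \emph{left} of crossing $j$ (not just right), which is precisely what makes the comparison with the chamber $\RegL_j$, and hence with membership in $\UW_r$, come out correctly.
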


\subsection{Type $A$ example}\label{sec:A_example}
Let $G=\SL_n(\C)$ for $n=4$, and let $v=s_1s_2$ as in~\eqref{eq:example_v=3124}. Let $w=w_0$, and let $\bw=(2,1,2,3,2,1)$ be a reduced expression for $w$. We have
\begin{equation*}%
  g=\gt=x_2(t_1)\ds_1 x_2(t_3)x_3(t_4)\ds_2 x_1(t_6)=
\smat{
0 & -t_{3} & 1 & t_{3} t_{4} \\
-1 & -t_{1} - t_{6} & 0 & t_{1} t_{4} \\
0 & -1 & 0 & t_{4} \\
0 & 0 & 0 & 1
}, \quad 
\gc=\smat{
0 & 0 & 1 & t_{3} t_{4} \\
-1 & -t_{1} - t_{6} & 0 & t_{1} t_{4} \\
0 & -1 & 0 & t_{4} \\
0 & 0 & 0 & 1
},
\end{equation*}
where we assume $t_r>0$ for all $r\in\Jo=\{1,3,4,6\}$. In terms of the $(a,b,c,d)$-parameters in~\eqref{eq:example_v=3124_matrix}, we have $a=t_3t_4$, $b=t_1t_4$, $c=t_4$, and $d=t_1+t_6$. 
Computing the UDL decomposition of $g^T\dw$, we get
\begin{equation}\label{eq:ex_YpYoYm}
  g^T\dw=\smat{
0 & 0 & 1 & 0 \\
0 & -1 & t_{1} + t_{6} & -t_{3} \\
0 & 0 & 0 & 1 \\
-1 & t_{4} & -t_{1} t_{4} & t_{3} t_{4}
}=\smat{
1 & \frac{1}{t_{6}} & \frac{t_{3}}{t_{1}} & 0 \\
0 & 1 & \frac{t_{3} t_{6}}{t_{1}} & -\frac{1}{t_{4}} \\
0 & 0 & 1 & \frac{1}{t_{3} t_{4}} \\
0 & 0 & 0 & 1
}\cdot \smat{
\frac{1}{t_{4} t_{6}} & 0 & 0 & 0 \\
0 & \frac{t_{6}}{t_{1}} & 0 & 0 \\
0 & 0 & \frac{t_{1}}{t_{3}} & 0 \\
0 & 0 & 0 & t_{3} t_{4}
}\cdot \smat{
1 & 0 & 0 & 0 \\
-\frac{t_{1} + t_{6}}{t_{4} t_{6}} & 1 & 0 & 0 \\
\frac{1}{t_{1} t_{4}} & -\frac{1}{t_{1}} & 1 & 0 \\
-\frac{1}{t_{3} t_{4}} & \frac{1}{t_{3}} & -\frac{t_{1}}{t_{3}} & 1}.
\end{equation}
Thus we have
\begin{equation*}%
  \yR=[g^T\dw]_L^+= \smat{
1 & \frac{1}{t_{6}} & \frac{t_{3}}{t_{1}} & 0 \\
0 & 1 & \frac{t_{3} t_{6}}{t_{1}} & -\frac{1}{t_{4}} \\
0 & 0 & 1 & \frac{1}{t_{3} t_{4}} \\
0 & 0 & 0 & 1
} \quad\text{and}\quad \dv\yR=\smat{
0 & 0 & 1 & \frac{1}{t_{3} t_{4}} \\
-1 & -\frac{1}{t_{6}} & -\frac{t_{3}}{t_{1}} & 0 \\
0 & -1 & -\frac{t_{3} t_{6}}{t_{1}} & \frac{1}{t_{4}} \\
0 & 0 & 0 & 1
}.
\end{equation*}
The top-aligned flag minors of $\dv\yR$ give the  pre-twist $\tpre_v^w(gB_-)=B_-v\yR$. To apply the chiral map to $B_-v\yR$, we need to apply downward row operations to $\dv\yR\in\capBWB(\dv N)$ to get an element $y_1\in \capBWB(N\dv\cap \dv N)$. This element is given by
\begin{equation*}%
  y_1=\smat{
0 & 0 & 1 & \frac{1}{t_{3} t_{4}} \\
-1 & -\frac{1}{t_{6}} & 0 & \frac{1}{t_{1} t_{4}} \\
0 & -1 & 0 & \frac{t_{1} + t_{6}}{t_{1} t_{4}} \\
0 & 0 & 0 & 1
},\quad\text{thus} \quad \twist_v^w(gB_-)=\chiv(B_-v\yR)=\chiv(B_-y_1)=y_1B_-.
\end{equation*}
Notice that each right-aligned flag minor of $y_1$ is positive (and in fact, subtraction-free as a function of $t_1,t_3,t_4,t_6$). 
We may also compute the left twist: we have $\chivi(gB_-)=B_-\gc$, 
\begin{equation*}%
 \yL=[\dw \gc^T]_R^+=\smat{
1 & \frac{t_{1}}{t_{3}} & \frac{1}{t_{3}} & \frac{1}{t_{3} t_{4}} \\
0 & 1 & \frac{1}{t_{1}} & \frac{1}{t_{1} t_{4}} \\
0 & 0 & 1 & \frac{t_{1} + t_{6}}{t_{4} t_{6}} \\
0 & 0 & 0 & 1
}, \quad\text{and}\quad \twistL_v^w(gB_-)=\yL vB_-=\smat{
-\frac{t_{1}}{t_{3}} & -\frac{1}{t_{3}} & 1 & \frac{1}{t_{3} t_{4}} \\
-1 & -\frac{1}{t_{1}} & 0 & \frac{1}{t_{1} t_{4}} \\
0 & -1 & 0 & \frac{t_{1} + t_{6}}{t_{4} t_{6}} \\
0 & 0 & 0 & 1
}\cdot B_-.
\end{equation*}
In the $(a,b,c,d)$-coordinates in~\eqref{eq:example_v=3124_matrix}, the right and the left twist maps are given by
\begin{equation*}%
  (a,b,c,d) \xmapsto{\twist_v^w} \left(\frac1a,\frac1b, \frac db, \frac c{cd-b}\right),\quad (a,b,c,d) \xmapsto{\twistL_v^w}\left(\frac1a,\frac1b, \frac d{cd-b}, \frac cb\right).
\end{equation*}
These two maps are mutually inverse and preserve total positivity; cf.~\eqref{eq:abcd>=0}. Finally, we check and compare the two Chamber Ansatz formulas (\cref{thm:A_ch_ans}) in \cref{fig:ch_ans}.

\begin{figure}
\begin{tabular}{c}%
\includegraphics[width=0.98\textwidth]{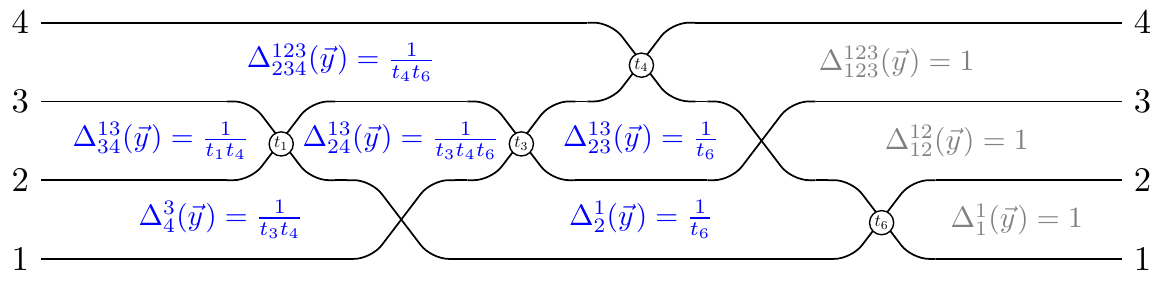}%
\\
(a) Right twist Chamber Ansatz: $\fR_j(\yR)=\chmnrR_{\RegL_r}(\yR)=\Delta^{\DRv(\RegL_r)}_{\DRw(\RegL_r)}(\yR)$.%
\\
\\
\includegraphics[width=0.98\textwidth]{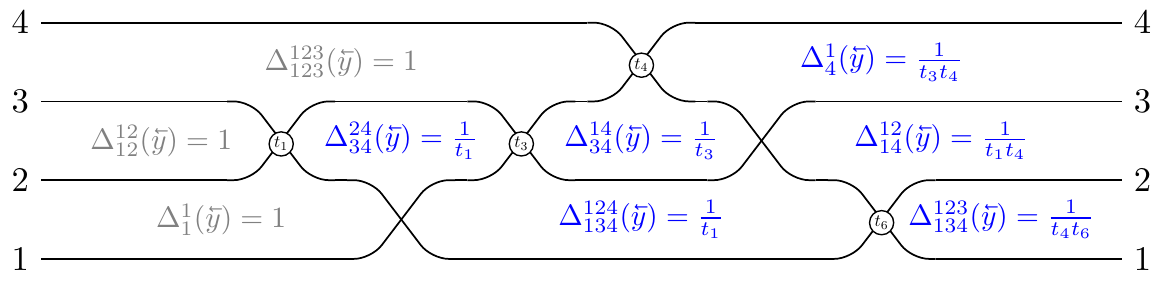}%
\\
(b) Left twist Chamber Ansatz: $\fL_j(\yL)=\chmnrL_{\RegR_r}(\yL)=\Delta^{\ULw(\RegR_r)}_{\ULv(\RegR_r)}(\yL)$.%
\end{tabular}
  \caption{\label{fig:ch_ans} Comparing the Chamber Ansatz formulas (\cref{thm:R_chamber_ans,thm:L_chamber_ans}). See also \cref{thm:A_ch_ans}.}
\end{figure}

 \begin{table}
\def\arrbig{1.8}
\def\arrmed{1.5}
\def\arrsmall{1.3}
\renewcommand{\arraystretch}{\arrbig}
\def\dst{}
\def\shl{\hspace{-0.07in}\renewcommand{\arraystretch}{\arrsmall}}
\begin{tabular}{|c|l|}\hline
\eqref{eq:Rich_dfn_R}, \eqref{eq:Rich_dfn_L} & $\Rich_v^w:=(B_-wB_-\cap BvB_-)/B_- $, \quad $\LRich_v^w:=B_-\bs(B_-wB_-\cap B_-vB)$\\
  \eqref{eq:NvN_to_Rich} & {\bf Chiral map $\chiv$:} $\dst \begin{tikzcd}[column sep=50pt,row sep=-10pt]%
\LRich_v^w \arrow[rr,"\chiv"',bend right=8] &  \capBWB(N\dv \cap \dv N) \arrow[l,"\sim"'] \arrow[r,"\sim"] & \Rich_v^w\end{tikzcd}$\\\hline
& {\bf Marsh--Rietsch parametrization and (pre-)twist maps:}\\
\eqref{eq:MR} & $g=\dst \gt  =  g_1\cdots g_m \in \capBWB(N\dv)$, \quad $\dst g_r=\ds_{i_r}$ if $\dst r\notin \Jo$,\quad $\dst g_r=x_{i_r}(t_r)$ if $\dst r\in \Jo$ \\
Thm.~\ref{thm:L_chamber_ans} & $\dst \gc\in \capBWB(N\dv\cap \dv N)$ \quad satisfies \quad $\dst gB_-=\gc B_-$\\
Dfn.~\ref{dfn:pre_twist} & \shl \renewcommand{\arraystretch}{\arrsmall}\begin{tabular}{l}
$\dst \yR=[g^T\dw]_L^+$,\quad $\dst \tpre_v^w(gB_-)= B_-v[g^T\dw]_L^+$,\quad $\dst \twist_v^w=\chiv\circ\tpre_v^w$\\
$\dst \yL=[\dw\gc^T]_R^+$,\quad $\dst \tpreL_v^w(B_-h)= [\dw h^T]_R^+vB_-$,\quad $\dst \twistL_v^w=\Ltpre_v^w \circ \chivi$ \\
                           \end{tabular}  \\
Thms.~\ref{thm:chiral_tnn},~\ref{thm:twist_tnn} & {\bf The maps $\chiv$, $\tpre_v^w$, and $\twist_v^w$ preserve total positivity}\\
\hline
& {\bf Chamber Ansatz formulas:}\\
\eqref{eq:fR_dfn}, \eqref{eq:fL_dfn} & $\dst \fR_r=\Delta^{\sv_{i_m}\cdots \sv_{i_r}\om_{i_r}}_{s_{i_m}\cdots s_{i_r}\om_{i_r}}$,  \quad $\dst \fL_r=\Delta^{s_{i_1}\cdots s_{i_r}w_0\om_{i_r^\ast}}_{\sv_{i_1}\cdots \sv_{i_r}w_0\om_{i_r^\ast}}$\\
Thm.~\ref{thm:R_chamber_ans} & Right twist Chamber Ansatz: express $t_r$ in terms of $\fR_j(\yR)$\\
Thm.~\ref{thm:L_chamber_ans} & Left twist Chamber Ansatz: express $t_r$ in terms of $\fL_j(\yL)$\\
  Thm.~\ref{thm:MR_chamber_ans} &\shl \renewcommand{\arraystretch}{1}\begin{tabular}{l}
Marsh--Rietsch Chamber Ansatz: express $t_r$ in terms of $\fL_j(z^T)$, \\
where $z\in N_-$ satisfies $zwB_-=gB_-$
                                      \end{tabular} \\
\hline
& {\bf Positivity-preserving involutions:}\\
\eqref{eq:inv_T} & $x\mapsto x^T$: anti-automorphism,  $x_i(t) \xleftrightarrow{T} y_i(t)$, $\ds_i^T=\ds_i^{-1}$; $a^T=a$  ($a\in\H$)\\
\eqref{eq:inv_iota} & $x\mapsto x^\iota$: anti-automorphism, preserves $x_i(t)$, $y_i(t)$, $\ds_i$; $a^\iota=a^{-1}$  ($a\in\H$)\\
  \eqref{eq:inv_theta} & $x\mapsto x^\theta=(x^\iota)^T$: automorphism, $x_i(t) \xleftrightarrow{\theta} y_i(t)$, $\ds_i^\theta=\ds_i^{-1}$; $a^\theta=a^{-1}$ ($a\in\H$)\\\hline
& {\bf Generalized minors:}\\
\eqref{eq:line_dfn} & $\lline{w}=\dot w=\ds_{i_1}\cdots \ds_{i_m},\quad \line w=\ds^{-1}_{i_1}\cdots \ds^{-1}_{i_m}$ \\
\eqref{eq:dom_dfn}, \eqref{eq:dom_iota} & $\DOM^v_w(x)=\dom \left(\lline{v^{-1}} x\line{w}\right)=\Delta^{ww_0\om_{i^\ast}}_{vw_0\om_{i^\ast}}(x^\iota)$\\\hline
& {\bf Collision moves:}\\
\shl\renewcommand{\arraystretch}{\arrmed}\begin{tabular}{c}
\\
\eqref{eq:collision_x}\\
\eqref{eq:collision_y}
\end{tabular} & \shl\renewcommand{\arraystretch}{\arrmed}\begin{tabular}{c}
 $t_+=1/t$,\quad $a_+=t^{\alphacheck_i}$, \quad $t_-=-1/t$\\
 $\ds_i^{-1} x_i(t)=x_i(t_-)a_+^{-1}y_i(t_+),\quad x_i(t)\ds_i^{-1}=y_i(t_+)a_+x_i(t_-)$\\
 $\ds_iy_i(t)=y_i(t_-)a_+x_i(t_+),\quad y_i(t)\ds_i=x_i(t_+)a_+^{-1}y_i(t_-)$
                \end{tabular}\\\hline
\eqref{eq:YpYoYm_dfn}, \eqref{eq:z_wyw} & $g^T\dw=\Yp\Yo \Ym$, \quad  $\yR=\Yp$, \quad $z=(\dw \Ym \dw^{-1})^T$ \\\hline
\end{tabular}
\renewcommand{\arraystretch}{1}
  \caption{\label{tab:formulas} Useful notation and formulas.}
\end{table}

\newpage

\section{Background}\label{sec:background}

\subsection{Pinnings}\label{sec:pinnings}
Our exposition mostly follows that of~\cite{GKL3}. We denote by $X(\H):=\Hom(\H,\Cast)$ the \emph{weight lattice} and for $\gamma\in X(\H)$ and $a\in\H$, we let $a^\gamma$ denote the value of $\gamma$ on~$a$. Recall that we have the root system $\Phi\subset X(\H)$ of $G$ and a set of simple roots $\{\alpha_i\}_{i\in I}$. Let $Y(\H):=\Hom(\Cast,\H)$ be the \emph{coweight lattice}. We have simple coroots $\{\alphacheck_i\}_{i\in I}\subset Y(\H)$ given by $t^{\alphacheck_i}:=\phi_i\begin{pmatrix}
t&0\\0&t^{-1}
\end{pmatrix}\in Y(\H)$. Consider the natural pairing $\<\cdot,\cdot\>: X(\H)\times Y(\H)\to \Z$ such that for all $\gamma\in X(\H)$, $\beta\in Y(\H)$, and $t\in \Cast$, we have $(t^\beta)^\gamma=t^{\<\gamma,\beta\>}$. Let $\{\omega_i\}_{i\in I}\subset X(\H)$ be the \emph{fundamental weights}, defined so that $\<\omega_i,\alphacheck_j\>=\delta_{i,j}$ for $i,j\in I$.

For $\gamma\in X(\H)$, $t\in \Cast$, $a\in \H$, and $w\in W$, we have~\cite[Equations~(1.2) and~(2.5)]{FZ} 
\begin{equation}\label{eq:torus_conj}
(\dw a\dw^{-1})^\gamma=a^{w\gamma},
\quad ax_i(t)a^{-1}=x_i(a^{\alpha_i}t),\quad ay_i(t)a^{-1}=y_i(a^{-\alpha_i}t).
\end{equation}
\noindent The totally positive part $\Htp$ of $\H$ is the subgroup generated by $t^{\alphacheck_i}$ for $i\in I$ and $t\in\R_{>0}$.

\subsection{Involutions}\label{sec:inv}
We will make use of three involutions on $G$; cf.~\cite[Section~2.1]{FZ_double}. We have two involutive anti-automorphisms $x\mapsto x^T$ and $x\mapsto x^\iota$ of $G$ and one involutive automorphism $x\mapsto x^\th=(x^T)^\iota=(x^\iota)^T$ defined by
\begin{align}%
\label{eq:inv_T}  a^T&=a \quad (a\in \H), &x_i(t)^T&=y_i(t), &y_i(t)^T&=x_i(t);\\
\label{eq:inv_iota}  a^\iota&=a^{-1} \quad (a\in \H), &x_i(t)^\iota&=x_i(t), &y_i(t)^\iota&=y_i(t);\\
\label{eq:inv_theta}  a^\th&=a^{-1} \quad (a\in \H), &x_i(t)^\th&=y_i(t), &y_i(t)^\th&=x_i(t).
\end{align}
\begin{remark}\label{rmk:inv_TNN}
Each of these three involutions clearly preserves the subset $\Gtnn\subset G$. 
 Thus we have involutive homeomorphisms
\begin{align}
\label{eq:iota_TNN} \GBtnn &\xrasim \BGtnn, &gB_-&\mapsto B_-g^\iota;\\
\label{eq:theta_TNN} \GBptnn &\xrasim \GBtnn, &gB&\mapsto g^\theta B_-.
\end{align}
We always apply the map~\eqref{eq:theta_TNN} to the objects in~\cite{MR,Ing} in order to switch from $G/B$ to $G/B_-$.
\end{remark}

The above involutions commute pairwise and they also commute with $x\mapsto x^{-1}$. We have
\begin{equation*}%
\ds_i^\iota=\ds_i=\phi_i\begin{pmatrix}
0 & 1 \\ -1 & 0
\end{pmatrix} \quad\text{and}\quad  \ds_i^T=\ds_i^\theta=\ds_i^{-1}=\phi_i\begin{pmatrix}
0 & -1 \\ 1 & 0
\end{pmatrix}.
\end{equation*}
For $w\in W$, choose a reduced expression $\bw=(i_1,\dots,i_m)$ and let
\begin{equation}\label{eq:line_dfn}
  \lline{w}:=\dot w=\ds_{i_1}\cdots \ds_{i_m},\quad \line w:=((w^{-1})^{\bigcdot})^{-1}=\ds^{-1}_{i_1}\cdots \ds^{-1}_{i_m}.
\end{equation}
Using this notation, we find~\cite[Proposition~2.1]{FZ_double}
\begin{equation*}%
  \dw^\iota=\lline{w^{-1}}, \quad \dw^T=\dw^{-1}=\line{w^{-1}}, \quad\text{and}\quad
\dw^\theta=\line{w}.
\end{equation*}

\subsection{Generalized minors}\label{sec:backgr_gen_mnrs}
The following construction appears in~\cite[Section~1.4]{FZ_double}. Recall that we set $\Gopm:=BB_-$ and $\Gomp:=B_-B$. For $i\in I$, let $\dom$ be the regular function whose restriction to $x\in \Gomp$ is given by
\begin{equation*}%
  \dom(x):=([x]_0^\mp)^{\om_i}.
\end{equation*}
In particular, for $x\in G$, $y_-\in N_-$, and $y_+\in N$, we have
\begin{equation}\label{eq:dom_Nmp}
  \dom(y_-xy_+)=\dom(x).
\end{equation}
For $i\in I$, $v,w\in W$, and $x\in G$, we set
\begin{equation}\label{eq:dom_dfn}
  \DOM^v_w(x):=\dom \left(\lline{v^{-1}} x\line{w}\right).
\end{equation}
For $a\in \H$, we have~\cite[Equation~(2.14)]{FZ_double}
\begin{equation}\label{eq:dom_H}
  \DOM^v_w(a\cdot x)=\DOM^v_v(a)\cdot \DOM^v_w(x).
\end{equation}
We also have the following identity~\cite[Equation~(2.25)]{FZ_double}:
\begin{equation}\label{eq:dom_iota}
  \DOM^v_w(x)= \DOM^w_v(x^T)=\Delta^{ww_0\om_{i^\ast}}_{vw_0\om_{i^\ast}}(x^\iota).
\end{equation}

\subsection{Factorizations}

For $u\in W$, we define two subgroups
\begin{equation}\label{eq:NN'_dfn}
  N(u):=N\cap \du^{-1} N_-\du \quad\text{and}\quad  N'(u):=N\cap \du^{-1} N\du.
\end{equation}
These subgroups do not depend on the choice of the representative $\du$ of $u$ inside $N_G(\H)$. The following result is well known.
\begin{lemma}\label{lemma:factor_NN'}
The multiplication map provides isomorphisms
\begin{equation*}%
  N(u)\times N'(u)\xrasim N, \quad N'(u)\times N(u)\xrasim N, \quad\text{and}\quad (\du^{-1}N\du\cap N)\times (\du^{-1} N\du\cap N_-)\xrasim \du^{-1}N\du.
\end{equation*}
\end{lemma}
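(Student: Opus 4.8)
The plan is to deduce all three isomorphisms from the standard fact that the big cell factorizations in \eqref{eq:Gopm_Gomp} are isomorphisms of varieties, combined with the characterization of $N(u)$ and $N'(u)$ in terms of root subgroups. First I would recall that $N$ is directed by the root subgroups $U_\alpha$ for $\alpha\in\Phi^+$, and that for $u\in W$ one has $N(u)=\prod_{\alpha\in\Phi^+\cap u^{-1}\Phi^-}U_\alpha$ and $N'(u)=\prod_{\alpha\in\Phi^+\cap u^{-1}\Phi^+}U_\alpha$, the products being taken in any order compatible with a convex order on roots. Since $\Phi^+$ is the disjoint union of $\Phi^+\cap u^{-1}\Phi^-$ and $\Phi^+\cap u^{-1}\Phi^+$, and since one can choose a convex order on $\Phi^+$ in which all roots of one part precede all roots of the other, multiplication gives an isomorphism of affine spaces $N(u)\times N'(u)\xrasim N$; choosing the opposite order gives $N'(u)\times N(u)\xrasim N$. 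For the scheme-theoretic statement (not just bijectivity on points) one uses that each $U_\alpha\cong\mathbb{G}_a$ and that the multiplication map $\prod_\alpha U_\alpha\to N$ is an isomorphism of varieties for any fixed convex order, a classical result on unipotent groups; see e.g.\ the references in~\cite{GKL3}.

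For the third isomorphism, I would conjugate the first by $\du$: applying $\operatorname{Ad}(\du)$ to $N(u)\times N'(u)\xrasim N$ and using $\du N(u)\du^{-1}=\du^{-1}\,{}^{\!}(\text{that is, }\du (N\cap \du^{-1}N_-\du)\du^{-1}= \du N\du^{-1}\cap N_-)$ and $\du N'(u)\du^{-1}=\du N\du^{-1}\cap N$. Relabeling $\du N\du^{-1}$ and taking inverses/opposite orders as needed, this yields an isomorphism $(\du^{-1}N\du\cap N)\times(\du^{-1}N\du\cap N_-)\xrasim \du^{-1}N\du$: indeed $\du^{-1}N\du$ is again a product of root subgroups $\prod_{\alpha\in u^{-1}\Phi^+}U_\alpha$, which splits as $\bigl(\prod_{\alpha\in u^{-1}\Phi^+\cap\Phi^+}U_\alpha\bigr)\times\bigl(\prod_{\alpha\in u^{-1}\Phi^+\cap\Phi^-}U_\alpha\bigr)=(\du^{-1}N\du\cap N)\times(\du^{-1}N\du\cap N_-)$, again by choosing a suitable convex order. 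The independence of $N(u),N'(u)$ of the lift $\du$ is immediate since replacing $\du$ by $\du a$ with $a\in\H$ conjugates $N$ and $N_-$ trivially (the torus normalizes each $U_\alpha$).

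I do not expect any of this to present a genuine obstacle: the only point requiring slight care is the passage from a bijection on $\K$-points to an isomorphism of schemes, which is handled uniformly by the fact that ordered products of root subgroups along a convex order give isomorphisms onto the corresponding unipotent subgroup. Accordingly, I would keep the write-up short, citing~\cite{GKL3} (or a standard reference on linear algebraic groups) for the root-subgroup factorization of $N$ and its conjugates, and then obtain the three displayed isomorphisms by choosing appropriate convex orders and, in the last case, conjugating by $\du$.
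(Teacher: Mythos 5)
The paper does not actually prove this lemma; it is stated as ``well known'' with no argument, so there is nothing to compare line by line. Your proposal is a correct write-up of the standard proof. Two small remarks. First, the classical theorem you are invoking (Borel, Humphreys \S 28, or Springer 8.2.1: a connected unipotent group normalized by the torus is the product of its root subgroups, the product map being an isomorphism of varieties for \emph{any} ordering of the roots) already gives everything, so the bookkeeping with convex orders is unnecessary --- you may simply apply that theorem to $N$, to $N(u)$, $N'(u)$ (whose root sets $\Phi^+\cap u^{-1}\Phi^-$ and $\Phi^+\cap u^{-1}\Phi^+$ are closed), and to $\du^{-1}N\du$ with root set $u^{-1}\Phi^+=(u^{-1}\Phi^+\cap\Phi^+)\sqcup(u^{-1}\Phi^+\cap\Phi^-)$; this also dispenses with the need to exhibit a reflection order putting the inversion set of $u$ first, which is true but an extra step. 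Second, the conjugation step is slightly garbled as written: conjugating $N(u)\times N'(u)\xrasim N$ by $\du$ (i.e.\ $x\mapsto \du x\du^{-1}$) yields $(\du N\du^{-1}\cap N_-)\times(\du N\du^{-1}\cap N)\xrasim \du N\du^{-1}$, which is the third isomorphism only after replacing $u$ by $u^{-1}$ and reordering the factors; your fallback of arguing directly on $\du^{-1}N\du$ via its root subgroups is cleaner and is what I would keep. The observation that the statements are independent of the lift $\du$ (the torus normalizes each $U_\alpha$) is correct and worth the sentence you give it.
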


\begin{lemma}[\cite{Lec}]\label{lemma:Lec_NN'}
Let $v\leq w\in W$ and let $(\bv,\bw)\in\Red(v,w)$. The functions $\fR_r$ defined in~\eqref{eq:fR_dfn} belong to the doubly-invariant ring $^{N(v)}\mathbb{C}[N]^{N'(w)}$. In other words, for all $(y,y')\in N(v)\times N'(w)$ and $x\in N$, we have
\begin{equation*}%
  \fR_r(yxy')=\fR_r(x).
\end{equation*}
\end{lemma}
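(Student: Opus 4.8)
The plan is to realize $\fR_r$ as a single matrix coefficient in the fundamental representation $V:=V(\om_{i_r})$ and then verify the two one‑sided invariances separately; right‑invariance is elementary, while left‑invariance is the crux. By \eqref{eq:fR_dfn} and \eqref{eq:dom_dfn}, for $x\in N$ we have $\fR_r(x)=\langle v^+,\ \lline{(\Vi{r-1})^{-1}}\,x\,\line{\Wi{r-1}}\,v^+\rangle$, where $v^+$ is a highest weight vector of $V$ and $\langle\cdot,\cdot\rangle$ the contravariant form, so that $\langle v^+,g\,v^+\rangle$ extracts the coefficient of $v^+$ in $g\,v^+$. Hence $\fR_r(x)=\langle\xi,\ x\cdot\eta_q\rangle$, where $\eta_q:=\line{\Wi{r-1}}\,v^+$ spans the one‑dimensional extremal weight space $V_q$, $q:=\Wi{r-1}\om_{i_r}$, and $\xi$ vanishes on every weight space of $V$ except $V_p$, $p:=\Vi{r-1}\om_{i_r}$. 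Throughout I will use two facts about the positive distinguished subexpression $(\bv,\bw)\in\Red(v,w)$, both consequences of $|\Jo|=\ell(w)-\ell(v)$: the active letters of $\bv$ form a reduced word for $v$, so $v=\vi{r-1}\cdot(\Vi{r-1})^{-1}$ is length‑additive (and similarly $w=\wi{r-1}\cdot(\Wi{r-1})^{-1}$, since $\bw$ is reduced); hence $\lline v=\lline{\vi{r-1}}\cdot\lline{(\Vi{r-1})^{-1}}$, and $\Vi{r-1}\le\Wi{r-1}$ in Bruhat order, so $q\le p$ in dominance order.

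For right‑invariance, write $\fR_r(xy')=\langle\xi,\ x(y'\eta_q)\rangle$; it suffices to show $y'\eta_q=\eta_q$ for $y'\in N'(w)$, i.e.\ that $\line{\Wi{r-1}}^{-1}\,y'\,\line{\Wi{r-1}}$ fixes $v^+$. Length‑additivity of $w=\wi{r-1}(\Wi{r-1})^{-1}$ gives $\Inv((\Wi{r-1})^{-1})\subseteq\Inv(w)$, so $(\Wi{r-1})^{-1}\beta\in\Phi^+$ for each $\beta\in\Phi^+\setminus\Inv(w)$; as these $\beta$ index the root subgroups generating $N'(w)$, conjugation by a lift of $(\Wi{r-1})^{-1}$ carries $N'(w)$ into $N$, which fixes $v^+$.

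Left‑invariance is the main obstacle. For $y\in N(v)=N\cap\dv^{-1}N_-\dv$ we have $m_-:=\lline v\,y\,\lline v^{-1}\in N_-$; using $\lline{(\Vi{r-1})^{-1}}=\lline{\vi{r-1}}^{-1}\lline v$ and the identity $\lline u^{T}=\lline u^{-1}$, one rewrites
\[
\fR_r(yx)=\bigl\langle\,\xi',\ m_-\cdot(\lline v\,x\,\eta_q)\,\bigr\rangle,\qquad \fR_r(x)=\bigl\langle\,\xi',\ \lline v\,x\,\eta_q\,\bigr\rangle,
\]
where $\xi'=\langle\lline{\vi{r-1}}\,v^+,\ \cdot\,\rangle$ vanishes on every weight space of $V$ except $V_{\vi{r-1}\om_{i_r}}$. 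So it remains to prove $\bigl\langle\,\xi',\ (m_--\Id)(\lline v\,x\,\eta_q)\,\bigr\rangle=0$. Now $m_--\Id$ strictly lowers weights by nonzero sums of roots from $\Inv(v^{-1})$; the vectors $x\,\eta_q$ (for $x\in N$) span the Demazure submodule $V_{\Wi{r-1}}(\om_{i_r})\subseteq V$ generated by $\eta_q$; and $\xi'$ detects only the weight $\vi{r-1}\om_{i_r}$. Translating everything by $(\Vi{r-1})^{-1}$, the desired vanishing is equivalent to the combinatorial claim $(\star)$: for every nonzero $\eta\in\Z_{\geq 0}\cdot\Inv(v)$, the weight $\Vi{r-1}\om_{i_r}-\eta$ is not a weight of $V_{\Wi{r-1}}(\om_{i_r})$.

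Claim $(\star)$ is where the real work lies: it fails for the full module $V(\om_{i_r})$ in general, so it genuinely uses the Demazure truncation together with the combinatorics of the positive distinguished subexpression. I would prove it by first reducing to $\eta\in\Inv(v)$, then translating by $(\Vi{r-1})^{-1}$ into a statement about $\om_{i_r}$ and the Weyl group elements $(\Vi{r-1})^{-1}v$ and $(\Vi{r-1})^{-1}\Wi{r-1}$, and invoking the standard description of the weights of $V_u(\om_i)$ (via elements $u'\le u$ of $W$, or via Littelmann paths); the inputs are the two length‑additivity facts above together with the observation that the active letters of $\bv$ in positions $r,\dots,m$ form a reduced subword of the reduced word $s_{i_m}\cdots s_{i_r}$ for $\Wi{r-1}$, which is where the ``rightmost'' property of the subexpression enters. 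An alternative route I would try first, avoiding Demazure combinatorics, is to show directly that $\lline{\vi{r-1}}^{-1}m_-\lline{\vi{r-1}}$ acts on $\lline v\,x\,\eta_q$ without changing its $v^+$‑component, by checking that $\lline v\,x\,\eta_q$ has no weight component of the form $\vi{r-1}\om_{i_r}+\delta$ with $\delta\in\Inv(v^{-1})$ — again a sign computation with the pairings $\langle\om_{i_r},\beta^\vee\rangle$ controlled by length‑additivity and $\Vi{r-1}\le\Wi{r-1}$. Combining the two invariances then gives $\fR_r(yxy')=\fR_r\bigl(y(xy')\bigr)=\fR_r(xy')=\fR_r(x)$ for all $(y,y')\in N(v)\times N'(w)$.
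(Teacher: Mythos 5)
The paper itself does not prove this lemma\,---\,it is quoted from~\cite{Lec}\,---\,so your argument has to stand on its own. Your setup is correct: writing $\fR_r(x)=\langle\xi,x\eta_q\rangle$ with $\eta_q$ of extremal weight $\Wi{r-1}\om_{i_r}$ and $\xi$ supported on the extremal weight space of weight $\Vi{r-1}\om_{i_r}$ is fine, the right-invariance argument (conjugating $N'(w)$ into $N$ using $\Inv((\Wi{r-1})^{-1})\subseteq\Inv(w)$) is complete, and the reduction of left-invariance to your claim $(\star)$ is sound. But $(\star)$ is the entire content of the lemma, and you do not prove it; you only list tools you ``would'' use. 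Worse, those tools cannot suffice: length-additivity of $v=\vi{r-1}(\Vi{r-1})^{-1}$ and of $w=\wi{r-1}(\Wi{r-1})^{-1}$, the inequality $\Vi{r-1}\leq\Wi{r-1}$, and the fact that the active letters in positions $r,\dots,m$ form a reduced subword of $s_{i_m}\cdots s_{i_r}$ all hold for \emph{every} reduced subexpression of $\bw$ for $v$, whereas the lemma and $(\star)$ are false for non-rightmost ones. Concretely, take $G=\SL_3$, $\bw=(1,2,1)$, $v=s_1$, and the \emph{leftmost} reduced subexpression (active only in position $1$): all of your listed inputs hold, yet for $r=3$ one gets $\fR_3=\Delta^{\om_1}_{s_1\om_1}(x)=x_{12}$, which is not invariant under $N(v)=\{x_1(t)\}$; equivalently $\Vi{2}\om_1-\alpha_1=\om_1-\alpha_1$ \emph{is} a weight of the Demazure module $V_{s_1}(\om_1)$, so $(\star)$ fails. (For the positive distinguished subexpression, active in position $3$, one has instead $\fR_3=\Delta^{s_1\om_1}_{s_1\om_1}=1$.) So any proof of $(\star)$ must invoke the defining positivity property of the rightmost subexpression\,---\,that $\vi{k-1}s_{i_k}>\vi{k-1}$ for every skipped position $k\in\Jo$\,---\,and your sketch never does; the sentence attributing the entry of the ``rightmost'' property to the reduced-subword observation is mistaken, since that observation holds for all reduced subexpressions.

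Two smaller points in the same vein: the ``standard description of the weights of $V_u(\om_i)$ via elements $u'\leq u$'' only accounts for extremal weights, which exhaust all weights only in the minuscule case, so outside type $A$ the cited tool is not even a correct description; and the proposed ``first reducing to $\eta\in\Inv(v)$'' is unjustified, since knowing $p-\alpha$ is not a weight for each single $\alpha\in\Inv(v)$ does not formally rule out $p-\eta$ for sums $\eta\in\Z_{\geq0}\cdot\Inv(v)$. Since this invariance is precisely what the paper outsources to~\cite{Lec}, the honest options are to import Leclerc's argument or to prove $(\star)$ genuinely, with the positivity condition on $(\bv,\bw)\in\Red(v,w)$ as the main hypothesis rather than an afterthought.
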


Our next result gives an explicit procedure for converting an MR-parametrized element $g\in N\dv$ such that $gB_-\in\Rich_v^w$ into an element $\gc\in N\dv\cap \dv N$ satisfying $gB_-=\gc B_-$; cf.~\eqref{eq:NvN_to_Rich}.
\begin{lemma}\label{lemma:Xggc}
For any $g\in N\dv$, there exists a unique element $x\in N_-\cap \dv^{-1} N \dv$ such that 
\begin{equation*}%
  \gc:=gx\in N\dv\cap \dv N.
\end{equation*}
\end{lemma}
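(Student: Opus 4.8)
The plan is to use the factorization result of \cref{lemma:factor_NN'} applied to $u=v^{-1}$, together with the $LDU$-type decomposition of \eqref{eq:Gopm_Gomp}. Write $g=n\dv$ with $n\in N$. An element $x\in N_-\cap \dv^{-1}N\dv$ has the property that $\dv x\dv^{-1}\in N$, so $\gc:=gx=n\dv x=\bigl(n\cdot(\dv x\dv^{-1})\bigr)\dv\in N\dv$ automatically; the real content is to arrange $\gc\in\dv N$ as well, i.e.\ $\dv^{-1}\gc\in N$, equivalently $\dv^{-1}n\dv x\in N$. So the claim reduces to: for every $p:=\dv^{-1}n\dv\in \dv^{-1}N\dv$, there is a unique $x\in \dv^{-1}N\dv\cap N_-$ with $px\in N$ (and then one checks $px\in\dv^{-1}N\dv$ too, which is automatic since both $p$ and $x$ lie in $\dv^{-1}N\dv$).

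First I would invoke the third isomorphism in \cref{lemma:factor_NN'} with $u=v^{-1}$: the multiplication map
\[
  (\dv^{-1}N\dv\cap N)\times(\dv^{-1}N\dv\cap N_-)\xrasim \dv^{-1}N\dv
\]
is a bijection. Hence $p$ factors uniquely as $p=q\cdot x^{-1}$ with $q\in\dv^{-1}N\dv\cap N$ and $x^{-1}\in\dv^{-1}N\dv\cap N_-$; equivalently $px=q\in N$, and this $x\in\dv^{-1}N\dv\cap N_-$ is unique. Then $\gc=gx=n\dv x$, and since $\dv x\dv^{-1}=\dv x\dv^{-1}$, one has $\dv^{-1}\gc=\dv^{-1}n\dv\, x=px=q\in N$, so $\gc\in\dv N$; and $\gc=n\dv x=n\,\dv x\dv^{-1}\dv$ with $n\in N$ and $\dv x\dv^{-1}\in N$ (because $x\in\dv^{-1}N\dv$), so $\gc\in N\dv$. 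Thus $\gc\in N\dv\cap\dv N$, as required. Uniqueness of $x$ with the stated property follows from uniqueness in the factorization: if $gx'\in N\dv\cap\dv N$ for some $x'\in N_-\cap\dv^{-1}N\dv$, then $px'=\dv^{-1}gx'\dv\cdot$ wait—more simply, $px'=\dv^{-1}(gx')\in N$ forces the pair $(px',(x')^{-1})$ to be the unique factorization of $p$, whence $x'=x$.

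The only subtle point—and what I would expect to be the main obstacle—is making sure the bookkeeping with the representative $\dv$ is consistent: $N_-\cap\dv^{-1}N\dv$ and the whole argument must be independent of the choice of lift $\dv\in N_G(\H)$ of $v$, which holds because conjugation by a torus element preserves $N$, $N_-$, and all the groups in \eqref{eq:NN'_dfn} (this is exactly the remark following \eqref{eq:NN'_dfn}). Once that is in place, everything is a direct translation through \cref{lemma:factor_NN'}; no computation beyond the manipulations above is needed.
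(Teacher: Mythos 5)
Your proposal is correct and follows essentially the same route as the paper: the paper's proof also factorizes $\dv^{-1}g\in\dv^{-1}N\dv$ via the third isomorphism of \cref{lemma:factor_NN'} as $y_1y_2$ with $y_1\in\dv^{-1}N\dv\cap N$, $y_2\in\dv^{-1}N\dv\cap N_-$, and sets $\gc:=\dv y_1$, $x:=y_2^{-1}$, with uniqueness from reversing the procedure. The only cosmetic slip is that the isomorphism you display is the one for $u=v$ (not $u=v^{-1}$) in the notation of \cref{lemma:factor_NN'}; the displayed statement and the rest of the argument are exactly what is needed.
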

\begin{proof}
Using \cref{lemma:factor_NN'}, we factorize $\dv^{-1}g\in \dv^{-1}N\dv$ as a product $y_1y_2$ with $y_1\in \dv^{-1}N\dv \cap N$ and $y_2\in \dv^{-1}N\dv \cap N_-$. Setting $\gc:=\dv y_1$ and $x:=y_2^{-1}$ shows the existence part. Since the procedure can be reversed, the uniqueness part follows.
\end{proof}

The next lemma is well known; see e.g.~\cite[Equation~(4.33)]{GKL3}.
\begin{lemma}
Let $v, w\in W$. We have the following isomorphisms:
\begin{align}%
\label{eq:dec_w_R}  N_-\dw\cap  \dw N &\xrasim (B_-wB_-)/B_-, & g&\mapsto  g B_-;\\
\label{eq:dec_w_L}  \dw N_-\cap N\dw &\xrasim \BMX(B_-wB_-), & g&\mapsto B_- g ;
\end{align}
\end{lemma}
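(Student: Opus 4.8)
The plan is to prove both statements by a single orbit--map argument, which, incidentally, does not depend on the choice of lift $\dw\in N_G(\H)$ of $w$. I will give the details for~\eqref{eq:dec_w_R}; then~\eqref{eq:dec_w_L} is its mirror image (interchange left and right cosets, and $N$ with $N_-$), or equivalently follows by applying the anti-automorphism $g\mapsto g^{-1}$ of $G$ with $w$ replaced by $w^{-1}$, since $g\mapsto g^{-1}$ sends $N_-\dw\cap\dw N$ to $\dw^{-1}N_-\cap N\dw^{-1}$ and $(B_-wB_-)/B_-$ to $\BMX(B_-w^{-1}B_-)$.

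For~\eqref{eq:dec_w_R}, the starting point is the Bruhat--theoretic identity $B_-wB_-=N_-\dw B_-$, immediate from $B_-=\H N_-$ and $\dw\H=\H\dw$; in particular the orbit map $N_-\to(B_-wB_-)/B_-$, $n\mapsto n\dw B_-$, is surjective. The key step is to show that its restriction to the subgroup $N_-\cap\dw N\dw^{-1}$ is a bijection. Injectivity: if $n_1\dw B_-=n_2\dw B_-$ with $n_1,n_2\in N_-\cap\dw N\dw^{-1}$, then $n_2^{-1}n_1$ lies in $(N_-\cap\dw N\dw^{-1})\cap\dw B_-\dw^{-1}$, so conjugating by $\dw^{-1}$ puts it in $N\cap B_-=\{1\}$, whence $n_1=n_2$. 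Surjectivity: the $N_-$-analogue of \cref{lemma:factor_NN'} (obtained from it via $g\mapsto g^T$) factors any $n\in N_-$ as $n=n'n''$ with $n'\in N_-\cap\dw N\dw^{-1}$ and $n''\in N_-\cap\dw N_-\dw^{-1}$; since $n''\dw\in\dw N_-\subseteq\dw B_-$, we get $n\dw B_-=n'\dw B_-$. Finally, right multiplication by $\dw$ is a bijection from $N_-\cap\dw N\dw^{-1}$ onto $(N_-\cap\dw N\dw^{-1})\dw=N_-\dw\cap\dw N$, and composing with the orbit map gives exactly the asserted map $g\mapsto gB_-$.

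To upgrade this set--theoretic bijection to an isomorphism of varieties, I would realize the Schubert cell inside the standard affine chart $\dw N B_-/B_-\cong N$ of $G/B_-$: since $N_-\dw\cap\dw N=\dw(N\cap\dw^{-1}N_-\dw)$, its image corresponds in this chart to the closed subgroup $N\cap\dw^{-1}N_-\dw$ of $N$ (a product of root subgroups), which conjugates by $\dw$ to $N_-\cap\dw N\dw^{-1}$; unwinding the identifications recovers $g\mapsto gB_-$, so it is an isomorphism of varieties. (Equivalently, the $N_-$-analogue of the factorization in \cref{lemma:factor_NN'} is an isomorphism of affine varieties, which makes the inverse of $g\mapsto gB_-$ a composition of morphisms.) I do not expect any real obstacle here: this is classical Bruhat theory. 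The only points deserving a little care are the passage from a bijection of sets to an isomorphism of varieties, and---for~\eqref{eq:dec_w_L}---keeping track of the lift $\dw$, which is harmless precisely because the argument above never uses that $\dw$ is the standard lift.
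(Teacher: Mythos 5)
Your proof is correct. The paper does not actually prove this lemma---it is stated as well known with a citation to~\cite[Equation~(4.33)]{GKL3}---and your argument is precisely the standard Bruhat-theoretic one that the citation stands in for: write $B_-wB_-=N_-\dw B_-$, use the factorization $N_-=(N_-\cap\dw N\dw^{-1})(N_-\cap\dw N_-\dw^{-1})$ (the $N_-$-analogue of \cref{lemma:factor_NN'}) for surjectivity, and $N\cap B_-=\{1\}$ after conjugating by $\dw^{-1}$ for injectivity, with the chart $\dw NB_-/B_-\cong N$ upgrading the bijection to an isomorphism of varieties. Your observations that the statement is independent of the choice of lift $\dw$ and that~\eqref{eq:dec_w_L} follows from~\eqref{eq:dec_w_R} via $g\mapsto g^{-1}$ with $w$ replaced by $w^{-1}$ are both accurate and handle the only delicate points.
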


\begin{lemma}
For $w\in W$, $b\in B$, and $b_-\in B_-$, we have $\dw b \dw^{-1}\in \Gomp$, $\dw^{-1}b_-\dw\in\Gopm$, 
\begin{equation}\label{eq:Gauss_wBwi}
  [\dw b \dw^{-1}]_R^+\in \dw N \dw^{-1}\cap N, \quad\text{and}\quad  [\dw^{-1} b_- \dw]_L^+\in \dw^{-1} N_- \dw \cap N.
\end{equation}
In particular, for $g\in B_-wB_-$, we have
\begin{equation}  \label{eq:Gauss_dwi_g} 
 \dw^{-1}g\in \Gopm,  \quad\text{and}\quad [\dw^{-1}g]_L^+\in \dw^{-1}N_-\dw\cap N;
\end{equation}
\end{lemma}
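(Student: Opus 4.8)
The plan is to reduce the whole statement to two elementary factorization facts about the unipotent groups $\dw N\dw^{-1}$ and $\dw^{-1}N_-\dw$, and then simply read off the relevant Gauss components. First I would strip off the torus: writing $b=an$ with $a\in\H$, $n\in N$ (using $B=\H\ltimes N$) gives $\dw b\dw^{-1}=(\dw a\dw^{-1})(\dw n\dw^{-1})$ with $\dw a\dw^{-1}\in\H$, so it suffices to understand $\dw n\dw^{-1}$; symmetrically, writing $b_-=a'n_-$ with $a'\in\H$, $n_-\in N_-$ reduces the second assertion to understanding $\dw^{-1}n_-\dw$.

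The key claims I would establish are the factorizations
\[
  \dw N\dw^{-1}=(\dw N\dw^{-1}\cap N_-)\cdot(\dw N\dw^{-1}\cap N),\qquad
  \dw^{-1}N_-\dw=(\dw^{-1}N_-\dw\cap N)\cdot(\dw^{-1}N_-\dw\cap N_-),
\]
each with uniqueness of factorization. The first I would deduce from \cref{lemma:factor_NN'} applied to $w^{-1}$ (taking $\dw^{-1}$ as the chosen representative, which is legitimate since the subgroups involved do not depend on it), combined with the observation that applying $g\mapsto g^{-1}$ to a factorization reverses the order of the two factors while leaving each factor-group unchanged (each of $\dw N\dw^{-1}\cap N$ and $\dw N\dw^{-1}\cap N_-$ is closed under inversion, and their intersection is $N\cap N_-=\{1\}$). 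The second I would obtain by transposing the factorization $\dw^{-1}N\dw=(\dw^{-1}N\dw\cap N)(\dw^{-1}N\dw\cap N_-)$ from \cref{lemma:factor_NN'} and using $(\dw^{-1}N\dw)^T=\dw^{-1}N_-\dw$ together with $N^T=N_-$ and $\dw^T=\dw^{-1}$.

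Granting these, the rest is bookkeeping of Gauss decompositions. From $\dw n\dw^{-1}=n_-n_+$ with $n_-\in\dw N\dw^{-1}\cap N_-$, $n_+\in\dw N\dw^{-1}\cap N$, I would conclude $\dw b\dw^{-1}=(\dw a\dw^{-1})\,n_-n_+\in N_-\H N=\Gomp$, and, after conjugating $n_-$ past the torus element, read off $[\dw b\dw^{-1}]_R^+=n_+\in\dw N\dw^{-1}\cap N$. Symmetrically, $\dw^{-1}n_-\dw=n'_+n'_-$ gives $\dw^{-1}b_-\dw\in N\H N_-=\Gopm$ with $[\dw^{-1}b_-\dw]_L^+$ an $\H$-conjugate of $n'_+\in\dw^{-1}N_-\dw\cap N$, hence again in $\dw^{-1}N_-\dw\cap N$ since $\H$ normalizes both factors. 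For the ``in particular'' statement I would write $g=b_1\dw b_2$ with $b_1,b_2\in B_-$, so that $\dw^{-1}g=(\dw^{-1}b_1\dw)\,b_2$; applying the previous step to $h:=\dw^{-1}b_1\dw\in\Gopm$ and absorbing $b_2$ into the $B_-$-tail yields $\dw^{-1}g=[h]_L^+\cdot\bigl([h]_0^\pm[h]_R^-b_2\bigr)\in N\cdot B_-=\Gopm$ and $[\dw^{-1}g]_L^+=[h]_L^+\in\dw^{-1}N_-\dw\cap N$.

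I expect the only mildly delicate point to be setting up the two factorizations in the orders I need: \cref{lemma:factor_NN'} is stated in a single fixed order, so one has to be careful that inverting (resp.\ transposing) a product reverses the factors but, crucially, sends each factor-subgroup to the expected one — inversion fixes $\dw N\dw^{-1}\cap N$ and $\dw N\dw^{-1}\cap N_-$ setwise, while transposition interchanges $N\leftrightarrow N_-$ and carries $\dw^{-1}N\dw$ to $\dw^{-1}N_-\dw$. Once this is in place, every remaining step is a one-line manipulation.
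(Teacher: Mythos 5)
Your proof is correct and is essentially the paper's argument: both reduce to \cref{lemma:factor_NN'}, conjugate, and read off the Gauss components, and your treatment of the ``in particular'' part matches the paper's. The only cosmetic difference is that the paper factorizes the unipotent part of $b$ inside $N$ as $N(w)\cdot N'(w)$ and then conjugates by $\dw$, which yields your factorization $\dw N\dw^{-1}=(\dw N\dw^{-1}\cap N_-)\cdot(\dw N\dw^{-1}\cap N)$ in one step, whereas you obtain it (and its transposed analogue) via the inversion/transposition trick.
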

\begin{proof}
Let $b=ay$ with $a\in\H$ and $y\in N$. Using \cref{lemma:factor_NN'}, we factorize $y=y_1y_2$ with $y_1\in N\cap \dw^{-1} N_-\dw$ and $y_2\in N\cap \dw^{-1} N\dw$. Then
\begin{equation*}%
  \dw b\dw^{-1}=\dw ay_1\dw^{-1}\cdot \dw y_2\dw^{-1} \in B_-\cdot (\dw N\dw^{-1}\cap N).
\end{equation*}
This shows the first part of~\eqref{eq:Gauss_wBwi}, and the second part follows by a similar argument.

To show~\eqref{eq:Gauss_dwi_g}, let $b_-\in B_-$ be such that $g\in b_- wB_-$. By~\eqref{eq:Gauss_wBwi}, we have $\dw^{-1}b_-\dw\in (\dw^{-1} N_- \dw \cap N)\cdot B_-$, and thus 
\begin{equation*}%
  \dw^{-1}g\in \dw^{-1}b_-\dw B_-\subset (\dw^{-1} N_- \dw \cap N)\cdot B_-.\qedhere
\end{equation*}
\end{proof}

\subsection{Collision moves}
By~\cite[Equation~(2.13)]{FZ}, for each $t\in \Cast$ and $i\in I$, there exist  $t_+=1/t$, $a_+=t^{\alphacheck_i}\in \H$, and $t_-=-1/t$ satisfying 
\begin{equation}\label{eq:collision_x}
\ds_i^{-1} x_i(t)=x_i(t_-)a_+^{-1}y_i(t_+),\quad x_i(t)\ds_i^{-1}=y_i(t_+)a_+x_i(t_-),
\end{equation}
\begin{equation}\label{eq:collision_y}
\ds_iy_i(t)=y_i(t_-)a_+x_i(t_+),\quad y_i(t)\ds_i=x_i(t_+)a_+^{-1}y_i(t_-).
\end{equation}
Note that if $t >0$ then $t_+ >0$, $a_+\in\Htp$, and $t_- < 0$. 

We also record the following \emph{conjugation moves}.
\begin{lemma}\label{lemma:conj}
Let $w\in W$, $i\in I$, and $t\in\Cast$.
\begin{theoremlist}[label=\normalfont(\roman*)]
\item\label{conj1} If $ws_i<w$ then $\dw x_i(t)\dw^{-1}\in N_-$ and $\dw y_i(t)\dw^{-1}\in N$.
\item\label{conj2} If $ws_i>w$ then $\dw x_i(t)\dw^{-1}\in N$ and $\dw y_i(t)\dw^{-1}\in N_-$.
\item\label{conj3} For all $h\in N'(s_i)$, we have $y_i(t)hy_i(-t)\in N$.
\item\label{conj4} For all $h\in N$, we have $\ds_i^{-1} h \ds_i=h' y_i(t')$ for some $h'\in N$ and $t'\in \C$. 
\item\label{conj5} We have $\dw_0 x_{i^\ast}(-t) \dw_0^{-1}=y_{i}(t)$.
\end{theoremlist}
\end{lemma}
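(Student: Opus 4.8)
The plan is to treat the five parts in turn; parts~\itemref{conj1}--\itemref{conj4} reduce to standard facts about root subgroups, and the only truly delicate point is a sign appearing in~\itemref{conj5}. For~\itemref{conj1}--\itemref{conj2}, recall from~\eqref{eq:torus_conj} that $x_i(\C)$ and $y_i(\C)$ are the one-parameter root subgroups $U_{\alpha_i}$ and $U_{-\alpha_i}$, that $\dw\,U_\beta\,\dw^{-1}=U_{w\beta}$ for every root $\beta$ and every lift $\dw$ of $w$ (changing the lift multiplies by an element of $\H$, which normalizes each $U_\beta$), and that $U_\beta\subseteq N$ exactly when $\beta\in\Phi^+$ while $U_\beta\subseteq N_-$ exactly when $-\beta\in\Phi^+$. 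Since $ws_i<w$ is equivalent to $w\alpha_i\in\Phi^-$ (and $ws_i>w$ to $w\alpha_i\in\Phi^+$), and $w(-\alpha_i)=-w\alpha_i$, all four claims follow at once.

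For~\itemref{conj3} I would first observe that $N'(s_i)=N\cap\ds_i^{-1}N\ds_i$ equals the product $\prod_{\beta\in\Phi^+\setminus\{\alpha_i\}}U_\beta$ (the unipotent radical of the standard minimal parabolic attached to $i$): a positive root subgroup $U_\beta$ lies in $\ds_i^{-1}N\ds_i$ precisely when $s_i\beta\in\Phi^+$, i.e.\ precisely when $\beta\neq\alpha_i$. Chevalley's commutator relations then show that conjugating $U_\beta$, for $\beta\in\Phi^+\setminus\{\alpha_i\}$, by $y_i(t)\in U_{-\alpha_i}$ lands in the subgroup generated by the $U_{\beta-j\alpha_i}$, $j\ge 0$; since $\beta$ has a positive coefficient on some simple root other than $\alpha_i$, so does every root $\beta-j\alpha_i$, and hence $\beta-j\alpha_i\in\Phi^+\setminus\{\alpha_i\}$. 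Therefore $y_i(t)\,N'(s_i)\,y_i(-t)=N'(s_i)\subseteq N$. For~\itemref{conj4} I would apply \cref{lemma:factor_NN'} with $u=s_i$; since $N(s_i)=x_i(\C)$ by the same computation, every $h\in N$ factors as $h=h_2\,x_i(s)$ with $h_2\in N'(s_i)$ and $s\in\C$. Since $\ds_i$ normalizes $N'(s_i)$ (conjugation permutes the corresponding $U_\beta$'s, and $\ds_i^2\in\H$ normalizes $N$), we have $\ds_i^{-1}h_2\ds_i\in N$; combining this with the rank-one identity $\ds_i^{-1}x_i(s)\ds_i=y_i(-s)$ gives $\ds_i^{-1}h\ds_i=(\ds_i^{-1}h_2\ds_i)\,y_i(-s)$, which is of the asserted form $h'y_i(t')$.

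For~\itemref{conj5} I would choose a reduced word for $w_0$ ending in $s_{i^\ast}$ and write $\dw_0=\dw_1\ds_{i^\ast}$, where $w_1:=w_0s_{i^\ast}$ has $\ell(w_1)=\ell(w_0)-1$; note also $\ds_i\dw_1=\dw_0$, since prepending $s_i$ to a reduced word for $w_1$ yields one for $s_iw_1=w_0$. The rank-one identity $\ds_{i^\ast}x_{i^\ast}(-t)\ds_{i^\ast}^{-1}=y_{i^\ast}(t)$ reduces the claim to $\dw_1\,y_{i^\ast}(t)\,\dw_1^{-1}=y_i(t)$; as $w_1\alpha_{i^\ast}=-w_0\alpha_{i^\ast}=\alpha_i$, part~\itemref{conj1} shows the left-hand side lies in $U_{-\alpha_i}=y_i(\C)$, so it equals $y_i(ct)$ for some $c\in\Cast$. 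To pin down $c$ I would apply the generalized minor $\Delta^{s_i\om_i}_{\om_i}$ to both sides. The right-hand side gives $ct$, since $y_i(s)$ sends the highest weight vector of $V(\om_i)$ to $v_{\om_i}+s\,v_{s_i\om_i}$. For the left-hand side, using $\dw_1^{-1}=\ds_{i^\ast}\dw_0^{-1}$ and $\ds_i\dw_1=\dw_0$ rewrites $\Delta^{s_i\om_i}_{\om_i}(\dw_1 y_{i^\ast}(t)\dw_1^{-1})$ as $\dom\!\bigl(\dw_0\,y_{i^\ast}(t)\,\ds_{i^\ast}\,\dw_0^{-1}\bigr)$; expanding $y_{i^\ast}(t)\ds_{i^\ast}$ by the collision move~\eqref{eq:collision_y} and conjugating by $\dw_0$ (using~\eqref{eq:torus_conj}) puts this product in the form $n_-\cdot t^{\ach_i}\cdot n_+$ with $n_-\in N_-$ and $n_+\in N$, so by~\eqref{eq:dom_Nmp} it equals $(t^{\ach_i})^{\om_i}=t$. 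Hence $c=1$.

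The step I expect to be the genuine obstacle is exactly this last sign determination in~\itemref{conj5}: the equality $c=1$ (equivalently, the $-t$ rather than $+t$ in the statement) is forced by the chosen pinning conventions and is invisible to soft arguments, so it has to be extracted from a weight-zero matrix-coefficient computation of the sort above. As a consistency check, one verifies~\itemref{conj5} directly for $G=\SL_2$, where indeed $\ds_1x_1(-t)\ds_1^{-1}=y_1(t)$.
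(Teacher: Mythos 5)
Your proof is correct and essentially matches the paper's: parts (i)--(iii) are exactly the standard root-subgroup facts the paper cites, part (iv) uses the same $N'(s_i)\times N(s_i)$ factorization and rank-one identity, and part (v) pins down the scalar by evaluating $\Delta^{s_i\om_i}_{\om_i}$ via a collision move, conjugation by $\dw_0$, and the invariance~\eqref{eq:dom_Nmp}, just as the paper does (it routes through the inverse of~\eqref{eq:collision_x} and $\ds_i\dw_0=\dw_0\ds_{i^\ast}$ rather than your $\dw_1=\dw_0\ds_{i^\ast}^{-1}$ and~\eqref{eq:collision_y}, and evaluates $\Delta^{s_i\om_i}_{\om_i}(y_i(t))$ by another collision move instead of the highest-weight vector). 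Two harmless nitpicks: in (v) the containment $\dw_1 y_{i^\ast}(t)\dw_1^{-1}\in N_-$ is part (ii), not (i) (what you really use is $\dw U_\beta\dw^{-1}=U_{w\beta}$ anyway), and in (iii) the Chevalley commutator produces roots $k\beta-j\alpha_i$ with $k\ge1$, not only $k=1$, which changes nothing in the conclusion.
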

\begin{proof}
Parts~\itemref{conj1} and~\itemref{conj2} are well known and may be deduced e.g. from~\cite[Section~4.2]{GKL3}. Part~\itemref{conj3} is also well known and follows from~\cite[Lemma~4.3]{GKL3}. For part~\itemref{conj4}, we use \cref{lemma:factor_NN'} to factorize $h=h_1h_2$ with $h_1\in N\cap \ds_i N\ds_i^{-1}$ and $h_2\in N\cap \ds_i N_-\ds_i^{-1}$. Then $h':=\ds_i^{-1}h_1\ds_i\in N$ and $\ds_i^{-1}h_2\ds_i=y_i(t')$, as expected.

For part~\itemref{conj5}, it is clear that we have $\dw_0 x_{i^\ast}(-t) \dw_0^{-1}=y_{i}(t')$ for some $t'\in \C$. Applying~\eqref{eq:dom_dfn}, we get
\begin{equation*}%
  \Delta^{s_{i}\om_{i}}_{\om_{i}}(\dw_0 x_{i^\ast}(-t) \dw_0^{-1})=
\Delta^{\om_{i}}_{\om_{i}}(\ds_i\dw_0 x_{i^\ast}(-t) \dw_0^{-1})=
\Delta^{\om_{i}}_{\om_{i}}(\dw_0\ds_{i^\ast} x_{i^\ast}(-t) \dw_0^{-1}).
\end{equation*}
Taking the inverse of the second equation in~\eqref{eq:collision_x}, we find $\ds_{i^\ast} x_{i^\ast}(-t)=x_{i^\ast}(t_+) a_+^{-1}y_{i_\ast}(t_-)$, where $a_+=t^{\alphacheck_{i^\ast}}$, $t_+=1/t$, and $t_-=-1/t$. Applying part~\itemref{conj2}, we find that 
\begin{equation*}%
  \dw_0x_{i^\ast}(t_+) a_+^{-1}y_{i_\ast}(t_-)\dw_0^{-1}\in N_-\dw_0 a_+^{-1} \dw_0^{-1} N.
\end{equation*}
 By~\eqref{eq:torus_conj} and~\eqref{eq:dom_Nmp}, we get
\begin{equation*}%
\Delta^{\om_{i}}_{\om_{i}}(\dw_0\ds_{i^\ast} x_{i^\ast}(-t) \dw_0^{-1})=\dom(\dw_0 a_+^{-1} \dw_0^{-1})=(\dw_0 a_+^{-1} \dw_0^{-1})^{\om_i}=t^{\<\om_{i^\ast},\alphacheck_{i^\ast}\>}=t.
\end{equation*}
On the other hand, by~\eqref{eq:collision_y} and~\eqref{eq:dom_Nmp},
\begin{equation*}%
  \Delta^{s_{i}\om_{i}}_{\om_{i}}(y_{i}(t))= \dom(\ds_i y_i(t))=\dom(y_i(t_-) t^{\alphacheck_i} x_i(t_+))=\dom(t^{\alphacheck_i})=t^{\<\om_{i},\alphacheck_{i}\>}=t.
\end{equation*}
This finishes the proof of part~\itemref{conj5}.
\end{proof}

\section{Basic properties of the twist}\label{sec:basic}
The goal of this section is to prove that $\twist_v^w$ gives a well-defined automorphism of $\Rich_v^w$; see \cref{cor:twist_well_def}. Since $\twist_v^w$ is defined as the composition of the chiral map $\chiv$ and the pre-twist $\tpre_v^w$, we need to study these maps first. Recall that the pre-twist is induced by the map $g\mapsto \dv[g^T\dw]_L^+$ introduced in \cref{prop:pre_pre_twist}.
\begin{proof}[Proof of \cref{prop:pre_pre_twist}]
  Let $g\in\capBWB(N\dv)$. Since $g\in B_-wB_-$, by~\eqref{eq:Gauss_dwi_g}, we have $\dw^{-1}g\in \Gopm$, and thus $g^T\dw\in\Gopm$. In particular, $[g^T\dw]_L^+$ is well defined. Set $h:=\dv[g^T\dw]_L^+ \in\dv N$. By definition, $[g^T\dw]_L^+\in g^T\dw \cdot B_-$. Since $g\in N\dv$, we find
\begin{equation*}%
  h=\dv[g^T\dw]_L^+\in \dv g^T\dw \cdot B_-\subset \dv(\dv^{-1}N_-)\dw\cdot B_-\subset B_-wB_-.
\end{equation*}
We have shown that $h\in \capBWB(\dv N)$. A similar argument shows that $[\dw h^T]_R^+$ is well defined and that $[\dw h^T]_R^+\dv\in \capBWB(N\dv)$.  It remains to show that the two maps are inverses of each other, i.e., that $g=[\dw h^T]_R^+\dv$.

Write
\begin{equation*}%
  \dw^{-1}g=g_+g_0g_-,\quad\text{where}\quad (g_+,g_0,g_-)\in N\times \H\times N_-.
\end{equation*}
By~\eqref{eq:dec_w_R}, we have $g_+\in \dw^{-1}N_-\dw\cap N$. By definition, we have $h=\dv g_-^T$. We find

\begin{equation*}%
  \dw h^T=\dw g_-\dv^{-1}=\dw (g_0^{-1}g_+^{-1}\dw^{-1}g)  \dv^{-1}=(\dw g_0^{-1}\dw^{-1}) \cdot (\dw g_+^{-1}\dw^{-1})\cdot (g  \dv^{-1}).
\end{equation*}
Since $g_0\in \H$, we have $\dw g_0^{-1}\dw^{-1}\in \H$. Since $g_+\in \dw^{-1}N_-\dw\cap N$, we have $\dw g_+^{-1}\dw^{-1}\in N_-$.  Thus, $\dw h^T\in B_-\cdot (g\dv^{-1})$. Since $g\in N\dv$, we have $[\dw h^T]_R^+=g\dv^{-1}$, so indeed $[\dw h^T]_R^+\dv=g$.
\end{proof}

Our next result will be used to show that the right pre-twist  $\tpre_v^w$ is well defined.
\begin{lemma}\label{lemma:pre_twist_well_def}
Let $g_1,g_2\in \capBWB(N\dv)$ be such that $g_1B_-=g_2B_-$. Let $h_1:=\dv[g_1^T\dw]_L^+$ and $h_2:=\dv[g_2^T\dw]_L^+$. Then $B_-h_1=B_-h_2$.
\end{lemma}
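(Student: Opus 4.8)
The plan is to reduce the statement to the fact that $g\mapsto[g]_L^+$ is equivariant under left multiplication by $N$, together with a conjugation bookkeeping for~$\dv$.

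First I would record the relation between $g_1$ and $g_2$. Since $g_1B_-=g_2B_-$, set $b:=g_1^{-1}g_2\in B_-$, so $g_2=g_1b$. Writing $g_i=n_i\dv$ with $n_i\in N$ (possible as $g_i\in N\dv$ by hypothesis), we get $b=\dv^{-1}(n_1^{-1}n_2)\dv\in\dv^{-1}N\dv$, hence $b\in B_-\cap\dv^{-1}N\dv$. The key preliminary claim is that in fact $b\in N_-$. To prove it I would invoke the factorization $(\dv^{-1}N\dv\cap N)\times(\dv^{-1}N\dv\cap N_-)\xrasim\dv^{-1}N\dv$ from \cref{lemma:factor_NN'}: writing $b=b'b''$ with $b'\in\dv^{-1}N\dv\cap N$ and $b''\in\dv^{-1}N\dv\cap N_-$, one has $b'=b(b'')^{-1}\in B_-\cap N=\{1\}$, so $b=b''\in N_-\cap\dv^{-1}N\dv$.

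Next I would transport this through the transpose. Using $\dv^T=\dv^{-1}$ and $N_-^T=N$, we obtain $b^T\in N\cap\dv^{-1}N_-\dv=N(v)$. Now $g_2^T\dw=b^T(g_1^T\dw)$; since $g_1\in B_-wB_-$ we have $g_1^T\dw\in\Gopm$ (exactly as in the proof of \cref{prop:pre_pre_twist}), and since $b^T\in N\subset B$, uniqueness in the factorization $N\times\H\times N_-\xrasim\Gopm$ of~\eqref{eq:Gopm_Gomp} gives $[g_2^T\dw]_L^+=b^T[g_1^T\dw]_L^+$. Therefore
\[
h_2=\dv[g_2^T\dw]_L^+=\dv\,b^T[g_1^T\dw]_L^+=(\dv\,b^T\dv^{-1})\,h_1 ,
\]
and since $b^T\in\dv^{-1}N_-\dv$ the conjugate $\dv\,b^T\dv^{-1}$ lies in $N_-\subset B_-$. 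Hence $B_-h_2=B_-h_1$, as claimed.

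I expect the only non‑formal point to be the passage from $b\in B_-\cap\dv^{-1}N\dv$ to $b\in N_-$; everything afterward is direct. (An alternative for that step: $b$ is unipotent, being conjugate into $N$, and the quotient morphism $B_-\to\H$ carries unipotents to $1$; but the argument via \cref{lemma:factor_NN'} stays within the tools already developed.)
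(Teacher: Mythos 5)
Your proof is correct and takes essentially the same route as the paper: the paper obtains $g_2=g_1x$ with $x\in N_-\cap\dv^{-1}N\dv$ (citing \cref{lemma:Xggc}) and then performs exactly your computation $h_2=\dv x^T\dv^{-1}\cdot h_1$ with $\dv x^T\dv^{-1}\in N_-$, using $x^T\in N$ to pull it through $[\cdot]_L^+$. Your direct derivation of $b\in N_-$ via \cref{lemma:factor_NN'} and $B_-\cap N=\{1\}$ is just an inlined version of the factorization argument underlying \cref{lemma:Xggc}, so no genuine difference in method.
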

\begin{proof}
Since $g_1,g_2\in N\dv$ and $g_1B_-=g_2B_-$, by \cref{lemma:Xggc}, we have $g_2=g_1x$ for some $x\in \dv^{-1}N\dv\cap N_-$. Since $x^T\in N$, we have
\begin{equation*}%
  h_2=\dv[g_2^T\dw]_L^+=\dv[x^Tg_1^T\dw]_L^+=\dv x^T[g_1^T\dw]_L^+=\dv x^T\dv^{-1} \cdot \dv[g_1^T\dw]_L^+=\dv x^T\dv^{-1}\cdot h_1.
\end{equation*}
Since $x^T\in \dv^{-1}N_-\dv$, we find $\dv x^T\dv^{-1}\in N_-$, and thus $h_2\in B_-h_1$.
\end{proof}

\begin{corollary}
The right and left pre-twists $\tpre_v^w$ and $\tpreL_v^w$ are well-defined mutually inverse isomorphisms between $\Rich_v^w$ and $\LRich_v^w$.
\end{corollary}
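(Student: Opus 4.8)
The plan is to derive the corollary from \cref{prop:pre_pre_twist}, \cref{lemma:pre_twist_well_def}, and the left-handed mirror of the latter, together with the fact that the relevant quotient maps are surjective and admit algebraic sections. Write $\Phi\colon g\mapsto\dv[g^T\dw]_L^+$ and $\Psi\colon h\mapsto[\dw h^T]_R^+\dv$ for the mutually inverse isomorphisms $\capBWB(N\dv)\xrasim\capBWB(\dv N)$ of \cref{prop:pre_pre_twist}. From the Bruhat decomposition, $BvB_-=N\dv B_-$ and $B_-vB=B_-\dv N$, so the maps $p\colon\capBWB(N\dv)\to\Rich_v^w$, $g\mapsto gB_-$, and $q\colon\capBWB(\dv N)\to\LRich_v^w$, $h\mapsto B_-h$, are surjective. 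By \cref{dfn:pre_twist}, ``$\tpre_v^w$ is well defined'' and ``$\tpreL_v^w$ is well defined'' unwind to the identities $\tpre_v^w\circ p=q\circ\Phi$ and $\tpreL_v^w\circ q=p\circ\Psi$, i.e.\ to the requirement that $\Phi$ send $p$-fibers into $q$-fibers and that $\Psi$ send $q$-fibers into $p$-fibers.

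The first of these is exactly \cref{lemma:pre_twist_well_def}. For the second, suppose $h_1,h_2\in\capBWB(\dv N)$ with $B_-h_1=B_-h_2$; then $c:=h_2h_1^{-1}$ is a unipotent element lying in both $\dv N\dv^{-1}$ and $B_-$, hence $c\in\dv N\dv^{-1}\cap N_-$ and $c^T\in\dv N_-\dv^{-1}\cap N$. Since $c^T\in N$, it can be absorbed into the $N$-factor of the $N_-\H N$-factorization, giving $[\dw h_2^T]_R^+=[\dw h_1^T c^T]_R^+=[\dw h_1^T]_R^+ c^T$, so $\Psi(h_2)=\Psi(h_1)\,(\dv^{-1}c^T\dv)$ with $\dv^{-1}c^T\dv\in N_-\cap\dv^{-1}N\dv\subseteq N_-$; therefore $\Psi(h_1)B_-=\Psi(h_2)B_-$, as needed. (This is the mirror of the proof of \cref{lemma:pre_twist_well_def}, and can alternatively be deduced from it using the anti-automorphism $x\mapsto x^\iota$, which swaps $G/B_-$ with $\BG$ and $[\,\cdot\,]_L^+$ with $[\,\cdot\,]_R^+$.)

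Granting that both maps are well defined, they are mutually inverse for a formal reason: for $g\in\capBWB(N\dv)$,
\[
  \tpreL_v^w\bigl(\tpre_v^w(gB_-)\bigr)=\tpreL_v^w\bigl(q(\Phi(g))\bigr)=p\bigl(\Psi(\Phi(g))\bigr)=p(g)=gB_-,
\]
which, by surjectivity of $p$, gives $\tpreL_v^w\circ\tpre_v^w=\id_{\Rich_v^w}$, and symmetrically $\tpre_v^w\circ\tpreL_v^w=\id_{\LRich_v^w}$. To upgrade these bijections to isomorphisms of varieties, use \eqref{eq:NvN_to_Rich}: the inverse of the isomorphism $\capBWB(N\dv\cap\dv N)\xrasim\Rich_v^w$, composed with the closed embedding $\capBWB(N\dv\cap\dv N)\hookrightarrow\capBWB(N\dv)$, is an algebraic section $s$ of $p$, so $\tpre_v^w=q\circ\Phi\circ s$ is a morphism; the same argument with a section of $q$ shows $\tpreL_v^w$ is a morphism, and mutually inverse morphisms are isomorphisms.

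I expect the only genuinely delicate point to be the mirror of \cref{lemma:pre_twist_well_def}: the computation is symmetric to the one already carried out, but one must track the transposes and keep straight which conjugate of $N$ or $N_-$ each factor lies in (here $\dv N\dv^{-1}\cap N_-$ and $\dv N_-\dv^{-1}\cap N$, in place of $\dv^{-1}N\dv\cap N_-$ and $\dv^{-1}N_-\dv\cap N$). Everything else is bookkeeping around \cref{prop:pre_pre_twist} and \cref{lemma:Xggc}, which carry the substantive content.
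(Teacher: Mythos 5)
Your proposal is correct and takes essentially the same route as the paper: well-definedness of $\tpre_v^w$ via \cref{lemma:pre_twist_well_def}, well-definedness of $\tpreL_v^w$ by the mirror computation (which the paper dismisses as ``a similar argument'' and which you spell out correctly, including the valid step that a unipotent element of $B_-$ lies in $N_-$ so that $c\in\dv N\dv^{-1}\cap N_-$), and the image and mutual-inverse statements from \cref{prop:pre_pre_twist}. Your extra remark about an algebraic section of $p$ (via the embedding $\capBWB(N\dv\cap\dv N)\hookrightarrow\capBWB(N\dv)$) to see that the maps are morphisms is a fine, if implicit-in-the-paper, bookkeeping point.
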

\begin{proof}
Each point of $\Rich_v^w$ is of the form $gB_-$ for some $g\in\capBWB(N\dv)$ in view of~\eqref{eq:NvN_to_Rich}. However, the projection map $\capBWB(N\dv)\to\Rich_v^w$, $g\mapsto gB_-$ is surjective but in general not injective. \Cref{lemma:pre_twist_well_def} shows that $\tpre_v^w(gB_-)$ does not depend on the choice of the representative $g\in\capBWB(N\dv)$, and thus $\tpre_v^w$ is well defined on $\Rich_v^w$. By \cref{prop:pre_pre_twist}, its image is contained inside $\LRich_v^w$.  A similar argument shows that the left pre-twist $\tpreL_v^w$ is well defined on $\LRich_v^w$ and has image contained inside $\Rich_v^w$. It follows from \cref{prop:pre_pre_twist} that the two maps are mutually inverse isomorphisms.
\end{proof}

It follows from~\eqref{eq:NvN_to_Rich} that the chiral map $\chiv$ restricts to an isomorphism $\RichL_v^w\xrasim\Rich_v^w$, and thus we get the following result.
\begin{corollary}\label{cor:twist_well_def}
The right and left twist maps $\twist_v^w$ and $\twistL_v^w$ are well-defined mutually inverse automorphisms of $\Rich_v^w$.
\end{corollary}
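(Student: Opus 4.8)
The plan is to deduce \cref{cor:twist_well_def} formally from the results already established in this section, since essentially all of the content has been packaged beforehand. By \cref{dfn:pre_twist} we have $\twist_v^w = \chiv \circ \tpre_v^w$ and $\twistL_v^w = \Ltpre_v^w \circ \chivi$. The corollary immediately preceding states that $\tpre_v^w\colon \Rich_v^w \xrasim \LRich_v^w$ and $\Ltpre_v^w\colon \LRich_v^w \xrasim \Rich_v^w$ are mutually inverse isomorphisms (this in turn rests on \cref{prop:pre_pre_twist} and \cref{lemma:pre_twist_well_def}), while~\eqref{eq:NvN_to_Rich} gives that $\chiv\colon \LRich_v^w \xrasim \Rich_v^w$ is an isomorphism with inverse $\chivi\colon \Rich_v^w \xrasim \LRich_v^w$.

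First I would note that $\twist_v^w$ is a composite of isomorphisms $\Rich_v^w \xrasim \LRich_v^w \xrasim \Rich_v^w$ (apply $\tpre_v^w$, then $\chiv$), hence an automorphism of $\Rich_v^w$; by the mirror-image reasoning, $\twistL_v^w$ is a composite $\Rich_v^w \xrasim \LRich_v^w \xrasim \Rich_v^w$ (apply $\chivi$, then $\Ltpre_v^w$), hence also an automorphism. It then remains to check that the two are mutually inverse, which is a one-line cancellation using associativity: $\twistL_v^w \circ \twist_v^w = \Ltpre_v^w \circ (\chivi \circ \chiv) \circ \tpre_v^w = \Ltpre_v^w \circ \tpre_v^w = \id_{\Rich_v^w}$, and symmetrically $\twist_v^w \circ \twistL_v^w = \chiv \circ (\tpre_v^w \circ \Ltpre_v^w) \circ \chivi = \chiv \circ \chivi = \id_{\Rich_v^w}$.

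I do not expect any real obstacle here: the substance lives in \cref{prop:pre_pre_twist}, \cref{lemma:pre_twist_well_def}, and the identification~\eqref{eq:NvN_to_Rich}. The only point deserving a little care is bookkeeping of domains and codomains — one must track that $\tpre_v^w$ and $\chivi$ land in the left-sided quotient $\BG$ (more precisely, in $\LRich_v^w$) whereas $\chiv$ and $\Ltpre_v^w$ return to $G/B_-$, so that the displayed composites are in fact defined. Once the types are matched up, the argument is purely formal.
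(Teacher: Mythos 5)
Your argument is correct and is essentially the paper's own proof: the paper likewise observes that the pre-twists are well-defined mutually inverse isomorphisms (the preceding corollary) and that $\chiv$ restricts to an isomorphism $\LRich_v^w\xrasim\Rich_v^w$ by~\eqref{eq:NvN_to_Rich}, and then composes. Your explicit cancellation of the composites just spells out what the paper leaves implicit.
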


\section{Marsh--Rietsch Chamber Ansatz}\label{sec:MR_chamber_ansatz}
Our goal is to recall the results of~\cite{MR} and connect them to our Chamber Ansatz formulas. The following result appears in~\cite[Theorem~7.1(1)]{MR}.  We shall compare it to \cref{thm:L_chamber_ans}.

\begin{theorem}[Marsh--Rietsch Chamber Ansatz~\cite{MR}]\label{thm:MR_chamber_ans}
Let $v\leq w$ in $W$ and $(\bv,\bw)\in\Red(v,w)$. Let $g:=\gt\in\capBWB(N\dv)$ be MR-parametrized, and let $z\in N_-$ be any element satisfying $gB_-=zw B_-$. Then for all $r\in\Jo$, we have
\begin{equation}\label{eq:MR_chamber_ans}
  t_r=\frac{ \prod_{j\neq i_r^\ast} \Delta^{\wi r w_0\om_j}_{\vi rw_0\om_j}(z^T)^{-a_{j,i_r^\ast}}  }
{\Delta^{\wi r w_0\om_{i_r^\ast}}_{\vi r w_0\om_{i_r^\ast}}(z^T) \Delta^{\wi{r-1}w_0\om_{i_r^\ast}}_{\vi{r-1}w_0\om_{i_r^\ast}}(z^T)   }.
\end{equation}
Conversely, for each $j\in[m]$, we have 
\begin{equation}\label{eq:MR_chamber_ans_inv}
  \fL_j(z^T)=\prod_{r\in\Jo:\, r\leq j } t_r^{-\<\om_{i_j},s_{i_j}s_{i_{j-1}}\cdots s_{i_{r+1}} \ach_{i_r}\>}.
\end{equation}
\end{theorem}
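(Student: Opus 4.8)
The plan is to recover the statement by transporting~\cite[Theorem~7.1(1)]{MR} from the $G/B$-conventions of~\cite{MR} to the $G/B_-$-conventions used here. Recall that~\cite{MR} fix a reduced word $\bw$ for $w$ together with its positive distinguished subexpression $\bv$ for $v$, parametrize the open Deodhar stratum of the Richardson variety inside $G/B$ by an algebraic torus in the manner of~\eqref{eq:MR}, and recover each parameter $t_r$ as a Laurent monomial in certain generalized minors evaluated at a unipotent representative of the corresponding flag. To pass to $G/B_-$ we apply the involutive automorphism $x\mapsto x^\theta$ of~\eqref{eq:inv_theta}: it interchanges $B$ with $B_-$ and $N$ with $N_-$, sends $x_i(t)\mapsto y_i(t)$, and sends $\ds_i\mapsto\ds_i^{-1}$ and $\dw\mapsto\line w$. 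This is exactly the translation already invoked in~\cref{rmk:inv_TNN} whenever the results of~\cite{MR,Ing} are compared with the present setup.

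The first step is to match the two parametrizations. Under $x\mapsto x^\theta$ the MR-parametrized element $g=\gt$ of~\eqref{eq:MR}, an ordered product of the factors $\ds_{i_r}$ ($r\notin\Jo$) and $x_{i_r}(t_r)$ ($r\in\Jo$), becomes the ordered product of the factors $\ds_{i_r}^{-1}$ and $y_{i_r}(t_r)$, which is a parametrizing element of the type considered in~\cite{MR}; likewise the relation $gB_-=zwB_-$ with $z\in N_-$ becomes $g^\theta B=z^\theta\line wB$ with $z^\theta\in N$, giving the corresponding unipotent representative in the $G/B$-picture. (If~\cite{MR} use the opposite handedness for this representative, one additionally composes with $x\mapsto x^{-1}$ and reverses the reduced word, i.e.\ replaces $(v,w)$ by $(v^{-1},w^{-1})$; the statement is invariant under this symmetry of the Deodhar decomposition.) The second step is to identify the minors: since $z^T=(z^\theta)^\iota$, the identity~\eqref{eq:dom_iota} rewrites the minors of the $G/B$-representative occurring in~\cite[Theorem~7.1(1)]{MR} as the generalized minors $\Delta^{\wi rw_0\om_j}_{\vi rw_0\om_j}(z^T)$ and $\Delta^{\wi{r-1}w_0\om_j}_{\vi{r-1}w_0\om_j}(z^T)$ appearing in~\eqref{eq:MR_chamber_ans}; the $w_0$-twist of the weights and the relabeling $i_r\mapsto i_r^\ast$ are precisely the effect of passing between $z^\theta$ and $z^T$, and the exponents match because $\ast$ is a Dynkin diagram automorphism, so $a_{j,i_r^\ast}=a_{j^\ast,i_r}$.

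For the converse identity~\eqref{eq:MR_chamber_ans_inv}, the plan is a direct computation with the parametrization~\eqref{eq:MR}, which is the ``easy'' half of a Chamber Ansatz and requires no inversion. Repeatedly applying the collision moves~\eqref{eq:collision_x} and~\eqref{eq:collision_y} rewrites $\gt$, modulo right multiplication by $B_-$, as $z\dw$ for an explicit $z\in N_-$ whose factorization produces a torus factor $t_r^{\ach_{i_r}}$ at each step $r\in\Jo$. Evaluating $\fL_j(z^T)=\Delta^{\wi jw_0\om_{i_j^\ast}}_{\vi jw_0\om_{i_j^\ast}}(z^T)$ by means of the multiplicativity~\eqref{eq:dom_H} of generalized minors under left multiplication by $\H$, the conjugation moves of~\cref{lemma:conj}, and the vanishing of unipotent factors~\eqref{eq:dom_Nmp}, one is left with a monomial in the $t_r$ with $r\le j$, and the exponent $-\<\om_{i_j},s_{i_j}s_{i_{j-1}}\cdots s_{i_{r+1}}\ach_{i_r}\>$ is the weight by which $t_r^{\ach_{i_r}}$ is twisted when pushed past the later reflections, by~\eqref{eq:torus_conj}. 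Alternatively,~\eqref{eq:MR_chamber_ans_inv} is already contained in~\cite[Theorem~7.1]{MR} and may simply be quoted after the same translation.

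The main difficulty here is not conceptual but is the careful reconciliation of conventions: one must pin down exactly how the combinatorial conventions of~\cite{MR} (the orientation of $\bw$, the set playing the role of $\Jo$, and the choice of unipotent representative) interact with the three involutions $T,\iota,\theta$, so that the $w_0$-twists and the relabelings $i_r\mapsto i_r^\ast$ come out consistently in \emph{both} the numerator and the denominator of~\eqref{eq:MR_chamber_ans}. One must also verify well-definedness: the element $z$ is determined only up to right multiplication by $N_-\cap\dw B_-\dw^{-1}$, and every generalized minor in~\eqref{eq:MR_chamber_ans} must be shown to be unaffected by this ambiguity\,---\,equivalently, to descend to a regular function on $\Rich_v^w$\,---\,which is established in~\cite{MR} and is also a consequence of the double invariance recorded in~\cref{lemma:Lec_NN'} under the $\theta$-translation.
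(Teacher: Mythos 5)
Your proposal matches the paper's treatment: the paper does not reprove this statement but simply quotes it as~\cite[Theorem~7.1(1)]{MR}, translated from $G/B$ to $G/B_-$ via the involution $x\mapsto x^\theta$ as indicated in \cref{rmk:inv_TNN}, which is exactly the convention-transport your first paragraph carries out (and the well-definedness in $z$ is likewise taken from~\cite[Theorem~7.1]{MR}, as the paper notes in the proof of \cref{thm:L_chamber_ans}). Your extra direct verification of~\eqref{eq:MR_chamber_ans_inv} is harmless but unnecessary, since it too is part of the quoted result.
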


\begin{proof}[Proof of \cref{thm:L_chamber_ans}]
Let $\gc\in \capBWB(N\dv\cap\dv N)$ and $\yL:=[\dw \gc^T]_R^+$ be as in \cref{thm:L_chamber_ans}. 
By~\cite[Theorem~7.1]{MR}, the right-hand side of~\eqref{eq:MR_chamber_ans} does not depend on the choice of the element $z\in N_-$ satisfying $gB_-=zwB_-$. Our goal is to show that the element $z:=\yLT$ satisfies these assumptions. Clearly, $\yL\in N$, so $z\in N_-$. We concentrate on showing that $gB_-=\yLT wB_-$.

Since $\gc\in B_-wB_-$, by~\eqref{eq:Gauss_dwi_g}, we have $\dw^{-1}\gc\in\Gopm$, and thus $\gc^T\dw\in\Gopm$. Moreover,~\eqref{eq:Gauss_dwi_g} implies that
\begin{equation}\label{eq:gcT_dw_in_B_Ym}
  \gc^T\dw=b \Ym \quad\text{for some $b\in B$ and $\Ym\in \dw^{-1}N\dw\cap N_-$.}
\end{equation}

By definition, $\gc B_-=gB_-$. Thus
\begin{equation}\label{eq:BgT=BYmdwi}
  Bg^T=B\gc^T=B\Ym\dw^{-1}.
\end{equation}
On the other hand, conjugating~\eqref{eq:gcT_dw_in_B_Ym} by $\dw$ and using $\dw\Ym\dw^{-1}\in N$, we find
\begin{equation*}%
  \dw\gc^T= \dw b\dw^{-1}\cdot \dw \Ym\dw^{-1},\quad \yL=[\dw\gc^T]_R^+=[\dw b\dw^{-1}]_R^+\cdot \dw \Ym\dw^{-1}.
\end{equation*}
 Let $b':=[\dw b\dw^{-1}]_R^+$. By~\eqref{eq:Gauss_wBwi}, we have $b'\in \dw N \dw^{-1}\cap N$.  In particular, $Bw^{-1} b'=Bw^{-1}$. Therefore
\begin{equation}\label{eq:BYmdwi=BwiyL}
  B\Ym \dw^{-1}=Bw^{-1}\cdot \dw \Ym\dw^{-1}=Bw^{-1}b'\cdot \dw\Ym\dw^{-1}=Bw^{-1}\yL.
\end{equation}
Combining~\eqref{eq:BgT=BYmdwi} with~\eqref{eq:BYmdwi=BwiyL}, we get $Bg^T=Bw^{-1}\yL$, and thus $gB_-=\yLT wB_-$.
\end{proof}

\section{Chiral and reversal maps}\label{sec:chiral}
The goal of \cref{sec:chiral_tnn_proof} is to prove \cref{thm:chiral_tnn}. In \cref{sec:reversal_maps}, we study \emph{reversal maps} which swap the roles of $(v,w)$ and $(ww_0,vw_0)$. In \cref{sec:chiral_op}, we use them to introduce the \emph{opposite chiral map} which will be later used in \cref{sec:twist-TNN} to show that the twist map preserves total positivity.

\subsection{The chiral map preserves total positivity}\label{sec:chiral_tnn_proof}
\begin{lemma}\label{lemma:y_i(t)gB_-_in_Rtp}
Let $v\leq w$ in $W$ and $gB_-\in \Rtp_v^w$. Let $i\in I$ be such that $s_iv<v$. Then 
\begin{equation*}%
  y_i(t)gB_-\in \Rtp_{s_iv}^w \quad\text{for all $t>0$.}
\end{equation*}
\end{lemma}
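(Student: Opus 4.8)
Proof plan for Lemma 5.1 ($y_i(t)gB_- \in \Rtp_{s_iv}^w$ when $s_iv < v$).

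The plan is to verify the two separate facts packaged into the statement: (a) that $y_i(t)gB_-$ lies in the open Richardson variety $\Rich_{s_iv}^w$, and (b) that it lies in the totally nonnegative flag variety $\GBtnn$. For (b), there is essentially nothing to do: by definition $\Gtnn$ is a monoid containing $y_i(t)$ for $t>0$, and $gB_- \in \Rtp_v^w \subset \GBtnn$ means $gB_-$ is a limit of points $g'B_-$ with $g'\in\Gtnn$; hence $y_i(t)g'\in\Gtnn$ and passing to the limit gives $y_i(t)gB_-\in\GBtnn$. So the content is entirely in (a), the Bruhat-cell bookkeeping.

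For (a), I first want a good representative of $gB_-$. Since $gB_-\in\Rich_v^w$, I may take $g\in \capBWB(N\dv)$, i.e. $g = u\dv$ with $u\in N$ and $g\in B_-wB_-$. The claim $y_i(t)gB_- \in BvB_-$-type cell needs to become $y_i(t)gB_- \in B(s_iv)B_-$, and the claim about $B_-wB_-$ needs $y_i(t)g \in B_-wB_-$ still. For the $B$-side: $y_i(t) = \ds_i^{-1} a' x_i(t') \cdot(\text{something})$ via the collision move, or more directly, $x_i$-conjugation — I would use that $s_iv<v$ means $\ell(s_iv) = \ell(v)-1$, so $\dv = \ds_i \dv'$ with $\dv'$ a lift of $s_iv$ and $\ell$ additive. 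Writing $g = u\ds_i\dv'$ and pushing $y_i(t)$ through: $y_i(t)u = u' y_i(t'')$ modulo an element of $N\cap\dots$ is not quite available, but $y_i(t)u\ds_i$ — here I use a collision-type identity $y_i(t)\ds_i = x_i(t_+)a_+^{-1}y_i(t_-)$ from the table, so $y_i(t)g = y_i(t)u\ds_i\dv' $ and I need to commute $y_i(t)$ past $u\in N$. The cleaner route: note $Bu = B$, so $Bg = B\dv = B(\ds_i\dv')$, hence $By_i(t)g = By_i(t)\dv = B y_i(t)\ds_i\dv' = B x_i(t_+)a_+^{-1}y_i(t_-)\dv'$. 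Since $x_i(t_+)a_+^{-1}\in B$ and $y_i(t_-)\dv'$: because $\ell(s_iv s_i)$ relative to $s_iv$ — hmm, I actually need $y_i(t_-)\dv' \in B(s_iv)B_-$, which holds because $y_i(t_-)\in \dv'{}^{?}N_-\dv'{}^{?}$... The key lemma I want is the standard fact that for $u' < u$ with $s_i u' = u$, $y_i(t)\,B u' B_- = B u B_-$ for $t\neq 0$ (or lands in the bigger cell). I'll invoke this from the background on Bruhat order / $\SL_2$-calculations rather than rederive it. Symmetrically, for the $B_-wB_-$ side, $y_i(t)$ multiplied on the left of an element of $B_-wB_-$ stays in $B_-wB_-$ precisely when $s_i w > w$... but actually left multiplication by $y_i(t)\in B_-$ never leaves $B_-wB_-$ — $y_i(t)\in B_-$ so $y_i(t)\cdot B_-wB_- = B_-wB_-$. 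That disposes of the $w$-side immediately.

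So the real work is only: $y_i(t)g \in B(s_iv)B_-$, i.e. the left Bruhat cell jumps from $v$ to $s_iv$ under left multiplication by $y_i(t)$. This I expect to be the main (though still routine) obstacle, and I would handle it via the collision move $y_i(t)\ds_i = x_i(t_+)a_+^{-1}y_i(t_-)$ together with the reduced-word factorization $\dv = \ds_i\dv'$ and the elementary fact (Lemma on Bruhat cells, e.g. from \cite{MR} or standard) that $y_i(s)\dv' \in B(s_iv)B_-$ for $s\neq 0$ since $s_i(s_iv)=v > s_iv$. Combining: $By_i(t)g = B y_i(t)\dv = B x_i(t_+)a_+^{-1} y_i(t_-)\dv' = B y_i(t_-)\dv' = B(s_iv)B_-$, using $t_-=-1/t\neq 0$. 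Finally, membership in the \emph{open} Richardson variety $\Rich_{s_iv}^w$ (not just the cell) follows since we also showed $y_i(t)g\in B_-wB_-$, and $s_iv \le w$ holds because $s_iv < v \le w$. Assembling (a) and (b) gives $y_i(t)gB_-\in \Rtp_{s_iv}^w$.
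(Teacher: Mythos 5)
Your overall decomposition is exactly the structure of the paper's (very short) proof: total nonnegativity is immediate because $\Gtnn$ is a monoid acting on $\GBtnn$, membership in $B_-wB_-$ is immediate because $y_i(t)\in B_-$, and the only real content is that the opposite Bruhat cell drops from $v$ to $s_iv$; the paper settles this last point in one line by citing the standard multiplication property of the Bruhat decomposition, namely $Bs_iB\cdot BvB_-\subseteq Bs_ivB_-$ whenever $s_iv<v$.

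Your treatment of that last point, however, has a genuine gap. From $Bg=B\dv$ (where $g=u\dv$ with $u\in N$) you conclude $By_i(t)g=By_i(t)\dv$; this does not follow, because $y_i(t)$ does not normalize $B$: $y_i(t)\,u\,y_i(-t)\notin B$ in general (already in rank one, $y_i(t)x_i(s)y_i(-t)$ has lower-left entry $-st^2\neq 0$). Commuting $y_i(t)$ past the unipotent factor $u$ is precisely where the work lies: one can fix it by factoring $u=x_i(c)u'$ with $u'\in N'(s_i)$ (\cref{lemma:factor_NN'}), pushing $u'$ through $\ds_i$, doing the rank-one computation for $y_i(t)x_i(c)\ds_i$, and absorbing the resulting conjugates via \cref{lemma:conj}\itemref{conj2},\itemref{conj3} --- or simply by invoking the correct standard fact. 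Your stated fallback (``for $u'<u$ with $s_iu'=u$, $y_i(t)\,Bu'B_-=BuB_-$'') is that fact with the direction reversed, and as stated it is false: already in $\SL_2$, $y_i(t)\cdot eB_-$ remains in $BB_-/B_-$ rather than moving to $Bs_iB_-/B_-$. The true statement --- left multiplication by $y_i(t)$, $t\neq0$, sends the cell $BvB_-$ of the \emph{longer} element into the cell $Bs_ivB_-$ of the \emph{shorter} one --- is what you need and what the paper cites. The remaining ingredients of your argument (the collision move, $y_i(t_-)\dv'\in \dv'N_-$ via the conjugation lemma, $s_iv\leq w$, and the closure argument for total nonnegativity) are fine.
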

\begin{proof}
For $t>0$, we have $y_i(t)\in\Gtnn$, and thus $y_i(t) \cdot  \GBtnn\subset \GBtnn$; cf.~\cite{Lus2}. Since $y_i(t)\in B_-$ and $g\in B_-wB_-$, we have $y_i(t)g\in B_-wB_-$. Since $y_i(t)\in Bs_iB$ and $g\in BvB_-$, by the properties of the Bruhat decomposition, we have $y_i(t)g\in Bs_ivB_-$ when $s_iv<v$. By definition, $\Rtp_{s_iv}^w=\GBtnn\cap Bs_ivB_-\cap B_-wB_-$.
\end{proof}

We will need the following technical result.
\begin{lemma}\label{lemma:dudv}
Let $v,u_1,u_2\in W$ be such that $\ell(u_1v)=\ell(u_1)+\ell(v)$ and $\ell(vu_2)=\ell(v)+\ell(u_2)$. Let $w:=vu_2$ and let $\bw$ be a reduced expression for $w$ that starts with a reduced expression for $v$. Let $\bv$ be the positive distinguished subexpression for $v$ inside $\bw$.\footnote{Note that $\bv$ is the \emph{rightmost} subexpression for $v$ inside $\bw$, while the start of $\bw$ gives the \emph{leftmost} subexpression for $v$, and the two subexpressions may overlap.} Choose an MR-parametrized element $g=\gt\in \capBWB(N\dv)$ with $\t \in (\R_{>0})^{\Jo}$ and set
\begin{equation*}%
  \cev g:=\du_1\dv[\dv^{-1}g]_L^+.
\end{equation*}
Then $B_-\cev g\in \BGtnn$.
\end{lemma}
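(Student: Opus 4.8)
The plan is to first bring $\cev g$ into a tractable form and then argue by induction on $\ell(v)$. First I would observe that $\cev g=\du_1\gc$, where $\gc:=\dv[\dv^{-1}g]_L^+$ is the unique element of $N\dv\cap\dv N$ with $\gc B_-=gB_-$ (cf.\ \cref{lemma:Xggc}): indeed $g\in\capBWB(N\dv)$ gives $\dv^{-1}g\in\dv^{-1}N\dv$, which by \cref{lemma:factor_NN'} equals $y_1y_2$ with $y_1\in\dv^{-1}N\dv\cap N$ and $y_2\in\dv^{-1}N\dv\cap N_-$, so $\dv^{-1}g\in NN_-\subseteq\Gopm$, $[\dv^{-1}g]_L^+=y_1$, and $\gc=\dv y_1$. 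Since $g$ is MR-parametrized with $\t\in(\R_{>0})^{\Jo}$, we have $gB_-\in\Rtp_v^w$ (see \cref{sec:MR_param}), hence $\gc B_-\in\Rtp_v^w$. As $\ell(u_1v)=\ell(u_1)+\ell(v)$, the element $\du_1\dv$ is the standard lift of $u_1v$; thus the goal becomes $B_-\du_1\gc\in\BGtnn$, where $\gc=\dv y_1\in N\dv\cap\dv N$ and $\gc B_-\in\Rtp_v^w$.

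In the base case $v=\id$ the positive distinguished subexpression is empty, so $g=x_{i_1}(t_1)\cdots x_{i_m}(t_m)\in\Gtnn$, $\dv=1$, $[\dv^{-1}g]_L^+=g$, and $\cev g=\du_1g$. Here $B_-\du_1\in\BGtnn$: the standard lift of a Weyl group element represents a totally nonnegative flag (for $\SL_n$ this is the sign normalization recalled in \cref{sec:main}; in general $B_-\du_1$ is the unique $\H$-fixed point of its cell). Since $\BGtnn$ is the closure of $\{B_-q:q\in\Gtnn\}$ and $\Gtnn$ is a monoid, right multiplication by $g\in\Gtnn$ preserves $\BGtnn$, so $B_-\cev g=(B_-\du_1)\cdot g\in\BGtnn$.

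For the inductive step, write $(i_1,\dots,i_p)$ for the reduced word beginning $\bw$, and set $v':=s_{i_1}v$, $\bw'':=(i_2,\dots,i_m)$, $w'':=s_{i_1}w=v'u_2$; then $\bw''$ begins with the reduced word $(i_2,\dots,i_p)$ for $v'$, one has $\ell(v'u_2)=\ell(v')+\ell(u_2)$, and $\ell(u_1v)=\ell(u_1)+\ell(v)$ forces $\ell(u_1s_{i_1})=\ell(u_1)+1$, so $\du_1\ds_{i_1}$ is the standard lift of $u_1s_{i_1}$ and $\ell((u_1s_{i_1})v')=\ell(u_1s_{i_1})+\ell(v')$. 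If the leading crossing of $\bw$ lies in the positive distinguished subexpression, so $g_1=\ds_{i_1}$, then deleting it carries the positive distinguished subexpression of $v$ in $\bw$ to that of $v'$ in $\bw''$ with the correspondingly shifted set of unused positions, so $g=\ds_{i_1}\tilde g$ with $\tilde g$ the MR element for $(v',w'')$ on $\bw''$ with the same positive parameters; then $\cev g=\du_1\ds_{i_1}\dvv[\dvv^{-1}\tilde g]_L^+$ is exactly the element the lemma produces for the data $(u_1s_{i_1},v',u_2)$, and the inductive hypothesis (on $\ell(v')=\ell(v)-1$) applies. If instead the leading crossing is unused, so $g_1=x_{i_1}(t_1)$, then I would apply the collision move $\ds_{i_1}^{-1}x_{i_1}(t_1)=x_{i_1}(-1/t_1)\,t_1^{-\alphacheck_{i_1}}\,y_{i_1}(1/t_1)$ of \cref{tab:formulas} (with $1/t_1>0$ and $t_1^{-\alphacheck_{i_1}}\in\Htp$) to push the leftmost factor $\ds_{i_1}^{-1}$ of $\dv^{-1}=\ds_{i_p}^{-1}\cdots\ds_{i_1}^{-1}$ past $x_{i_1}(t_1)$, transport $x_{i_1}(-1/t_1)$ and $y_{i_1}(1/t_1)$ across $\dvv^{-1}$ (they land in $N$ and in $N_-$ respectively since $(v')^{-1}\alpha_{i_1}>0$), and recombine, reducing to the shorter word $\bw''$ up to an extra factor lying in $\Gtnn$ or in $B_-$, which does not affect membership in $\BGtnn$; the shift in the $v$-index is absorbed using \cref{lemma:y_i(t)gB_-_in_Rtp}, transported from $\GBtnn$ to $\BGtnn$ via~\eqref{eq:iota_TNN}.

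The hardest step, I expect, is the second case of the inductive step: one must track exactly how the collision moves rearrange $\dv^{-1}g$ and check that, after pushing everything past $\dvv^{-1}$, the $N$-part $[\dv^{-1}g]_L^+$ reassembles as the corresponding $N$-part for $\bw''$ times a totally nonnegative (or $B_-$-absorbable) correction. This is the real content of the lemma; it rests on the combinatorics of the positive distinguished subexpression under deleting the leading letter, and on the identification $\gc=\dv y_1$ with $y_1\in\dv^{-1}N\dv\cap N$ from the first step, which through \cref{lemma:factor_NN',lemma:Xggc} keeps all these factorizations under control.
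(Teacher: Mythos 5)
Your skeleton is the same as the paper's: induction on $\ell(v)$, the identification $\cev g=\du_1\gc$ with $\gc=\dv[\dv^{-1}g]_L^+$, the base case via $B_-\du_1\in\BGtnn$ and right multiplication by $\Gtnn$, and a case split on whether the leading letter of $\bw$ is used by the positive distinguished subexpression. Your first case (leading letter used) is correct and matches the paper, including the fact (which the paper leaves implicit) that deleting the leading used letter yields the positive distinguished subexpression of $v'=s_{i_1}v$ in the truncated word.

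The genuine gap is in the second case, which you yourself flag as ``the real content'' and then only sketch — and the sketch as written does not work. After the collision move $\ds_{i_1}^{-1}x_{i_1}(t_1)=x_{i_1}(t_-)a_+^{-1}y_{i_1}(t_+)$ you propose to conjugate both $x_{i_1}(t_-)$ and $y_{i_1}(t_+)$ across $\dvv^{-1}$ and treat the results as corrections ``lying in $\Gtnn$ or in $B_-$, which do not affect membership in $\BGtnn$.'' But $\dvv^{-1}y_{i_1}(t_+)\dvv\in N_-$ then sits in the \emph{middle} of the argument of $[\cdot]_L^+$, to the left of $\dvv^{-1}g'$: it is neither a right $B_-$ factor that $[\cdot]_L^+$ ignores, nor a left $N_-$ factor absorbable into $B_-\du_1\dots$, so it cannot be discarded. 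Moreover, even if it could, your reduction would land on the element $g'=g_2\cdots g_m$, which is MR-parametrized for the pair $(v,s_{i_1}w)$, \emph{not} for $(v',s_{i_1}w)$; the induction hypothesis requires an element built from the positive distinguished subexpression of $v'$, and no such element appears in your argument. The paper's resolution of exactly this point is the step you are missing: by \cref{lemma:y_i(t)gB_-_in_Rtp} (applied directly to $g'B_-\in\Rtp_{v}^{s_{i_1}w}$ in $\GBtnn$ — the detour through \eqref{eq:iota_TNN} you mention plays no role), one has $a_+y_{i_1}(t_+)g'B_-\in\Rtp_{v'}^{s_{i_1}w}$, so this flag admits a \emph{fresh} MR-parametrized representative $g''$ for the pair $(v',s_{i_1}w)$ with respect to a reduced word starting with a reduced word for $v'$; since $[\cdot]_L^+$ is unchanged under right multiplication by $B_-$, one may replace $a_+y_{i_1}(t_+)g'$ by $g''$ inside the bracket. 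Only then does the computation close: $\dvv^{-1}x_{i_1}(t_-)\dvv\in N$ is pulled out to the left by \cref{lemma:conj}, and $\du_1\ds_{i_1}x_{i_1}(t_-)$ is absorbed into $B_-\du_1\ds_{i_1}$ because $u_1s_{i_1}>u_1$, reducing to the induction hypothesis for $(u_1s_{i_1},v',u_2)$. Without this re-parametrization step (which is where total positivity, not just bookkeeping of factors, enters), the inductive step in the unused-letter case is not established.
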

\begin{proof}
We prove the result by induction on $\ell(v)$. 

Consider the base case $v=\id$. Then $\dv^{-1}g\in N$ so $\cev g=\du_1 g$.  The element $g$ is a product of elements of the form $x_i(t)\in\Gtnn$ whose right action preserves $\BGtnn$, and thus indeed  $B_-\cev g=B_-\du_1 g\in \BGtnn$ since $B_-\du_1\in\BGtnn$.

For the induction step, assume that $\ell(v)>0$ and that the result has been shown for all $v'$ satisfying $\ell(v')<\ell(v)$. Write $g=g_1g_2\cdots g_m$ as in~\eqref{eq:MR}. Suppose that the reduced expressions for $v$ and $w$ start with $s_i$. We consider two cases: either $g_1=\ds_i$ or $g_1=x_i(t)$.  In each case, we will set $g':=g_2\cdots g_m$, $v':=s_iv<v$, $u_1':=u_1s_i$, and $u_2':=u_2$.

Assume first that $g_1=\ds_i$ for $i\in I$. Then $(\dv')^{-1} g'=\dv g$ and $\du'_1\dv'=\du_1\dv$, so 
\begin{equation*}%
  \cev g\,':=\du'_1\dv'[(\dv')^{-1} g']_L^+=\du_1\dv[\dv^{-1}g]_L^+=\cev g.
\end{equation*}
By the induction hypothesis, we have $B_-\cev g\in \BGtnn$.

Assume now that $g_1=x_i(t)$, with $t>0$.  Then $\dv^{-1}$ ends with $\ds_i^{-1}$.  Applying a collision move~\eqref{eq:collision_x}, we replace $\ds_i^{-1} x_i(t)$ with $x_i(t_-)a_+y_i(t_+)$ and get
\begin{equation}\label{eq:dudv_etc}
 \du_1\dv [\dv^{-1} g]_L^+=\du_1 \ds_i\dv'[(\dv')^{-1}\cdot  x_i(t_-)a_+y_i(t_+) \cdot g']_L^+.
\end{equation}
First, we have $g'B_-\in\Rtp_{v}^{s_iw}$, and thus by \cref{lemma:y_i(t)gB_-_in_Rtp}, $y_i(t_+) g'B_-\in\Rtp_{s_iv}^{s_iw}$. Therefore $a_+ y_i(t_+) g'B_-\in\Rtp_{s_iv}^{s_iw}$. Recall that we set $v':=s_iv$ and $u_2':=u_2$, and thus $w':=v'u_2'=s_iw$. Choose a reduced expression $\bw'$ for $w'$ starting with a reduced expression for $v'$ and let $g''=\gbf_{\bv',\bw'}(\bt')$ be the corresponding MR-parametrized element satisfying $a_+y_i(t_+)g'B_-=g''B_-$. Since $s_iv'>v'$, by \cref{conj2}, we find $(\dv')^{-1}x_i(t_-) \dv' \in N$. Combining the pieces together, the right-hand side of~\eqref{eq:dudv_etc} transforms into
\begin{equation*}%
\du_1 \ds_i\dv'[(\dv')^{-1}x_i(t_-)\dv'\cdot  (\dv')^{-1} \cdot g'']_L^+= \du_1 \ds_i\dv'\cdot (\dv')^{-1}x_i(t_-)\dv'[  (\dv')^{-1} \cdot g'']_L^+=\du_1 \ds_ix_i(t_-)\dv'[(\dv')^{-1} \cdot g'']_L^+.
\end{equation*}
Since $\ell(u_1v)=\ell(u_1)+\ell(v)$ and $s_iv<v$, we get $u_1s_i>u_1$, and thus $\du_1\ds_ix_i(t_-)\in N_-\du_1\ds_i$ by \cref{conj1}. Thus
\begin{equation}\label{eq:dudv_etc2}
  B_-\du_1 \ds_ix_i(t_-)\dv'[(\dv')^{-1} \cdot g'']_L^+=B_-\du_1 \ds_i\dv'[(\dv')^{-1} \cdot g'']_L^+=B_-\du_1'\dv'[(\dv')^{-1} \cdot g'']_L^+.
\end{equation}
By the induction hypothesis, the right-hand side of~\eqref{eq:dudv_etc2} belongs to $\BGtnn$.
\end{proof}

\begin{proof}[Proof of \cref{thm:chiral_tnn}]
We first prove the special case where $w=w_0$. Choose a reduced expression $\bw_0=(i_1,\dots,i_m)$ for $w_0$ that starts with a reduced expression for $v$. Let $\bv$ be the positive distinguished subexpression for $v$ inside $\bw_0$. Let $g:=\gbf_{\bv,\bw_0}(\bt)$ 
 be MR-parametrized. As we showed in the proof of \cref{lemma:Xggc}, the element $\gc\in \capBWoB(N\dv\cap \dv N)$ satisfying $\gc B_-=gB_-$ is given by
\begin{equation*}%
  \gc=\dv[\dv^{-1} g]_L^+.
\end{equation*}
Applying \cref{lemma:dudv} for $u_1=\id$ and $u_2=v^{-1}w_0$, we see that $B_-\gc\in\BGtnn$. We have shown that for $gB_-\in\Rtp_v^{w_0}$, the element $B_-\gc=\chivi(gB_-)$ belongs to $\BGtnn$. Clearly, $B_-\gc\in \LRtp_v^{w_0}$. The converse direction is handled similarly. We are done with the case $w=w_0$. 

Let $v\leq w$ in $W$. Let $\gc\in\capBWB(N\dv\cap \dv N)$ be such that $\gc B_-\in\Rtp_v^w$. Then $\gc B_-$ belongs to the closure of $\Rtp_v^{w_0}$ inside $\GBtnn$. The map $\chivi$ is continuous on the opposite Schubert cell $(BvB_-)/B_-$ which contains both $\Rtp_v^w$ and $\Rtp_v^{w_0}$. Therefore, $B_-\gc=\chivi(\gc B_-)$ belongs to the closure of $\chivi(\Rtp_v^{w_0})=\LRtp_v^{w_0}$. In particular, $B_-\gc\in \BGtnn$. 
\end{proof}

\subsection{Reversal maps}\label{sec:reversal_maps}
Throughout this section, we fix a pair $v\leq w$ in $W$ and set $\vv:=ww_0$, $\ww:=vw_0$. Our goal is to relate $\Rtp_v^w$ to $\Rtp_{\vv}^{\ww}$ and $\LRtp_v^w$ to $\LRtp_{\vv}^{\ww}$.  We will achieve this by introducing two maps: $g\mapsto g^\th\dw_0$ and $g\mapsto g^{-\iota}\dw_0$, where $g^{-\iota}:=(g^{-1})^\iota=(g^\iota)^{-1}$. 

The first map $g\mapsto g^\th\dw_0$ descends to a map on $G/B_-$: for $b\in B_-$, it sends $gb_-\mapsto g^\th b_-^\th\dw_0=g^\th \dw_0\cdot \dw_0^{-1}b_-^\th\dw_0$, where $b_-^\th\in B$ and thus $\dw_0^{-1}b_-^\th\dw_0\in B_-$. The fact that this map preserves total positivity was recently shown by Lusztig.

\begin{proposition}[{\cite[\S1.5]{Lusztig_Springer}}]\label{prop:rev1} 
The map $\revR:G/B_-\xrasim G/B_-$ sending $gB_-\mapsto g^\theta\dw_0B_-$ induces homeomorphisms
\begin{equation*}%
  (G/B_-)_{\geq0}\xrasim (G/B_-)_{\geq0} \quad\text{and}\quad \Rtp_v^w\xrasim \Rtp_{\vv}^{\ww}.
\end{equation*}
\end{proposition}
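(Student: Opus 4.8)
The plan is to realize $\revR$ as an easy-to-track operation on Bruhat cells, and then to reduce the total-positivity assertion, in the spirit of the proof of \cref{thm:chiral_tnn}, to an explicit Marsh--Rietsch computation in the case $w=w_0$.

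First I would verify that $\revR$ is a well-defined biregular involution of $G/B_-$. Since $\theta$ is an automorphism with $B_-^\theta=B$, and $\dw_0^{-1}B\dw_0=B_-$, replacing $g$ by $gb_-$ with $b_-\in B_-$ sends $g^\theta\dw_0B_-$ to $g^\theta\dw_0\,(\dw_0^{-1}b_-^\theta\dw_0)\,B_-=g^\theta\dw_0B_-$, so $\revR$ descends to $G/B_-$; it is clearly a morphism, and, using $\dw_0^\theta=\dw_0^{-1}$, one gets $\revR\circ\revR(gB_-)=g\,\dw_0^\theta\dw_0\,B_-=gB_-$, so $\revR$ is a biregular involution. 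Next I would read off its effect on Bruhat cells: because $\theta$ carries $B_-wB_-$ onto $BwB$ and $BvB_-$ onto $B_-vB$, while $B\dw_0=\dw_0B_-$, one obtains $\revR\bigl((B_-wB_-)/B_-\bigr)=(B(ww_0)B_-)/B_-$ and $\revR\bigl((BvB_-)/B_-\bigr)=(B_-(vw_0)B_-)/B_-$. Intersecting, $\revR$ carries $\Rich_v^w$ biregularly onto $\Rich_{\vv}^{\ww}$, and, more generally, permutes the Richardson strata of $G/B_-$ by $(v,w)\mapsto(ww_0,vw_0)$; this is an involution on the index set since $(ww_0)w_0=w$ and $(vw_0)w_0=v$.

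Granting these two facts, everything reduces to showing $\revR(\GBtnn)=\GBtnn$: then $\revR$, being a continuous involution, is a self-homeomorphism of $\GBtnn$, and intersecting with the strata gives $\revR(\Rtp_v^w)=\revR(\GBtnn\cap\Rich_v^w)=\GBtnn\cap\Rich_{\vv}^{\ww}=\Rtp_{\vv}^{\ww}$, also a homeomorphism. Since $\revR^2=\id$, it is enough to prove $\revR(\GBtnn)\subseteq\GBtnn$; and since every $\Rtp_v^w$ lies in the closure of $\Rtp_v^{w_0}$ inside $\GBtnn$ (as in the proof of \cref{thm:chiral_tnn}), so that $\GBtnn=\overline{\bigcup_{v}\Rtp_v^{w_0}}$, and $\revR$ is continuous, it suffices to prove $\revR(\Rtp_v^{w_0})\subseteq\GBtnn$ for every $v$. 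Here I would apply the involution \eqref{eq:iota_TNN}, which recasts the desired statement $g^\theta\dw_0B_-\in\GBtnn$ as the equivalent assertion $B_-\dw_0g^T\in\BGtnn$. For $gB_-\in\Rtp_v^{w_0}$ fix $(\bv,\bw_0)\in\Red(v,w_0)$ and write $g=g_1\cdots g_m$ as in \eqref{eq:MR} with $g_r\in\{\ds_{i_r},x_{i_r}(t_r)\}$ and all $t_r>0$, so that $g^T=g_m^T\cdots g_1^T$ with $g_r^T\in\{\ds_{i_r}^{-1},y_{i_r}(t_r)\}$. I would then prove $B_-\dw_0g^T\in\BGtnn$ by an induction analogous to the proof of \cref{lemma:dudv}: pushing the factors of $\dw_0$ through the $g_r^T$ by means of the collision moves \eqref{eq:collision_x}--\eqref{eq:collision_y} and the conjugation moves of \cref{lemma:conj}, one rewrites $\dw_0g^T$ as $b_-\cdot\dot u\cdot(\text{a product of factors }x_i(t)\text{ with }t>0)$ for some $b_-\in B_-$ and $u\in W$, and since $B_-\dot u\in\BGtnn$ and right multiplication by $x_i(t)$ with $t>0$ preserves $\BGtnn$, this gives $B_-\dw_0g^T\in\BGtnn$. (Conceptually, $\revR$ is the composition of the $w_0$-identification $G/B_-\xrasim G/B$, $gB_-\mapsto g\dw_0B$, with the homeomorphism \eqref{eq:theta_TNN}, and the point is that the former respects total positivity.)

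The hard part is the last computation. As in \cref{lemma:dudv}, the collision moves \eqref{eq:collision_x}--\eqref{eq:collision_y} produce parameters $t_-=-1/t<0$, and the crux of the induction is to arrange that these negative factors are absorbed into $b_-$ (where, after quotienting by $B_-$ on the left, they do no harm) while the surviving $x_i$-parameters remain positive; choosing the inductive hypothesis --- likely involving auxiliary Weyl group elements $u_1,u_2$ as in \cref{lemma:dudv} --- so that the conjugation moves of \cref{lemma:conj} keep everything in the right subgroups is the only genuinely delicate step, the rest being routine Bruhat-decomposition bookkeeping.
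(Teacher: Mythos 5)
First, note that the paper itself does not prove \cref{prop:rev1}: it is quoted from Lusztig \cite[\S1.5]{Lusztig_Springer}, so the positivity statement is exactly the nontrivial content being imported. Your reductions are all correct and cleanly done: $\revR$ is a well-defined biregular involution of $G/B_-$; it sends $\Rich_v^w$ onto $\Rich_{\vv}^{\ww}$ by the Bruhat-cell bookkeeping; using the closure relations (as invoked in the proof of \cref{thm:chiral_tnn}) and continuity, everything reduces to $\revR(\Rtp_v^{w_0})\subseteq\GBtnn$; and the translation via \eqref{eq:iota_TNN} to the assertion $B_-\dw_0 g^T\in\BGtnn$ for $g=\gt$ MR-parametrized is right (using $\dw_0^\iota=\dw_0$ and $\theta\circ\iota=T$).

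However, the proposal stops exactly where the proposition becomes nontrivial. You assert that $\dw_0 g^T$ can be rewritten as $b_-\cdot\dot u\cdot(\text{a product of }x_i(t)\text{ with }t>0)$, but you never formulate the inductive statement, and you yourself flag that arranging for the negative parameters $t_-=-1/t$ produced by \eqref{eq:collision_x}--\eqref{eq:collision_y} to be absorbed into $B_-$, while keeping the surviving parameters positive, is ``the only genuinely delicate step'' --- and then you do not carry it out. That step is the theorem; without a precise induction in the style of \cref{lemma:dudv} (with explicitly tracked auxiliary Weyl group elements and length conditions so that \cref{lemma:conj} applies at each stage), the argument is a plausibility sketch, not a proof. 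Moreover, in examples the manipulation already requires identities beyond the cited toolkit: after one collision one typically faces products like $x_i(a)y_i(b)$ with $a,b>0$, which must be exchanged via the rank-one relation $x_i(a)y_i(b)=y_i\bigl(\tfrac{b}{1+ab}\bigr)(1+ab)^{\alpha_i^\vee}x_i\bigl(\tfrac{a}{1+ab}\bigr)$ (or some equivalent device) before further factors of $\dw_0$ can be processed; this relation appears nowhere in \eqref{eq:collision_x}--\eqref{eq:collision_y} or \cref{lemma:conj}, so even the claim that the bookkeeping can be done with the cited moves needs justification. As written, the proposal establishes the formal framework around the result but leaves its core --- the statement the paper attributes to Lusztig --- unproved.
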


The second map $g\mapsto g^{-\iota}\dw_0$ descends to a map on $\BG$: for $b\in B_-$, it sends $b_-g\mapsto b_-^{-\iota}g^{-\iota} \dw_0$, where $b_-^{-\iota}\in B_-$. It turns out that this map also preserves total positivity.
\begin{proposition}\label{prop:rev2}
The map $\revL:\BG\xrasim \BG$ sending $B_-g\mapsto B_-g^{-\iota}\dw_0$ induces homeomorphisms
\begin{equation*}%
  \BGtnn\xrasim \BGtnn \quad\text{and}\quad \LRtp_v^w\xrasim \LRtp_{\vv}^{\ww}.
\end{equation*}
\end{proposition}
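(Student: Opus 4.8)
The plan is to imitate the short argument by which Lusztig proves \cref{prop:rev1}, but carried out on the left quotient. The idea is to absorb the ``reversal'' into an automorphism of $G$ that visibly preserves $\Gtnn$. Concretely, for $g\in G$ one has $g^{-\iota}\dw_0=\dw_0\cdot(\dw_0^{-1}g^{-\iota}\dw_0)$, so, setting $\sigma(g):=\dw_0^{-1}g^{-\iota}\dw_0$, the map $\revL$ becomes
\[
  \revL(B_-g)=B_-\dw_0\,\sigma(g)=(B_-\dw_0)\cdot\sigma(g).
\]
First I would record that $\sigma\colon G\to G$ is an involutive automorphism: it is an automorphism since $g\mapsto g^{-\iota}$ is a composition of the two anti-automorphisms $g\mapsto g^{-1}$ and $g\mapsto g^{\iota}$ and conjugation by $\dw_0^{-1}$ is an automorphism, and it is an involution because $g\mapsto g^{-\iota}$ is one and $\dw_0^{\iota}=\dw_0$ (so $\dw_0^{-\iota}=\dw_0^{-1}$). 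The same computation shows at once that $\revL$ is an involution of $\BG$.

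The heart of the argument is that $\sigma(\Gtnn)=\Gtnn$, and I would check this on the monoid generators of $\Gtnn$. For $a\in\H$ one has $a^{-\iota}=a$, so $\sigma$ restricts to conjugation by $\dw_0$ on $\H$, which preserves $\Htp$ because $w_0$ sends each $\alphacheck_i$ to $-\alphacheck_{i^\ast}$; thus $\sigma(s^{\alphacheck_i})=(s^{-1})^{\alphacheck_{i^\ast}}\in\Htp$ for $s>0$. For the root subgroups, $x_i(t)^{-\iota}=x_i(-t)$ and $y_i(t)^{-\iota}=y_i(-t)$, and, applying \cref{conj5} together with the anti-automorphism $g\mapsto g^T$ (to convert the resulting $x$'s into $y$'s and vice versa), one computes $\sigma(x_i(t))=\dw_0^{-1}x_i(-t)\dw_0=y_{i^\ast}(t)$ and $\sigma(y_i(t))=x_{i^\ast}(t)$ for all $t$. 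Hence $\sigma$ carries the generators $x_i(t),y_i(t)$ $(t>0)$ and $s^{\alphacheck_i}$ $(s>0)$ into $\Gtnn$; being a group homomorphism it maps the submonoid they generate into $\Gtnn$, and being an involution, $\sigma(\Gtnn)=\Gtnn$. I expect this to be the only genuinely delicate point: a priori one only obtains $\sigma(x_i(t))=y_{i^\ast}(ct)$ for some $c\in\{\pm1\}$, and it is essential that $c=+1$ so that positivity is preserved\,---\,this is exactly what the sign computation in \cref{conj5} delivers.

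With $\sigma$ in hand I would conclude as Lusztig does. The point $B_-\dw_0$ is the unique point of the stratum $\LRtp_{w_0}^{w_0}$, hence lies in $\BGtnn$; and $\BGtnn$ is stable under right multiplication by $\Gtnn$, since it is the closure of $\pi(\Gtnn)$ for the projection $\pi\colon G\to\BG$ and $\pi(\Gtnn)\cdot\Gtnn=\pi(\Gtnn)$ (the same then holds for the closure, by continuity of right translation). Therefore, for $g\in\Gtnn$,
\[
  \revL(B_-g)=(B_-\dw_0)\cdot\sigma(g)\in\BGtnn\cdot\Gtnn\subseteq\BGtnn.
\]
Since $\revL$ is a morphism of varieties, in particular continuous, and $\BGtnn=\overline{\pi(\Gtnn)}$, this gives $\revL(\BGtnn)\subseteq\BGtnn$; as $\revL$ is an involution of $\BG$, equality follows and $\revL$ restricts to a self-homeomorphism of $\BGtnn$.

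It remains to match up the strata, and here I would argue on all of $\BG$. If $g\in B_-wB_-\cap B_-vB$ then $g^{-1}\in B_-w^{-1}B_-\cap Bv^{-1}B_-$, hence $g^{-\iota}=(g^{-1})^{\iota}\in B_-wB_-\cap B_-vB$ (the anti-automorphism $\iota$ preserves $B$ and $B_-$ and carries a representative of $u$ to one of $u^{-1}$), and finally $g^{-\iota}\dw_0\in B_-(ww_0)B\cap B_-(vw_0)B_-$, using $\dw_0^{-1}B\dw_0=B_-$ and $\dw_0^{-1}B_-\dw_0=B$. This shows $\revL(\LRich_v^w)=\LRich_{ww_0}^{vw_0}=\LRich_{\vv}^{\ww}$; intersecting with $\BGtnn$ and invoking $\revL(\BGtnn)=\BGtnn$ then yields the homeomorphism $\revL\colon\LRtp_v^w\xrasim\LRtp_{\vv}^{\ww}$.
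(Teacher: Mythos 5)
Your argument is correct, and it takes a genuinely different route from the paper's. The paper first transports the problem to $G/B_-$ through the involution $B_-g\mapsto g^\iota B_-$ of~\eqref{eq:iota_TNN}, then assumes $h=\gt$ is MR-parametrized, conjugates the factorization letter by letter by $\dw_0$ using \cref{conj5} to recognize $\dw_0 h^{-\iota}B_-=\gtast^\th\dw_0 B_-$ with $\gtast=\gbf_{\bv^\ast,\bw^\ast}(\bt)$, and finally invokes \cref{prop:rev1}; positivity is thus inherited from Lusztig's reversal map, stratum by stratum. You instead package the reversal into the single involutive automorphism $\sigma(g)=\dw_0^{-1}g^{-\iota}\dw_0$, verify $\sigma(\Gtnn)=\Gtnn$ on the monoid generators (with \cref{conj5}, via transpose, pinning down the crucial sign $\sigma(x_i(t))=y_{i^\ast}(t)$), and conclude from $B_-\dw_0\in\BGtnn$, stability of $\BGtnn$ under the right action of $\Gtnn$, continuity, and involutivity of $\revL$; the stratum statement $\revL(\LRich_v^w)=\LRich_{ww_0}^{vw_0}$ you obtain by a direct Bruhat-cell computation rather than from \cref{prop:rev1}. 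Your version is more self-contained: it uses neither \cref{prop:rev1} nor MR-parametrizations, and the same device would in fact reprove \cref{prop:rev1}. What the paper's computation buys is reusability in the semifield setting: in \cref{sec:SF} the analogue of $\revL$ over a semifield $K$ is obtained by repeating exactly this MR-factorization manipulation, whereas your closure-and-continuity step has no counterpart there. The two facts you use without proof\,---\,that $B_-\dw_0\in\BGtnn$ (nonemptiness of $\LRtp_{w_0}^{w_0}$, e.g.\ via the MR parametrization for $v=w=w_0$ transported by $\iota$) and that $\BGtnn$ is stable under right multiplication by $\Gtnn$\,---\,are standard and are used without comment in the paper as well (e.g.\ in the base case of \cref{lemma:dudv}), so they do not constitute a gap.
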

\begin{proof}
Recall from~\eqref{eq:iota_TNN} that the map $B_-g\mapsto g^\iota B_-$ yields an involutive isomorphism $\BGtnn\xrasim \GBtnn$. Letting $h:=g^\iota$, we need to show that the map $hB_-\mapsto \dw_0h^{-\iota}B_-$ induces a homeomorphism $\GBtnn\xrasim \GBtnn$. We may assume that $h=\gt=g_1g_2\cdots g_m$ is MR-parametrized as in~\eqref{eq:MR}. Write 
\begin{equation}\label{eq:rev2_iota}
  \dw_0h^{-\iota}B_-=\dw_0g_1^{-\iota}\dw_0^{-1}\cdot \dw_0g_2^{-\iota}\dw_0^{-1}\cdots \dw_0g_m^{-\iota}\dw_0^{-1}\cdot \dw_0B_-.
\end{equation}
If $g_1=x_i(t)$ then $g_1^{-\iota}=x_i(-t)$ and by \cref{conj5}, $\dw_0g_1^{-\iota}\dw_0^{-1}=y_{i^\ast}(t)$, which is equal to $x_{i^\ast}(t)^\th$. Set
\begin{equation}\label{eq:MR_ast}
  \gtast:=g_1^\ast g_2^\ast\cdots g_m^\ast, \quad\text{where} \quad g_r^\ast=
  \begin{cases}
    \ds_{i_r^\ast}, &\text{if $r\notin \Jo$,}\\
    x_{i^\ast}(t_r), & \text{if $r\in \Jo$.}
  \end{cases}
\end{equation}
Let $w^\ast:=w_0ww_0$. For $\bw=(i_1,\dots,i_m)$, let $\bw^\ast:=(i_1^\ast,\dots,i_m^\ast)$, and let $\bv^\ast$ be the positive distinguished subexpression for $v^\ast:=w_0vw_0$ inside $\bw^\ast$. Comparing~\eqref{eq:MR_ast} to~\eqref{eq:MR}, we see that $\gtast=\gbf_{\bv^\ast,\bw^\ast}(\bt)$, and thus $\gtast B_-\in \GBtnn$. In particular,~\eqref{eq:rev2_iota} implies that 
\begin{equation*}%
  \dw_0h^{-\iota}B_-=\gtast^\th \dw_0 B_-.
\end{equation*}
The result belongs to $\GBtnn$ by \cref{prop:rev1}.
\end{proof}

\subsection{The opposite chiral map}\label{sec:chiral_op}
The chiral map relates the opposite Schubert cells $B_-\bs (B_-vB)$ and $(BvB_-)/B_-$ for $v$. There is a similar map, relating the Schubert cells $B_-\bs (B_-wB_-)$ and $(B_-wB_-)/B_-$ for $w$. First, for $h\in G$, write $h^{-T}:=(h^{-1})^T=(h^T)^{-1}$. Recall from~\eqref{eq:dec_w_R} and~\eqref{eq:dec_w_L} that we have isomorphisms
\begin{align}%
\label{eq:N_-wN1} N_-\dw\cap \dw N&\xrasim (B_-wB_-)/B_-, & h&\mapsto hB_-;\\
\label{eq:N_-wN2} N_-\dw\cap \dw N&\xrasim B_-\bs(B_-wB_-), & h&\mapsto B_-h^{-T}.
\end{align}
\begin{definition}
Let $w\in W$. The \emph{opposite chiral map} 
\begin{equation*}%
  \chiw: B_-\bs(B_-wB_-)\xrasim (B_-wB_-)/B_-
\end{equation*}
is obtained by identifying both sides with $N_-\dw\cap \dw N$ via~\eqref{eq:N_-wN1}--\eqref{eq:N_-wN2}. We also consider restrictions $\chiw: \LRich_v^w\xrasim \Rich_v^w$ for each $v\leq w$. The inverse of $\chiw$ is denoted $\chiwi$.
\end{definition}

We have the following direct analog of \cref{thm:chiral_tnn}.
\begin{theorem}\label{thm:chiral_tnn_op}
Let $w\in W$. Then for any $h\in N_-\dw \cap \dw N$, we have
\begin{equation}\label{eq:chiral_tnn_op}
  hB_-\in \GBtnn \quad \Longleftrightarrow \quad B_-h^{-T}\in\BGtnn.
\end{equation}
In other words, for all $v\leq w \in W$, the map $\chiw: \LRich_v^w\xrasim \Rich_v^w$ restricts to a homeomorphism
\begin{equation*}%
  \chiw: \LRtp_v^w\xrasim \Rtp_v^w.
\end{equation*}
\end{theorem}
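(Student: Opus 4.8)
The plan is to reduce \cref{thm:chiral_tnn_op} to \cref{thm:chiral_tnn} by means of the reversal maps $\revR$ and $\revL$ of \cref{sec:reversal_maps}. Set $u:=ww_0$, and for $h\in N_-\dw\cap\dw N$ put $g:=h^\theta\dw_0$. The crucial point, which I would establish first, is the group-theoretic fact that $g\in N\du\cap\du N$. For this I would use that $\dw^\theta=\dw^{-\iota}$ (see \cref{sec:inv}) together with the factorization $w_0=w^{-1}\cdot(ww_0)$, which has additive lengths; the latter gives $\dw_0=\dw^\iota\du$, hence $\du=\dw^{-\iota}\dw_0=\dw^\theta\dw_0$. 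Since $\theta$ is an automorphism with $N^\theta=N_-$, applying it to $h\in N_-\dw\cap\dw N$ yields $h^\theta\in N\dw^\theta\cap\dw^\theta N_-$, and multiplying on the right by $\dw_0$ while using $N_-\dw_0=\dw_0N$ converts this into $g\in N(\dw^\theta\dw_0)\cap(\dw^\theta\dw_0)N=N\du\cap\du N$.

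Next I would compute how $hB_-$ and $B_-h^{-T}$ are transported by the reversal maps. On the one hand, by \cref{prop:rev1}, $\revR(hB_-)=h^\theta\dw_0B_-=gB_-$. On the other hand, using $(h^{-T})^{-\iota}=(h^T)^\iota=h^\theta$ (recall $x^\theta=(x^T)^\iota$), \cref{prop:rev2} gives $\revL(B_-h^{-T})=B_-(h^{-T})^{-\iota}\dw_0=B_-h^\theta\dw_0=B_-g$. Conceptually this exhibits a commutative square $\revR\circ\chiw=\vec\chi_u\circ\revL$ relating the opposite chiral map for $w$ to the ordinary chiral map for $u$. Now $\revR$ is an isomorphism of $G/B_-$ restricting to a homeomorphism of $\GBtnn$, and $\revL$ is an isomorphism of $\BG$ restricting to a homeomorphism of $\BGtnn$, so we obtain the chain of equivalences
\[
  hB_-\in\GBtnn \iff gB_-\in\GBtnn \iff B_-g\in\BGtnn \iff B_-h^{-T}\in\BGtnn,
\]
whose middle equivalence is \cref{thm:chiral_tnn} applied to $u$ and to $g\in N\du\cap\du N$. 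This is precisely \eqref{eq:chiral_tnn_op}.

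Finally, for the reformulation in terms of $\chiw$, I would note that $\revR$ and $\revL$ also restrict to isomorphisms $\Rich_v^w\xrasim\Rich_u^{vw_0}$ and $\LRich_v^w\xrasim\LRich_u^{vw_0}$ of the underlying Richardson varieties, so that the commutative square above is compatible with the Richardson stratifications; combining this with \eqref{eq:chiral_tnn_op} and the identities $\Rtp_v^w=\Rich_v^w\cap\GBtnn$, $\LRtp_v^w=\LRich_v^w\cap\BGtnn$ yields the desired homeomorphism $\chiw\colon\LRtp_v^w\xrasim\Rtp_v^w$. I expect the only genuinely delicate step to be the bookkeeping with dotted representatives in the first paragraph --- in particular verifying $\du=\dw^\theta\dw_0$; once that (and the routine stratum-compatibility of the reversal maps) is in place, the rest follows formally from \cref{thm:chiral_tnn} and the two reversal propositions.
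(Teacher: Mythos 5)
Your proposal is correct and is essentially the paper's own argument: the paper proves \eqref{eq:chiral_tnn_op} by composing exactly the same three ingredients (\cref{prop:rev1}, \cref{thm:chiral_tnn}, \cref{prop:rev2}) in the same commutative square \eqref{eq:reversal_cd}, just traversed from the corner $\capBWB(N\dv\cap\dv N)$ via $\gc\mapsto\gc^\theta\dw_0$ rather than from $h$ via $h\mapsto h^\theta\dw_0$ as you do. Your explicit verification of the representative identity $\du=\dw^\theta\dw_0$ (which the paper leaves implicit in asserting that the horizontal maps land in $N_-\dww\cap\dww N$) is a welcome, correct piece of bookkeeping, not a deviation.
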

\begin{proof}
Similarly to \cref{sec:reversal_maps}, set $\vv:=ww_0$ and $\ww:=vw_0$.  We have the following commutative diagram consisting of four involutive isomorphisms:
\begin{equation}\label{eq:reversal_cd}
\begin{tikzcd}[column sep=20pt,row sep=20pt]
\capBWB(N\dv\cap \dv N) \arrow[rrr, "\gc\mapsto \gc^\theta \dw_0"] \arrow[d,"\gc\mapsto\gc"]
&&& N_-\dww\cap \dww N\cap B\dvv B_-  \arrow[d, "h\mapsto h^{-T}"] \\
\capBWB(N\dv\cap \dv N) \arrow[rrr, "\gc\mapsto \gc^{-\iota}\dw_0"] &&& N\dww\cap \dww N_-\cap B_-\dvv B.
\end{tikzcd}  
\end{equation}
We have already shown that the left vertical map and both horizontal maps preserve total positivity. More precisely, fix $\gc\in \capBWB(N\dv\cap \dv N)$. By \cref{prop:rev1}, $\gc B_-\in \GBtnn$ if and only if $\gc^\th\dw_0 B_-\in \GBtnn$. By \cref{thm:chiral_tnn}, $\gc B_-\in\GBtnn$ if and only if $B_-\gc\in \BGtnn$. By \cref{prop:rev2}, $B_-\gc\in \BGtnn$ if and only if $B_-\gc^{-\iota}\dw_0\in \BGtnn$. Setting $h:=\gc^\th\dw_0$ and composing the three maps, we see that $h\in \GBtnn$ if and only if $h^{-T}B_-\in \GBtnn$. Thus indeed the map $\chiw$ preserves total positivity, i.e.,~\eqref{eq:chiral_tnn_op} is satisfied for all $h\in N_-\dw\cap \dw N$. 
\end{proof}

\section{Twist maps and total positivity}\label{sec:twist-TNN}
In this section, we show that the twist and pre-twist maps preserve total positivity. We fix a pair $v\leq w$ in $W$, an MR-parametrized element $g=\gt \in \capBWB(N\dv)$ with $\bt\in(\R_{>0})^{\Jo}$, and the corresponding element $\gc\in \capBWB(N\dv\cap\dv N)$ satisfying $gB_-=\gc B_-$. Recall that \cref{lemma:Xggc} gives an explicit procedure to recover $\gc$ from $g$.

In view of \cref{lemma:Xggc} and~\eqref{eq:gcT_dw_in_B_Ym}, we consider factorizations
\begin{equation}\label{eq:YpYoYm_dfn}
  g^T\dw=\Yp\Yo \Ym \quad\text{and}\quad    \gc^T\dw=\Ypp\Yo \Ym
\end{equation}
for $(\Yp,\Yo,\Ym)\in N\times \H\times N_-$ and $\Ypp=\Xggc^{T}\Yp\in N$, where $x$ is given by \cref{lemma:Xggc}. See~\eqref{eq:ex_YpYoYm} for an example.  By definition, we have
\begin{equation}\label{eq:yR=Yp}
  \yR=[g^T\dw]_L^+=\Yp.
\end{equation}

Our first goal is to express the element $\Yo$ in terms of the MR-parameters $\bt\in(\R_{>0})^{\Jo}$. 
\begin{lemma}\label{lemma:Yo_tnn}
The element $\Yo\in \H$ is given by
\begin{equation*}%
  \Yo=\prod_{j\in\Jo} t_j^{-s_{i_m}\cdots s_{i_{j+1}}\ach_{i_j}}.
\end{equation*}
In particular, we have $\Yo\in\Htp$ and for each $i\in I$, 
\begin{equation}\label{eq:dom_Yo}
  \dom(\Yo)=\prod_{j\in \Jo} t_j^{-\<\om_i,s_{i_m}\cdots s_{i_{j+1}}\ach_{i_j}\>}.
\end{equation}
\end{lemma}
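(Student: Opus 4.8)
**The plan is to compute $\Yo$ directly from the MR-factorization $g = g_1 \cdots g_m$ by collapsing $g^T\dw$ into a product that lands in $N \cdot \H \cdot N_-$.** First I would write out $g^T = g_m^T \cdots g_1^T$, where $g_r^T = \ds_{i_r}^{-1}$ for $r \notin \Jo$ (since $\ds_i^T = \ds_i^{-1}$) and $g_r^T = y_{i_r}(t_r)$ for $r \in \Jo$ (since $x_i(t)^T = y_i(t)$). Then $g^T\dw$ is the product $g_m^T \cdots g_1^T \cdot \ds_{i_1}\cdots \ds_{i_m}$. The idea is to commute the factors $\ds_{i_r}$ of $\dw$ leftward one at a time, from $\ds_{i_1}$ inward, pairing each $\ds_{i_r}$ (for $r \notin \Jo$) with the corresponding $g_r^T = \ds_{i_r}^{-1}$ so that they cancel, and pushing each $\ds_{i_r}$ (for $r \in \Jo$) past the $y_{i_j}(t_j)$'s to its left using the collision moves~\eqref{eq:collision_y}.

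\textbf{The key mechanism is a downward induction on $r$ from $m$ to $1$, tracking a partially simplified product.} At stage $r$, I would maintain an identity of the form
\begin{equation*}
  g^T\dw = (\text{stuff in } N) \cdot a_r \cdot g_r^T \cdots g_1^T \cdot \ds_{i_1}\cdots \ds_{i_r},
\end{equation*}
where $a_r \in \H$ accumulates the torus contributions. When $r \notin \Jo$, the rightmost factor $\ds_{i_r}$ meets $\ds_{i_r}^{-1}$ buried in $g_r^T \cdots g_1^T$ after commuting past the $y$-factors — but more cleanly, one should recognize that $v\pu r = \sv_{i_1}\cdots\sv_{i_r}$ and that the $\ds_{i_r}$ for $r\notin\Jo$ is exactly what is needed to build up $\dv$ on the left; the remaining $y_i(\cdot)$'s stay in $N_-$ after conjugation. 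When $r \in \Jo$, applying $y_{i_r}(t_{r})\cdots$ conventions and the move $y_i(t)\ds_i = x_i(1/t)\,t^{-\alphacheck_i}\, y_i(-1/t)$ from~\eqref{eq:collision_y} produces a torus factor $t_r^{-\alphacheck_{i_r}}$ which must then be conjugated past all the reflections $\ds_{i_m}\cdots\ds_{i_{r+1}}$ standing to its left, turning it into $t_r^{-s_{i_m}\cdots s_{i_{r+1}}\alphacheck_{i_r}}$ via~\eqref{eq:torus_conj}. Collecting these across all $j \in \Jo$ gives the claimed formula $\Yo = \prod_{j\in\Jo} t_j^{-s_{i_m}\cdots s_{i_{j+1}}\alphacheck_{i_j}}$.

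\textbf{The main obstacle is bookkeeping: keeping the $N$-part and $N_-$-part genuinely in their subgroups at every stage while the reflections sweep through.} One has to be careful that pushing $\ds_{i_r}$ leftward past a $y$-factor really does land in $N$ (resp.\ $N_-$) — this is where \cref{conj1,conj2} and \cref{lemma:factor_NN'} come in, governed by whether the relevant partial product of reflections increases or decreases length, which is controlled precisely by the positive-distinguished (rightmost) property of $\bv$ inside $\bw$. Once the formula for $\Yo$ is established, the remaining assertions are immediate: each $t_j > 0$ and each $t_j^{-s_{i_m}\cdots s_{i_{j+1}}\alphacheck_{i_j}}$ lies in $\Htp$ since $\Htp$ is closed under taking $t^\beta$ for $t>0$ and any coweight $\beta$ (a product of $W$-conjugates of simple coroots is still an integer combination of simple coroots), so $\Yo\in\Htp$; and~\eqref{eq:dom_Yo} follows by applying the character $\dom$ and using $(t^\beta)^{\om_i} = t^{\<\om_i,\beta\>}$ together with linearity of the pairing. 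Alternatively, one could bypass the direct computation by evaluating generalized minors on both sides of $g^T\dw = \Yp\Yo\Ym$ and using~\eqref{eq:dom_Nmp} to isolate $\dom(\Yo)$, but the explicit factorization argument is cleaner and yields the full torus element, not just its characters.
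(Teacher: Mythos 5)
Your high-level strategy is the same as the paper's: compute $[g^T\dw]_0^\pm$ by processing the word, letting each $r\in\Jo$ contribute a torus factor $t_r^{-\ach_{i_r}}$ via a collision move, which is then conjugated through the remaining reflections to give $t_r^{-s_{i_m}\cdots s_{i_{r+1}}\ach_{i_r}}$; and your derivation of $\Yo\in\Htp$ and of~\eqref{eq:dom_Yo} from the formula is fine. But the concrete inductive scheme you propose does not work as stated, and the part you wave off as ``bookkeeping'' is exactly where the substance of the proof lies. First, the maintained identity $g^T\dw=(\text{stuff in }N)\cdot a_r\cdot g_r^T\cdots g_1^T\cdot\ds_{i_1}\cdots\ds_{i_r}$ is not achievable: the unprocessed reflections $\ds_{i_{r+1}}\cdots\ds_{i_m}$ sit to the \emph{right} of the inner block and cannot have been absorbed into factors written on its left; moreover the torus contributions cannot be parked on the left with their final exponents, since they still have to be conjugated through the remaining suffix (which stands to their right, not their left, as you wrote), and the template has no slot for the $N_-$-type debris that every collision move produces. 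Second, ``pushing $\ds_{i_r}$ past the $y_{i_j}(t_j)$'s using the collision moves'' is not available when $i_j\neq i_r$: the moves~\eqref{eq:collision_x}--\eqref{eq:collision_y} only apply to same-index pairs. What is actually needed is conjugation of general elements of $N$ by $\ds_i^{\pm1}$ (\cref{conj4}), the factorization $N=N'(s_i)\,N(s_i)$ from \cref{lemma:factor_NN'}, \cref{conj3}, and the length conditions of \cref{conj1,conj2} relative to the remaining suffix of $w$ --- and, contrary to your remark, these length conditions come solely from the reducedness of $\bw$, not from the positive-distinguished property of $\bv$.

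The paper resolves precisely this bookkeeping problem by strengthening the statement: it proves, by induction on $\ell(w)$, that $[g^T h\,\dw]_0^\pm=\prod_{j\in\Jo}t_j^{-s_{i_m}\cdots s_{i_{j+1}}\ach_{i_j}}$ for \emph{every} $h\in N$ inserted between $g^T$ and $\dw$. At each step one peels off the innermost pair $g_1^T,\ds_{i_1}$: if $1\notin\Jo$ one uses \cref{conj4} and \cref{conj2} to rewrite the argument as $(g')^T h'\dw'$; if $1\in\Jo$ one splits $h=h_1h_2$ with $h_2=x_{i_1}(t')$ killed by \cref{conj1}, conjugates $h_1$ by $y_{i_1}(t)$ (\cref{conj3}), applies the collision move, pushes $y_{i_1}(t_-)$ into a discarded $N_-$ tail via \cref{conj2}, and pushes the torus factor past $\dw'$ via~\eqref{eq:torus_conj}. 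All junk is thus absorbed either into the arbitrary $h$ or into an $N_-$ factor that is irrelevant for $[\cdot]_0^\pm$. Without this (or an equivalent correctly formulated invariant), your outline does not close up into a proof, so I would count it as having the right idea but a genuine gap in execution.
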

\begin{proof}
We have $\Yo=[g^T\dw]_0^\pm=[g_m^T\cdots g_1^T\ds_{i_1}\cdots \ds_{i_m}]_0^\pm$. We prove the following stronger statement by induction on $\ell(w)$: for any $h\in N$, we have
\begin{equation}\label{eq:Y0_proof}
  [g^T h \dw]_0^\pm=\prod_{j\in\Jo} t_j^{-s_{i_m}\cdots s_{i_{j+1}}\ach_{i_j}}.
\end{equation}
 In the base case $w=\id$, both sides of~\eqref{eq:Y0_proof} are equal to the identity element of $\H$. Assume now that the result is known for all $w'\in W$ with $\ell(w')<\ell(w)$. We consider two cases: either $g_1^T=\ds_i^{-1}$ or $g_1^T=y_i(t)$ for some $i\in I$ and $t>0$. In each case, we set $w':=s_iw<w$ and $g':=g_2\cdots g_m$.

In the first case $g_1^T=\ds_i^{-1}$, we have
\begin{equation*}%
  [g^T h \dw]_0^\pm=[(g')^T\ds_i^{-1}h\ds_{i}\dw']_0^\pm.
\end{equation*}
By \cref{conj4}, we can factorize $\ds_i^{-1}h\ds_{i}=h'y_i(t')$ for $h'\in N$ and $t'\in\R$. By \cref{conj2}, $y_i(t')\dw'\in \dw' N_-$, and thus we get $[g^T h \dw]_0^\pm=[(g')^Th'\dw']_0^\pm$. The result follows from the induction hypothesis.

Consider the second case $g_1^T=y_i(t)$. Using \cref{lemma:factor_NN'}, we factorize $h=h_1h_2$ for $(h_1,h_2)\in N'(s_i)\times N(s_i)$. Thus $h_2=x_i(t')$ for some $t'\in\C$. Since $s_iw<w$, by \cref{conj1}, we get $h_2\dw\in \dw N_-$, and thus $[g^T h \dw]_0^\pm=[g^T h_1 \dw]_0^\pm$. Write
\begin{equation*}%
  [g^T h_1 \dw]_0^\pm=[(g')^Ty_i(t)h_1\ds_{i}\dw']_0^\pm=[(g')^T\cdot y_i(t)h_1 y_i(-t) \cdot y_i(t)\ds_{i}\dw']_0^\pm.
\end{equation*}
Applying a collision move~\eqref{eq:collision_y}, we write $y_i(t)\ds_i=x_i(t_+)a_+^{-1}y_i(t_-)$ where $t_+=1/t$ and $a_+=t^{\ach_i}$. By \cref{conj2}, $y_i(t_-)\dw'\in\dw' N_-$. Next, we have $t^{-\ach_i}\dw'=\dw' t^{-s_{i_m}\cdots s_{i_2}\ach_i}$. By \cref{conj3}, we have $y_i(t)h_1 y_i(-t) \in N$. Setting $h':=y_i(t)h_1y_i(-t)x_i(t_+)$, we get 
\begin{equation*}%
  [g^T h_1 \dw]_0^\pm=[(g')^Th' \dw']_0^\pm\cdot t^{-s_{i_m}\cdots s_{i_2}\ach_i}.
\end{equation*}
This finishes the induction step.
\end{proof}

We now use the above computation to show that the twist maps preserve total positivity.
\begin{proof}[Proof of \cref{thm:twist_tnn}]
In view of \cref{thm:chiral_tnn}, it suffices to show that the right pre-twist map $\tpre_v^w:\Rich_v^w\xrasim \LRich_v^w$ sends $\Rtp_v^w$ to $\LRtp_v^w$.  Recall from~\eqref{eq:gcT_dw_in_B_Ym} that the element $\Ym$ belongs to $\dw^{-1}N\dw\cap N_-$. Let $h:=\dw \Ym^T$, and thus $h\in N_-\dw\cap \dw N$. By~\eqref{eq:BgT=BYmdwi}, we have $Bg^T=B h^T$, and thus $gB_-=h B_-$ belongs to $\GBtnn$.
 By \cref{thm:chiral_tnn_op}, we get %
\begin{equation*}%
  B_-h^{-T}=B_- \dw \Ym^{-1} \quad \in \BGtnn.
\end{equation*}
Since $\Yo\in\Htp$ by \cref{lemma:Yo_tnn}, we can act by $\Yo^{-1}$ on the right and get $B_- \dw \Ym^{-1} \Yo^{-1}\in\BGtnn$. By~\eqref{eq:YpYoYm_dfn}, $\Yo\Ym=\Yp^{-1}g^T\dw$, so $\Ym^{-1}\Yo^{-1}=\dw^{-1}g^{-T}\Yp$. We have shown that $B_-g^{-T}\Yp\in \BGtnn$. Since $g\in N\dv$, we have $g^{-T}\in N_-\dv$, so $B_-g^{-T}\Yp=B_- v \Yp\in \BGtnn$. It remains to note that the right pre-twist of $gB_-$ is given by 
\begin{equation*}%
 \tpre_v^w(gB_-):=B_-v[g^T\dw]_L^+=B_-v\Yp\quad \in\BGtnn.\qedhere
\end{equation*}
\end{proof}

\section{Right twist Chamber Ansatz}\label{sec:right-twist-chamber}
We concentrate on proving \cref{thm:R_chamber_ans}. Even though it looks similar to the Marsh--Rietsch Chamber Ansatz (\cref{thm:MR_chamber_ans}), we were unable to formally deduce one statement from the other. Thus, we utilize the proof strategy of~\cite{BZ} who treated the case $v=\id$.

Throughout, let us fix a pair $v\leq w$ in $W$, a reduced expression $\bw$ for $w$, and an MR-parametrized element $g=\gt=g_1g_2\cdots g_m$ with $\bt\in(\Cast)^{\Jo}$. Our goal is to study the element $\yR:=[g^T\dw]_L^+$. Recall from~\eqref{eq:NN'_dfn} that for $u\in W$, we set $N(u):=N\cap \du^{-1}N_-\du$ and $N'(u):=N\cap \du^{-1}N\du$. 

\begin{lemma}\label{lemma:yR_yRpr_N'(u)}
Let $1\leq r\leq m+1$ and 
\begin{equation}\label{eq:BFZ_step1_statement}
\yRpr:=[g_m^T\cdots g_r^T\ds_{i_r}\cdots \ds_{i_m}]_L^+.
\end{equation}
Then $\yR\in \yRpr\cdot N'(s_{i_r}\cdots s_{i_m})$.
\end{lemma}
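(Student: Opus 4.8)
The plan is to prove the assertion by induction on $r$, running from $r=1$ up to $r=m+1$. Throughout, write $u_r:=s_{i_r}s_{i_{r+1}}\cdots s_{i_m}$; this is a reduced word, being a contiguous subword of the reduced word $\bw$, so $\dot u_r=\ds_{i_r}\cdots\ds_{i_m}$ and $\yRpr=[\,g_m^T\cdots g_r^T\,\dot u_r\,]_L^+$. Since $u_r=s_{i_r}u_{r+1}$ with $\ell(u_r)=\ell(u_{r+1})+1$, the positive roots $\alpha$ with $u_r\alpha>0$ are precisely those with $u_{r+1}\alpha>0$ other than $u_{r+1}^{-1}\alpha_{i_r}$; hence $N'(u_r)\subseteq N'(u_{r+1})$. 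The base case $r=1$ is immediate, since $u_1=w$ and the assertion reads $\yR\in\yR\cdot N'(w)$. For the inductive step, assuming $\yR\in\yRpr N'(u_r)$ it suffices to show $\yRpr\in\yR^{(r+1)}N'(u_{r+1})$, for then $\yR\in\yRpr N'(u_r)\subseteq\yRpr N'(u_{r+1})\subseteq\yR^{(r+1)}N'(u_{r+1})$.

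If $r\notin\Jo$, then $g_r^T=\ds_{i_r}^{-1}$, so that $g_m^T\cdots g_r^T\,\dot u_r=g_m^T\cdots g_{r+1}^T\,\dot u_{r+1}$ and hence $\yRpr=\yR^{(r+1)}$; this settles the case. If $r\in\Jo$, then $g_r^T=y_{i_r}(t_r)$, and I would use the collision move~\eqref{eq:collision_y} to write $y_{i_r}(t_r)\ds_{i_r}=x_{i_r}(t_r^{-1})\,t_r^{-\ach_{i_r}}\,y_{i_r}(-t_r^{-1})$. Because $u_r=s_{i_r}u_{r+1}$ is reduced, \cref{conj2} yields $\dot u_{r+1}^{-1}y_{i_r}(-t_r^{-1})\dot u_{r+1}\in N_-$ and $\dot u_{r+1}^{-1}x_{i_r}(t_r^{-1})\dot u_{r+1}\in N$; combining these with $\dot u_{r+1}^{-1}t_r^{-\ach_{i_r}}\dot u_{r+1}\in\H$ and the invariance of $[\,\cdot\,]_L^+$ under right multiplication by $B_-$, one obtains
\begin{equation*}
\yRpr=[\,g_m^T\cdots g_{r+1}^T\,x_{i_r}(t_r^{-1})\,\dot u_{r+1}\,]_L^+=[\,A\,\dot u_{r+1}\,n_1\,]_L^+,\qquad A:=g_m^T\cdots g_{r+1}^T,\quad n_1:=\dot u_{r+1}^{-1}x_{i_r}(t_r^{-1})\dot u_{r+1}.
\end{equation*}
Here $n_1\in N$ and $\dot u_{r+1}n_1\dot u_{r+1}^{-1}=x_{i_r}(t_r^{-1})\in N$, so in fact $n_1\in N'(u_{r+1})$.

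The element $A\,\dot u_{r+1}=(g_{r+1}\cdots g_m)^T\dot u_{r+1}$ lies in $\Gopm$, since $g_{r+1}\cdots g_m$ is itself MR-parametrized (cf.\ the proof of \cref{prop:pre_pre_twist}). Writing $A\,\dot u_{r+1}=\yR^{(r+1)}h\,m_-$ with $h\in\H$ and $m_-\in N_-$, we get $\yRpr=\yR^{(r+1)}\,[\,h\,m_-\,n_1\,]_L^+$; moving $h$ to the right (it normalizes $N_-$ and $N'(u_{r+1})$) and then discarding it, $(\yR^{(r+1)})^{-1}\yRpr=[\,\tilde m_-\,\tilde n_1\,]_L^+$ for suitable $\tilde m_-\in N_-$ and $\tilde n_1\in N'(u_{r+1})$. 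The inductive step, and hence the lemma, therefore reduces to the claim that $[\,m_-\,n\,]_L^+\in N'(u)$ whenever $m_-\in N_-$, $n\in N'(u)$, and $m_-n\in\Gopm$. I expect this last claim to be the main obstacle; I would prove it by induction on $\ell(u)$, splitting off a simple reflection from $u$ via \cref{lemma:factor_NN'}, or else geometrically, noting that $[\,m_-n\,]_L^+\in N'(u)$ is equivalent to the flag $\dot u\,m_-\,n\,B_-$ lying in the Schubert cell $B\dot uB_-/B_-$: the factor $\dot u n\dot u^{-1}\in N$ keeps the flag in this cell, and the remaining contribution coming from $\dot u m_-\dot u^{-1}$ must be shown to be absorbable into $B$. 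The root-subgroup bookkeeping required to finish either argument is the one genuinely technical point.
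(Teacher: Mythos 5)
Your overall induction scheme (the nesting $N'(u_r)\subseteq N'(u_{r+1})$, the case split on $r\in\Jo$, the collision move~\eqref{eq:collision_y} and \cref{conj2}) is sound and runs parallel to the ingredients of the paper's proof, which instead tracks an auxiliary element $h\in N$ inserted in the middle (its steps~\eqref{eq:BFZ_step_1x}--\eqref{eq:BFZ_step_1z}). However, the statement you reduce everything to is false as you state it: it is \emph{not} true that $[m_-n]_L^+\in N'(u)$ whenever $m_-\in N_-$, $n\in N'(u)$ and $m_-n\in \Gopm$. Already in $\SL_3$ with $u=s_1$, take $n$ the unitriangular matrix with $(1,3)$ entry $1$ (so $n\in N'(s_1)$) and $m_-$ the unitriangular matrix with $(3,2)$ entry $1$; then $m_-n\in\Gopm$ but $[m_-n]_L^+$ has a nonzero $(1,2)$ entry, hence lies outside $N'(s_1)$. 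So no amount of root-subgroup bookkeeping will close the argument in the generality you set up, and since you flag this claim as the unproven "main obstacle," this is a genuine gap.

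The fix is to remember where your $m_-$ comes from. Since $g_{r+1}\cdots g_m\in B_-u_{r+1}B_-$ (each factor lies in $B_-s_{i_j}B_-$ and the suffix word is reduced --- this, rather than "being MR-parametrized," is also the clean justification that $A\du_{r+1}\in\Gopm$), the argument of~\eqref{eq:Gauss_dwi_g}/\eqref{eq:gcT_dw_in_B_Ym} applied to $u_{r+1}$ shows that the $N_-$-factor of the decomposition $A\du_{r+1}=\yR^{(r+1)}h\,m_-$ satisfies the extra constraint $m_-\in N_-\cap \du_{r+1}^{-1}N\du_{r+1}$ (the analogue of the paper's $\Ym$), and this is preserved under conjugation by $h\in\H$. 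With that constraint, $\tilde m_-\tilde n_1\in \du_{r+1}^{-1}N\du_{r+1}$, and \cref{lemma:factor_NN'} factors it as an element of $N'(u_{r+1})$ times an element of $N_-\cap\du_{r+1}^{-1}N\du_{r+1}\subset B_-$, so $[\tilde m_-\tilde n_1]_L^+\in N'(u_{r+1})$ immediately --- exactly the mechanism the paper uses to prove~\eqref{eq:BFZ_step_1x}. With this repair your induction goes through.
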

\begin{proof}
We first show that for all $h\in N$, we have
\begin{equation}\label{eq:BFZ_step_1x}
  [g^T h \dw]_L^+\in \yR \cdot N'(w).
\end{equation}
By \cref{lemma:factor_NN'}, we can factorize $h=h_1h_2$ for $(h_1,h_2)\in N'(w^{-1})\times N(w^{-1})$. Since $h_2\dw\in \dw N_-$, we have $[g^Th\dw]_L^+=[g^Th_1\dw]_L^+$. Next, by~\eqref{eq:YpYoYm_dfn}, we have
\begin{equation*}%
  g^Th_1\dw=g^T\dw\cdot \dw^{-1}h_1\dw=\Yp\Yo\Ym\cdot \dw^{-1}h_1\dw.
\end{equation*}
By~\eqref{eq:gcT_dw_in_B_Ym}, $\Ym\in N_- \cap \dw^{-1}N \dw$. Note that $\dw^{-1}h_1\dw\in N'(w)$. By \cref{lemma:factor_NN'}, the multiplication map gives isomorphisms 
\begin{equation*}%
  (N_- \cap \dw^{-1}N \dw) \times N'(w) \to \dw^{-1}N \dw \to N'(w) \times (N_- \cap \dw^{-1}N \dw).
\end{equation*}
Thus
\begin{equation*}%
  g^Th_1\dw\in \Yp\Yo \cdot N'(w) \cdot  (N_- \cap \dw^{-1}N \dw)\subset \Yp \cdot N'(w)\cdot \H \cdot  (N_- \cap \dw^{-1}N \dw).
\end{equation*}
We therefore find
\begin{equation*}%
  [g^T h \dw]_L^+=[g^Th_1\dw]_L^+\in \Yp\cdot N'(w).
\end{equation*}
It remains to note that by~\eqref{eq:yR=Yp}, $\Yp=\yR$. %

Clearly,~\eqref{eq:BFZ_step_1x} holds with $(w,g,\yR)$ replaced with $(s_{i_r}\cdots s_{i_m},g_r\cdots g_m,\yRpr)$: for all $h\in N$, we have
\begin{equation}\label{eq:BFZ_step_1y}
  [g_m^T\cdots g_r^T h \ds_{i_r}\cdots \ds_{i_m}]_L^+\in \yRpr \cdot N'(s_{i_r}\cdots s_{i_m}).
\end{equation}

Finally, we show that for all $h^{(1)}\in N$ and all $1\leq r\leq m+1$, there exists $h^{(r)}\in N$ such that 
\begin{equation}\label{eq:BFZ_step_1z}
  [g^T h^{(1)} \dw]_L^+=[g_m^T\cdots g_r^T h^{(r)} \ds_r\cdots \ds_m]_L^+.
\end{equation}
We proceed by induction on $r$. The base case $r=1$ is clear. To pass from $r=1$ to $r=2$, we consider two cases: either $g_1^T=\ds_i^{-1}$ or $g_1^T=y_i(t)$ for some $i\in I$ and $t>0$. In each case, we set $w':=s_iw<w$ and $g':=g_2\cdots g_m$. The proof of \cref{lemma:Yo_tnn} shows that in each case, we have $[g^T h^{(1)} \dw]_L^+=[(g')^Th^{(2)}\dw']_L^+$ for some $h^{(2)}\in N$. Iterating this procedure, we prove~\eqref{eq:BFZ_step_1z} for all $1\leq r\leq m+1$.

To finish the proof of the lemma, we observe that $\yR=[g^T h^{(1)} \dw]_L^+$ for $h^{(1)}=1\in N$. By~\eqref{eq:BFZ_step_1z}, $\yR=[g_m^T\cdots g_r^T h^{(r)} \ds_r\cdots \ds_m]_L^+$ for some $h^{(r)}\in N$. Applying~\eqref{eq:BFZ_step_1y}, we get $\yR\in \yRpr \cdot N'(s_{i_r}\cdots s_{i_m})$.
\end{proof}

\begin{corollary}\label{cor:BFZ_step1}
For each $1\leq r\leq m$, we have 
\begin{equation*}%
  \fR_r(\yR)=\fR_r(\yRpr).
\end{equation*}
\end{corollary}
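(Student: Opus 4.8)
The plan is to deduce \cref{cor:BFZ_step1} from \cref{lemma:yR_yRpr_N'(u)} together with a right-invariance property of the generalized minor $\fR_r$. By \cref{lemma:yR_yRpr_N'(u)}, the element $z:=(\yRpr)^{-1}\yR$ lies in $N$ (both $\yR$ and $\yRpr$ do) and moreover in $N'(s_{i_r}\cdots s_{i_m})$. Hence it suffices to show that $\fR_r$ is invariant under right multiplication by the subgroup $N'(s_{i_r}\cdots s_{i_m})$; evaluating at the point $x=\yRpr$ then gives $\fR_r(\yR)=\fR_r(\yRpr\cdot z)=\fR_r(\yRpr)$, which is the assertion.

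To establish this invariance I would argue directly from the definition of $\fR_r$. Since $\bw$ is reduced, $\Wi{r-1}=w^{-1}\wi{r-1}$ equals $s_{i_m}s_{i_{m-1}}\cdots s_{i_r}$, and the reversed expression $(i_m,i_{m-1},\dots,i_r)$ is a reduced word for $\Wi{r-1}$, of length $m-r+1=\ell(\Wi{r-1})$. Thus, by \eqref{eq:line_dfn}, $\line{\Wi{r-1}}=\ds_{i_m}^{-1}\ds_{i_{m-1}}^{-1}\cdots\ds_{i_r}^{-1}$, so $\line{\Wi{r-1}}^{-1}=\ds_{i_r}\ds_{i_{r+1}}\cdots\ds_{i_m}$ is a representative of $s_{i_r}s_{i_{r+1}}\cdots s_{i_m}$ in $N_G(\H)$; since $N'(\cdot)$ does not depend on the choice of representative, $N'(s_{i_r}\cdots s_{i_m})=N\cap\line{\Wi{r-1}}\,N\,\line{\Wi{r-1}}^{-1}$, and so every $z\in N'(s_{i_r}\cdots s_{i_m})$ satisfies $n:=\line{\Wi{r-1}}^{-1}z\,\line{\Wi{r-1}}\in N$. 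Now recall from \eqref{eq:fR_dfn} and \eqref{eq:dom_dfn} that $\fR_r(x)$ is the value of the function $\Delta^{\om_{i_r}}_{\om_{i_r}}$ at $\lline{(\Vi{r-1})^{-1}}\,x\,\line{\Wi{r-1}}$. Writing $x z\,\line{\Wi{r-1}}=\bigl(x\,\line{\Wi{r-1}}\bigr)\,n$ and applying the invariance \eqref{eq:dom_Nmp} (with the index $i$ there specialized to $i_r$) of $\Delta^{\om_{i_r}}_{\om_{i_r}}$ under right multiplication by $N$, one gets $\fR_r(xz)=\fR_r(x)$, as needed.

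I do not expect a serious obstacle; the only thing requiring care is the bookkeeping. One must correctly match the Weyl group element $\Wi{r-1}$ occurring in the lower index of $\fR_r$ with the group $N'(s_{i_r}\cdots s_{i_m})=N'(\Wi{r-1}^{-1})$ furnished by \cref{lemma:yR_yRpr_N'(u)}, which in turn forces using the reversed reduced word $(i_m,\dots,i_r)$ for $\Wi{r-1}$ when computing $\line{\Wi{r-1}}$. It is worth noting that the invariance used here is not literally the one provided by \cref{lemma:Lec_NN'} (which concerns $N'(w)$), so I would give the short self-contained verification above rather than attempt to quote that lemma.
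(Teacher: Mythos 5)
Your proof is correct, and its skeleton is the same as the paper's: both arguments reduce the corollary to \cref{lemma:yR_yRpr_N'(u)} plus the right-invariance of $\fR_r$ under $N'(s_{i_r}\cdots s_{i_m})$. The only divergence is how that invariance is justified. The paper obtains it by applying \cref{lemma:Lec_NN'} not to $(v,w)$ but to the suffix pair $(\sv_{i_r}\cdots\sv_{i_m}\leq s_{i_r}\cdots s_{i_m})$, after noting that $\fR_r=\DOMir^{\sv_{i_m}\cdots\sv_{i_r}}_{s_{i_m}\cdots s_{i_r}}$ is one of Leclerc's functions for that smaller pair; you instead verify the invariance directly from \eqref{eq:fR_dfn}, \eqref{eq:dom_dfn} and \eqref{eq:dom_Nmp}, conjugating $z\in N'(s_{i_r}\cdots s_{i_m})$ past $\line{\Wi{r-1}}$, with $\line{\Wi{r-1}}^{-1}=\ds_{i_r}\cdots\ds_{i_m}$ a lift of $s_{i_r}\cdots s_{i_m}$. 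Your caution about not quoting \cref{lemma:Lec_NN'} verbatim is justified: applied to $(v,w)$ it only gives invariance under $N'(w)$, which (since $\ell(w)=\ell(\wi{r-1})+\ell(s_{i_r}\cdots s_{i_m})$) is contained in, and generally strictly smaller than, $N'(s_{i_r}\cdots s_{i_m})$, so the naive citation would not suffice. Your direct computation buys self-containedness (in particular, no need to check that the suffix of a positive distinguished subexpression is again positive distinguished, which the paper's citation implicitly uses), at the cost of a few lines of bookkeeping, all of which you carry out correctly.
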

\begin{proof}
Recall from~\eqref{eq:fR_dfn} that $\fR_r=\DOMir^{v\pu{r-1}}_{w\pu{r-1}}=\DOMir^{\sv_{i_m}\cdots \sv_{i_r}}_{s_{i_m}\cdots s_{i_r}}$. 
 Applying \cref{lemma:Lec_NN'} to the pair $(\sv_{i_r}\cdots\sv_{i_m}\leq s_{i_r}\cdots s_{i_m})$, we see that the function $\fR_r\in\C[N]$ is invariant under right multiplication by $N'(s_{i_r}\cdots s_{i_m})$. The result follows from \cref{lemma:yR_yRpr_N'(u)}.
\end{proof}

\begin{lemma}\label{lemma:g_minors}
For all $i\in I$, we have
\begin{equation*}%
  \DOM^{}_{w^{-1}}(g)=\prod_{r\in\Jo} t_r^{ \<\om_i,s_{i_1}\cdots s_{i_{r-1}}\ach_{i_r} \>  }
\end{equation*}
\end{lemma}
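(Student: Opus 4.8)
The plan is to compute $\DOM^{}_{w^{-1}}(g)$ by induction on $\ell(w)=m$, peeling off the first factor $g_1$ of the MR-parametrization $g=g_1g_2\cdots g_m$. Recall that by~\eqref{eq:dom_dfn}, $\DOM^{}_{w^{-1}}(g)=\dom(g\line{w^{-1}})=\dom(g\dw^{-1})$ (since $\line{w^{-1}}=\dw^{-1}$), and $\dom$ is left-$N_-$-invariant and right-$N$-invariant by~\eqref{eq:dom_Nmp}, and transforms under left multiplication by $\H$ via~\eqref{eq:dom_H}. Write $\dw^{-1}=\dw'^{-1}\ds_{i_1}^{-1}$ where $w'=s_{i_1}w$ and $\ell(w')=m-1$. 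First I would treat the case $g_1=\ds_{i_1}$: then $g\dw^{-1}=g'\ds_{i_1}\dw'^{-1}\ds_{i_1}^{-1}$, hmm — more directly, $\DOM^{}_{w^{-1}}(g)=\dom(\ds_{i_1}g'\line{w^{-1}})$, and I would instead use the left-hand index of the minor. Actually cleaner: use $\DOM^{}_{w^{-1}}=\DOM^{w}_{}(\,\cdot^T)$ by~\eqref{eq:dom_iota}-style identities, or simply track things via $\dom(\lline{(\cdot)^{-1}}\,g\,\line{w^{-1}})$ with the left index trivial. The cleanest bookkeeping is to prove the stronger claim by induction: for every $h\in N$,
\[
  \dom\!\left(h\,g\,\line{w^{-1}}\right)=\prod_{r\in\Jo} t_r^{\<\om_i,\, s_{i_1}\cdots s_{i_{r-1}}\ach_{i_r}\>},
\]
which kills the contribution of any unipotent stuff that migrates to the left during the induction.

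The induction step splits into two cases according to whether $g_1=\ds_{i_1}$ or $g_1=x_{i_1}(t_1)$. In the first case, $1\notin\Jo$, and $h\,g\,\line{w^{-1}}=h\,\ds_{i_1}\,g'\,\line{w'^{-1}}\ds_{i_1}^{-1}$. Using \cref{conj4} (or \cref{lemma:factor_NN'}) to move $\ds_{i_1}$ past $h$, write $h\ds_{i_1}=\ds_{i_1}h''$ with... no: rather factor $h=h_1h_2$ with $h_2\in N(s_{i_1})$ so $\ds_{i_1}^{-1}h_2^{-1}\ds_{i_1}\in N_-$, absorb $h_2$ on the left as in the proof of \cref{lemma:Yo_tnn}, and the trailing $\ds_{i_1}^{-1}$ is absorbed by right-$N$... this requires care since $\ds_{i_1}^{-1}\notin N$. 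The right trick: note $\line{w^{-1}}=\ds_{i_1}^{-1}\line{w'^{-1}}$ is wrong; rather $\dw=\ds_{i_1}\dw'$ so $\line{w^{-1}}=\dw^{-1}=\dw'^{-1}\ds_{i_1}^{-1}$. So $h g\line{w^{-1}}=(h\ds_{i_1})g'\dw'^{-1}\ds_{i_1}^{-1}$. Apply \cref{conj4}: $\ds_{i_1}^{-1}(h\ds_{i_1})^{\text{-ish}}$... The actual mechanism that works is: since $w's_{i_m}\cdots$ — let me instead follow exactly the pattern in \cref{lemma:Yo_tnn}'s proof but on the other side. I would mirror that argument: in the $g_1=\ds_{i_1}$ case the exponent index set $\Jo$ is unchanged and $s_{i_1}\cdots s_{i_{r-1}}$ is unchanged for $r\ge 2$, so the inductive formula for $w'$ (with weight $\om_i$ pulled through $\ds_{i_1}$) gives the result after verifying a trivial torus computation; in the $g_1=x_{i_1}(t_1)$ case, $1\in\Jo$, and a collision move $x_{i_1}(t_1)\dw'^{-1}$... no, $g_1$ is at the far left, so we instead compute $\dom(h x_{i_1}(t_1) g'\line{w^{-1}})$ and absorb $x_{i_1}(t_1)$ — but $x_{i_1}(t_1)\in N$ is on the left, not the right, so it does NOT disappear under left-$N_-$-invariance. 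This is the genuine subtlety.

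The resolution of that subtlety — and the main obstacle — is handling the $g_1=x_{i_1}(t_1)$ case, where the new $x_{i_1}(t_1)$ factor sits to the left of a minor that is only left-$N_-$-invariant. The way through is to use~\eqref{eq:dom_H}: write $h x_{i_1}(t_1) = h x_{i_1}(t_1)$, and observe that we should be computing $\DOM^{}_{w^{-1}}(g)=\dom(\lline{e}\,g\,\line{w^{-1}})$ where the \emph{left} Weyl element is trivial, so $x_{i_1}(t_1)$ on the left is absorbed by the left-$N_-$... no. The correct observation is that for the generalized minor $\DOM^{}_{w^{-1}}$, left multiplication of $g$ by $x_{i_1}(t_1)\in N$ changes the value: $\DOM^{}_{w^{-1}}(x_{i_1}(t_1)g)$ must be expanded. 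Here I would use the standard exchange relation / the recursion $\DOM^{}_{w^{-1}}(x_{i_1}(t)g)=\DOM^{}_{w^{-1}}(g)+t\cdot(\text{lower minor})$ only if $s_{i_1}w^{-1}<w^{-1}$ — but in fact since $\ell(s_{i_1}w)<\ell(w)$, we have $w^{-1}s_{i_1}<w^{-1}$, i.e. $x_{i_1}(t)$ acts from the left on the index $e$ which is minimal, so $e s_{i_1}=s_{i_1}>e$, giving $\DOM^{e}_{w^{-1}}(x_{i_1}(t)g)=\DOM^{e}_{w^{-1}}(g)$ exactly (left multiplication by $x_j(t)$ with $s_je>e$ fixes $\DOM^{e}_{\star}$). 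That is the key point: the left index $e$ is minimal in Bruhat order, so $\fL$-type left-unipotent invariance holds automatically for $\DOM^{}_{w^{-1}}$. Granting this, the $g_1=x_{i_1}(t_1)$ factor is simply deleted with no change to the value — but then how does $t_1$ appear in the product? It appears precisely from the $\dw^{-1}=\dw'^{-1}\ds_{i_1}^{-1}$ bookkeeping: when $g_1=x_{i_1}(t_1)$, we instead commute $x_{i_1}(t_1)$ to the \emph{right} past nothing and note that the reduced word for $w$ still starts with $s_{i_1}$; the collision move $x_{i_1}(t_1)\dw^{-1}=x_{i_1}(t_1)\ds_{i_1}^{-1}\dw'^{-1}=y_{i_1}(1/t_1)\,t_1^{-\ach_{i_1}}\,x_{i_1}(-1/t_1)\dw'^{-1}$ (by~\eqref{eq:collision_x}), and $y_{i_1}(1/t_1)\in N_-$ is killed on the left, $t_1^{-\ach_{i_1}}$ contributes $t_1^{\<\om_i,\ach_{i_1}\>}$ via~\eqref{eq:dom_H}, and the remaining $x_{i_1}(-1/t_1)\dw'^{-1}$ feeds the induction with $g'$ — wait, that reorganizes things but requires $g_1$ to be adjacent to $\dw^{-1}$, which it is not. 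So in fact the right approach is to run the induction from the \emph{right} end $g_m$, peeling $g_m$ off together with $\ds_{i_m}$ exactly as in \cref{lemma:Yo_tnn}: $g^T\dw$ there versus $g\dw^{-1}$ here are handled by the same collision bookkeeping, and the exponent $\<\om_i, s_{i_1}\cdots s_{i_{r-1}}\ach_{i_r}\>$ (ascending product, matching right-end peeling) versus the descending product in \cref{lemma:Yo_tnn} confirms that this is the mirror computation. I would therefore set $w':=ws_{i_m}$, $g':=g_1\cdots g_{m-1}$, split on $g_m=\ds_{i_m}$ vs $g_m=x_{i_m}(t_m)$, use \cref{conj1}/\cref{conj2}, \cref{conj4}, \cref{lemma:factor_NN'}, and a collision move~\eqref{eq:collision_x} together with~\eqref{eq:dom_H}, pushing the weight $\om_i$ through $s_{i_1}\cdots s_{i_{m-1}}$, and the torus factor $t_m^{-\ach_{i_m}}$ contributes $t_m^{\<\om_i,\, s_{i_1}\cdots s_{i_{m-1}}\ach_{i_m}\>}$, completing the induction. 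The main obstacle is getting the side (left vs. right) and the direction of the Weyl-group product consistent, and correctly tracking which unipotent pieces are absorbed by $\dom$'s one-sided invariances; once the correct mirror of the \cref{lemma:Yo_tnn} argument is set up, the computation is routine.
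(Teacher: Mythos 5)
Your final plan---compute $\DOM^{}_{w^{-1}}(g)=\dom(g\dw^{-1})$ by induction, peeling $g_m$ together with $\ds_{i_m}^{-1}$ and mirroring the proof of \cref{lemma:Yo_tnn}---is exactly what the paper does (its proof is the observation that $\DOM^{}_{w^{-1}}(g)=\dom([g\dw^{-1}]_0^{\mp})$ plus ``entirely analogous to \cref{lemma:Yo_tnn}''). However, the one piece of your argument stated precisely is wrong, and it is the piece that makes the induction close. Your proposed stronger claim, that $\dom(h\,g\,\dw^{-1})$ equals the desired monomial for every $h\in N$ placed to the \emph{left} of $g$, is false: already for $G=\SL_2$, $w=s_1$, $v=\id$, one has $\dom\bigl(x_1(s)\,x_1(t_1)\,\ds_1^{-1}\bigr)=s+t_1$, because $\dom$ is invariant under left multiplication by $N_-$ and right multiplication by $N$ (see~\eqref{eq:dom_Nmp}), \emph{not} under left multiplication by $N$. (Relatedly, the ``key point'' in your middle detour---that left multiplication by $x_j(t)$ fixes $\Delta^{\om_i}_{\star}$ because the upper index is the identity---is false for $j=i$: in $\SL_n$ it changes the top-aligned $i\times i$ minor.) The junk that accumulates in this mirrored induction is lower-unipotent and sits \emph{between} $g$ and $\dw^{-1}$: the correct auxiliary statement is that $\dom(g\,h_-\,\dw^{-1})=\prod_{r\in\Jo}t_r^{\<\om_i,s_{i_1}\cdots s_{i_{r-1}}\ach_{i_r}\>}$ for every $h_-\in N_-$, the exact mirror ($N\leftrightarrow N_-$, word read from the other end) of the statement about $[g^T h\dw]_0^{\pm}$ proved in \cref{lemma:Yo_tnn}.

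With that device the step goes through as in \cref{lemma:Yo_tnn}. If $g_m=\ds_{i_m}$, factor $h_-=h_1\,y_{i_m}(c)$ with $\ds_{i_m}h_1\ds_{i_m}^{-1}\in N_-$ and $\ds_{i_m}y_{i_m}(c)\ds_{i_m}^{-1}=x_{i_m}(-c)$, which is pushed through $(\dw')^{-1}$ into $N$ by \cref{conj2} (since $w's_{i_m}=w>w'$) and absorbed on the right. If $g_m=x_{i_m}(t_m)$, first absorb the $y_{i_m}(c)$-factor of $h_-$ through $\dw^{-1}$ using \cref{conj1}, conjugate the remaining factor of $h_-$ by $x_{i_m}(t_m)$ (it stays in $N_-$), and apply the collision move~\eqref{eq:collision_x}, $x_{i_m}(t_m)\ds_{i_m}^{-1}=y_{i_m}(t_+)a_+x_{i_m}(t_-)$ with $a_+=t_m^{\ach_{i_m}}$ (note: $a_+$, not $t_m^{-\ach_{i_m}}$ as you wrote; the positive power is what produces the positive exponents in this lemma, in contrast to \cref{lemma:Yo_tnn}). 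Then $x_{i_m}(t_-)$ is absorbed through $(\dw')^{-1}$ by \cref{conj2}, the torus factor pushed to the far right contributes $t_m^{\<\om_i,s_{i_1}\cdots s_{i_{m-1}}\ach_{i_m}\>}$ via~\eqref{eq:torus_conj} and~\eqref{eq:dom_H}, and the leftover $y_{i_m}(t_+)\in N_-$---which your bookkeeping cannot handle, since it is neither in $N$ nor at the far left---is exactly what the new $h_-'$ in the induction hypothesis absorbs. So: right strategy, same as the paper's, but as set up your induction would fail at this point and needs the correction above.
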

\begin{proof}
We have
\begin{equation*}%
    \DOM^{}_{w^{-1}}(g)=  \dom(g\dw^{-1})=\dom([g\dw^{-1}]_0^{\mp}).
\end{equation*}
The computation of $\dom([g\dw^{-1}]_0^{\mp})$ is entirely analogous to the computation of $\dom(y_0)=\dom([g^T\dw]_0^{\pm})$ which was completed in \cref{lemma:Yo_tnn}.
\end{proof}

\begin{lemma}\label{lemma:DOM_DOM=1}
For all $i\in I$, we have
\begin{equation*}%
  \DOM^{v^{-1}}_{w^{-1}}(\yR)\cdot \DOM^{}_{w^{-1}}(g)=1.
\end{equation*}
\end{lemma}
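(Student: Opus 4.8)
The idea is to compute both generalized minors explicitly in terms of the torus element $\Yo\in\H$ coming from the factorization $g^T\dw=\yR\Yo\Ym$ of~\eqref{eq:YpYoYm_dfn} and~\eqref{eq:yR=Yp}, where by~\eqref{eq:gcT_dw_in_B_Ym} we may take $\Ym\in\dw^{-1}N\dw\cap N_-$. Concretely, I will prove
\[
\DOM^{v^{-1}}_{w^{-1}}(\yR)=(\dw\Yo^{-1}\dw^{-1})^{\om_i}
\qquad\text{and}\qquad
\DOM^{}_{w^{-1}}(g)=(\dw\Yo\dw^{-1})^{\om_i}.
\]
Since $a\mapsto a^{\om_i}$ is a group homomorphism on $\H$ and the elements $\dw\Yo^{-1}\dw^{-1}$ and $\dw\Yo\dw^{-1}$ are mutually inverse, multiplying these two identities yields $\DOM^{v^{-1}}_{w^{-1}}(\yR)\cdot\DOM^{}_{w^{-1}}(g)=1$.

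To get the first identity, I would unfold the definition using~\eqref{eq:dom_dfn} and~\eqref{eq:line_dfn} to obtain $\DOM^{v^{-1}}_{w^{-1}}(\yR)=\dom(\dv\yR\dw^{-1})$. Since $g\in N\dv$ we have $\dv g^T\in N_-$, and solving $g^T\dw=\yR\Yo\Ym$ for $\yR$ gives
\[
\dv\yR\dw^{-1}=(\dv g^T)\cdot(\dw\Ym^{-1}\dw^{-1})\cdot(\dw\Yo^{-1}\dw^{-1}),
\]
which is a product of an element of $N_-$, an element of $N$ (here $\Ym\in\dw^{-1}N\dw$ is used), and the element $a:=\dw\Yo^{-1}\dw^{-1}$ of $\H$. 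Rewriting this product in $N_-\H N$-form (move $a$ to the middle) and discarding the unipotent factors via~\eqref{eq:dom_Nmp} gives $\DOM^{v^{-1}}_{w^{-1}}(\yR)=\dom(a)=a^{\om_i}$, as claimed.

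For the second identity, transpose $g^T\dw=\yR\Yo\Ym$ using $\dw^T=\dw^{-1}$ and $\Yo^T=\Yo$ to get $g=\dw\Ym^T\Yo\yR^T$, so that $\DOM^{}_{w^{-1}}(g)=\dom(g\dw^{-1})$ with
\[
g\dw^{-1}=(\dw\Ym^T\dw^{-1})\cdot(\dw\Yo\dw^{-1})\cdot(\dw\yR^T\dw^{-1}).
\]
The first factor lies in $N_-$ since $\Ym^T\in\dw^{-1}N_-\dw\cap N$, and the middle factor lies in $\H$. The only genuine point is that $\dw\yR^T\dw^{-1}$ contributes no torus: I would factor $\yR^T\in N_-$ as $pq$ with $p\in N_-\cap\dw^{-1}N_-\dw$ and $q\in N_-\cap\dw^{-1}N\dw$ — this is the $N_-$-analogue of \cref{lemma:factor_NN'}, obtained by applying the anti-automorphism $x\mapsto x^T$ to it with $u=w$ — so that $\dw\yR^T\dw^{-1}=(\dw p\dw^{-1})(\dw q\dw^{-1})\in N_-\,N$. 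Then $g\dw^{-1}$ is a product of two $N_-$-elements, the torus element $\dw\Yo\dw^{-1}$, and an $N$-element; conjugating the inner $N_-$-factor past the torus and applying~\eqref{eq:dom_Nmp} again gives $\DOM^{}_{w^{-1}}(g)=(\dw\Yo\dw^{-1})^{\om_i}$.

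The main obstacle is this last extraction of the $\H$-component of $g\dw^{-1}$, i.e.\ the claim $\dw\yR^T\dw^{-1}\in N_-N$; everything else is formal bookkeeping with~\eqref{eq:dom_Nmp} and the factorization in~\eqref{eq:gcT_dw_in_B_Ym}. A shorter but less self-contained alternative would be to quote the explicit monomials from \cref{lemma:g_minors} and \cref{lemma:Yo_tnn} (and~\eqref{eq:torus_conj} to rewrite $(\dw\Yo^{-1}\dw^{-1})^{\om_i}$ in terms of the $t_r$), reducing the lemma to the Weyl-group identity $\langle w\om_i,\,s_{i_m}\cdots s_{i_{r+1}}\ach_{i_r}\rangle=-\langle\om_i,\,s_{i_1}\cdots s_{i_{r-1}}\ach_{i_r}\rangle$ for each $r\in\Jo$; but the route above seems cleaner.
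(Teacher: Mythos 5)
Your proof is correct, and it takes a genuinely different (though closely related) route from the paper's. The paper introduces an auxiliary Gauss decomposition $\dw\yR^{\,T}\dv^{-1}=\Xm\Xo\Xp$, proves $\Xp=g\dv^{-1}$ by a $B_-$-coset argument, and then pivots on the identity $\dom(\dw\yR^{\,T}\dw^{-1})=1$, expressing both generalized minors through $\dom(\Xo)$. You avoid the auxiliary decomposition and the identification $\Xp=g\dv^{-1}$ altogether, reading each minor off the single factorization $g^T\dw=\yR\Yo\Ym$ of~\eqref{eq:YpYoYm_dfn} and its transpose: $\DOM^{v^{-1}}_{w^{-1}}(\yR)=(\dw\Yo^{-1}\dw^{-1})^{\om_i}$ and $\DOM^{}_{w^{-1}}(g)=(\dw\Yo\dw^{-1})^{\om_i}$, whose product is trivially $1$. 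The ingredients are exactly those already available in the paper ($\Ym\in\dw^{-1}N\dw\cap N_-$ from~\eqref{eq:gcT_dw_in_B_Ym}, $\yR=\Yp$ from~\eqref{eq:yR=Yp}, the transposed form of \cref{lemma:factor_NN'} to split $\yR^{\,T}\in N_-$, and~\eqref{eq:dom_Nmp}), so the argument is self-contained; it also yields slightly more than the lemma asks for, namely the individual values of the two minors in terms of $\Yo$, which together with \cref{lemma:Yo_tnn} essentially reproves \cref{lemma:g_minors} without repeating the induction. One caution concerns only your closing side remark: with the standard Weyl-group action the exponent identity should involve $w^{-1}\om_i$, i.e.\ $\<w^{-1}\om_i,\,s_{i_m}\cdots s_{i_{r+1}}\ach_{i_r}\>=-\<\om_i,\,s_{i_1}\cdots s_{i_{r-1}}\ach_{i_r}\>$ (for $w=s_1s_2$ in $\SL_3$ and $r=1$ one has $\<w\om_1,s_2\ach_1\>=0\neq-1$), so the direction of the conjugation formula~\eqref{eq:torus_conj} has to be handled with care in that alternative route; since your main argument never invokes it, this does not affect correctness.
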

\begin{proof}
Let $\hat y:=\dv \yR$. By \cref{prop:pre_pre_twist}, we have $\hat y\in \capBWB(\dv N)$. By~\eqref{eq:Gauss_dwi_g}, we have $\dw \hat y^T\in \Gomp$. Let $(\Xm,\Xo ,\Xp)\in N_-\times \H\times N$ be given by 
\begin{equation}\label{eq:dwyTdv}
  \dw \yR^{\,T} \dv^{-1}=\dw \hat y^T=\Xm\Xo \Xp.%
\end{equation}
Our first goal is to show that $\Xp=g\dv^{-1}$. Since $\Xp\in N$ and $g\in N\dv$, it suffices to show that $B_-\Xp=B_-g\dv^{-1}$. Recall from~\eqref{eq:YpYoYm_dfn} that we have a factorization $g^T\dw=\Yp\Yo \Ym$, and as explained in~\eqref{eq:gcT_dw_in_B_Ym}, we have $\Ym\in \dw^{-1}N\dw\cap N_-$. In particular, $\Ym^T\in\dw^{-1}N_-\dw$, and thus $\dw \Ym^T\Yo^T\dw^{-1}\in B_-$. By~\eqref{eq:yR=Yp}, we have $\yR=\Yp$. Applying~\eqref{eq:dwyTdv}, we find
\begin{equation*}%
  B_-\Xp=B_-\Xm\Xo\Xp=B_-\dw\yR^{\,T}\dv^{-1}=B_- \dw \Ym^T\Yo^T\dw^{-1}\cdot \dw\yR^{\,T}\dv^{-1}=B_- \dw \cdot \dw^{-1} g\dv^{-1}=B_- g\dv^{-1}.
\end{equation*}
Thus indeed $\Xp=g\dv^{-1}$. 

By \cref{lemma:factor_NN'}, $\dw \yR^{\,T} \dw^{-1}\in N_- N$. By~\eqref{eq:dom_Nmp}--\eqref{eq:dom_H}, we have
\begin{align*}%
  1&=\dom(\dw \yR^{\,T} \dw^{-1})=\dom(\dw (\dw^{-1}\Xm\Xo \Xp \dv) \dw^{-1})=\dom(\Xo \Xp\dv\dw^{-1})=\dom(\Xo )\dom(\Xp\dv\dw^{-1})\\
&=\dom(\Xo )\dom(g\dw^{-1})=\dom(\Xo )\DOM^{}_{w^{-1}}(g).
\end{align*}
It remains to show that $\dom(\Xo )=\DOM^{v^{-1}}_{w^{-1}}(\yR)$. From~\eqref{eq:dwyTdv}, we find $\yR=\dv^{-1}\Xp^T\Xo \Xm^T\dw$, and thus
\begin{equation*}%
  \DOM^{v^{-1}}_{w^{-1}}(\yR)=\dom(\dv(\dv^{-1}\Xp^T\Xo \Xm^T\dw)\dw^{-1})=\dom(\Xp^T\Xo \Xm^T)=\dom(\Xo ),
\end{equation*}
since $\Xp^T\in N_-$ and $\Xm^T\in N$.%
\end{proof}

\begin{proof}[Proof of \cref{thm:R_chamber_ans}]
We first show~\eqref{eq:R_chamber_ans_inv}. By \cref{cor:BFZ_step1}, we have $\fR_j(\yR)=\fR_j(y\pj)$. Let $g\pj:=g_j\cdots g_m$ and let $\yRpj$ be given by~\eqref{eq:BFZ_step1_statement}. By \cref{lemma:DOM_DOM=1}, we have
\begin{equation*}%
  \fR_j(\yRpj)\cdot \Delta^{\om_{i_j}}_{s_{i_m}\cdots s_{i_{j}} \om_{i_j}}(g\pj)=1.
\end{equation*}
By \cref{lemma:g_minors},  
\begin{equation*}%
  \Delta^{\om_{i_j}}_{s_{i_m}\cdots s_{i_{j}} \om_{i_j}}(g\pj)=\prod_{r\in\Jo:\, r\geq j } t_r^{\<\om_{i_j},s_{i_j}s_{i_{j+1}}\cdots s_{i_{r-1}} \ach_{i_r}\>}.
\end{equation*}
This shows~\eqref{eq:R_chamber_ans_inv}.

Let us now deduce~\eqref{eq:R_chamber_ans} from~\eqref{eq:R_chamber_ans_inv}. Each side of~\eqref{eq:R_chamber_ans} is a monomial in $(t_k)_{k\geq r}$. Let $k\in \Jo$ be such that $k\geq r$. We will show that $t_k$ appears on both sides of~\eqref{eq:R_chamber_ans} with the same exponent. Assume first that $k>r$. By~\eqref{eq:R_chamber_ans_inv}, the exponent of $t_k$ in the numerator of the right-hand side of~\eqref{eq:R_chamber_ans} is given by
\begin{equation*}%
  \<\sum_{j\neq i_r} a_{j,i_r}\om_j,s_{i_{r+1}}\cdots s_{i_{k-1}}\ach_{i_k}\>.
\end{equation*}
On the other hand, the exponent of $t_k$ in the denominator is
\begin{equation*}%
  \<-\om_{i_r},s_{i_r}\cdots s_{i_{k-1}}\ach_{i_k}\>+\<-\om_{i_r},s_{i_{r+1}}\cdots s_{i_{k-1}}\ach_{i_k}\>.
\end{equation*}
Since $s_{i_r}\om_{i_r}=\om_{i_r}-\al_{i_r}$, we find
\begin{equation*}%
  \<-\om_{i_r},s_{i_r}\cdots s_{i_{k-1}}\ach_{i_k}\>=\<\al_{i_r}-\om_{i_r},s_{i_{r+1}}\cdots s_{i_{k-1}}\ach_{i_k}\>.
\end{equation*}
Therefore the total exponent of $t_k$ in the right-hand side of~\eqref{eq:R_chamber_ans} is
\begin{equation*}%
  \<-\al_{i_r}+2\om_{i_r}+\sum_{j\neq i_r} a_{j,i_r}\om_j,s_{i_{r+1}}\cdots s_{i_{k-1}}\ach_{i_k}\>=  \<-\al_{i_r}+\sum_{j\in I} a_{j,i_r}\om_j,s_{i_{r+1}}\cdots s_{i_{k-1}}\ach_{i_k}\>=0,
\end{equation*}
since $\al_{i_r}=\sum_{j\in I} a_{j,i_r}\om_j$. We have matched the exponent of $t_k$ on both sides of~\eqref{eq:R_chamber_ans} for all $k>r$. For $k=r$, by~\eqref{eq:R_chamber_ans_inv}, $t_r$ appears in $\fR_j(\yR)$ only for $j\leq r$. Thus, out of all terms in the right-hand side of~\eqref{eq:R_chamber_ans}, it only contributes to $\fR_r(\yR)=\Delta^{\Vi{r-1}\om_{i_r}}_{\Wi{r-1}\om_{i_r}}(\yR)$ in the denominator. It is clear from~\eqref{eq:R_chamber_ans_inv} that the exponent of $t_r$ in $\fR_r(\yR)$ is equal to $-1$.
\end{proof}

Comparing \cref{thm:R_chamber_ans,thm:MR_chamber_ans} with \cref{lemma:Yo_tnn}, we obtain the following relation between the left and the right twist Chamber Ansatz formulas. Set
\begin{equation}\label{eq:z_wyw}
  z:=(\dw \Ym \dw^{-1})^T.
\end{equation}
 Thus $z\in N_-\cap \dw N\dw^{-1}$. By~\eqref{eq:BgT=BYmdwi}, we have $Bg^T=B\dw^{-1}z^T$, and thus $gB_-=z\dw B_-$ belongs to $\GBtnn$. We conclude that the element $z$ satisfies the assumptions of \cref{thm:MR_chamber_ans}. \Cref{thm:R_chamber_ans} is stated in terms of the element $\yR:=[g^T\dw]_L^+$, which by~\eqref{eq:yR=Yp} equals $\Yp$. 

\begin{corollary}\label{cor:R_vs_L_ans}
For all $i\in I$ and $r=1,\dots,m+1$, we have
\begin{equation}\label{eq:minors_YpYo}
  \Delta^{\sv_{i_m}\cdots \sv_{i_r} \om_i}_{s_{i_m}\cdots s_{i_r}\om_i}(\Yp\Yo)=
\Delta^{s_1\cdots s_{i_{r-1}} w_0\om_{i^\ast}}_{\sv_1\cdots\sv_{i_{r-1}} w_0\om_{i^\ast}}(z^T).
\end{equation}
\end{corollary}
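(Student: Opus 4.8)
The plan is to expand each side of~\eqref{eq:minors_YpYo} as an explicit Laurent monomial in the MR-parameters $\bt=(t_j)_{j\in\Jo}$ and then check that the two monomials agree; this is exactly the ``comparison of \cref{thm:R_chamber_ans,thm:MR_chamber_ans} with \cref{lemma:Yo_tnn}'' alluded to above.

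For the left-hand side, I would first peel off the torus factor. Using the transformation rule $\Delta^{v\om_i}_{w\om_i}(xa)=a^{w\om_i}\,\Delta^{v\om_i}_{w\om_i}(x)$ for $a\in\H$, together with \cref{lemma:Yo_tnn} in the form~\eqref{eq:dom_Yo} (extended from fundamental weights to all weights by linearity), and recalling that $\Yp=\yR$ by~\eqref{eq:yR=Yp}, one obtains
\begin{equation*}
  \Delta^{\Vi{r-1}\om_i}_{\Wi{r-1}\om_i}(\Yp\Yo)=\Big(\prod_{j\in\Jo}t_j^{-\langle \Wi{r-1}\om_i,\; s_{i_m}\cdots s_{i_{j+1}}\ach_{i_j}\rangle}\Big)\cdot\Delta^{\Vi{r-1}\om_i}_{\Wi{r-1}\om_i}(\yR).
\end{equation*}
The remaining factor $\Delta^{\Vi{r-1}\om_i}_{\Wi{r-1}\om_i}(\yR)$ is the ``arbitrary fundamental weight'' analogue of $\fR_r(\yR)$, and I claim it equals $\prod_{j\in\Jo:\,j\ge r}t_j^{-\langle\om_i,\; s_{i_r}\cdots s_{i_{j-1}}\ach_{i_j}\rangle}$. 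Indeed, since $\dom$ is invariant under right multiplication by $N$, the generalized minor $\Delta^{\Vi{r-1}\om_i}_{\Wi{r-1}\om_i}$ is invariant under right multiplication by $N'(s_{i_r}\cdots s_{i_m})$ for \emph{every} $i$ (this replaces the appeal to \cref{lemma:Lec_NN'} in the proof of \cref{cor:BFZ_step1}), so \cref{lemma:yR_yRpr_N'(u)} lets us replace $\yR$ by $\yRpr$; then \cref{lemma:DOM_DOM=1,lemma:g_minors}, applied to the subword $(i_r,\dots,i_m)$ — for which $\yRpr$ plays the role of $\yR$, and $\Vi{r-1},\Wi{r-1}$ play the roles of $v^{-1},w^{-1}$ — evaluate $\Delta^{\Vi{r-1}\om_i}_{\Wi{r-1}\om_i}(\yRpr)$. (Equivalently, this is the fundamental-weight version of~\eqref{eq:R_chamber_ans_inv}.)

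For the right-hand side, recall from the discussion preceding the corollary that $z$ satisfies the hypotheses of \cref{thm:MR_chamber_ans}. Applying~\eqref{eq:dom_iota} rewrites $\Delta^{\wi{r-1}w_0\om_{i^\ast}}_{\vi{r-1}w_0\om_{i^\ast}}(z^T)$ as the generalized minor $\Delta^{\vi{r-1}\om_i}_{\wi{r-1}\om_i}$ at the fundamental weight $\om_i$, evaluated at $(z^T)^\iota$; and the fundamental-weight version of~\eqref{eq:MR_chamber_ans_inv} (proved verbatim as in~\cite{MR}) identifies this with $\prod_{k\in\Jo:\,k\le r-1}t_k^{-\langle\om_i,\; s_{i_{r-1}}s_{i_{r-2}}\cdots s_{i_{k+1}}\ach_{i_k}\rangle}$. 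Putting the three pieces together, the assertion~\eqref{eq:minors_YpYo} reduces to the monomial identity
\begin{equation*}
  \Big(\prod_{j\in\Jo}t_j^{-\langle \Wi{r-1}\om_i,\; s_{i_m}\cdots s_{i_{j+1}}\ach_{i_j}\rangle}\Big)\Big(\prod_{j\in\Jo:\,j\ge r}t_j^{-\langle\om_i,\; s_{i_r}\cdots s_{i_{j-1}}\ach_{i_j}\rangle}\Big)=\prod_{k\in\Jo:\,k\le r-1}t_k^{-\langle\om_i,\; s_{i_{r-1}}\cdots s_{i_{k+1}}\ach_{i_k}\rangle}.
\end{equation*}
This last identity is where the actual work lies: one compares the exponent of each $t_k$ on the two sides, treating the cases $k\ge r$ and $k<r$ separately, substitutes $\Wi{r-1}=s_{i_m}\cdots s_{i_r}$, and uses $\langle\om_i,\ach_j\rangle=\delta_{i,j}$ together with $\alpha_j=\sum_l a_{l,j}\om_l$ to telescope the products — exactly in the style of the exponent-matching at the end of the proof of \cref{thm:R_chamber_ans}. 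I expect the only genuine obstacle to be bookkeeping: keeping the Weyl-group words and the sign/weight conventions straight (in particular, pinning down the precise weight by which $\Yo$ rescales the $\yR$-factor) so that the three monomials line up correctly; once all exponents are written out the verification is a formal root-system computation.
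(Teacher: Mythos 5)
Your proposal is correct and follows essentially the paper's own (implicit) route: the corollary is obtained precisely by comparing the inverse Chamber Ansatz formulas~\eqref{eq:R_chamber_ans_inv} and~\eqref{eq:MR_chamber_ans_inv} (extended to arbitrary fundamental weights, which is harmless since the extra minors are either trivial or reduce to some $\fR_j$, $\fL_j$) with the torus factor from \cref{lemma:Yo_tnn}, exactly as you do. The exponent matching you defer is indeed routine and closes: $W$-invariance of $\<\cdot,\cdot\>$ together with $s_{i_k}\ach_{i_k}=-\ach_{i_k}$ gives cancellation of the exponent of $t_k$ for $k\ge r$ and agreement for $k\le r-1$, with no Cartan-matrix telescoping actually needed.
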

\begin{remark}
\Cref{cor:R_vs_L_ans} demonstrates the primary difficulty with relating the two Chamber Ansatz formulas: \cref{thm:R_chamber_ans} is given in terms of $\yR=\Yp$, while \cref{thm:L_chamber_ans} is given in terms of $\yL$ which is closely related to $\Ym$, as explained in the proof of \cref{thm:L_chamber_ans}. It is unclear to us how to show~\eqref{eq:minors_YpYo} directly without reproving each Chamber Ansatz from scratch.
\end{remark}
\begin{remark}
Let $G=\SL_n(\C)$. Let $C$ be a chamber of the wiring diagram of $\bw$ located vertically at height $i$ and horizontally between the $(r-1)$-th and the $r$-th crossings. Then the left- and the right-hand sides of~\eqref{eq:minors_YpYo} are equal to $\chmnrR_C(\Yp\Yo)$ and $\chmnrL_C(z^T)$, respectively.
\end{remark}

\begin{example}\label{ex:R_vs_L}
We continue the example from \cref{sec:A_example}. Recall that the matrices $\Yp,\Yo,\Ym$ defined in~\eqref{eq:YpYoYm_dfn} have been computed in the right-hand side of~\eqref{eq:ex_YpYoYm}. Thus
\begin{equation*}%
 \Yp= \smat{
1 & \frac{1}{t_{6}} & \frac{t_{3}}{t_{1}} & 0 \\
0 & 1 & \frac{t_{3} t_{6}}{t_{1}} & -\frac{1}{t_{4}} \\
0 & 0 & 1 & \frac{1}{t_{3} t_{4}} \\
0 & 0 & 0 & 1
} \quad\text{and}\quad z^T=\dw \Ym \dw^{-1}=\smat{
1 & \frac{t_{1}}{t_{3}} & \frac{1}{t_{3}} & \frac{1}{t_{3} t_{4}} \\
0 & 1 & \frac{1}{t_{1}} & \frac{1}{t_{1} t_{4}} \\
0 & 0 & 1 & \frac{t_{1} + t_{6}}{t_{4} t_{6}} \\
0 & 0 & 0 & 1
}.
\end{equation*}
The minors of these matrices appearing in \cref{cor:R_vs_L_ans} are computed in \cref{fig:R_vs_L}.
\end{example}

\begin{figure}
\begin{tabular}{c}%
\includegraphics[width=0.98\textwidth]{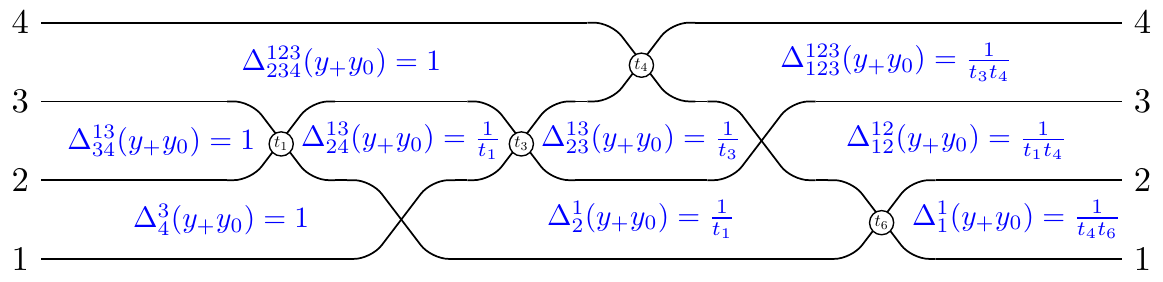}%
\\
\\
\includegraphics[width=0.98\textwidth]{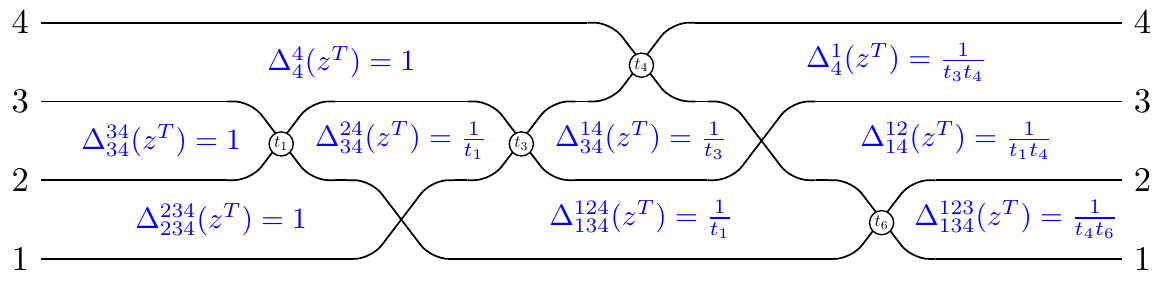}%
\end{tabular}
  \caption{\label{fig:R_vs_L}Relating the left and the right twist Chamber Ansatz formulas; cf. \cref{cor:R_vs_L_ans} and \cref{ex:R_vs_L}.}
\end{figure}

\section{Muller--Speyer twist}\label{sec:muller-speyer-twist}
We relate our twist maps to the twist maps of~\cite{MuSp}.

\subsection{Background on positroid varieties}
We continue using the notation from \cref{sec:MuSp_main_res}. Recall that for $g\in G$, $\pi_J(gB_-)\in\Gr(n-k,n)$ is the linear span of the rightmost $n-k$ columns of the matrix $g$. We denote by $|g]$ the corresponding $n\times (n-k)$ submatrix of $g$ with column set $[k+1,n]:=\{k+1,k+2,\dots,n\}$. For an $(n-k)$-element set $F\subset[n]$ and $g\in G$, we let $\Delta_F(g)$ be the minor of $g$ with row set $F$ and column set $[k+1,n]$. The functions $\Delta_F(g)$ give projective coordinates on $\Gr(n-k,n)$ called \emph{Pl\"ucker coordinates}.

Recall that for $(v,w)\in\QJ$, we have the open positroid variety $\Pio_v^w=\pi_J(\Rich_v^w)$. The \emph{positroid cell} $\Povtp_v^w$ is the image of $\Rtp_v^w$ under $\pi_J$. Alternatively, it can be described as the subset of $\Pio_v^w$ where all nonzero Pl\"ucker coordinates have the same sign.

\subsection{The Muller--Speyer twist map}\label{sec:MuSp_dfn}
Fix $(v,w)\in\QJ$. We define the \emph{right twist} $\TMSR_v^w:\Pio_v^w\xrasim \Pio_v^w$. Let $g\in G$ be such that $g\PJ\in\Pio_v^w$, and let $M:=|g]$ be the corresponding $n\times (n-k)$ matrix. Thus $g\PJ$ is the column span of $M$. We denote by $M_1,M_2,\dots,M_n\in \C^{n-k}$ the row vectors of $M$. We extend this labeling periodically for all $i\in \Z$ by the condition that $M_{i+n}=M_i$ for all $i\in\Z$. Let $\<\cdot,\cdot\>$ denote the standard inner product on $\C^{n-k}$. For $i\in \Z$, let 
\begin{align*}%
  \IR_i&:=\{j\geq i\mid M_j\text{ is not in the span of $M_i,M_{i+1},\dots,M_{j-1}$}\};\\
  \IL_i&:=\{j\leq i\mid M_j\text{ is not in the span of $M_i,M_{i-1},\dots,M_{j+1}$}\}.\\
\end{align*}
By definition, the span of the empty set is $\{0\}$, and thus when $M_i=0$, we have $i\notin \IR_i$ and $i\notin \IL_i$. We always have $|\IR_i|=|\IL_i|=n-k$ and each of the sets $\{M_j\mid j\in\IR_i\}$ and $\{M_j\mid j\in \IL_i\}$ forms a basis of $\C^{n-k}$.

For $i\in \Z$, let $\vec M_i\in\C^{n-k}$ be the unique vector given by 
\begin{equation*}%
  \<\vec M_i,M_j\>=
  \begin{cases}
    1, &\text{if $i=j$},\\
    0, &\text{otherwise},
  \end{cases} \quad\text{for all $j\in\IR_i$}.
\end{equation*}
Similarly, let $\cev M_i\in\C^{n-k}$ be the unique vector given by 
\begin{equation*}%
  \<\cev M_i,M_j\>=
  \begin{cases}
    1, &\text{if $i=j$},\\
    0, &\text{otherwise},
  \end{cases} \quad\text{for all $j\in\IL_i$}.
\end{equation*}
Let $\vec M$ be the matrix with rows $\vec M_1,\vec M_2,\dots,\vec M_n$, and let $\cev M$ be the matrix with rows $\cev M_1,\cev M_2,\dots,\cev M_n$. By definition, $\TMSR_v^w(g\PJ)$ is the column span of $\vec M$ and $\TMSL_v^w(g\PJ)$ is the column span of $\cev M$. See \cref{ex:MuSp} below.

\subsection{Relating the twist maps}
The following result, giving the relationship between our twist maps $\twist_v^w,\twistL_v^w$ and the twist maps $\TMSR_v^w,\TMSL_v^w$ of~\cite{MuSp}, is a reformulation of \cref{thm:MuSp_twist}.
\begin{theorem}
  For all $(v,w)\in\QJ$, we have the following commutative diagram, where all maps are isomorphisms:
\begin{equation}\label{eq:MuSp_cd}
\begin{tikzcd}%
\Rich_v^w \arrow[d,"\pi_J"] & \Rich_v^w \arrow[r,rightarrow,"\twist_v^w"] \arrow[d,"\pi_J"] \arrow[l,rightarrow,"\twistL_v^w"'] &\Rich_v^w \arrow[d,"\pi_J"]\\
\Pio_v^w & \Pio_v^w \arrow[r,rightarrow,"\TMSR_v^w"'] \arrow[l,rightarrow,"\TMSL_v^w"] & \Pio_v^w.
\end{tikzcd}  
\end{equation}
\end{theorem}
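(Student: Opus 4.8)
The statement is a reformulation of \cref{thm:MuSp_twist}, and the whole argument takes place in type $A$, so $G=\SL_n(\C)$, $(v,w)\in Q^J$, and $w$ is $k$-Grassmannian. The plan is to check commutativity of~\eqref{eq:MuSp_cd} on a Zariski-dense open subset of $\Rich_v^w$: this suffices because $\twist_v^w$ is a regular automorphism of $\Rich_v^w$ (\cref{cor:twist_well_def}), $\pi_J$ is an isomorphism, and the Muller--Speyer maps are regular automorphisms of $\Pio_v^w$ by~\cite{MuSp}. I would take the open Deodhar stratum, i.e.\ the image of the MR-parametrization $\bt\mapsto\gt B_-$ for $\bt\in(\Cast)^{\Jo}$, which is open dense by \cref{sec:MR_param}. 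Since $\twistL_v^w=(\twist_v^w)^{-1}$ and $\TMSL_v^w=(\TMSR_v^w)^{-1}$ (the latter by~\cite{MuSp}), it is enough to prove commutativity of the right-hand square, i.e.\ $\pi_J\circ\twist_v^w=\TMSR_v^w\circ\pi_J$ on this stratum.

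Next I would unwind both sides on a fixed MR-parametrized element. Put $g:=\gt\in\capBWB(N\dv)$ and let $\gc\in\capBWB(N\dv\cap\dv N)$ be the element of \cref{lemma:Xggc} with $gB_-=\gc B_-$; then $\pi_J(\gt B_-)$ is the column span of $M:=|\gc]$, the $n\times(n-k)$ submatrix on columns $k+1,\dots,n$, and $\TMSR_v^w(\pi_J(\gt B_-))$ is the column span of the matrix $\vec M$ built from $M$ via the biorthogonality relations of \cref{sec:MuSp_dfn}. On our side, writing $g^T\dw=\yR\,\Yo\,\Ym$ as in~\eqref{eq:YpYoYm_dfn} with $\yR\in N$, $\Yo\in\H$, $\Ym\in\dw^{-1}N\dw\cap N_-$, we have $\tpre_v^w(\gt B_-)=B_-\dv\yR$ and $\twist_v^w(\gt B_-)=\chiv(B_-\dv\yR)=hB_-$, where $h$ is the unique element of $\capBWB(N\dv\cap\dv N)$ with $B_-h=B_-\dv\yR$, obtained from $\dv\yR$ by downward row operations as in the proof of \cref{lemma:Xggc}. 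Hence $\pi_J(\twist_v^w(\gt B_-))$ is the column span of $|h]$, and the goal becomes $\operatorname{colspan}|h]=\operatorname{colspan}\vec M$ in $\Gr(n-k,n)$.

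The heart of the proof is this identification. I would verify that the rows $h_1,\dots,h_n$ of $|h]$ satisfy, for every $i$, the relations $\langle h_i,M_j\rangle=0$ for $j\in\IR_i\setminus\{i\}$ and $\langle h_i,M_i\rangle\neq 0$, and that these relations pin down the corresponding point of $\Gr(n-k,n)$ to be exactly $\operatorname{colspan}\vec M$. Three ingredients feed into this. First, a description of the Grassmann necklace $(\IR_i)_{i\in\Z}$ of $\pi_J(\gt B_-)$ purely in terms of $(v,w)$: since $w$ is $k$-Grassmannian, $\dw$ is a signed permutation matrix of controlled shape and $\IR_i$ can be read off from the positroid combinatorics of $(v,w)$ as in~\cite{KLS}. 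Second, the key linear-algebra identity converting inner products of rows of $\gc$ (which is what enters the Muller--Speyer construction) into entries of $\yR$: here the factorization $g^T\dw=\yR\Yo\Ym$ with $\Ym\in\dw^{-1}N\dw\cap N_-$ is precisely the bridge between the transpose-plus-Gaussian-decomposition description of our twist and the inner-product description of the Muller--Speyer twist. Third, a careful reconciliation of the signs arising from $\dv$, $\dw$, $\dw_0$, the transpose $g\mapsto g^T$, and the involution $i\mapsto i^\ast$; the normalization of $\dv$ fixed in \cref{sec:main} (so that the flag minors of $\dv$ are nonnegative), together with the fact that both maps restrict to the totally positive cell $\Povtp_v^w$ (\cref{thm:twist_tnn} and~\cite{MuSp}), is what forces the signs to match exactly rather than merely up to sign.

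An alternative route to the second and third ingredients is to compare Chamber Ansatz formulas rather than matrices: by \cref{thm:A_ch_ans} the wiring-diagram chamber minors $\chmnrR_C(\yR)$ are explicit Laurent monomials in $\bt$, and after identifying the relevant chamber minors with suitably normalized Pl\"ucker coordinates of $\pi_J(hB_-)$ — using the $k$-Grassmannian structure of $\bw$ — one matches them against the Muller--Speyer Chamber Ansatz, which expresses the Pl\"ucker coordinates of $\TMSR_v^w(\pi_J(\gt B_-))$ as Laurent monomials in the plabic-graph parameters, via the known dictionary between MR-parametrizations and plabic-graph parametrizations for Grassmannian permutations (bridge decompositions, Le diagrams). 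Either way, the main obstacle is the same: because $\chiv$ changes the $G/B_-$-coset, $\pi_J(\twist_v^w(gB_-))$ is genuinely not the projection of the pre-twist $\tpre_v^w(gB_-)$, so one cannot simply discard the downward row reduction inside $\chiv$; controlling how that row reduction interacts with passing to columns $k+1,\dots,n$ is exactly where the hypothesis $(v,w)\in Q^J$ enters, and aligning the resulting signs with the Muller--Speyer normalization is the delicate part. Commutativity of the left-hand square of~\eqref{eq:MuSp_cd} then follows by inverting.
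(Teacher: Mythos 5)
Your main route does not go through as written; the gap is at the step you call the heart of the proof. You assert that the rows $h_i$ of $|h]$ satisfy $\langle h_i,M_j\rangle=0$ for $j\in\IR_i\setminus\{i\}$ and $\langle h_i,M_i\rangle\neq0$, and that this pins down $\colspan|h]=\colspan\vec M$. Neither half survives scrutiny. First, even if the relations held, they only say that each row $h_i$ is a nonzero multiple of $\vec M_i$, and row-wise proportionality with unspecified scalars does not determine the column span: rescaling the rows of a full-rank $n\times(n-k)$ matrix changes its column span in general, so you would still need to exhibit an invertible $(n-k)\times(n-k)$ matrix $A$ with $|h]=\vec M A$, which the proposal never addresses. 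Second, the relations themselves are exactly the ``key linear-algebra identity'' you invoke but never derive; this identity \emph{is} the content of \cref{thm:MuSp_twist}, so naming it as an ingredient is circular. The paper's own \cref{ex:MuSp} is a warning sign here: the representative of $\pi_J\circ\twist_v^w(gB_-)$ produced by the chiral map and the Muller--Speyer matrix $\vec M$ agree only after a nontrivial column operation (their first rows are $(1/t_2,\,0,\,0)$ and $(1/t_2,\,-1/(t_1t_2t_4),\,0)$, which are not proportional), so a naive row-by-row biorthogonality between the representative in $\capBWB(N\dv\cap\dv N)$ and the Muller--Speyer data is not the mechanism by which the two twists agree; replacing $|g]$ by $|\gc]$ only changes $\vec M$ by right multiplication by an invertible matrix and does not by itself repair this. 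Controlling the column operation hidden in the chiral map's row reduction is precisely the difficulty, and your outline gives no handle on it.

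Your ``alternative route'' is, in essence, the paper's actual proof, and it is the one you should carry out. The paper verifies the right square of~\eqref{eq:MuSp_cd} on the totally positive parts by assembling the commutative diagram~\eqref{eq:MuSp_inside_cd}: the top trapezoid is the right twist Chamber Ansatz (\cref{thm:R_chamber_ans}), the bottom trapezoid is Muller--Speyer's Chamber Ansatz~\cite{MuSp}, the left triangle is the identification of the MR-parametrization $\gbf_{\bv,\bw}$ with Postnikov's boundary measurement map $\Meas$ for the Le-diagram plabic graph~\cite{TW,Karpman2} (see~\cite[Proposition~4.3]{GL_cluster}), and the right triangle identifies the functions $\fR_r$ evaluated at $\yR$ with the face Pl\"ucker coordinates $\Delta_{F_r}$ of the twisted point~\cite[Lemma~3.6]{GL_cluster}; Zariski density of $\Rtp_v^w\subset\Rich_v^w$ and $\Povtp_v^w\subset\Pio_v^w$ then upgrades the identity of regular maps from the positive parts to the whole varieties, and the left square follows because $\twistL_v^w$ and $\TMSL_v^w$ are the inverses. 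Note that this route deliberately avoids any matrix-level comparison of representatives, which is exactly what your primary argument gets stuck on; your density reduction and the inversion step for the left square are fine, but the sign bookkeeping and ``bridge'' identities you list for the direct approach are not supplied and would amount to reproving the cited plabic-graph dictionary by hand.
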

\begin{proof}
We focus on the commutativity of the right square in~\eqref{eq:MuSp_cd}. First, we claim that the following diagram is commutative, where all maps are homeomorphisms:
\tikzset{close/.style={outer sep=-2pt}} %
\begin{equation}\label{eq:MuSp_inside_cd}
\begin{tikzcd}%
\Rtp_v^w \arrow[rrr,"\twist_v^w"] \arrow[dd,"\pi_J"] &&&
\Rtp_v^w \arrow[dd,"\pi_J"] \arrow[dl,"(\fR_r)_{r\in\Jo}"',close,pos=0.5]\\
 & (\R_{>0})^{\Jo} \arrow[r,"\text{\eqref{eq:R_chamber_ans_inv}}", bend left=10] 
\arrow[ul,"\text{\eqref{eq:MR}}"',close] 
\arrow[dl,"\Meas",close]
& 
(\R_{>0})^{\Jo} \arrow[l,"\text{\eqref{eq:R_chamber_ans}}", bend left=10] & \\
\Povtp_v^w \arrow[rrr,"\TMSR_v^w"']  &&& 
\Povtp_v^w \arrow[ul,close,"(\Delta_{F_r})_{r\in\Jo}",pos=0.5].
\end{tikzcd}  
\end{equation}
Here $\Meas: (\R_{>0})^{\Jo}\xrasim \Povtp_v^w$ is the \emph{boundary measurement map} of~\cite{Pos}, which puts the positive parameters $(t_r)_{r\in \Jo}$ on the edges of a \emph{Le-diagram plabic graph}; see part (c) in the proof of~\cite[Proposition~4.3]{GL_cluster}. We have a collection $\{F_0\}\sqcup\{F_r\mid r\in\Jo\}$ of $(n-k)$-element sets labeling the faces of the Le-diagram as in~\cite[Section~1.3]{GL_cluster}. The corresponding Pl\"ucker coordinates are functions on $\Pio_v^w$ defined up to common rescaling, and we always rescale them so that $\Delta_{F_0}=1$. Thus we obtain a map $(\Delta_{F_r})_{r\in\Jo}:\Povtp_v^w\to (\R_{>0})^{\Jo}$, which appears as the bottom right diagonal arrow in~\eqref{eq:MuSp_inside_cd}.

The commutativity of the top trapezoid in~\eqref{eq:MuSp_inside_cd} follows from the right twist Chamber Ansatz (\cref{thm:R_chamber_ans}). The bottom trapezoid is commutative by the results of~\cite{MuSp}. The left triangle is commutative by the results of~\cite{TW,Karpman2}; see the proof of~\cite[Proposition~4.3]{GL_cluster}. Finally, the right triangle is commutative by~\cite[Lemma~3.6]{GL_cluster}. We have shown that the right square in~\eqref{eq:MuSp_cd} is commutative when all maps are restricted to $\Rtp_v^w$ and $\Povtp_v^w$. Since these maps are regular and the subsets $\Rtp_v^w\subset \Rich_v^w$ and $\Povtp_v^w\subset \Pio_v^w$ are Zariski dense, it follows that the right square in~\eqref{eq:MuSp_cd} is commutative. Since $\twistL_v^w$ and $\TMSL_v^w$ are inverses of $\twist_v^w$ and $\TMSR_v^w$, respectively, we see that the left square in~\eqref{eq:MuSp_cd} is commutative as well.
\end{proof}

\begin{example}\label{ex:MuSp}
Let $k=2$, $n=5$, $v=s_4s_2$, and $\bw=(2,1,4,3,2)$. (Thus $w=s_2s_1s_4s_3s_2$ is $k$-Grassmannian since $ws_j>w$ for all $j\neq k$.) We have
\begin{equation*}%
  g=\smat{
1 & 0 & t_{2} & 0 & 0 \\
0 & -t_{1} & 1 & t_{1} t_{4} & 0 \\
0 & -1 & 0 & t_{4} & 0 \\
0 & 0 & 0 & 0 & 1 \\
0 & 0 & 0 & -1 & 0
},\quad \twist_v^w(gB_-)=\smat{
1 & \frac{1}{t_{1} t_{2}} & \frac{1}{t_{2}} & 0 & 0 \\
0 & 0 & 1 & \frac{1}{t_{1} t_{4}} & 0 \\
0 & -1 & 0 & \frac{1}{t_{4}} & 0 \\
0 & 0 & 0 & 0 & 1 \\
0 & 0 & 0 & -1 & 0
}\cdot B_-.
\end{equation*}
Applying the map $x\mapsto |x]$, we get
\begin{equation*}%
  \pi_J(gB_-)=\colspan \smat{
t_{2} & 0 & 0 \\
1 & t_{1} t_{4} & 0 \\
0 & t_{4} & 0 \\
0 & 0 & 1 \\
0 & -1 & 0
} \quad\text{and}\quad \pi_J\circ\twist_v^w(gB_-)=\colspan\smat{
\frac{1}{t_{2}} & 0 & 0 \\
1 & \frac{1}{t_{1} t_{4}} & 0 \\
0 & \frac{1}{t_{4}} & 0 \\
0 & 0 & 1 \\
0 & -1 & 0
}.
\end{equation*}
On the other hand, applying the definition in \cref{sec:MuSp_dfn} to the matrix $M=|g]$, we find
\begin{equation*}%
  \TMSR_v^w\circ \pi_J(gB_-)=\colspan \vec M, \quad\text{where}\quad \vec M=\smat{
\frac{1}{t_{2}} & -\frac{1}{t_{1} t_{2} t_{4}} & 0 \\
1 & 0 & 0 \\
0 & \frac{1}{t_{4}} & 0 \\
0 & 0 & 1 \\
0 & -1 & 0
}
\end{equation*}
Applying column operations, we see that $\pi_J\circ\twist_v^w(gB_-)=\TMSR_v^w\circ \pi_J(gB_-)$ inside $\Gr(n-k,n)$, confirming the commutativity of the right square in~\eqref{eq:MuSp_cd}.
\end{example}

\section{Subtraction-free expressions}\label{sec:SF}
In the previous sections, we have shown that several maps preserve total positivity. In this section, we show the stronger statement that these maps are given by \emph{subtraction-free rational functions} when written in terms of the MR-parameters. In order to state our results precisely, we review the results of~\cite{BaoHe}.

Let $\kbf$ denote a field.  We let $G(\kbf), B(\kbf), B_-(\kbf), (G/B_-)(\kbf), \Rich_v^w(\kbf),$ and so on denote the respective groups or varieties defined over $\kbf$.  The chiral maps $\chiv, \chivi$, the opposite chiral maps $\chiw, \chiwi$, the pre-twists $\tpre_v^w, \Ltpre_v^w$, the twists $\twist_v^w, \twistL_v^w$, and the reversal maps $\revR,\revL$ of \cref{sec:reversal_maps} are all defined over $\kbf$.

Let $\bt=(t_1,t_2,\dots,t_a)$ be a collection of variables. A rational function $\phi\in\Q(\bt)$ is called \emph{$\Z$-subtraction free} if it can be written as a ratio $P(\bt)/Q(\bt)$ where $P,Q$ are polynomials in $\bt$ with nonnegative integer coefficients. A rational map $f: \k^a \dashrightarrow \k^b$ is called \emph{$\Z$-subtraction free} if it can be written as $f(\bx)=(\phi_1(\bx),\phi_2(\bx),\dots,\phi_b(\bx))$ where $\phi_1,\dots,\phi_b$ are $\Z$-subtraction free rational functions. (Such maps are called \emph{admissible} in~\cite{Lus_II,BaoHe}.)

For $(\bv,\bw) \in\Red(v,w)$, let $\gbf_{\bv,\bw}: (\k^*)^{\ell(w)-\ell(v)} \to \Rich_v^w(\kbf)$ denote the MR-parametrization.  It is known (see~\cite[Proposition~6.1]{BaoHe}) that for $(\bv,\bw),(\bv',\bw') \in\Red(v,w)$ the rational map $\gbf_{\bv',\bw'}^{-1} \circ \gbf_{\bv,\bw}: (\k^*)^{\ell(w)-\ell(v)}  \dashrightarrow (\k^*)^{\ell(w)-\ell(v)}$ is $\Z$-subtraction free, and this rational function does not depend on $\k$.  Similarly, we obtain MR-parametrizations of $\LRich_v^w(\kbf)$ by considering \emph{leftmost} distinguished subexpressions for $v$ inside $\bw$, i.e., by applying the map $gB_-\mapsto B_-g^\iota$ of~\eqref{eq:iota_TNN}.

We say that an isomorphism $\phi: \Rich_v^w(\kbf) \xrasim  \Rich_v^w(\kbf)$ is \emph{$\Z$-subtraction free} if the composition
\begin{equation}\label{eq:admissible_dfn}
  \gbf_{\bv,\bw}^{-1} \circ \phi \circ \gbf_{\bv,\bw}:  (\k^*)^{\ell(w)-\ell(v)}  \dashrightarrow (\k^*)^{\ell(w)-\ell(v)}
\end{equation}
is $\Z$-subtraction free.  This notion does not depend on the choice of $(\bv,\bw)$.  Similarly, we define the notion of \emph{$\Z$-subtraction free} for isomorphisms $\phi: \Rich_v^w(\kbf) \to  \LRich_v^w(\kbf)$, and so on.

\begin{theorem}\label{thm:Zsf}
The chiral maps $\chiv, \chivi$, the opposite chiral maps $\chiw, \chiwi$, the pre-twists $\tpre_v^w, \Ltpre_v^w$, the twists $\twist_v^w, \twistL_v^w$, and the reversal maps $\revR,\revL$ are $\Z$-subtraction free. The corresponding $\Z$-subtraction free rational functions in $\Q(\bt)$ appearing in~\eqref{eq:admissible_dfn} are independent of $\k$.
\end{theorem}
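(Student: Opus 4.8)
The plan is to prove $\Z$-subtraction freeness by unwinding each map into a composition of elementary operations, and checking that each operation is $\Z$-subtraction free in MR-coordinates. The main point is that the ingredients are already available: collision moves \eqref{eq:collision_x}--\eqref{eq:collision_y} are $\Z$-subtraction free (since $t_+ = 1/t$ and $a_+ = t^{\alphacheck_i}$ are monomials with positive integer exponents), the LDU/UDL factorizations $g \mapsto ([g]_L^+, [g]_0^\pm, [g]_R^-)$ are given by subtraction-free expressions in the entries whenever applied to a totally positive matrix, and \cite[Proposition~6.1]{BaoHe} already establishes that changing the reduced word $(\bv,\bw)$ induces a $\Z$-subtraction free transition map. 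The key observation, which I would state as a preliminary lemma, is that a composition of $\Z$-subtraction free rational maps is $\Z$-subtraction free, and that the notion is independent of the field $\k$ as well as of the choice of $(\bv,\bw)$ (this independence is the content of the last sentence of the theorem and follows formally from \cite[Proposition~6.1]{BaoHe}).

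First I would handle the pre-twist $\tpre_v^w$. By \cref{prop:pre_pre_twist} and \cref{dfn:pre_twist}, we need to show that the map $g \mapsto \yR = [g^T\dw]_L^+$, expressed via MR-parameters, is $\Z$-subtraction free. The explicit computation underlying \cref{lemma:Yo_tnn} (and the iterative ``push-through'' argument in \cref{lemma:yR_yRpr_N'(u)}) shows that the factorization $g^T\dw = \Yp\Yo\Ym$ is obtained by a sequence of collision moves and conjugations, all of which are $\Z$-subtraction free. Alternatively, and perhaps more cleanly, one can use the right twist Chamber Ansatz \eqref{eq:R_chamber_ans_inv}: it expresses every generalized minor $\fR_j(\yR)$ as a Laurent monomial in the $t_r$ with integer exponents, and since $\yR \in N$ can be reconstructed from sufficiently many of its minors via subtraction-free expressions (as in \cite{BaoHe}, using the fact that a leftmost MR-parametrization of $\LRich_v^w$ recovers the coordinates subtraction-freely), this gives the claim for $\tpre_v^w$. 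The left pre-twist $\Ltpre_v^w$ is handled symmetrically using \cref{thm:L_chamber_ans}, or simply as the inverse of $\tpre_v^w$ (noting that the inverse of a $\Z$-subtraction free isomorphism need not a priori be $\Z$-subtraction free, so here one should use the Chamber Ansatz directly rather than just inverting).

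Next I would handle the chiral map $\chiv$ and its inverse. By definition $\chiv$ is the composition $\LRich_v^w \xleftarrow{\sim} \capBWB(N\dv \cap \dv N) \xrightarrow{\sim} \Rich_v^w$, and by \cref{lemma:Xggc} the passage from an MR-parametrized $g \in N\dv$ to $\gc \in N\dv\cap\dv N$ amounts to factoring $\dv^{-1}g \in \dv^{-1}N\dv$ as $y_1 y_2$ with $y_1 \in \dv^{-1}N\dv \cap N$, $y_2 \in \dv^{-1}N\dv \cap N_-$ — a Gauss-type factorization that is subtraction free on the totally positive locus and given by the same rational formulas over any $\k$. Composing, $\twist_v^w = \chiv \circ \tpre_v^w$ and $\twistL_v^w = \Ltpre_v^w \circ \chivi$ are then $\Z$-subtraction free. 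The opposite chiral maps $\chiw, \chiwi$ are analogous, using \eqref{eq:N_-wN1}--\eqref{eq:N_-wN2} together with \eqref{eq:dec_w_R}--\eqref{eq:dec_w_L}. Finally, the reversal maps $\revR: gB_- \mapsto g^\theta \dw_0 B_-$ and $\revL: B_-g \mapsto B_-g^{-\iota}\dw_0$ are $\Z$-subtraction free because, as shown in the proofs of \cref{prop:rev1,prop:rev2}, on an MR-parametrized element they are computed via \cref{conj5} (which sends $x_i(-t) \mapsto y_{i^\ast}(t)$, a monomial change) followed by a relabeling $\bw \mapsto \bw^\ast$ of the reduced word; the resulting identification $\gtast = \gbf_{\bv^\ast,\bw^\ast}(\bt)$ is literally the identity on parameters, hence trivially $\Z$-subtraction free, and conjugation by $\dw_0$ together with the involutions $\theta, \iota$ permute the $x_i, y_i$ without introducing signs on the positive locus.

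The main obstacle is bookkeeping rather than conceptual: one must verify that each elementary step — the Gauss factorizations $[\cdot]_L^+$, $[\cdot]_R^+$, the splittings of \cref{lemma:factor_NN'}, the conjugation moves of \cref{lemma:conj} — when written out as a rational map in coordinates (e.g., matrix entries in type $A$, or generalized-minor coordinates in general) is a ratio of polynomials with nonnegative integer coefficients, and that this holds over an arbitrary field $\k$ with the \emph{same} rational expressions. For the ``same over any $\k$'' part, the cleanest route is to argue once over $\k = \Q$ (or $\Q(\bt)$ generically) that the composite in \eqref{eq:admissible_dfn} equals a specific element of $\Q(\bt)$ built from the above ingredients, and then invoke the standard fact (implicit in \cite{BaoHe}, and following from the fact that MR-parametrizations and Gauss factorizations are defined by the same scheme-theoretic formulas) that this rational function is field-independent. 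I would organize the write-up around a short lemma that the class of $\Z$-subtraction free isomorphisms is closed under composition and contains the generators listed above, and then assemble the theorem in one paragraph.
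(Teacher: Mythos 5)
There is a genuine gap, and it sits exactly where the paper's proof does its real work. Your argument rests on two claims: that the elementary moves (collision moves \eqref{eq:collision_x}--\eqref{eq:collision_y}, the Gauss factorizations $[\cdot]_L^+,[\cdot]_R^+$, the factorization of \cref{lemma:Xggc}) are $\Z$-subtraction free, and that composing them gives the theorem. The first claim is false or unjustified as stated. A collision move produces $t_-=-1/t$, not just $t_+$ and $a_+$; the proofs of \cref{lemma:dudv,lemma:Yo_tnn} work precisely because these negative factors are conjugated into subgroups that are absorbed by $B_-$ or disappear in the quotient, so the individual moves are not subtraction free even though the end result is. Likewise, ``subtraction free on the totally positive locus'' conflates positivity of values with the existence of subtraction-free formulas: minors and Gauss-factorization entries of a totally positive matrix are positive, but that does not produce a representation of the map in MR-coordinates as a ratio of polynomials with nonnegative integer coefficients, which is what $\Z$-subtraction freeness demands (and is the whole content of the theorem, given that total positivity was already proved in \cref{thm:chiral_tnn,thm:twist_tnn}). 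The paper supplies exactly this missing input from Bao--He's canonical basis framework: $\Rich_v^w(\k)$ is embedded in $\P(\Vd(\k))$, and \cite[Theorem~3.8]{BaoHe} characterizes $\Rsf_v^w$ by the ratios of canonical-basis coordinates lying in the semifield, which is what converts positivity statements into subtraction-free (and $\k$-independent) rational formulas, together with \cite[Proposition~6.1, Theorem~6.4, Theorem~1.1(3)]{BaoHe} and the Chamber Ansatz to recover the MR-parameters of the image.

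Your treatment of the chiral map also misses the specific obstruction the paper flags: the proof of \cref{thm:chiral_tnn} establishes the case $w=w_0$ by induction (\cref{lemma:dudv}) but handles general $w$ by a closure/continuity argument, which has no semifield or subtraction-free analogue. Saying that \cref{lemma:Xggc} is ``a Gauss-type factorization that is subtraction free'' does not address this. The paper instead embeds $\Rsf_v^w$ into $\Rsf_v^{w_0}$ via $(\bt,\bt')\mapsto \bx_{\bi}(\bt)\gbf_{\bv,\bw}(\bt')B_-$, obtains subtraction-free canonical-basis coordinates there, and then performs an algebraic specialization $t_1=0,\dots,t_l=0$ (choosing representatives in $\Ztnn[\bt,\bt']$ not all divisible by $t_1$, etc.) to descend to arbitrary $w$; only then does the Marsh--Rietsch Chamber Ansatz yield $\Z$-subtraction freeness of $\chivi:\Rsf_v^w\xrasim\LRsf_v^w$, after which the opposite chiral map, pre-twists and twists follow by composition with the reversal maps (\cite[Theorem~6.4]{BaoHe}) and the monomial torus factor of \cref{lemma:Yo_tnn}. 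Without either the canonical-basis positivity input or an explicit inductive computation that genuinely tracks subtraction-freeness of the parameter transitions at every step (including through the negative collision parameters and the passage from $w_0$ to general $w$), your composition argument does not close.
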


Now let $K\subset \k$ be a semifield.  For $v\leq w$ in $W$, we let
\begin{equation*}%
  \Rsf_v^w:=\{\gt B_-(\kbf) \mid \bt\in K^{\Jo}\}\quad \subset (G/B_-)(\kbf).
\end{equation*}
As we mentioned above,  the subset $\Rsf_v^w\subset (G/B_-)(\kbf)$ does not depend on the choice of $(\bv,\bw)\in\Red(v,w)$ by~\cite[Proposition~6.1]{BaoHe}; see also the last sentence of~\cite[Section~3]{BaoHe}. Similarly, we define $\LRsf_v^w\subset (\BG)(\kbf)$ as the image of $\Rsf_v^w$ under the map $gB_-\mapsto B_-g^\iota$. 
We let
\begin{equation*}%
  \GBsf:=\bigsqcup_{v\leq w} \Rsf_v^w \quad\text{and}\quad \BGsf:=\bigsqcup_{v\leq w} \LRsf_v^w.
\end{equation*}

Theorem~\ref{thm:Zsf} has the following immediate corollary.
\begin{corollary}
The chiral maps $\chiv, \chivi$, the opposite chiral maps $\chiw, \chiwi$, the pre-twists $\tpre_v^w, \Ltpre_v^w$, the twists $\twist_v^w, \twistL_v^w$, and the reversal maps $\revR,\revL$ are bijections between the respective semifield points.
\end{corollary}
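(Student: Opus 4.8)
The statement is a formal consequence of \cref{thm:Zsf}: the key points are that a $\Z$-subtraction free rational map can be evaluated over an arbitrary semifield, and that the list of maps in \cref{thm:Zsf} is closed under passing to inverses. For each map $\phi$ in that list, its inverse is again a map of the same type: $\chivi=(\chiv)^{-1}$, $\chiwi=(\chiw)^{-1}$, $\Ltpre_v^w=(\tpre_v^w)^{-1}$, $\twistL_v^w=(\twist_v^w)^{-1}$, while the inverse of the reversal map attached to $(v,w)$ is the reversal map attached to $(ww_0,vw_0)$, since $(vw_0)w_0=v$ and $(ww_0)w_0=w$. As \cref{thm:Zsf} holds for all $v\le w$, both $\phi$ and $\phi^{-1}$ are $\Z$-subtraction free.

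Now fix MR-parametrizations $\psi$ of the source of $\phi$ and $\psi'$ of its target (using the parametrization $gB_-\mapsto B_-g^\iota$ when the relevant variety is of the form $\LRich_v^w$), and put $f:=(\psi')^{-1}\circ\phi\circ\psi\colon(\kbf^{\ast})^{\ell(w)-\ell(v)}\dashrightarrow(\kbf^{\ast})^{\ell(w)-\ell(v)}$. By \cref{thm:Zsf} each coordinate of $f$ is a ratio $P/Q$ of polynomials in $\bt$ with coefficients in $\Z_{\ge0}$, and likewise for $f^{-1}=\psi^{-1}\circ\phi^{-1}\circ\psi'$. Given a semifield $K\subset\kbf$, such a ratio can be evaluated at any $\bt\in K^{\ell(w)-\ell(v)}$: since $K$ is closed under addition and multiplication and contains every positive integer (a finite sum of $1$'s), both $P(\bt)$ and $Q(\bt)$ lie in $K$, and $Q(\bt)$ is invertible in $K$, so $f(\bt)\in K^{\ell(w)-\ell(v)}$. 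Thus $f$ restricts to $f|_{K}\colon K^{\ell(w)-\ell(v)}\to K^{\ell(w)-\ell(v)}$, and, as evaluation at $K$ is compatible with composition of subtraction-free rational maps, $f|_{K}$ and $f^{-1}|_{K}$ are mutually inverse; hence $f|_{K}$ is a bijection. Since $\Rsf_v^w=\psi(K^{\Jo})$ — this being independent of the choice of $(\bv,\bw)\in\Red(v,w)$ by \cite[Proposition~6.1]{BaoHe}, with the analogous statement for $\LRsf_v^w$ — the map $\phi$ carries semifield points to semifield points, identified through $\psi,\psi'$ with the bijection $f|_{K}$. This proves the corollary; taking unions over $v\le w$ gives the versions for $\GBsf$ and $\BGsf$.

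The only real content is \cref{thm:Zsf} itself, whose proof I would organize as follows. Using \cref{dfn:pre_twist} and the commutative square \eqref{eq:reversal_cd}, reduce to $\chiv,\chivi,\tpre_v^w,\Ltpre_v^w,\revR,\revL$. For the reversal maps, the proofs of \cref{prop:rev1,prop:rev2} already exhibit them, in MR-coordinates, as the relabeling $i\mapsto i^{\ast}$ of the reduced word with the parameters $\bt$ unchanged (the signs from $\iota$ or $\theta$ being cancelled by those of \cref{conj5}), so they are visibly $\Z$-subtraction free and field-independent. For the chiral and pre-twist maps I would revisit the inductive computations of \cref{lemma:Yo_tnn} and of the proof of \cref{thm:R_chamber_ans} (following the strategy of \cite{BZ} for $v=\id$) and check that every sign-bearing factor $t_-=-1/t$ produced by a collision move \eqref{eq:collision_x}--\eqref{eq:collision_y} is absorbed into the discarded $N_-$- or $H$-part of a Gaussian factorization before it can reach a coordinate of the output, keeping track of which factors lie in $N$, $H$, or $N_-$ as in \eqref{eq:gcT_dw_in_B_Ym}. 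I expect the main obstacle to be exactly this sign bookkeeping: one cannot argue entrywise, since $[\,\cdot\,]_L^+$ and $[\,\cdot\,]_R^+$ are not subtraction-free in arbitrary matrix entries, so the cancellation of the $t_-$'s must be tracked at the level of the $N/H/N_-$ decomposition and along every branch of the induction.
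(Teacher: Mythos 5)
Your derivation of the corollary is correct and matches the paper, which treats the statement as an immediate consequence of \cref{thm:Zsf}: the maps and their inverses all appear in that theorem's list, $\Z$-subtraction free maps send semifield-valued MR-parameters to semifield-valued MR-parameters, and the semifield points are by definition the images of $K^{\Jo}$ under the (injective) MR-parametrizations, independently of the choice of $(\bv,\bw)$ by \cite[Proposition~6.1]{BaoHe}. Your closing sketch of how one would prove \cref{thm:Zsf} itself is not needed for this corollary and in any case diverges from the paper's actual proof of that theorem, which works with the canonical-basis embedding $\Rich_v^w(\k)\subset \P(\Vd(\k))$ following \cite{BaoHe} (plus a specialization-at-zero argument and the Marsh--Rietsch Chamber Ansatz) rather than tracking signs through the collision-move inductions.
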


\begin{remark}
Using the $\Z$-subtraction free rational maps in Theorem~\ref{thm:Zsf}, we may extend the definitions of these maps to flag manifolds and Richardson varieties defined over arbitrary semifields; see \cite[Section 4]{BaoHe}.
\end{remark}

\def\dG{\dot G}
Our proof of \cref{thm:Zsf} will rely on the results of~\cite{BaoHe}. 
Let $\dG$ be a simply-connected, semisimple, complex algebraic group with simply-laced root datum that folds to $G$.  Thus $\dG$ is equipped with an automorphism $\sigma: \dG \to \dG$ so that $\dG^\sigma \cong G$.  We fix a dominant regular integral weight $\la$ for $\dG$. Let $\laVd$ denote the integrable highest weight module of the quantum group with simply laced root datum folding to $G$, defined over $\Acal:=\Z[q,q^{-1}]$.  The $\Acal$-module $\laVd$ is free over $\Acal$ with canonical basis $\Bd$.  Let 
\begin{equation*}%
  \Vd(\k):=\laVd\otimes_{\Acal} \k,
\end{equation*}
where the map $\Acal\to\k$ is given by $q\mapsto 1$. The $\k$-vector space $\Vd(\k)$ is equipped with the image of the canonical basis, which we also denote by $\Bd$. We consider the projectivization $\P(\Vd(\k))$ of $\Vd(\k)$. For $\xi=\sum_{b\in\Bd} \xi_b b$, we denote by $[\xi]$ the corresponding element of $\P(\Vd(\k))$, for which we will be interested in the ratios $\xi_b/\xi_{b'}$.  See~\cite[Section~2G]{BaoHe} for background and further details.

The flag variety $G/B_-(\k)$ can be identified with a subvariety of $\P(\Vd(\k))$; see \cite[Equation~(2-2)]{BaoHe}.  Thus, $\Rich_v^w(\k)$ can be identified with a locally closed subvariety 
\begin{equation*}%
  \PRds\subset\P(\Vd(\k)).
\end{equation*}
By  \cite[Theorem 3.8]{BaoHe}, $\Rsf_v^w$ is identified with the subset
\begin{equation*}%
  \{[\xi]\in\PRds\mid \xi=\sum_{b\in\Bd} \xi_b b,\text{ where $\xi_b/\xi_{b'}\in K$ for all $b,b'\in\Bd$ such that $\xi_b,\xi_{b'}\neq0$}\}.
\end{equation*}
We will similarly identify $\LRich_v^w(\k)\supset \LRsf_v^w$ with $\LPRds\supset \LPRdsf$ inside $\P(\Vd(\k))$. 

\begin{proof}[Proof of \cref{thm:Zsf}]
Let $\phi:\Rich_v^w(\k)\xrasim\Rich_v^w(\k)$ be an isomorphism, and suppose that $[\xi']=\phi([\xi])$. Then the ratios $\xi'_{b'_1}/\xi'_{b'_2}$ can be expressed as rational functions of $\xi_{b_1}/\xi_{b_2}$, where $b_1,b_2,b'_1,b'_2\in\Bd$. For any of the maps we consider, these rational functions are represented by ratios of integer polynomials independent of $\k$. It follows that the rational maps considered in \cref{thm:Zsf} are independent of~$\k$.

\proofsection{Chiral map.} 

Our proof of \cref{thm:chiral_tnn} gives an isomorphism
\begin{equation*}%
\chivi:  \Rsf_v^{w_0}\xrasim \LRsf_v^{w_0}.
\end{equation*}
(The semifield analog of \cref{lemma:y_i(t)gB_-_in_Rtp} is~\cite[Theorem~1.1(3)]{BaoHe}.) However, the remainder of the proof used continuity arguments. We instead proceed as follows. Choose a reduced expression $\bi=(i_1,\dots,i_l)$ for $w_0w^{-1}$ and for $\bt=(t_1,\dots,t_l)$, let $\bx_{\bi}(\bt):=x_{i_1}(t_1)\cdots x_{i_l}(t_l)$. Here $l=\ell(w_0)-\ell(w)$.  We have an isomorphism 
\begin{equation*}%
  K^{\ell(w_0)-\ell(w)}\times \Rsf_v^w\xrasim \Rsf_v^{w_0},\quad (\bt,gB_-)\mapsto \bx_{\bi}(\bt)gB_-.
\end{equation*}
Pick an MR-parametrization $K^{\ell(w)-\ell(v)}\xrasim \Rsf_v^w$ sending $\bt'\mapsto \gbf_{\bv,\bw}(\bt')$. We thus obtain a parametrization 
\begin{equation*}%
  f: K^{\ell(w_0)-\ell(w)}\times K^{\ell(w)-\ell(v)}\xrasim \Rsf_v^{w_0},\quad (\bt,\bt')\mapsto \bx_{\bi}(\bt) \gbf_{\bv,\bw}(\bt')  B_-.
\end{equation*}
Now, $\chivi(f(\bt,\bt'))\in\LRsf_v^{w_0}$, so all ratios $\xi_b/\xi_{b'}$ (with $\xi_b,\xi_{b'}\neq0$) of the coordinates of $[\xi]=\chivi(f(\bt,\bt'))$ belong to $K$ and must be given by $\Z$-subtraction free rational functions in $\Q(\bt,\bt')$.  We now argue that these functions are $\Z$-subtraction free at $\bt=0$.

Let $K^!:=K\sqcup\{0\}$. Observe that for $(\bt,\bt')\in (K^!)^{\ell(w_0)-\ell(w)}\times K^{\ell(w)-\ell(v)}$, we have $f(\bt,\bt')\in ((BvB_-)/B_-)(\k)$, and thus $\chivi(f(\bt,\bt'))\in (B_-\bs (B_-vB))(\k)$ is well defined. In particular, after scaling all coordinates, we may assume that for all $b\in\Bd$, we have $\xi_b\in\Ztnn[\bt,\bt']$, and that moreover, not all $\xi_b$ are divisible by $t_1$.  Thus $\chivi(f(0,t_2,\ldots,t_l,\bt'))$ is represented by $[\xi']$ where $\xi'_b = \xi_b|_{t_1=0} \in \Ztnn[t_2,\ldots,t_l,\bt']$.  Repeating this argument for $t_2,t_3,\ldots,t_l$, we deduce that 
\begin{equation*}%
  \chivi(\gbf_{\bv,\bw}(\bt'))=[\xi''], \quad\text{where}\quad \xi''_b \in \Ztnn[\bt'] \quad \text{for all $b\in \Bd$.}
\end{equation*}
We deduce that $\chivi$ restricts to a bijection $\chivi:\Rsf_v^w\xrasim \LRsf_v^w$ for all $v\leq w$. By the Marsh--Rietsch Chamber Ansatz (cf. \cref{thm:MR_chamber_ans} and~\cite[Remark~2.12]{BaoHe}), we deduce that the map $\chivi:\Rsf_v^w\xrasim \LRsf_v^w$ is $\Z$-subtraction free. The proof for the map $\chiv$ is similar.
 
\proofsection{Reversal maps}
 For the map $\revR$ sending $gB_-\mapsto g^\th\dw_0B_-$, this is~\cite[Theorem~6.4]{BaoHe}. For the map $\revL$, we note that $B_- g \mapsto g^\iota B_-$ induces, by definition, an isomorphism $(\BG)(K) \to (G/B_-)(K)$.  The claim then follows in the same way as in the proof of Proposition~\ref{prop:rev2}.

\proofsection{Opposite chiral map} Replacing $\Rtp_v^w$ by $\Rsf_v^w$, etc. in the proof of \cref{thm:chiral_tnn_op}, we deduce that 
\begin{equation*}%
  \chiw:\LRsf_v^w\xrasim \Rsf_v^w
\end{equation*}
is a bijection. The $\Z$-subtraction freeness follows as before.

\proofsection{Pre-twist} The computation in the proof of \cref{thm:twist_tnn} shows that $\tpre_v^w$ restricts to a bijection $\LRsf_v^w\xrasim \Rsf_v^w$. The claim follows as before.

\proofsection{Twist} Since the twist map is the composition of the pre-twist and the chiral map, this follows immediately.
\end{proof}

\section{Cluster structures on Richardson varieties}\label{sec:cluster}
The goal of this section is to use our results to relate the two conjectural cluster structures on $\Rich_v^w$ described in~\cite{Lec,Ing}. 

Throughout this section, we assume that $G$ is of arbitrary type, even though the results of~\cite{Lec} apply only when $G$ is of simply laced ($ADE$) type while the results of~\cite{Ing} apply only when $G$ is of type $A$. We also fix $v\leq w\in W$ and $(\bv,\bw)\in\Red(v,w)$ with $\bw=(i_1,i_2,\dots,i_m)$.

\subsection{Cluster algebras}
We refer the reader to~\cite{FZ,FWZ_book} for a general introduction to cluster algebras. A \emph{quiver} is a directed graph $Q$ without directed cycles of size~$1$ and~$2$. An \emph{ice quiver} is a quiver $Q$ together with a partition of its vertex set into two sets $V(Q)=V_f(Q)\sqcup V_m(Q)$ of \emph{frozen} and \emph{mutable} vertices, respectively. A \emph{seed} is a pair $(Q,\bx)$ where $\bx=(x_j)_{j\in V(Q)}$ is a collection of algebraically independent elements of the \emph{ambient field} $\Fcal=\C(y_1,\dots,y_r)$ of rational functions in some number of variables. For a mutable vertex $v\in V_m(Q)$, the \emph{mutation} $\mu_v$ is an operation that changes a seed $(Q,\bx)$ into another seed $(Q',\bx')$. The elements of all seeds that can be obtained from $(Q,\bx)$ via a sequence of mutations are called \emph{cluster variables}, where $(x_j)_{j\in V_f(Q)}$ are referred to as \emph{frozen variables}. 

The \emph{cluster algebra} $\Acal(Q,\bx)$ is the subring of $\Fcal$ generated by all cluster variables, frozen variables, and inverses of frozen variables. A \emph{cluster algebra structure} on a variety $X$ is a ring isomorphism $\phi:\Acal(Q,\bx)\xrasim \C[X]$ from some cluster algebra to the ring of regular functions on $X$. Under this isomorphism, each cluster variable becomes a regular function on $X$. Clearly, the isomorphism $\phi$ is determined by its values on the elements of the initial seed $\bx$. 

As we mentioned in the introduction, cluster structures have been previously constructed on special families of open Richardson varieties, namely, on unipotent cells, double Bruhat cells in type $A$, and open positroid varieties; see~\cite{FZ3,Scott,GLS_quantum,SSBW,GL_cluster,GoYa}.

\subsection{Leclerc's construction}\label{ssec:Lec}
For each $r\in[m]$, recall that we have a regular function $\fR_r:N\to \C$ given by~\eqref{eq:fR_dfn}. It is not in general irreducible as an element of the coordinate ring $\C[N]$. When $G$ is of simply laced type, by~\cite[Corollary~4.4]{Lec}, there are exactly $\ell(w)-\ell(v)$ irreducible factors of $\prod_{r\in[m]} \fR_r$, and they form the initial seed in Leclerc's conjectural cluster algebra structure on $\Rich_v^w$. The following conjecture applies to $G$ of arbitrary type, but is open even when $G$ is simply laced.
\begin{conjecture}\label{conj:irr}
There exists a unique family $(F_r)_{r\in\Jo}$ of irreducible elements of $\C[N]$ such that for each $r\in[m]$, we have
\begin{equation*}%
  \fR_r=\prod_{j\in\Jo:\,j\geq r} F_j^{p_{r,j}}
\end{equation*}
for some nonnegative integers $p_{r,j}\geq0$ satisfying $p_{r,r}=1$ for all $r\in\Jo$.
\end{conjecture}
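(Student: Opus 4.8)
The plan is to derive the statement from the right twist Chamber Ansatz (\cref{thm:R_chamber_ans}) together with Leclerc's count of the irreducible factors of $\fR_1\cdots\fR_m$. Since $\C[N]$ is a polynomial ring, hence a UFD, each $\fR_r$ already factors essentially uniquely into irreducibles, so the genuine content of the conjecture is threefold: (a) that $\fR_1\cdots\fR_m$ has exactly $|\Jo|$ distinct irreducible factors; (b) that these can be labelled $(F_j)_{j\in\Jo}$ so that $F_j$ divides $\fR_r$ only when $j\ge r$; and (c) that $F_r$ occurs in $\fR_r$ with multiplicity exactly $1$. For $G$ simply laced, (a) is precisely \cite[Corollary~4.4]{Lec}, so there the remaining task is to sharpen that count to the triangular shape (b)--(c) and to deduce uniqueness; for general $G$ one must additionally reprove (a).

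The main device is the monomial map supplied by \eqref{eq:R_chamber_ans_inv}. Fix $(\bv,\bw)\in\Red(v,w)$, put $N:=|\Jo|$, and order $\Jo=\{r_1<\dots<r_N\}$. Let $\kappa\colon(\C^\ast)^{\Jo}\dashrightarrow N$ be the rational map $\bt\mapsto\yR:=[\gt^T\dw]_L^+$, so that $\kappa^\ast\fR_r$ is the function $\fR_r(\yR)$ of \cref{thm:R_chamber_ans}. By \eqref{eq:R_chamber_ans_inv} each $\kappa^\ast\fR_j$ (for $j\in[m]$) is a Laurent monomial in $\bt$; moreover $\fR_{r_a}(\yR)$ involves only the variables $t_{r_b}$ with $b\ge a$, so the exponent matrix $A$ of $(\kappa^\ast\fR_{r_1},\dots,\kappa^\ast\fR_{r_N})$ is upper triangular with every diagonal entry equal to $-1$; in particular $A\in\GL_N(\Z)$. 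Hence the $\fR_{r_a}$ are algebraically independent as functions on $\Rich_v^w$, and since $\dim\Rich_v^w=N$ they form a transcendence basis of $\C(\Rich_v^w)$.

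Next I would exploit the observation that $\kappa^\ast$ sends each $\fR_r$ to a unit of $\C[\bt^{\pm1}]$. If one knows in addition that $\kappa^\ast$ carries all of $\C[N]$ into $\C[\bt^{\pm1}]$ — equivalently, that the entries of $\yR$ are Laurent polynomials in $\bt$, as in \eqref{eq:ex_YpYoYm} — then, writing $\fR_r=\prod_j F_j^{p_{r,j}}$ with each $p_{r,j}\ge 0$, the identity $\prod_j(\kappa^\ast F_j)^{p_{r,j}}=\kappa^\ast\fR_r$ forces every $\kappa^\ast F_j$ to be a unit of $\C[\bt^{\pm1}]$, since every factor of a unit in a commutative ring is a unit; thus $\kappa^\ast F_j=c_j\,\bt^{\gamma_j}$, and after rescaling $F_j$ we may take $c_j=1$. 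Gathering the $\gamma_j$ into a matrix $C$ and the $p_{r,j}$ with $r,j\in\Jo$ into $P$, the factorization becomes $A=PC$ over $\Z$. The identification of the factors with $\Jo$ and the upper-triangularity of $C$ should then come from Leclerc's explicit description of the factors in \cite[Section~4]{Lec}, likely via an inductive refinement on $\ell(w)-\ell(v)$; given that, $P=AC^{-1}$ is upper triangular, $\det P\cdot\det C=\pm1$ with $\det C=\pm1$, and since $P$ is upper triangular with nonnegative integer entries we obtain $\det P=\prod_a P_{a,a}=1$, whence $P_{a,a}=1$ for all $a$ — which is (c), while the shape of $P$ is (b). Uniqueness follows because $F_r$ is the only irreducible factor of $\fR_r$ whose $\kappa$-image involves $t_r$.

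Two steps are where I expect the real difficulty. First, for non-simply-laced $G$ the count (a) is not known: the proof of \cite[Corollary~4.4]{Lec} runs through the dual semicanonical basis and the preprojective algebra, which is $ADE$-specific, so one would need either a folding argument relating the $\fR_r$ for $G$ to the corresponding functions for the simply-laced cover $\dG$ of \cref{sec:SF}, or a type-independent computation of the $\gamma_j$. Second, and more essentially, the reduction above rests on the claim $\kappa^\ast\C[N]\subseteq\C[\bt^{\pm1}]$, i.e.\ that every irreducible factor of every $\fR_r$ becomes a single Laurent monomial under the right twist — a ``twist-homogeneity'' property of Leclerc's initial seed. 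Establishing it seems to require an explicit hold on the cluster variables $F_j$ themselves, essentially the data that the conjectural cluster structure is meant to record; this is the step I expect to be the main obstacle. Without it, the argument still yields that $\{\fR_r\}_{r\in\Jo}$ generate $\C(\Rich_v^w)$ and that the abstract factorizations exist, but not the integrality and triangular shape of $P$.
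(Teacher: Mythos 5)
This statement is \cref{conj:irr}, which the paper explicitly leaves open (``is open even when $G$ is simply laced''), so there is no proof in the paper to compare against; your text is a strategy sketch rather than a proof, and the gaps you flag are real and are exactly where the content of the conjecture lies. The first unproven ingredient is the ``twist-homogeneity'' claim $\kappa^\ast\C[N]\subseteq\C[\bt^{\pm1}]$: the Chamber Ansatz \eqref{eq:R_chamber_ans_inv} controls only the products $\fR_r$, not their irreducible factors, and even the weaker statement that the entries of $\yR$ are Laurent polynomials in $\bt$ is not established anywhere in the paper (the paper only proves subtraction-freeness of the relevant maps in \cref{sec:SF}, which gives ratios of positive polynomials, not Laurentness). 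In the second worked example one can check the entries of $\yR$ are Laurent polynomials with several terms, so nothing forces an individual irreducible factor of $\fR_r$ to pull back to a single monomial unless one already knows these factors are cluster variables with the expected Laurent/positivity behavior --- which is precisely the conjectural cluster-structure input.

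The second and more decisive gap is the step where you obtain the labeling of the factors by $\Jo$ and the triangularity of $C$ (hence of $P$) ``from Leclerc's explicit description.'' In simply-laced type, \cite[Corollary~4.4]{Lec} gives only the count $\ell(w)-\ell(v)$ of irreducible factors of $\fR_1\cdots\fR_m$; the labeled, triangular refinement with $p_{r,r}=1$ is exactly what \cref{conj:irr} asserts, so invoking it there is circular, and in non-simply-laced type even the count is unavailable (Leclerc's argument goes through the preprojective algebra). What survives of your argument is the (correct, and consistent with the paper) observation that the exponent matrix of $(\fR_r(\yR))_{r\in\Jo}$ is unitriangular up to sign, so these functions are algebraically independent; but that does not constrain how each $\fR_r$ factors in $\C[N]$. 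Note also that the only situation where the paper indicates the conjecture is actually known is type $A$, via Ingermanson's combinatorial factorization of the chamber minors \cite{Ing}, where the paper says the statement ``becomes manifestly true'' --- a route entirely different from the Chamber-Ansatz reduction you propose.
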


So far, the functions $\fR_r$ are defined on $N$. We explain how to convert them into functions on the flag variety. Leclerc works with the left-sided quotient $\BG$. In order to view each $\fR_r$ as a function on $\LRich_v^w$, he uses the isomorphism~\eqref{eq:NvN_to_Rich} between $\LRich_v^w$  and $\capBWB(N\dv \cap \dv N)$. Specifically, each $\fR_r$ is a function on $\LRich_v^w$ sending $B_-\dv h\mapsto \fR_r(h)$ for $h\in \dv^{-1} \capBWB(N\dv \cap \dv N)\subset N$. By \cref{lemma:Lec_NN'}, we have $\fR_r(yh)=\fR_r(h)$ for all $y\in N\cap \dv^{-1}N_-\dv$. By \cref{lemma:factor_NN'}, the multiplication map gives an isomorphism
\begin{equation*}%
  (N\cap \dv^{-1}N_-\dv) \times \dv^{-1} \capBWB(N\dv \cap \dv N) \xrasim \dv^{-1} \capBWB(\dv N).
\end{equation*}
(Here we have used the fact that left multiplication by $N\cap \dv^{-1}N_-\dv$ preserves the subset $\dv^{-1} B_-wB_-$.) 
Thus $\fR_r$ sends $B_-\dv h\mapsto \fR_r(h)$ where $h$ is now any element of $\dv^{-1} \capBWB(\dv N)$. Recall from \cref{prop:pre_pre_twist} that the element $\yR:=[g^T\dw]_L^+$  satisfies $\yR\in \dv^{-1} \capBWB(\dv N)$. We see that the right pre-twist isomorphism from \cref{dfn:pre_twist} is exactly the missing ingredient needed to define Leclerc's functions on the right-sided quotient $G/B_-$.
\begin{definition}\label{dfn:Lec_fR}
We consider Leclerc's minors $\fR_r$ as regular functions on $\Rich_v^w$, sending $gB_-\mapsto \fR_r(\yR)$, where $g\in N\dv$ and $\yR:=[g^T\dw]_L^+$.
\end{definition}

When $G$ is of simply laced type, Leclerc~\cite{Lec} defined a quiver $\QLec$ on $\ell(w)-\ell(v)$ vertices in terms of irreducible morphisms of preprojective algebra modules. He showed that the resulting seed gives rise to a cluster subalgebra of $\C[\Rich_v^w]$ and conjectured that this subalgebra coincides with $\C[\Rich_v^w]$. Assuming \cref{conj:irr}, the vertex set of $\QLec$ is naturally identified with $\Jo$.

\subsection{Ingermanson's construction}
We continue to assume that $G$ is of arbitrary type. The results of~\cite{Ing} apply when $G=\SL_n(\C)$.  However, the constructions below can be extended to the case of arbitrary $G$. 
 We note that~\cite{Ing} works inside $G/B$ rather than $G/B_-$, and thus make the necessary modifications (such as applying the involution $x\mapsto x^\theta$ from \cref{sec:inv}) to match her notation to our notation.

Following~\cite[Proposition~V.3]{Ing}, let us consider a labeling of the chambers of the wiring diagram for $\bw$ by minors which are very closely related to the functions $\fL_r$ defined in~\eqref{eq:fL_dfn}. Specifically, set 
\begin{equation}\label{eq:fLp_dfn}
  \fLp_r:=\Delta^{\wi{r-1} w_0\om_{i_r^\ast}}_{\vi{r-1} w_0\om_{i_r^\ast}}: B \to \C.
\end{equation}
When $G=\SL_n(\C)$, we therefore have
\begin{equation}\label{eq:fR_fL_fLp_chmnr}
  \fR_r=\chmnrR_{\RegL_r}, \quad \fL_r=\chmnrL_{\RegR_r}, \quad\text{and}\quad \fLp_r=\chmnrL_{\RegL_r};
\end{equation}
cf.~\eqref{eq:fR_fL_chmnr}, \cref{sec:A_example}, and \cref{ex:gracie}. 

Following~\cite[Notation~III.28]{Ing}, we choose a representative for $gB_-$ which differs from the $z$-matrix in \cref{thm:MR_chamber_ans} by a torus element; cf.~\eqref{eq:zing=z}.
\begin{definition}\label{dfn:zing}
Let $gB_-\in \Rich_v^w$. Let $\zing\in B_-$ be the unique matrix satisfying $\zing wB_-=gB_-$ and $\Delta^{w w_0\om_{i^\ast}}_{v w_0\om_{i^\ast}}(\zing^T)=1$ for all $i\in I$. Each $\fLp_r$ is considered as a regular function on $\Rich_v^w$ sending $gB_-\mapsto \fLp_r(\zing)$.
\end{definition}
\begin{remark}
In terms of the usual matrix minors when $G=\SL_n(\C)$, we have $\Delta^{w w_0\om_{i^\ast}}_{v w_0\om_{i^\ast}}=\Delta^{w [i+1,n]}_{v[i+1,n]}$ and $\fLp_r=\Delta^{\wi{r-1} [i_r+1,n]}_{\vi{r-1} [i_r+1,n]}$.
\end{remark}
 The cluster variables $(X_j)_{j\in\Jo}$ introduced in~\cite[Proposition~V.1]{Ing} are irreducible factors of these regular functions. There are exactly $\ell(w)-\ell(v)$ of them, and~\cite[Section~4]{Ing} describes a combinatorial procedure for how each function $\fLp_r$ factorizes into irreducibles. In particular, these functions are labeled by the elements of~$\Jo$ and \cref{conj:irr} becomes manifestly true in this description. 

\begin{figure}
  \includegraphics[width=1.0\textwidth]{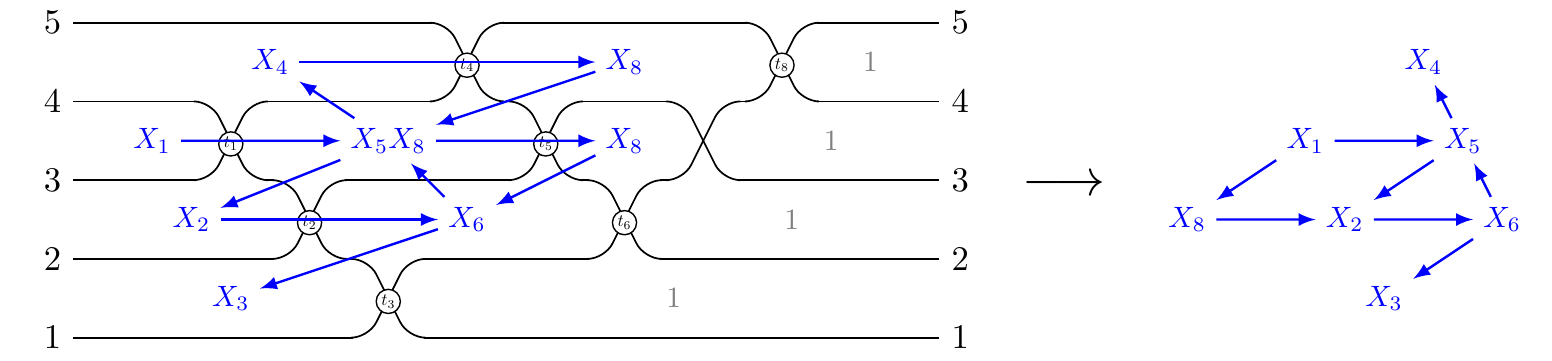}
  \caption{\label{fig:gracie_arrows} Constructing the quiver of~\cite{Ing} from the BFZ quiver by combining the arrows between the irreducible factors; cf.~\cite[Figure~7.8]{Ing}.}
\end{figure}

\cite[Definition~VII.2]{Ing} gives an entirely different construction of the quiver $\QIng$: it is simply obtained from the \emph{BFZ quiver}~\cite[Definition~2.2]{FZ3} associated to the wiring diagram of $\bw$ by summing up the contributions of all arrows between each pair of irreducible factors and removing all loops and $2$-cycles; see \cref{fig:gracie_arrows} and \cref{ex:gracie}. The following conjecture is supported by vast computational evidence.
\begin{conjecture}\label{conj:cluster} Let $G=\SL_n(\C)$.
\begin{enumerate}
\item The quivers $\QLec$ and $\QIng$ are isomorphic.
\item Assuming \cref{conj:irr}, the isomorphism is given by naturally identifying both vertex sets with $\Jo$. 
\item For each $v\leq w\in W$, these quivers are locally acyclic in the sense of~\cite{Muller}.
\end{enumerate}
\end{conjecture}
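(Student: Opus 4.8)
The plan is to reduce all three parts to combinatorics of the wiring diagram of $\bw$, bootstrapping from the unipotent-cell case $v=\id$. When $v=\id$ the Richardson variety $\Rich_\id^w\cong\capBWBnopar(N)$ is a unipotent cell, the functions $\fLp_r$ (and $\fR_r$) are chamber minors of the wiring diagram, and $\QIng$ is the BFZ quiver \cite[Definition~2.2]{FZ3} of $\bw$, which is also the quiver of the Berenstein--Fomin--Zelevinsky / Geiss--Leclerc--Schr\"oer cluster structure \cite{FZ3,GLS_quantum} on $\C[\capBWBnopar(N)]$, i.e.\ it equals $\QLec$. Thus parts~(1)--(2) reduce to showing that passing from $v=\id$ to a general $v$ performs the \emph{same} operation on both quivers, and the first order of business is \cref{conj:irr}. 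The function-theoretic common ground needed to even phrase this comparison --- realizing Leclerc's $\fR_r$ (via $\yR$) and Ingermanson's $\fLp_r$ (via $\zing$) as regular functions on $\Rich_v^w$ that become explicit Laurent monomials in the MR-parameters --- is already supplied by the twist map and the Chamber Ansatz formulas of this paper (see \cref{thm:R_chamber_ans,thm:MR_chamber_ans} and \cref{dfn:Lec_fR,dfn:zing}).

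On the Leclerc side, one would interpret the factorization $\fR_r=\prod_j F_j^{p_{r,j}}$ of \cref{conj:irr} as the decomposition of Leclerc's preprojective-algebra module $M_r$ into indecomposables: if $M_r\cong\bigoplus_j N_j^{\oplus p_{r,j}}$ with each $N_j$ indecomposable, then $\fR_r=\varphi_{M_r}=\prod_j\varphi_{N_j}^{p_{r,j}}$, so one sets $F_j:=\varphi_{N_j}$. Proving \cref{conj:irr} then splits into (a)~irreducibility of $\varphi_{N_j}$ in $\C[N]$ for the indecomposable rigid summands --- expected in simply laced type, but the uniform statement is exactly the open point --- and (b)~matching the multiplicities $p_{r,j}$ and the normalization $p_{j,j}=1$ with the output of Ingermanson's factorization algorithm \cite[Section~4]{Ing}; part~(b) also pins down the common vertex set $\Jo$ on both sides. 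For the arrows, one would use that the Leclerc quiver is the $\operatorname{Ext}^1$-quiver of the summands $N_j$ inside the relevant subcategory, that the $\operatorname{Ext}^1$-quiver of the un-decomposed $M_r$'s is the BFZ quiver of $\bw$, and that $\dim\operatorname{Ext}^1(N_j,N_{j'})$ is computed from the BFZ quiver by summing arrow multiplicities over the summands and cancelling oriented $2$-cycles --- which is precisely Ingermanson's recipe \cite[Definition~VII.2]{Ing}. Matching these two reductions, the homological ``decompose and add $\operatorname{Ext}$'' on one side against the purely combinatorial ``combine the arrows'' on the other, is the main obstacle, and it is only as sharp as \cref{conj:irr} permits.

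For part~(3), once $\QLec\cong\QIng$ is established it suffices to verify local acyclicity for the combinatorial model $\QIng$. One would try to exhibit a \emph{Banff sequence} in the sense of \cite{Muller} by processing the crossings indexed by $\Jo$ from one end of the wiring diagram of $\bw$ to the other, in the spirit of the arguments establishing local acyclicity for the quivers of unipotent cells and open positroid varieties; the induction on the number of crossings should go through provided one checks that combining the arrows between irreducible factors --- together with the genuine reducibility of the $\fLp_r$'s --- still leaves a removable (covered) mutable vertex at every stage. An equivalent route is to construct a maximal green sequence for $\QIng$ by the same inductive scheme, since reddening sequences force local acyclicity; either way, part~(3) should need no input beyond parts~(1)--(2).
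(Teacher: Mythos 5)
There is nothing in the paper for your proposal to be checked against: \cref{conj:cluster} is stated as an open conjecture, supported only by ``vast computational evidence,'' with the authors deferring its study to joint work with Sherman-Bennett and Speyer and noting forthcoming work of Serhiyenko and Sherman-Bennett verifying some parts. So the first thing to say is that you have not proved the statement either --- what you have written is a programme, and every genuinely hard step in it is flagged rather than carried out. Concretely: (a) the irreducibility of the functions $\varphi_{N_j}$ attached to the indecomposable summands, which you yourself call ``exactly the open point,'' is essentially the content of \cref{conj:irr}, which is also open (even in simply laced type, as the paper notes); (b) the matching of the exponents $p_{r,j}$ and of the vertex set $\Jo$ with Ingermanson's factorization algorithm is asserted, not argued; (c) the key identity --- that $\dim\operatorname{Ext}^1(N_j,N_{j'})$ computed inside Leclerc's subcategory agrees with summing BFZ-quiver arrow multiplicities over summands and cancelling $2$-cycles, i.e.\ with \cite[Definition~VII.2]{Ing} --- is precisely the heart of part (1), and you only name it as ``the main obstacle.'' Your base case is also thinner than it looks: for $v=\id$ the comparison of $\QIng$ with the BFZ quiver is immediate, but the whole difficulty of the conjecture is the general-$v$ case, where Leclerc's quiver is defined through irreducible morphisms in a Frobenius subcategory and no mechanism is offered for why ``passing from $v=\id$ to general $v$ performs the same operation on both quivers.''

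For part (3) there is an additional error of substance: the claim that a reddening (or maximal green) sequence forces local acyclicity in the sense of \cite{Muller} is not a theorem, so your ``equivalent route'' does not exist as stated; and the proposed Banff-sequence induction over the crossings in $\Jo$ is again only a plan, with the crucial verification (that combining arrows between irreducible factors of the $\fLp_r$'s leaves a covered mutable vertex at each stage) left unchecked. In short, the proposal correctly identifies the ingredients the paper itself provides (the twist, \cref{thm:R_chamber_ans}, \cref{thm:MR_chamber_ans}, \cref{dfn:Lec_fR}, \cref{dfn:zing}, and \cref{thm:compare}, which show $\fR_r$ and $\fLp_r$ agree as functions on $\Rich_v^w$), but the passage from equality of these functions to an isomorphism of the quivers $\QLec\cong\QIng$ and to local acyclicity is exactly what remains conjectural, and your text does not close that gap.
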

\noindent 
We shall return to the study of the cluster structure of Richardson and related varieties in joint work with Melissa Sherman-Bennett and David Speyer.\footnote{We understand that Khrystyna Serhiyenko and Melissa Sherman-Bennett have a forthcoming work that verifies some parts of the Conjecture~\ref{conj:cluster}.}

\subsection{Relating the two cluster structures}

\begin{figure}
\begin{tabular}{c}%
\includegraphics[width=0.98\textwidth]{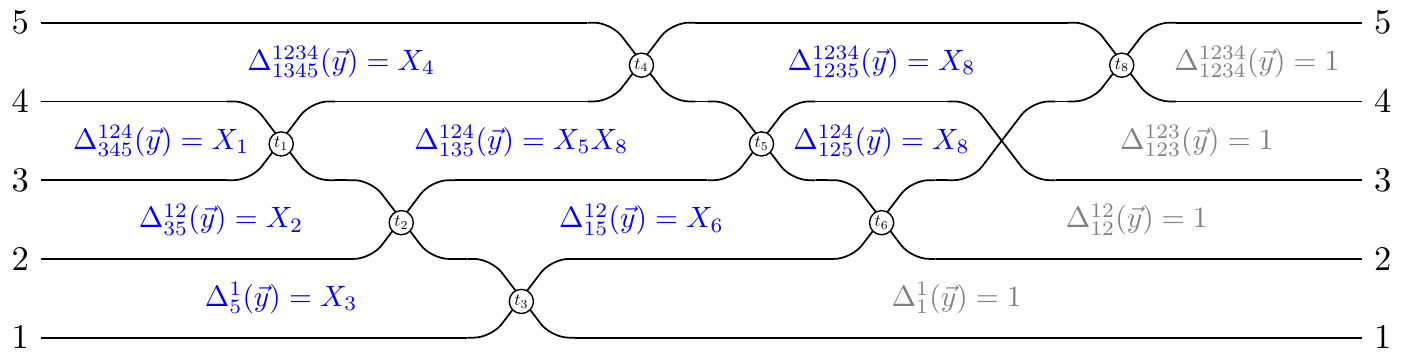}%
\\
\\
\includegraphics[width=0.98\textwidth]{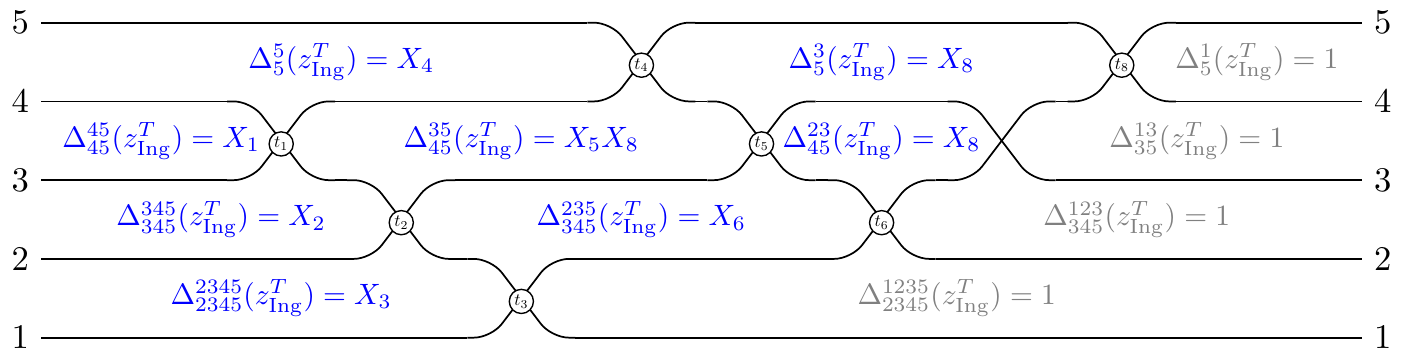}%
\end{tabular}
  \caption{\label{fig:gracie_compare} Comparing the conjectural cluster structures of~\cite{Lec} and of~\cite{Ing}; cf. \cref{thm:compare,ex:gracie}.}
\end{figure}

\begin{theorem}\label{thm:compare}
The functions $\fR_r$ and $\fLp_r$ coincide as regular functions on $\Rich_v^w$. More precisely, in the notation of \cref{dfn:Lec_fR,dfn:zing}, we have 
\begin{equation}\label{eq:flp=fr}
  \fLp_r(\zing^T)=\fR_r(\yR) \quad\text{for each $r\in[m]$.}
\end{equation}
\end{theorem}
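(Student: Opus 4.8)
The plan is to deduce \eqref{eq:flp=fr} from \cref{cor:R_vs_L_ans} combined with the $\H$-scaling behaviour of generalized minors. Recall the relevant data: $g^T\dw=\Yp\Yo\Ym$, so that $\yR=[g^T\dw]_L^+=\Yp$ by \eqref{eq:yR=Yp} and $z^T=\dw\Ym\dw^{-1}$ by \eqref{eq:z_wyw}, with $z\in N_-$ and $z\dw B_-=gB_-$; and $\Yo\in\Htp$ is given explicitly by \cref{lemma:Yo_tnn}. Since $\fR_r=\Delta^{\Vi{r-1}\om_{i_r}}_{\Wi{r-1}\om_{i_r}}$ and $\fLp_r=\Delta^{\wi{r-1}w_0\om_{i_r^\ast}}_{\vi{r-1}w_0\om_{i_r^\ast}}$ are generalized minors, \eqref{eq:dom_H} (and its right-handed analogue) give, for $a\in\H$,
\[
  \fR_r(x\,a)=a^{\Wi{r-1}\om_{i_r}}\fR_r(x),\qquad \fLp_r(a\,x)=a^{\wi{r-1}w_0\om_{i_r^\ast}}\fLp_r(x).
\]

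The core computation is \cref{cor:R_vs_L_ans} specialized to $i=i_r$: for $1\le r\le m$ it reads exactly $\fR_r(\Yp\Yo)=\fLp_r(z^T)$, while its boundary case $r=m+1$ (empty products; $\wi m=w$, $\vi m=v$; and $\Delta^{\om_i}_{\om_i}(\Yp)=1$ because $\Yp\in N$) gives $\Delta^{ww_0\om_{i^\ast}}_{vw_0\om_{i^\ast}}(z^T)=\Delta^{\om_i}_{\om_i}(\Yp\Yo)=\Yo^{\om_i}$ for all $i\in I$. From the first identity and the scaling rule,
\[
  \fLp_r(z^T)=\fR_r(\Yp\Yo)=\Yo^{\Wi{r-1}\om_{i_r}}\fR_r(\Yp)=\Yo^{\Wi{r-1}\om_{i_r}}\fR_r(\yR).
\]

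It remains to relate $\zing^T$ to $z^T$. By \eqref{eq:zing=z}, $\zing=z\,a_0$ for a unique $a_0\in\H$, so $\zing^T=a_0\,z^T$, and the normalization $\Delta^{ww_0\om_{i^\ast}}_{vw_0\om_{i^\ast}}(\zing^T)=1$ becomes $a_0^{ww_0\om_{i^\ast}}\Yo^{\om_i}=1$ for all $i$. Using $\om_{i^\ast}=-w_0\om_i$ we have $ww_0\om_{i^\ast}=-w\om_i$, and since $\{w\om_i\}_{i\in I}$ is a basis of $X(\H)$, \eqref{eq:torus_conj} forces $\dw a_0\dw^{-1}=\Yo$, i.e.\ $a_0=\dw^{-1}\Yo\dw$. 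Hence, again by \eqref{eq:torus_conj}, $w^{-1}\wi{r-1}=\Wi{r-1}$, and $w_0\om_{i_r^\ast}=-\om_{i_r}$,
\[
  \fLp_r(\zing^T)=a_0^{\wi{r-1}w_0\om_{i_r^\ast}}\fLp_r(z^T)=\Yo^{\,\Wi{r-1}w_0\om_{i_r^\ast}}\fLp_r(z^T)=\Yo^{-\Wi{r-1}\om_{i_r}}\fLp_r(z^T).
\]
Substituting the expression for $\fLp_r(z^T)$ from the previous step, the two powers of $\Yo$ cancel and we obtain $\fLp_r(\zing^T)=\fR_r(\yR)$, which is \eqref{eq:flp=fr}.

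The main obstacle here is not conceptual---the genuinely hard input, \cref{cor:R_vs_L_ans}, is already established---but rather the bookkeeping: one must track the sub- and superscript weights of $\fR_r$ and $\fLp_r$ under the two one-sided $\H$-actions and correctly identify the torus element $a_0$ via the normalization together with the $r=m+1$ case of \cref{cor:R_vs_L_ans}. The cancellation rests on the elementary identities $ww_0\om_{i^\ast}=-w\om_i$ and $w_0\om_{i_r^\ast}=-\om_{i_r}$ (both coming from $\om_{i^\ast}=-w_0\om_i$) and on \eqref{eq:torus_conj}; in particular, the fact that $\Wi{r-1}$ rather than $\Vi{r-1}$ occurs in the final exponent depends on $a_0$ multiplying $z^T$ on the left, which is exactly the content of \eqref{eq:zing=z}.
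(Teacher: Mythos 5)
Your argument is correct, and it reorganizes the paper's proof rather than reproducing it. The paper proves \eqref{eq:flp=fr} by first exhibiting $\zing$ explicitly from the factorization \eqref{eq:YpYoYm_dfn}, namely $\zing=(\dw\Yo\Ym\dw^{-1})^T=z\cdot\dw\Yo\dw^{-1}$, verifying the normalization of \cref{dfn:zing} via \eqref{eq:dom_Yo} and \eqref{eq:MR_chamber_ans_inv}, and then evaluating both $\fLp_r(\zing^T)$ and $\fR_r(\yR)$ as explicit monomials in the MR-parameters (using \cref{lemma:Yo_tnn}, \eqref{eq:MR_chamber_ans_inv}, \eqref{eq:R_chamber_ans_inv}) on $\Rtp_v^w$ and extending by Zariski density. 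You instead take \cref{cor:R_vs_L_ans} as the single substantive input: its $i=i_r$ case gives $\fR_r(\Yp\Yo)=\fLp_r(z^T)$, its $r=m+1$ case pins down the normalizing torus factor, and everything else is one-sided $\H$-scaling of generalized minors together with $\om_{i^\ast}=-w_0\om_i$. This buys a derivation with no new monomial computations, at the cost of leaning on the corollary, which the paper itself extracts from the same two inverse Chamber Ansatz formulas; so the ultimate ingredients coincide, but your packaging is genuinely different and arguably cleaner. I verified the weight bookkeeping: the scalings $\fR_r(xa)=a^{\Wi{r-1}\om_{i_r}}\fR_r(x)$ and $\fLp_r(ax)=a^{\wi{r-1}w_0\om_{i_r^\ast}}\fLp_r(x)$ are the correct column/row scalings, and the two powers of $\Yo$ cancel exactly as you claim.

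Two minor points. First, citing \eqref{eq:zing=z} for ``$\zing=za_0$ with $a_0\in\H$'' is circular in spirit, since that identity lives inside the paper's proof of this very theorem; what you need is only that for any $a_0\in\H$ one has $za_0\in B_-$ and $za_0wB_-=gB_-$, so choosing $a_0$ to satisfy the normalization produces an element meeting the conditions of \cref{dfn:zing}, and the uniqueness asserted there gives $\zing=za_0$ --- the same logical status as the paper's verification. Second, your explicit formula $a_0=\dw^{-1}\Yo\dw$ follows the literal statement of \eqref{eq:torus_conj}; with the convention used consistently elsewhere in the paper one gets $a_0=\dw\Yo\dw^{-1}$, as in \eqref{eq:zing=z}. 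This does not affect your proof: the only fact you use is $a_0^{\mu}=\Yo^{w^{-1}\mu}$ for all weights $\mu$, which follows directly from $a_0^{w\om_i}=\Yo^{\om_i}$ for all $i$ without any conjugation formula and yields exactly the exponent $\Yo^{-\Wi{r-1}\om_{i_r}}$ entering your cancellation. Finally, as in the paper, you should record that the identity is first obtained where \cref{cor:R_vs_L_ans} applies (MR-parametrized points) and extends to all of $\Rich_v^w$ by density, since both sides of \eqref{eq:flp=fr} are regular.
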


\begin{example}\label{ex:gracie}
Let us consider the running example of~\cite{Ing}. We have $n=5$, $v=s_3$, $\bw=(3,2,1,4,3,2,3,4)$, $\Jo=\{1,2,3,4,5,6,8\}$. The MR-parameters $(t_j)_{j\in\Jo}$ are expressed in terms of the cluster variables $(X_j)_{j\in \Jo}$ by applying the Chamber Ansatz~\eqref{eq:A_ch_ans} to the chamber labels in \cref{fig:gracie_arrows}:
\begin{equation*}%
  t_1=\frac{X_2X_4}{X_1X_5X_8}, \quad t_2=\frac{X_3X_5X_8}{X_2X_6},\quad t_3=\frac{X_6}{X_3},\quad t_4=\frac{X_5}{X_4},\quad t_5=\frac{X_6}{X_5X_8},\quad t_6=\frac{X_8}{X_6},\quad t_8=\frac1{X_8}.
\end{equation*}
With this substitution, we compute

\noindent\makebox[1.0\textwidth]{\scalebox{0.94}{$\displaystyle \yR=\smat{
1 & \frac{X_{3} X_{4} X_{5} + X_{2} X_{4} + X_{1} X_{6}}{X_{4} X_{5} X_{6}} & \frac{X_{3} X_{5} + X_{2}}{X_{6}} & \frac{{\left(X_{1} + X_{2}\right)} X_{3}}{X_{2} X_{8}} & X_{3} \\
0 & 1 & X_{5} & \frac{{\left(X_{1} + X_{2}\right)} X_{6}}{X_{2} X_{8}} & X_{6} \\
0 & 0 & 1 & \frac{X_{2} X_{4} + X_{1} X_{6}}{X_{2} X_{5} X_{8}} & 0 \\
0 & 0 & 0 & 1 & X_{8} \\
0 & 0 & 0 & 0 & 1
}, \quad \zing^T=\smat{
\frac{1}{X_{3}} & \frac{X_{3} X_{5} + X_{2}}{X_{2} X_{6}} & \frac{X_{1} + X_{2}}{X_{1} X_{8}} & \frac{X_{5}}{X_{4}} & 1 \\
0 & \frac{X_{3}}{X_{2}} & 0 & -\frac{X_{1} X_{6}}{X_{2} X_{4}} & -\frac{X_{2} X_{4} + X_{1} X_{6}}{X_{2} X_{5}} \\
0 & 0 & \frac{X_{2}}{X_{1}} & \frac{X_{5} X_{8}}{X_{4}} & X_{8} \\
0 & 0 & 0 & \frac{X_{1}}{X_{4}} & 0 \\
0 & 0 & 0 & 0 & X_{4}
}.$}}
One can check that the matrix $\zing^\th$ indeed coincides with the matrix given in~\cite[Example~VII.27]{Ing}. The minors of $\yR$ and $\zing^T$ appearing in \cref{thm:compare} are compared in \cref{fig:gracie_compare}.
\end{example}

\begin{proof}[Proof of \cref{thm:compare}]
Let $(\Yp,\Yo,\Ym)\in N\times \H\times N_-$ be given by~\eqref{eq:YpYoYm_dfn}. By~\eqref{eq:yR=Yp}, we have $\Yp=\yR$. Let $z:=(\dw \Ym\dw^{-1})^T$ be given by~\eqref{eq:z_wyw}. As explained after~\eqref{eq:z_wyw}, we have $z\in N_-$ and $zwB_-=gB_-$.

Our first goal is to show that $z$ differs from $\zing$ by a torus element. Specifically, we claim that 
\begin{equation}\label{eq:zing=z}
  \zing:=(\dw \Yo\Ym \dw^{-1})^T=z\cdot \dw\Yo\dw^{-1}
\end{equation}
\noindent satisfies the conditions of \cref{dfn:zing}. 
First, we have $z\in N_-$ and $\dw\Yo\dw^{-1}\in\H$, so $\zing\in B_-$. Second, we have $\zing wB_-=z \dw\Yo B_-=z w B_-= gB_-$. It remains to show that $\Delta^{w w_0\om_{i^\ast}}_{v w_0\om_{i^\ast}}(\zing^T)=1$ for all $i\in I$.  By \cref{eq:dom_H}, we have
\begin{equation*}%
  \Delta^{w w_0\om_{i^\ast}}_{v w_0\om_{i^\ast}}(\zing^T)= 
\Delta^{w w_0\om_{i^\ast}}_{v w_0\om_{i^\ast}}(\dw\Yo\dw^{-1}\cdot z^T)=
\Delta^{w w_0\om_{i^\ast}}_{w w_0\om_{i^\ast}}(\dw\Yo\dw^{-1})\cdot 
\Delta^{w w_0\om_{i^\ast}}_{v w_0\om_{i^\ast}}(z^T).
\end{equation*}
We compute each of the two factors separately.  Applying~\eqref{eq:dom_iota} and then~\eqref{eq:dom_dfn}, we get
\begin{equation*}%
  \Delta^{w w_0\om_{i^\ast}}_{w w_0\om_{i^\ast}}(\dw\Yo\dw^{-1}) = 
\Delta^{w\om_{i}}_{w \om_{i}}((\dw\Yo\dw^{-1})^\iota) =
\Delta^{w\om_{i}}_{w \om_{i}}(\line{w}\Yo^{-1} \lline{w^{-1}}) =
\Delta^{\om_{i}}_{\om_{i}}(\lline{w^{-1}}\line{w}\Yo^{-1} \lline{w^{-1}}\line{w}) =
\Delta^{\om_{i}}_{\om_{i}}(\Yo^{-1}).
\end{equation*}
It suffices to assume that $g=\gt$ is MR-parametrized for $\bt\in(\R_{>0})^{\Jo}$, since $\Rtp_v^w$ is Zariski dense inside $\Rich_v^w$. Then the first factor $\Delta^{w w_0\om_{i^\ast}}_{w w_0\om_{i^\ast}}(\dw\Yo\dw^{-1})$ is given by the inverse of the right hand side of~\eqref{eq:dom_Yo}. The second factor $\Delta^{w w_0\om_{i^\ast}}_{v w_0\om_{i^\ast}}(z^T)$ coincides with $\fL_j(z^T)$ where $j$ is the maximal index such that $i_j=i$. Applying~\eqref{eq:MR_chamber_ans_inv}, we indeed see that $\Delta^{w w_0\om_{i^\ast}}_{v w_0\om_{i^\ast}}(\zing^T)=1$. We have shown~\eqref{eq:zing=z}.

The proof of~\eqref{eq:flp=fr} proceeds by an entirely similar argument. We have
\begin{equation*}%
  \fLp_r(\zing^T)= 
\Delta^{\wi{r-1} w_0\om_{i_r^\ast}}_{\vi{r-1} w_0\om_{i_r^\ast}}(\dw\Yo\dw^{-1}\cdot z^T)=
\Delta^{\wi{r-1} w_0\om_{i_r^\ast}}_{\wi{r-1} w_0\om_{i_r^\ast}}(\dw\Yo\dw^{-1})\cdot 
\Delta^{\wi{r-1} w_0\om_{i_r^\ast}}_{\vi{r-1} w_0\om_{i_r^\ast}}(z^T).
\end{equation*}
Restricting to the case $gB_-\in\Rtp_v^w$ with $g=\gt$ as above, the first factor is computed using \cref{lemma:Yo_tnn} and the second factor is computed using~\eqref{eq:MR_chamber_ans_inv}. Computing the right-hand side of~\eqref{eq:flp=fr} using~\eqref{eq:R_chamber_ans_inv}, we deduce the result.
\end{proof}

\appendix
\section{Fixed points of the twist}\label{sec:fixed}
The twist map $\twist_v^w$ is an automorphism of $\Rich_v^w$ which preserves the totally positive part $\Rtp_v^w$ and generalizes the right twist of~\cite{MuSp}. In~\cite[Section~4.5]{KarpCS}, Karp posed the problem of finding the fixed points of the right twist map on open positroid varieties and their totally positive parts. This section is motivated by a related question for open Richardson varieties.
\begin{question}
Does $\twist_v^w:\Rtp_v^w\xrasim\Rtp_v^w$ have a unique fixed point for all $v\leq w\in W$? 
\end{question}
We show below that in general, the answer is ``no'' (even for open positroid varieties). We start with an interesting example where the answer appears to be ``yes.''

\begin{conjecture}
Let $G=\SL_n(\C)$, $v=\id$, and $w=w_0$. Then the unique fixed point of $\twist_v^w$ inside $\Rtp_v^w$ equals $gB_-$ where $g=(g_{i,j})_{i,j=1}^n$ is an upper triangular matrix given by
\begin{equation*}%
  g_{i,j}=\sqrt{{n-i\choose j-i}{j-1\choose j-i}},\quad\text{for $1\leq i\leq j\leq n$.}
\end{equation*}
\end{conjecture}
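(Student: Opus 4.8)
## Proof proposal for the conjectural fixed point

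The plan is to verify directly that the explicit matrix $g=(g_{i,j})$ is a fixed point of the right twist $\twist_v^{w_0}$ with $v=\id$, and then separately to argue uniqueness. When $v=\id$, the open Richardson variety $\Rich_{\id}^{w_0}$ is the big unipotent cell $\capBWBnopar(N)$, and by the remark following \cref{thm:R_chamber_ans} the twist map reduces to the Berenstein--Zelevinsky twist $\eta_{w_0}\colon \capBWBnopar(N)\xrasim\capBWBnopar(N)$, which in our language is the composition $h\mapsto [\dw_0 h^T]_R^+$ (there is no chiral map to worry about, since $\chi_{\id}$ is the identity and $N\dv\cap\dv N=N$). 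So the conjecture is equivalent to the identity $[\dw_0\, g^T\dw_0]_R^+ = g$, i.e.\ $g^T\dw_0 = b_-\cdot \dw_0^{-1} g$ for some $b_-\in B_-$, equivalently $\dw_0\, g^T\dw_0 \in B_- g$. First I would exploit the type-$A$ Chamber Ansatz (\cref{thm:A_ch_ans}): writing $g B_-$ as $\gbf_{\bv,\bw}(\bt)$ for the standard reduced word $\bw$ of $w_0$ (say the one giving the usual pipe dream / wiring diagram), a fixed point of the twist is exactly a point $\bt\in(\R_{>0})^{\Jo}$ such that the chamber minors $\chmnrR_C(\yR)$ evaluated at $\yR = g$ itself (since $\twist$ fixes $gB_-$, we have $\yR$ in the same $B_-$-orbit as $g$, and the chamber minors $\fR_r$ are flag minors, hence $B_-$-invariant on the relevant side) satisfy the self-consistent system obtained by substituting the Chamber-Ansatz expressions for $t_r$ back into the formulas $\chmnrR_C(\yR)=\prod_{r\colon C\in\UW_r} t_r^{-1}$. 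This turns the fixed-point equation into a finite system of multiplicative relations among the flag minors $\Delta^I_{[n-|I|+1,n]}(g)$.

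The key computational step is therefore to evaluate the right-aligned flag minors of the conjectural matrix $g$ in closed form. Since $g_{i,j}=\sqrt{\binom{n-i}{j-i}\binom{j-1}{j-i}}$, each entry is a square root of a product of binomials; I expect the flag minors $\Delta^I_{[n-k+1,n]}(g)$ to evaluate to products of square roots of binomials indexed by $I$, plausibly via a Lindström--Gessel--Viennot / Jacobi--Trudi-type evaluation or a Gram-matrix interpretation (the matrix $g g^T$ or $g^T g$ may have an especially clean Hankel/binomial form, which would let one compute all minors through a Cauchy--Binet argument). Once these minors are known, one checks that the Chamber Ansatz ratios $\dfrac{\chmnrR_{\RegU_r}(\yR)\chmnrR_{\RegD_r}(\yR)}{\chmnrR_{\RegL_r}(\yR)\chmnrR_{\RegR_r}(\yR)}$ reproduce exactly the MR-parameters $t_r$ that parametrize $g$ itself — i.e.\ the displayed matrix is self-conjugate under the combinatorial recursion. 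An alternative, possibly cleaner route: observe that the matrix $g$ has a symmetry (e.g.\ invariance under the transpose-anti-transpose $i\mapsto n+1-i$ composed with transposition) that matches the symmetry of $\eta_{w_0}$, and show that the twist, restricted to the subvariety of such symmetric points, is a contraction or a Möbius-type involution with a forced unique fixed point.

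For uniqueness I would argue as follows. The twist $\twist_{\id}^{w_0}$ on $\Rtp_{\id}^{w_0}\cong(\R_{>0})^{N}$, expressed in MR-coordinates via the Chamber Ansatz, is a \emph{subtraction-free} rational self-map (\cref{thm:Zsf}); composing with logarithms it becomes a map $\R^N\to\R^N$ whose components are differences of $\log$-sum-exp functions. One then tries to show this map is a contraction in a suitable (e.g.\ Hilbert or sup) metric, or that its inverse (the left twist) is, using that the Chamber-Ansatz exponent matrix — the matrix of exponents $-\langle\om_{i_j}, s_{i_j}\cdots s_{i_{r-1}}\ach_{i_r}\rangle$ from \eqref{eq:R_chamber_ans_inv} — is triangular with $\pm1$ on the diagonal, so the linearization at any point is unipotent-triangular and the nonlinear corrections are controlled. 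The \textbf{main obstacle} I anticipate is precisely this uniqueness argument: subtraction-free maps need not be globally contracting, and one may instead have to combine the explicit fixed point with an index/degree computation or a monotonicity (coordinatewise) argument special to the $v=\id$, $w=w_0$ case, rather than a general principle. A secondary obstacle is getting a clean closed form for all flag minors of $g$; if the Gram-matrix trick fails one is left with a potentially messy binomial-determinant identity, which I would handle by induction on $n$ together with a Dodgson-condensation / Plücker-relation recursion matching the recursive structure of the wiring diagram.
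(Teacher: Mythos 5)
This statement is left as a \emph{conjecture} in the paper (\cref{sec:fixed}); the authors give only the $n=5$ example and note that totally positive fixed points of the twist need not be unique for other strata, so there is no proof of theirs to compare yours against, and your text has to stand on its own as an attack on an open problem. As written it is a plan rather than a proof, with concrete gaps. First, the fixed-point equation is set up incorrectly: for $v=\id$ the right twist sends $gB_-\mapsto [g^T\dw_0]_L^+B_-$, and since $N\cap B_-=\{e\}$ the representative in $N$ is unique, so $gB_-$ is fixed if and only if $[g^T\dw_0]_L^+=g$, i.e.\ $g^{-1}g^T\dw_0\in B_-$ (equivalently, for the inverse/left twist, $[\dw_0 g^T]_R^+=g$, i.e.\ $\dw_0 g^Tg^{-1}\in B_-$). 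Your equation $[\dw_0\,g^T\dw_0]_R^+=g$ carries a spurious factor of $\dw_0$, and the subsequent ``equivalently'' rewritings ($g^T\dw_0=b_-\dw_0^{-1}g$ versus $\dw_0 g^T\dw_0\in B_-g$) are not even consistent with each other; any verification run from that equation would fail. Second, the existence half is not actually established: the whole content of the claim is a closed-form determinantal/binomial identity for the right-aligned flag minors of the explicit matrix $g$ (equivalently, the vanishing pattern forcing $g^{-1}g^T\dw_0\in B_-$), and you only express the expectation that a Lindstr\"om--Gessel--Viennot or Gram-matrix computation will deliver it. That identity must be proved, not presumed; it is where all the work lies.

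The deeper gap is uniqueness, which you yourself flag. Subtraction-freeness (\cref{thm:Zsf}) does not give a contraction in the Hilbert metric or any coordinatewise monotonicity, and the triangular shape of the exponent matrix in~\eqref{eq:R_chamber_ans_inv} controls at best a formal linearization, not the global dynamics of the twist on $(\R_{>0})^{\Jo}$. Moreover, the paper's own positroid example shows that for other $(v,w)$ the totally positive fixed point is \emph{not} unique and the twist can have infinite order on the torus quotient, so no general principle of the kind you invoke can work; a uniqueness proof must exploit the specific pair $(\id,w_0)$ (for instance via a degree/index count on $\Rtp_{\id}^{w_0}$, a strictly decreasing potential for the log-twist, or the symmetry of $g$ under $i\mapsto n+1-i$ combined with transposition that you mention but do not develop). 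Until (i) the flag-minor identity for the displayed $g$ is verified from the corrected fixed-point equation and (ii) an actual uniqueness mechanism is supplied, the conjecture remains open.
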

\begin{example}
For $n=5$, the above matrix $g$ is given by
\begin{equation*}%
g=  \smat{%
    \sqrt1 & \sqrt4 & \sqrt6 & \sqrt4 & \sqrt1\\
    0 & \sqrt1 & \sqrt6 & \sqrt9 & \sqrt4\\
    0 & 0 & \sqrt1 & \sqrt6 & \sqrt6\\
    0 & 0 & 0 & \sqrt1 & \sqrt4\\
    0 & 0 & 0 & 0 & \sqrt1
  }.%
\end{equation*}
\end{example}

We now give an example where the twist $\twist_v^w$ has no fixed points on $\Rtp_v^w$. 
\begin{example}
Consider the open positroid variety $\Pio_v^w$ from~\cite[Section~A.2]{MuSp}. Its Zariski closure is the subvariety of $\Gr(4,8)$ given by the vanishing of four Pl\"ucker coordinates
\begin{equation*}%
  \Delta_{1234}=\Delta_{3456}=\Delta_{5678}=\Delta_{1278}=0.
\end{equation*}
The frozen variables are given by
\begin{equation*}%
  \Delta_{1238},\Delta_{2348},\Delta_{2345},\Delta_{2456},\Delta_{4567},\Delta_{4678},\Delta_{1678},\Delta_{1268}.
\end{equation*}
The right twist inverts the frozen variables by~\cite[Equation~(9)]{MuSp}. Thus, in order for them to stay preserved (and be positive real), all frozen variables have to be equal to $1$. There are also five mutable cluster variables
\begin{equation*}%
  \Delta_{1248},\Delta_{2346},\Delta_{4568},\Delta_{2678},\Delta_{2468}.
\end{equation*}
Using~\cite[Equations~(A.1)--(A.2)]{MuSp}, one can write down a system of equations on these five variables in order for them to stay preserved under the twist map, and check that it admits no positive real solutions.
\end{example}
\begin{remark}
The above example is also an example of the twist map having infinite order on the torus quotient $\Pio_v^w/\H$. This suggests an interesting interplay between the behavior of totally positive fixed points of the twist map and its periodicity properties; see~\cite{GP_Zam} for related results.
\end{remark}

\bibliographystyle{alpha_tweaked}
\bibliography{twist}
\end{document}